\newcommand{\BZ}{\mathbb{Z}}
\newcommand{\GG}{\mathbb{G}}
\newcommand{\DD}{\mathfrak{D}}
\newcommand{\CC}{\mathfrak{C}}
\newcommand{\FF}{\mathfrak{F}}
\newcommand{\ti}{\widetilde}
\DeclareMathOperator{\modu}{mod}
\newtheorem{theorem}{Theorem}[section]
\newtheorem{lemma}[theorem]{Lemma}
\newtheorem{corollary}[theorem]{Corollary}
\theoremstyle{definition}
\newtheorem{defn}[theorem]{Definition}
\theoremstyle{question}
\newtheorem{question}[theorem]{Question}
\newtheorem{prop}[theorem]{Proposition}
\newtheorem{rem}[theorem]{Remark}
\newtheorem*{conj}{Conjecture}
\newtheorem*{thm*}{Theorem}
\begin{document}

\tikzset{middlearrow/.style={
        decoration={markings,
            mark= at position 0.5 with {\arrow{#1}} ,
        },
        postaction={decorate}
    }
}

\title[Commensurability of RAAGs]{On commensurability of some right-angled Artin groups II: RAAGs defined by paths}

\thanks{This work was supported by the ERC Grant 336983, by the Basque Government grant IT974-16 and by the grants MTM2014-53810-C2-2-P and MTM2017-86802-P of the Ministerio de Econom\'ia y Competitividad of Spain. The third author was supported by CMUP (UID/MAT/00144/2013), which is funded by FCT (Portugal) with national (MEC) and European structural funds (FEDER), 
under the partnership agreement PT2020, and by the Russian Foundation for Basic Research (project no.  15-01-05823)}

\author[M. Casals-Ruiz]{Montserrat Casals-Ruiz}
\address{Ikerbasque - Basque Foundation for Science and Matematika Saila,  UPV/EHU,  Sarriena s/n, 48940, Leioa - Bizkaia, Spain}
\email{montsecasals@gmail.com}

\author[I. Kazachkov]{Ilya Kazachkov}
\address{Ikerbasque - Basque Foundation for Science and Matematika Saila,  UPV/EHU,  Sarriena s/n, 48940, Leioa - Bizkaia, Spain}
\email{ilya.kazachkov@gmail.com}

\author[A. Zakharov]{Alexander Zakharov}
\address{Centre of Mathematics, University of Porto,
R. Campo Alegre 687, 4169-007 Porto, Portugal}
\email{zakhar.sasha@gmail.com}

\keywords{right-angled Artin groups, commensurability, quasi-isometries}

\begin{abstract}
In this paper we continue the study of right-angled Artin groups up to commensurability initiated in \cite{CKZ}. We show that RAAGs defined by different paths of length greater than 3 are not commensurable. We also characterise which RAAGs defined by paths are commensurable to RAAGs defined by trees of diameter 4. More precisely, we show that a RAAG defined by a path of length $n>4$ is commensurable to a RAAG defined by a tree of diameter 4 if and only if $n \equiv 2\, (\modu 4)$. These results follow from the connection that we establish between the classification of RAAGs up to commensurability and linear integer-programming.
\end{abstract}
\maketitle

\section{Introduction}

\subsection{Context of the problem}
One of the basic problems on locally compact topological groups is to classify their lattices up to commensurability. Recall that two lattices $\Gamma_1, \Gamma_2 <G$ are commensurable if and only if there exists $g\in G$ such that $\Gamma_1 \cap \Gamma_2^g$ has finite index in both $\Gamma_1$ and $\Gamma_2^g$. In particular, commensurable lattices have covolumes that are commensurable real numbers, that is, they have a rational ratio.
	
The notion of commensurability was generalized to better suit topological and large-scale geometric properties and to compare groups without requiring them to be subgroups of a common group. More precisely, we say that two groups $H$ and $K$ are (abstractly) commensurable if they have isomorphic finite index subgroups. In this paper, we will only be concerned with the notion of abstract commensurability and we simply refer to it as commensurability. 
	
As we mentioned, commensurability is closely related to the large-scale geometry of the group. Indeed, any finitely generated group can be endowed with a natural word-metric which is well-defined up to quasi-isometry and since any finitely generated group is quasi-isometric to any its finite index subgroup, it follows that commensurable groups are quasi-isometric. 
	
Gromov suggested to study groups from this geometric point of view and understand the relation between these two concepts. More precisely, a basic problem in geometric group theory is to classify commensurability and quasi-isometry classes (perhaps within a certain class) of finitely generated groups and to understand whether or not these classes coincide. 
	
The classification of groups up to commensurability (both in the abstract and classical case) has a long history and a number of famous solutions for very diverse classes of groups such as Lie groups and more generally, locally compact topological groups, hyperbolic 3-manifold groups, pro-finite groups, Grigorchuk-Gupta-Sidki groups, etc, see for instance \cite{5, 13, 20, 25, 26, GrW, Gar}.

In this paper, we focus on the question of classification of right-angled Artin groups, RAAGs for short, up to commensurability. Recall that a RAAG is a finitely presented group $\GG(\Gamma)$ which can be described by a finite simplicial graph $\Gamma$, the commutation graph, in the following way: the vertices of $\Gamma$ are in bijective correspondence with the generators of $\GG(\Gamma)$ and the set of defining relations of $\GG(\Gamma)$ consists of commutation relations, one for each pair of generators connected by an edge in $\Gamma$. 
	
RAAGs have become central in group theory, their study interweaves geometric group theory with other areas of mathematics. This class interpolates between two of the most classical families of groups, free and free abelian groups, and its study provides uniform approaches and proofs, as well as far reaching generalisations of the results for free and free abelian groups. The study of this class from the different perspectives has contributed to the development of new, rich theories such as the theory of CAT(0) cube complexes and has been an essential ingredient in Agol's solution of the virtually fibred Conjecture.
	
The commensurability classification of RAAGs has been previously considered for the following classes of RAAGs: free groups \cite{56, 25, 47}, \cite[1.C]{31}; free Abelian groups, \cite{30, 3}; $F_m \times \mathbb{Z}^n$, \cite{58}; free products of free groups and free Abelian groups, \cite{5}; $F_m \times F_n$ with $m,n \ge 2$, \cite{60, 17}; $\GG(\Gamma)$, where $\Gamma$ is connected, triangle- and square-free graph without any degree one vertices, \cite{KKi}; $\Gamma$ is star-rigid and does not have induced 4-cycles and the outer automorphism of $\GG$ is finite, \cite{Huang}; and $\Gamma$ is a tree of diameter $\le 3$, \cite{BN} and of diameter 4, \cite{CKZ}.

In \cite{CKZ}, we characterise the commensurability classes of RAAGs defined by trees of diameter 4. As a consequence, we deduce the existence of infinitely many different commensurability classes, confirming a conjecture of Behrstock and Neumann, and provide first examples of RAAGs that are quasi-isometric but not commensurable.

The proof of the aforementioned results was performed in three steps. In the first step, we determine a commensurabality invariant for RAAGs defined by trees. More precisely, to a given pair of trees $\Delta$ and $\Gamma$ we associate a linear system of equations $S(\Delta, \Gamma)$ and show that the existence of positive integer solutions is a commensurability invariant of $\GG(\Delta)$ and $\GG(\Gamma)$, i.e. we prove:
\begin{thm*}[see \cite{CKZ}]
Let $\Delta$ and $\Gamma$ be trees. If $\GG(\Delta)$ and $\GG(\Gamma)$ are commensurable, then the system of equations $S(\Delta, \Gamma)$ has positive integer solutions.
\end{thm*}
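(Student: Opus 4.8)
The plan is to extract the desired solution of $S(\Delta,\Gamma)$ from the combinatorics of a common finite-index subgroup, by comparing the canonical splittings that a RAAG defined by a tree and its finite-index subgroups carry.

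First I would fix a group $H$ together with finite-index embeddings $H\hookrightarrow\GG(\Delta)$ and $H\hookrightarrow\GG(\Gamma)$, which exist by the definition of commensurability. The main structural input is that for a tree $T$ the RAAG $\GG(T)$ has a canonical graph-of-groups decomposition visible in $T$: after collapsing the trivial edges, the underlying graph is the subtree $T^{\circ}$ spanned by the vertices of degree $\ge 2$, the edge groups are the rank-two abelian subgroups $\GG(e)\cong\BZ^2$ carried by the edges $e$, and the vertex group at $v$ is the star subgroup $\GG(\mathrm{st}(v))\cong\BZ\times F_{\deg_T(v)}$, with canonical $\BZ$-factor the support direction $\langle v\rangle$. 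This is a JSJ-type decomposition over abelian subgroups and so is respected by isomorphisms; moreover it passes to finite-index subgroups, so the embedding $H\hookrightarrow\GG(\Gamma)$ induces a graph-of-groups decomposition $\mathcal{G}_\Gamma(H)$ of $H$ whose vertex groups are finite-index subgroups of conjugates of the $\GG(\mathrm{st}(v))$, $v\in V(\Gamma)$, and whose edge groups are finite-index in conjugates of the $\BZ^2$'s; likewise one gets $\mathcal{G}_\Delta(H)$ from the $\GG(\Delta)$-side.

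The next step is to identify $\mathcal{G}_\Gamma(H)$ and $\mathcal{G}_\Delta(H)$ with the canonical splitting of $H$ itself, so that they become two descriptions of a single graph of groups. Each vertex group $V$ of this splitting is then simultaneously a finite-index subgroup of $\GG(\mathrm{st}(v))^{g}=\BZ\times F_{\deg_\Gamma(v)}$ for some $v\in V(\Gamma)$ and of $\GG(\mathrm{st}(w))^{h}=\BZ\times F_{\deg_\Delta(w)}$ for some $w\in V(\Delta)$; matching the canonical $\BZ$-factors and using that an index-$d$ subgroup of $\BZ\times F_n$ is a $\BZ\times F_m$ with $m-1$ a multiple of $n-1$ governed by $d$, one reads off a linear relation between $\deg_\Gamma(v)-1$, $\deg_\Delta(w)-1$ and the relevant local indices. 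Recording, for each ordered pair $(w,v)\in V(\Delta)\times V(\Gamma)$, the number of vertices of $H$'s splitting lying over $w$ and $v$ on the two sides together with their local indices, and adding the constraints coming from the compatibility of the two global indices $[\GG(\Gamma):H]$, $[\GG(\Delta):H]$, from the multiplicativity of indices along the $\BZ^2$ edge groups, and from the requirement that the underlying graph of the splitting of $H$ folds onto both $\Gamma^{\circ}$ and $\Delta^{\circ}$, one obtains a finite system of linear equations in non-negative integer unknowns (the multiplicities and local indices). Checking that this system is exactly (or implies the solvability of) $S(\Delta,\Gamma)$ — in particular that only the degree/leaf data of $\Delta$ and $\Gamma$ enter, which is what $S(\Delta,\Gamma)$ encodes — completes the argument; strict positivity of the solution is then arranged by passing to a further common finite-index subgroup whose splitting meets every relevant conjugacy class.

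The main obstacle is the rigidity step: one must show that the canonical decomposition of $\GG(T)$ is rigid enough that the two induced decompositions $\mathcal{G}_\Gamma(H)$ and $\mathcal{G}_\Delta(H)$ of the one group $H$ really can be identified with a single canonical splitting of $H$ — so that no edge or vertex group visible from one side is hidden inside a vertex group visible from the other — and that the ensuing combinatorial correspondence between $\Delta$ and $\Gamma$ is constrained by linear relations alone, with no hidden nonlinear dependence among the multiplicities. This rests on a precise description of which subgroups of $\GG(T)$ can occur as vertex and edge groups of a splitting of the prescribed type, in the spirit of the analysis of small-diameter trees in \cite{BN}.
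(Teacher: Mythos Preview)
Your outline is essentially the approach the paper (and \cite{CKZ}) takes: the ``canonical graph-of-groups decomposition'' you describe is exactly the reduced centralizer splitting, its Bass--Serre tree is the reduced extension graph $\widetilde{\Gamma}^e$, your induced splittings $\mathcal{G}_\Gamma(H)$ and $\mathcal{G}_\Delta(H)$ are the quotient graphs $\Psi(H_1)\cong\Psi(H_2)$, and your ``rigidity step'' is precisely Lemma~3.6 of \cite{CKZ} asserting that the isomorphism $\varphi$ induces $\overline{\varphi}\colon\widetilde{\Gamma}_1^e\to\widetilde{\Gamma}_2^e$ and $\varphi_*\colon\Psi(H_1)\to\Psi(H_2)$. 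The linear relations you extract from comparing the two $\BZ\times F$ descriptions of a vertex group are what become Lemmas~\ref{proportions} and~\ref{equations} (the $D_1,D_2$ there are your $\deg-1$ quantities), and the ``multiplicities and local indices'' you propose to record are packaged in the paper as the vertex labels $L(v),L'(v)$ and edge labels $l_v(e),l'_v(e)$, which are positive integers by construction---so no further passage to a subgroup is needed for positivity. The one thing your sketch leaves genuinely unspecified is the step ``checking that this system is exactly $S(\Delta,\Gamma)$'': the paper handles this by pushing the labels forward to the product graph $\mathfrak{D}=\widetilde{\Gamma}_1\times\widetilde{\Gamma}_2$ via the type morphism and defining the variables $M_{ij}(e)$ as explicit bilinear combinations (equation~\eqref{M}) of the $L$'s and $l$'s, after which the system $S$ is read off directly from Lemmas~\ref{EE} and~\ref{VE}.
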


The proof of this step uses methods from geometric group theory.

In the second step, we center on RAAGs defined by trees of diameter 4 and characterise the trees for which the associated linear system of equations does not have positive integer solutions. This part is the most technical, although the methods required come from linear algebra. The strategy is to locally simplify the structure of the linear system of equations, that is, to determine some subsystems of equations and show that in order for them to have positive integer solutions, they must have an ``easy'' form. This allows us to simplify the entire system of linear equations enough to be able to determine whether or not it has positive integer solutions. This step allows us to characterise RAAGs $\GG(T)$ and $\GG(T')$ defined by trees of diameter 4 for which the system of equations $S(T,T')$ does not have positive integer solutions and hence by step 1, deduce that these groups are not commensurable.

In the last step, we consider RAAGs $\GG(T)$ and $\GG(T')$ defined by trees of diameter 4 for which the system of linear equations $S(T,T')$ does have positive integer solutions. From a minimal solution of the system $S(T,T')$, we build explicit finite index subgroups of $\GG(T)$ and $\GG(T')$, show that they are isomorphic and conclude that $\GG(T)$ and $\GG(T')$ are commensurable. The methods used in this step come mainly from Bass-Serre theory.

\subsection{Results and strategy of the proof}

In this paper, we develop methods introduced in \cite{CKZ} and study the commensurability classes of RAAGs defined by paths. More precisely, let $P_n$, $n \geq 1$, denote the path graph with $n$ edges and $n+1$ vertices and let $T_{k,k+1}$, $k \geq 1$, denote a tree of diameter 4, with the central vertex of degree 2 and such that the two vertices adjacent to the central vertex have  degrees $k+1$ and $k+2$ correspondingly, so that $T_{k,k+1}$ has $2k+1$ leaves, see Figure \ref{fig:T4}. 

We show that different paths of length more than $4$ are not commensurable.

\begin{theorem}\label{thm1}
The groups $\GG(P_n)$ and $\GG(P_m)$, $n>m \ge 0$, are commensurable if and only if $n=4$ and $m=3$.
\end{theorem}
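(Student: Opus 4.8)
The plan is to reduce the statement to an integer‑feasibility question using the invariant of \cite{CKZ}, and then to resolve that question by a direct analysis of the system $S(P_n,P_m)$.

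First I would dispose of the low‑complexity cases by elementary means. If $m\le 2$, then $\GG(P_m)$ is one of $\BZ$, $\BZ^2$, $F_2\times\BZ$; these are pairwise non‑commensurable (by amenability and growth rate), and each has a non‑trivial centre, which is inherited by every finite index subgroup. For $n\ge 3$, by contrast, $P_n$ is connected with no vertex adjacent to all the others, so $\GG(P_n)$ is non‑amenable with trivial centre, and triviality of the centre passes to all finite index subgroups, since the centraliser in $\GG(P_n)$ of a non‑central element has infinite index (standard for RAAGs). Hence $\GG(P_n)$ and $\GG(P_m)$ are not commensurable whenever $m\le 2$. For the forward implication in the exceptional pair $(n,m)=(4,3)$ I would exhibit isomorphic finite index subgroups explicitly — e.g. by adapting the construction in the last step of \cite{CKZ} to a minimal positive solution of $S(P_4,P_3)$, or by a direct computation using the splitting $\GG(P_4)\cong\GG(P_3)*_{\BZ}\BZ^2$. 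It then remains to show that $\GG(P_n)$ and $\GG(P_m)$ are not commensurable whenever $3\le m<n$ and $(n,m)\ne(4,3)$.

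By the theorem of \cite{CKZ} quoted above, for this it suffices to show that $S(P_n,P_m)$ has no solution in positive integers for such $(n,m)$. The first task is to write this system out explicitly. Since a path is a chain of vertices of degree at most $2$, all the combinatorial data feeding the construction of $S$ — the stars and links of the vertices and their incidences — are arranged linearly along $P_n$ and along $P_m$, and $S(P_n,P_m)$ inherits this shape: its unknowns are indexed by pairs consisting of a position along $P_n$ and a position along $P_m$, morally the branching multiplicities over the two paths of a putative common finite combinatorial cover, and each equation is local, coupling only unknowns at positions adjacent in $P_n$ or in $P_m$, together with boundary equations coming from the leaves of the two paths.

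The heart of the argument is then to prove this chain‑like system infeasible. I would propagate the positivity constraints inward from the leaves: the boundary equations pin down the unknowns near the ends, the local equations force the remaining unknowns to evolve along the two paths by a fixed linear recursion, and compatibility of the two prescribed ends becomes an arithmetic relation between $n$ and $m$. For $n\ne m$ this relation either already fails over the positive reals — in which case some non‑negative combination of the equations of $S(P_n,P_m)$ is visibly contradictory — or it is solvable over the positive reals but the recursion imposes a congruence obstruction modulo a small integer, the same mechanism responsible for the condition $n\equiv 2\,(\modu 4)$ in the diameter‑$4$ case, which kills integrality; in both regimes the only surviving possibility turns out to be $n=m$, with the single exception $(n,m)=(4,3)$, where the boundary data happen to be slack enough. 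The main obstacle I anticipate is precisely this feasibility bookkeeping: one must rule out every positive integer solution, not merely the obvious ones, which concretely amounts to producing — uniformly across the residue classes of $n$ and $m$ — either a Farkas‑type certificate of real infeasibility or an explicit congruence obstruction, and then verifying the finitely many small pairs by hand, in particular that $(4,3)$ is genuinely the unique small exception. Putting $S(P_n,P_m)$ into a normal form in which these certificates are transparent, presumably after the kind of local simplification of subsystems carried out in \cite{CKZ}, is where I expect the technical weight to lie.
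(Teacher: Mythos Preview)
Your reduction to the linear system $S(P_n,P_m)$ is the right framework and matches the paper. The gap is that you do not carry out the infeasibility analysis, and your sketch of how it would go is structurally off. You model the system as a one-dimensional linear recursion propagated from the leaves of each path, to be resolved either by a Farkas certificate or by a congruence obstruction ``modulo a small integer''. But the unknowns are labels $M_{kl}(e)$ on the edges of the product graph $\DD=P_{m-2}\times P_{n-2}$, which is genuinely two-dimensional; there is no single recursion to run along either path, and in fact no integrality or congruence obstruction ever appears in the path-versus-path analysis --- every infeasibility the paper exhibits is already an infeasibility over the non-negative reals. Your invocation of the $n\equiv 2\ (\modu 4)$ phenomenon is a misreading: that condition is not an integrality obstruction on solutions, it is the residue class of $n$ for which the diameter-$4$ system is feasible at all.

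What the paper actually does is organise $\DD$ by its planar faces and prove a local ``fork lemma'' (Lemma~\ref{fork}): for two diagonally adjacent faces whose incoming NW edges carry zero labels (i.e.\ lie outside the image subgraph $\CC$), the outgoing SE edges must also carry zero labels. Starting from the boundary of $\DD$, this is iterated along the NW--SE and SW--NE diagonals, peeling edges out of $\CC$ until the local-surjectivity condition of Lemma~\ref{DD} is violated. The argument branches on the parities of $m$ and $n$ and on which connected component of $\DD$ contains $\CC$. One residual case --- both $m$ and $n$ congruent to $2\bmod 4$ --- is not closed by the fork-lemma argument alone; there the paper detours through Theorem~\ref{th2} and the diameter-$4$ classification of \cite{CKZ}, showing that $\GG(P_{4k+2})$ is commensurable to $\GG(T_{k,k+1})$ and that the latter are pairwise non-commensurable for distinct $k$. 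Your outline does not anticipate this detour, and a purely ``uniform certificate across residue classes'' plan would have to reproduce it.
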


We also compare the commensurability classes between paths and trees of diameter $4$ and prove

\begin{theorem}\label{th2}
Let $n>4$. The group $\GG(P_n)$ is commensurable to $\GG(T)$, where $T$ is a tree of diameter $4$, if and only if $n = 4k+2$, $k\ge 1$, and $\GG(T)$ is commensurable to $\GG(T_{k,k+1})$.
\end{theorem}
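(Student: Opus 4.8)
The plan is to reduce Theorem~\ref{th2} to two self-contained statements and then assemble them, using that commensurability is an equivalence relation. Since $P_n$ is a tree, the theorem of \cite{CKZ} recalled above applies to the pair $(P_n,T)$: if $\GG(P_n)$ and $\GG(T)$ are commensurable, then $S(P_n,T)$ has positive integer solutions. I would isolate, first, \emph{a construction}---for every $k\ge 1$ the groups $\GG(P_{4k+2})$ and $\GG(T_{k,k+1})$ are commensurable---and, second, \emph{an obstruction}---if $T$ has diameter $4$, $n>4$, and $S(P_n,T)$ has positive integer solutions, then $n\equiv 2\,(\modu 4)$. Granting these two facts, the theorem follows. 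If $\GG(P_n)$ is commensurable to $\GG(T)$ with $T$ of diameter $4$ and $n>4$, then $S(P_n,T)$ has positive integer solutions, so the obstruction gives $n=4k+2$ with $k=(n-2)/4\ge 1$; the construction then makes $\GG(P_n)=\GG(P_{4k+2})$ commensurable to $\GG(T_{k,k+1})$, hence so is $\GG(T)$. Conversely, if $n=4k+2$ with $k\ge 1$ and $\GG(T)$ is commensurable to $\GG(T_{k,k+1})$, then the construction and transitivity make $\GG(P_n)$ commensurable to $\GG(T)$.

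For the construction, I would use that the RAAG of a tree splits canonically as the fundamental group of a graph of groups whose underlying graph has one vertex per edge of the tree, with $\mathbb{Z}^2$ edge groups and vertex groups of the form $\mathbb{Z}\times F_r$ (obtained by repeatedly splitting the RAAG along edges of the tree). For $P_{4k+2}$ the underlying graph of this decomposition is a segment with exactly one dangling free generator in each of the two extreme vertex groups; for $T_{k,k+1}$ it is again a segment, but now with $k$ and $k+1$ dangling generators in the two extreme vertex groups. Following and adapting the third step of \cite{CKZ}, I would take a minimal positive solution of $S(P_{4k+2},T_{k,k+1})$---its existence is a finite computation, and this is exactly where the value $4k+2$ enters, since it is the path length that, after passing to a finite cover, reconciles the dangling-generator pattern $(1,1)$ of the path with the pattern $(k,k+1)$ forced by the two hubs of $T_{k,k+1}$---and from that solution build explicit finite-index subgroups of $\GG(P_{4k+2})$ and $\GG(T_{k,k+1})$ carrying isomorphic graphs of groups; Bass--Serre theory then yields an isomorphism between these subgroups.

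The obstruction is the technical heart of the argument and mirrors the second step of \cite{CKZ}. The unknowns of $S(P_n,T)$ record the relative covering degrees of the local pieces---stars, leaves, branches---of $P_n$ and of $T$, subject to linear equations expressing that these data glue to a common finite-index subgroup. Because $P_n$ has no vertex of degree $\ge 3$, the part of $S(P_n,T)$ contributed by $P_n$ is a chain: the subsystem attached to an interior vertex $v_i$ couples only the unknowns of $v_{i-1}$, $v_i$ and $v_{i+1}$. The plan, exactly as in \cite{CKZ}, is to localise: to show that positivity together with integrality forces each such three-term subsystem into a rigid normal form, so that the unknowns propagate along the path with period $4$, and then to confront this periodic chain with the branching structure of $T$---its centre together with the hubs around it---which pins $n$ modulo $4$ to be $2$. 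I expect the main difficulty to lie in carrying this analysis out uniformly in the a priori arbitrary number of hubs of $T$ and in their degrees, i.e.\ in proving that no such structure can absorb a path of length $n\not\equiv 2\,(\modu 4)$.

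Finally, I would note that the analysis in the previous paragraph in fact also determines $T$ up to commensurability: the combinatorial condition it extracts from the solvability of $S(P_n,T)$ is precisely the commensurability criterion for trees of diameter $4$ of \cite{CKZ}, equivalently the statement that $\GG(T)$ be commensurable to $\GG(T_{k,k+1})$. This refinement is not needed for the proof, however, since the equivalence-relation argument of the first paragraph already supplies it.
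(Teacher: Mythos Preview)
Your high-level strategy matches the paper's exactly: reduce Theorem~\ref{th2} to a construction (Theorem~\ref{com}: $\GG(P_{4k+2})$ is commensurable to $\GG(T_{k,k+1})$) and an obstruction (Theorem~\ref{non-com}: if $n>4$ and $n\not\equiv 2\pmod 4$ then $\GG(P_n)$ is commensurable to no diameter-$4$ tree RAAG), then assemble by transitivity just as you do.

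The execution differs in two places worth noting. For the obstruction, you propose to analyse the CKZ system $S(P_n,T)$ directly for arbitrary $T$; the paper instead first uses the classification of \cite{CKZ} to replace $T$ by a canonical representative $T((m_k,1),\ldots,(m_1,1);0)$, and then works not with the CKZ system but with the \emph{product graph} $\mathfrak D=\widetilde\Gamma_1\times\widetilde\Gamma_2$ and its edge labels $M_{ij}$ developed in Section~\ref{sec:2}. The period-$4$ propagation you anticipate is realised by Lemmas~\ref{l2} and~\ref{tr}, which show that the absence of one edge from the image subgraph $\mathfrak C\subseteq\mathfrak D$ forces the absence of others further along; only non-negativity of the $M_{ij}$ is used---integrality plays no role in this half. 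For the construction, you propose to read off a minimal solution of the system and build subgroups from it as in step three of \cite{CKZ}; the paper instead works forwards, writing down an explicit graph of groups $X$ (built from ``diamond'' pieces $D_i$) and exhibiting concrete finite covers---defined via explicit Schreier graphs---of each RAAG whose induced Bass--Serre splitting is $X$. The paper remarks that a solution of the system \emph{guided} this construction, but the proof does not invoke it.
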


Note that in \cite{CKZ} we give a complete commensurability classification of RAAGs defined by trees of diameter 4, in particular, it is described which of them are commensurable to $\GG(T_{k,k+1})$.

The proof follows the same $3$-step structure as in \cite{CKZ}. Note, however, that instead of considering systems of equations as in \cite{CKZ}, in this paper we work with linear systems of equations and inequalities and instead of requiring that the linear system of equations have a positive integer solution (as in \cite{CKZ}), we require that our system of equations and inequalities have an integer solution.

\begin{itemize}
\item In Section \ref{sec:2}, we reduce commensurability between RAAGs defined by trees to the existence of integer solutions of a (disjunction of) linear system of equations and inequalities $S$, see Corollary \ref{cor:reductionLSE}.

\item In Sections \ref{sec:3}, \ref{sec:4} and \ref{sec:6}, we analyse the system $S$, characterise when it has no integer solutions and deduce when two RAAGs from our class are not commensurable. In Section \ref{sec:3}, we study the system $S$ defined by paths of length $3$ and $m\ge 5$ and show that it never has  integer solutions. This is the simplest case and it introduces the techniques and ideas for the other cases. In Section \ref{sec:4}, we study the system $S$ defined by a path of length $n>4$ and a tree of diameter $4$ and show that if $n \not\equiv 2 \, (\modu  4)$ then the system does not have integer solutions.  Finally in Section \ref{sec:6}, we address the system defined by paths of different length greater than $4$ and again show that it never has integer solutions.

\item In Section \ref{sec:5}, we show that when the path is of length $4k+2$ and the tree of diameter $4$ is $T_{k,k+1}$ we can exhibit isomorphic finite index subgroups and conclude that the corresponding RAAGs are commensurable.
\end{itemize}

\subsection{Related problems and further research:}\

As we already mentioned in \cite{CKZ}, it is our belief that the general strategy of the proof can be used to study commensurability classes of RAAGs defined by trees and more generally all  RAAGs.

Corollary \ref{cor:reductionLSE} reduces commensurability between two RAAGs defined by trees to the existence of integer solutions of a linear system of equations and inequalities. This brings up a natural question of whether or not this necessary condition is also sufficient.

\begin{question}\label{q:2}
Let $\GG(\Gamma_1)$ and $\GG(\Gamma_2)$ be RAAGs defined by trees. Is it true that $\GG(\Gamma_1)$ and $\GG(\Gamma_2)$ are commensurable if and only if the system $S_i(\Gamma_1, \Gamma_2)$ defined by the product graph {\rm(}see {\rm Section \ref{sec:prodgraph})} has integer solutions?
\end{question}

In all the cases we studied so far, solutions of the (linear) system (of equations and inequalities) have guided the construction of the  subgroups which witness commensurability.  In essence, Question \ref{q:2} asks whether one can build a finite cover of the Salvetti complex of a RAAG from local covers of the complexes associated to the centralisers of generators. 

As pointed out to us by Henry Wilton, this question may be approached using techniques introduced by Ian Agol in his solution of the virtual Haken conjecture. In \cite{Agol}, the author constructs a finite-sheeted cover which is modelled on some hierarchy. In order to do it, he constructs a measure on the space of colorings of a wall graph and then refines the colors to reflect how each wall is cut up by previous stages of the hierarchy. He then uses the measure to find a solution to certain gluing equations on the colored cubical polyhedra defined by the refined colorings, and uses solutions to these equations to get the base case of the hierarchy and glue up successively each stage of the hierarchy using a gluing theorem to glue at each stage after passing to a finite-sheeted cover. 

Solutions of the linear system are, in some sense, values necessary for the consistent gluing of the local covers. The goal would be to generalise Agol's Gluing Theorem to build the finite index cover from the local ones and so describe the finite index subgroup that witnesses commensurability.

In the same way we speculate that trees and, more generally, $2$-dimensional RAAGs could play the role of the base of induction for a hierarchy. (Recall that a RAAG is 2-dimensional if and only if its commutation graph is triangle-free.) Centralisers in $2$-dimensional RAAGs are of the form $\mathbb{Z} \times F_n$. If the answer to Question \ref{q:2} is positive, then given a solution of the system, one can build the finite index subgroup from local covers of free groups. In the general case, commensurability of RAAGs would imply compatible commensurable centralisers of certain elements and centralisers are again RAAGs of lower complexity. By induction, one then could build finite index subgroups for the centralisers and, using a gluing theorem, extend them to a finite cover of the group. This brings us to the following question

\begin{question}\label{q:3}
Let $\GG(\Gamma_1)$ and $\GG(\Gamma_2)$ be RAAGs. Is it true that  $\GG(\Gamma_1)$ and $\GG(\Gamma_2)$ are commensurable if and only if the system $S_i(\Gamma_1, \Gamma_2)$ defined by the product graph {\rm(}see {\rm Section \ref{sec:prodgraph})} has integer solutions?
\end{question}

This is just a rough strategy to approach the general problem. A good starting point is to understand whether or not the reduction from commensurability to integer solutions of a linear system generalises from trees to $2$-dimensional RAAGs. More precisely, we expect that Corollary \ref{cor:reductionLSE} can be generalised as follows

\begin{conj}
Let $\GG(\Gamma_1)$ and $\GG(\Gamma_2)$ be $2$-dimensional RAAGs. If $\GG(\Gamma_1)$ and $\GG(\Gamma_2)$ are commensurable, then the system $S_i(\Gamma_1, \Gamma_2)$ defined by the product graph  {\rm(}see {\rm Section \ref{sec:prodgraph})} has integer solutions.
\end{conj}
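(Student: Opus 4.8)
We indicate an approach modelled on the three-step scheme of \cite{CKZ} and of the present paper; here only the analogue of the first step is at issue, namely that commensurability suffices for (one of) the systems $S_i(\Gamma_1,\Gamma_2)$ to have an integer solution---the converse being the content of Questions \ref{q:2} and \ref{q:3}. Suppose $\GG(\Gamma_1)$ and $\GG(\Gamma_2)$ are commensurable and fix a common finite-index subgroup $H$. Since $\Gamma_1$ and $\Gamma_2$ are triangle-free, the Salvetti complexes $S(\Gamma_i)$ are compact non-positively curved \emph{square} complexes, hence special by Haglund--Wise; the finite covers $p_i\colon X_i\to S(\Gamma_i)$ corresponding to $H$ are then again compact special square complexes, each a $K(H,1)$. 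The goal is to read off from $X_1$ and $X_2$ the combinatorial data recorded by the system and to show that the mere existence of these honest covers forces that data to be a genuine nonnegative integer point.

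\textbf{Local pieces and gluing.} For a vertex $v$ of $\Gamma_i$ one has $C_{\GG(\Gamma_i)}(v)=\langle v\rangle\times\GG(\mathrm{lk}\,v)\cong\BZ\times F_{\deg v}$, because $\mathrm{lk}\,v$ spans no edge in the triangle-free graph $\Gamma_i$; geometrically this centraliser is the ``star subcomplex'' $S_v\subseteq S(\Gamma_i)$, a copy of $S^1\times R_{\deg v}$, and two such subcomplexes $S_v,S_w$ overlap in the torus $T_{vw}$ exactly when $\{v,w\}$ is an edge. Restricting $p_i$ over each $S_v$ gives a finite cover of $S^1\times R_{\deg v}$, i.e.\ a finite-index subgroup of $\BZ\times F_{\deg v}$, whose relevant combinatorics is encoded by a finite list of nonnegative integers---roughly, the multiplicities with which the lifts of the various $v$- and $w$-labelled cells occur, together with their incidences. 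Coherence of these lists over the overlaps $T_{vw}$, together with the cross-compatibility of the two sides---a maximal product subgroup $\BZ\times F_n$ of $\GG(\Gamma_1)$ meets $H$ in a finite-index subgroup that must sit compatibly inside the maximal product subgroups of $\GG(\Gamma_2)$---yields the \emph{equations}, while the combinatorial obstruction to assembling a finite cover of a rose out of prescribed fibre data (a suitable permutation of the fibre must exist) yields the \emph{inequalities}; this last point is exactly why the systems in the present paper, unlike those in \cite{CKZ}, involve inequalities. Indexing the resulting variables by the product graph of $\Gamma_1$ and $\Gamma_2$ (Section \ref{sec:prodgraph}) and collecting these constraints produces one of the systems $S_i(\Gamma_1,\Gamma_2)$, of which the ``shape vector'' of the given common cover is by construction an integer solution---which is the assertion, generalising Corollary \ref{cor:reductionLSE}.

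\textbf{Main obstacle.} The crux is a rigidity statement: every commensurability (indeed, coarsely, every quasi-isometry) between two-dimensional RAAGs must carry the pattern of maximal product subgroups $\BZ\times F_n$ of one side onto that of the other, and do so with \emph{controlled multiplicities}, so that the ``shapes'' above are well defined and finite in number. For RAAGs defined by trees such control is available because $\GG(T)$ is organised as a tree of groups with vertex groups $\BZ\times F_n$ and edge groups $\BZ^2$, for which Bass--Serre and JSJ theory apply (this is essentially what the first step of \cite{CKZ} exploits); but once the defining graph has induced cycles---already $C_4$, with $\GG(C_4)\cong F_2\times F_2$---the global structure of $\GG(\Gamma_i)$ is genuinely two-dimensional and is not the Bass--Serre tree of its centraliser pieces, so the argument has to be carried out directly inside the Haglund--Wise framework, controlling the (possibly large and highly overlapping) poset of maximal product sub-complexes of the special square complex. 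Establishing this structural input, checking that the gluing and realisability conditions it produces are genuinely \emph{linear}, and verifying that a common \emph{finite} cover yields an \emph{integer} rather than merely rational or formal solution, is where the real difficulty lies.
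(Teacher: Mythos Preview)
The statement you are attempting to prove is listed in the paper as a \emph{Conjecture}, not a theorem: the paper does not prove it, and there is no ``paper's own proof'' to compare against. What the paper does prove is the special case where $\Gamma_1$ and $\Gamma_2$ are trees (Corollary~\ref{cor:reductionLSE}); the conjecture is precisely the statement that this should extend to arbitrary triangle-free $\Gamma_i$. So your proposal should be read as a strategy toward an open problem, not as a competing proof.

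As a strategy, your outline is reasonable in spirit and you correctly locate the main obstruction. But there are two genuine gaps worth naming. First, the system $S_i(\Gamma_1,\Gamma_2)$ is only \emph{defined} in the paper for trees: the entire machinery of Section~\ref{sec:2}---reduced extension graph, quotient graph, type morphism $\delta$, the labels $M_{kl}(e)$, and the equations (\ref{ee}), (\ref{ve1}), (\ref{ve2})---rests on the reduced centraliser splitting of a tree RAAG as a graph of groups with $\BZ\times F_n$ vertex groups and $\BZ^2$ edge groups. For a general $2$-dimensional RAAG there is no such splitting (your own $C_4$ example shows this), so before one can even state what it means for $S_i$ to ``have an integer solution'' one must first give a definition of the system in this generality. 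Your proposal gestures at this (``indexing the resulting variables by the product graph'') but does not actually write down the analogue of Lemmas~\ref{EE} and~\ref{VE}.

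Second, the step you flag as the ``main obstacle'' is indeed the crux, and your sketch does not resolve it. In the tree case the key input is Lemma~3.6 of \cite{CKZ}: an isomorphism of finite-index subgroups induces an isomorphism of reduced extension graphs, which is what makes the type morphism $\delta$ and hence the labels $M_{kl}$ well defined. For general $2$-dimensional RAAGs one would need the analogous rigidity for the pattern of maximal $\BZ\times F_n$ subgroups, and while results of Huang and of Kim--Koberda point in this direction, turning that into a concrete map of combinatorial objects from which one can read off \emph{linear} equations with the required finiteness is exactly the missing ingredient. Your proposal acknowledges this but does not supply it, so what you have written is a plausible programme rather than a proof.
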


\medskip

The existence of integer solutions of a linear system of equations and inequalities can be interpreted as a syntactic fragment of the Presburger arithmetic (with order) and so in particular, it is a decidable problem. The Presburger arithmetic has quantifier elimination if we add predicates for division. Hence, the existence of integer solutions is equivalent to a boolean combination of atomic formulas in the language $(+,<,0)$ and congruences of integers. This justifies the classification we obtain for trees and paths, where it is required that the length $n$ of the path is congruent to $2$ modulo $4$.

The decidability of the existence of integer solutions of a linear system is a very well-known and long-studied problem in Computer Science. It was intensively studied in the field of mechanical theorem proving and it is most commonly known as (linear) integer-programming. It is actually one of the most important models in management science (capital budgeting, warehouse location, scheduling, etc) and there are many different efficient algorithms to address it. 

As a consequence, given two trees, one can describe the linear system associated to them and use an algorithm to decide whether or not the system has an integer solution. If the answer is negative, that is, there is no integer solutions, then we can conclude that the corresponding groups are not commensurable. Furthermore if the answer to Question \ref{q:2} (and Question \ref{q:3}) were positive, we could conclude that the commensurability problem between tree (and general) RAAGs is decidable as well as have a good understanding of its complexity, see \cite{CH} and references there.

\section{Systems of equations associated to tree RAAGs}\label{sec:2}

The main goal of this section is, given two RAAGs $\GG(\Gamma_1)$ and $\GG(\Gamma_2)$ defined by trees, to construct a linear system of equations and inequalities $S(\Gamma_1, \Gamma_2)$ such that if $\GG(\Gamma_1)$ and $\GG(\Gamma_2)$ are commensurable, then the system $S(\Gamma_1,\Gamma_2)$ has  integer solutions, see Corollary \ref{cor:reductionLSE}.

In order to construct the system of equations, one needs to introduce several commensurability invariants, namely the (reduced) extension graph and the quotient graph. We assign certain labels to the quotient graph, deduce a system of linear equations and show that if two tree RAAGs are commensurable, then the exponents are positive integer solutions of the system of equations.

Since the quotient graph depends on the subgroup witnessing commensurability, so does the system of equations. In order to overcome this dependence, we introduce a new graph, the product graph, which only depends on the trees $\Gamma_1$ and $\Gamma_2$, we associate certain labels to the vertices and edges, describe a linear system of equations and inequalities and show again that if the groups are commensurable, the labels are integer solutions of the linear system.

This sections follows the ideas introduced in \cite{CKZ}. For completeness, we recall the definitions and results needed in this paper.

\subsection{Reduced centralizer splitting}

Observe that tree RAAGs split as fundamental groups of graphs of groups, whose vertex groups are centralisers of vertex generators. We recall the notion of (reduced) centralizer splitting, as in \cite{CKZ}, which we will use in Section \ref{sec:5}.
\begin{defn}[(Reduced) Centraliser splitting]
Let $\Delta$ be a tree and let $\GG(\Delta)$ be the RAAG with the underlying graph $\Delta$. The {\it centraliser splitting} of $\GG(\Delta)$ is defined as follows. The graph of the splitting is isomorphic to $\Delta$ and the vertex group at every vertex is defined to be the centralizer of the corresponding vertex generator. Note that if $v$ is some vertex of $\Delta$, and $u_1, \ldots, u_s$ are all vertices of $\Delta$ adjacent to $v$, then $C(v)=\langle v,u_1,\ldots,u_s \rangle \cong \mathbb{Z} \times F_s$, where $F_s$ is the free group of rank $s$, see \cite{CKZ} for more details on centralizers in RAAGs. In particular, $C(v)$ is abelian if and only if $v$ has degree 1, and in this case $C(v) \cong \mathbb{Z}^2$ is contained in the centralizer of the vertex adjacent to $v$. For an edge $e$ connecting vertices $u$ and $v$ the edge group at $e$ is $C(u) \cap C(v)=\langle u,v \rangle  \cong \mathbb{Z}^2$.

Note that the centralizer splitting is neither reduced nor minimal, since for every vertex of degree 1 in $\Delta$ the vertex group is equal to the incident edge group. Thus it makes sense to consider the {\it reduced centraliser splitting} of $\mathbb{G}(\Delta)$ (for a tree $\Delta$), which is obtained from the centralizer splitting by removing all vertices of degree 1. In this splitting all the vertex groups are non-abelian, and all the edge groups are isomorphic to $\mathbb{Z}^2$, in particular, this splitting is already minimal and reduced. 
\end{defn}	

\subsection{Reduced extension graph and quotient graph}
In this section, we recall the notions of (reduced) extension and quotient graphs, see \cite{CKZ} for further details.

\begin{defn}[Extension graph, see \cite{KK}]
Let $\GG(\Gamma)$ be a RAAG with the underlying commutation graph $\Gamma$, then the {\it extension graph} $\Gamma^e$ is defined as follows. The vertex set of $\Gamma^e$ is the set of all elements of $\mathbb{G}(\Gamma)$ which are conjugate to the canonical generators (vertices of $\Gamma$). Two vertices are joined by an edge if and only if the corresponding group elements commute. The group $\mathbb{G}(\Gamma)$ acts on $\Gamma^e$ by conjugation.
\end{defn}
	\begin{defn}[Reduced extension graph]
For a tree $\Gamma$, we define the {\it reduced extension graph} of $\Gamma$, and denote it by $\widetilde{\Gamma}^e$,  to be the full subgraph of the extension graph $\Gamma^e$, whose vertex set is the set of all elements of $\mathbb{G}(\Gamma)$ which are conjugate to the canonical generators corresponding to vertices of $\Gamma$ of degree more than 1 (which are exactly those which have non-abelian centralizers). 
\end{defn}	
   Suppose that $\Gamma$ is a finite tree of diameter at least $3$ and let $G=\GG(\Gamma)$. Let $\ti{\Gamma}$ be the tree obtained from $\Gamma$ by deleting all degree $1$ vertices together with the incident edges.
  Then $G$ acts on $\Gamma^e$ and on $\ti{\Gamma}^e$ by conjugation, so that $G \backslash \Gamma^e \cong \Gamma$ and $G \backslash \ti{\Gamma}^e \cong \ti{\Gamma}$; these actions can be thought of as the natural actions on the Bass-Serre trees of the centralizer splitting and the reduced centralizer splitting of $G$ respectively, see Lemma 3.4 in \cite{CKZ}.  
   
  Suppose that $H$ is a finite index subgroup of $G$. Let $\Psi(H) = H \backslash \ti{\Gamma}^e$, then $\Psi(H)$ is a finite graph, and there are natural graph morphisms $\gamma = \gamma_H:   \ti{\Gamma}^e \rightarrow \Psi(H)$ and $\delta=\delta_H: \Psi(H) \rightarrow \Gamma$. Note that the image of $\delta$ lies in $\ti{\Gamma}$, so we can also think of $\delta$ as a morphism  $\delta: \Psi(H) \rightarrow \ti{\Gamma}$.
   
  \subsection{Labels in the reduced extension graph and the quotient graph}
 
Suppose $H$ is a finite index subgroup of $G=\GG(\Gamma)$. We can then associate some labels to the quotient graph as follows.
 
\begin{defn}[Label of a vertex/edge]  
Let $w$ be a vertex of $\widetilde{\Gamma}^e$, thus $w$ is also an element of $G$. Define the {\it label of the vertex $w$}, denoted by $\overline{L}(w)$, to be the minimal positive integer $k$ such that $w^k \in H$. Such number exists, since $H$ has finite index in $G$. 

For an edge $f$ of $\widetilde{\Gamma}^e$ connecting vertices $w_1$ and $w_2$ define the {\it label of the edge $f$ at the vertex $w_1$}, denoted by $\overline{l}_{w_1}(f)$, to be the minimal positive integer $k$ such that there exists an integer $l$ such that $w_1^k w_2^l \in H$. Without loss of generality, we can assume that $l$ is non-negative. The label of $f$ at $w_2$ is defined analogously. Note that, by definition, $\overline L(w_1) \geq \overline{l}_{w_1}(f)$, for all edges $f$. 
  
Observe that the labels of vertices and edges are invariant under the action of $H$ on $\widetilde{\Gamma}^e$ (by conjugation). Indeed, for $h \in H$ we have $w^k \in H$ iff $(w^h)^k \in H$, and $w_1^k w_2^l \in H$ iff $(w_1^h)^k (w_2^h)^l \in H$.  This means that we can define labels for the quotient graph $\Psi(H)$. If $v$ is a vertex of $\Psi(H)$, then define the {\it label of the vertex $v$}, denoted by $L(v)$, to be the label $\overline{L}(w)$, where $w$ is some vertex of $\widetilde{\Gamma}^e$ such that $\gamma(w)=v$. Analogously, if $p$ is an edge of $\Psi(H)$ connecting vertices $v_1$ and $v_2$, then define the {\it label of the edge $p$ at the vertex $v_1$}, denoted by $l_{v_1}(p)$, to be the label $\overline{l}_{w_1}(f)$, where $f$ is some edge of   $\widetilde{\Gamma}^e$ such that $\gamma(f)=p$, and $w_1$ is the end of $p$ such that $\gamma(w_1)=v_1$. It follows that these labels are well-defined. Note that the labels of vertices and edges of $\Psi(H)$ are positive integers.
\end{defn}

\subsection{System of equations for the quotient graph}
  
  Suppose now that $\Gamma_1$ and $\Gamma_2$ are two finite trees of diameters at least 3 such that $\GG({\Gamma_1}) $ and $\GG({\Gamma_2})$ are commensurable, and $H_1 \leq G_1$, $H_2 \leq G_2$ are finite index isomorphic subgroups, and $\varphi: H_1 \rightarrow H_2$ is the isomorphism. All the definitions above apply to both $H_1$ in $G_1$ and $H_2$ in $G_2$. In \cite{CKZ} we show that $\varphi$ induces graph isomorphisms $\overline{\varphi}$: $\ti{\Gamma}_1^e \rightarrow \ti{\Gamma}_2^e$ and $\varphi_*: \Psi(H_1) \rightarrow \Psi(H_2)$, see Lemma 3.6 in \cite{CKZ}. 
  
  We associate a system of equations to the quotient graph and show that the edge and vertex labels of $\Psi(H_1)$ and $\Psi(H_2)$ are positive integer solutions of the system. 
   
  As above, we have surjective graph morphisms 
  $$
  \gamma_1: \ti{\Gamma}_1^e \rightarrow \Psi(H_1),  \: \gamma_2: \ti{\Gamma}_2^e \rightarrow \Psi(H_2), \: \delta_1: \Psi(H_1) \rightarrow \ti{\Gamma}_1, \: \delta_2: \Psi(H_2) \rightarrow \ti{\Gamma}_2. 
  $$
   As in \cite{CKZ}, for an edge $e$ of $\Psi(H_1)$ beginning in a vertex $u$ we denote by $L(u)$ the vertex label of $u$ in $\Psi(H_1)$, by $L'(u)$ the vertex label of $\varphi_*(u)$ in $\Psi(H_2)$, by $l_{u}(e)$ the edge label of $e$ at the vertex $u$ in $\Psi(H_1)$, and by $l_{u}'(e)$ the edge label of $\varphi_*(e)$ at the vertex $\varphi_*(u)$ in $\Psi(H_2)$. All these labels are positive integers by definition. 
   
  We summarize the equations obtained in \cite{CKZ} in the following two lemmas.
  
\begin{lemma}[see Lemma 3.16 in \cite{CKZ}] \label{proportions}
		Let $e$ be an edge of $\Psi(H_1)$ connecting vertices $v_1$ and $v_2$. Then the following equations hold: 
		\begin{equation}\label{eq3}
		\frac{L(v_1)}{l_{v_1}(e)} = \frac{L'(v_1)}{l'_{v_1}(e)} = \frac{L(v_2)}{l_{v_2}(e)} = \frac{L'(v_2)}{l'_{v_2}(e)}=q,
		\end{equation}
		where $q$ is some positive integer.
	\end{lemma}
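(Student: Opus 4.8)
The plan is to reduce the four equalities to a computation in a single rank-$2$ free abelian subgroup and then to transfer the resulting index across the isomorphism $\varphi$.

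First I would lift everything to the reduced extension graphs. Since vertex and edge labels are $H$-invariant, choose vertices $w_1,w_2$ of $\ti{\Gamma}_1^e$ and an edge $f$ joining them with $\gamma_1(w_i)=v_i$ and $\gamma_1(f)=e$, so that $L(v_i)=\overline L(w_i)$ and $l_{v_i}(e)=\overline l_{w_i}(f)$; set $w_i'=\overline\varphi(w_i)$ and $f'=\overline\varphi(f)$, and using the compatibility $\gamma_2\circ\overline\varphi=\varphi_*\circ\gamma_1$ deduce $L'(v_i)=\overline L(w_i')$ and $l'_{v_i}(e)=\overline l_{w_i'}(f')$. It now suffices to establish the chain of equalities for the bar-labels attached to $f$ and to $f'$.

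For the unprimed half: the subgroup $A=\langle w_1,w_2\rangle$ is conjugate to an edge group of the centralizer splitting of $\GG(\Gamma_1)$ (each edge of $\ti{\Gamma}_1^e$ lies in the $G_1$-orbit of an edge of $\ti{\Gamma}_1$, since $G_1\backslash\ti{\Gamma}_1^e\cong\ti{\Gamma}_1$), hence $A\cong\mathbb{Z}^2$, and $\Lambda=A\cap H_1$ has finite index in $A$. Writing elements of $A$ in the basis $(w_1,w_2)$, the definitions give $\Lambda\cap(\mathbb{Z}\times 0)=\overline L(w_1)\mathbb{Z}\times 0$, $\Lambda\cap(0\times\mathbb{Z})=0\times\overline L(w_2)\mathbb{Z}$, and $\pi_i(\Lambda)=\overline l_{w_i}(f)\mathbb{Z}$ for the coordinate projections $\pi_1,\pi_2$. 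The index counts coming from the exact sequences $0\to\Lambda\cap\ker\pi_i\to\Lambda\to\pi_i(\Lambda)\to 0$ give $[A:\Lambda]=\overline l_{w_1}(f)\,\overline L(w_2)=\overline l_{w_2}(f)\,\overline L(w_1)$, so $\overline L(w_1)/\overline l_{w_1}(f)=\overline L(w_2)/\overline l_{w_2}(f)$; and since $\langle w_1^{\overline L(w_1)},w_2^{\overline L(w_2)}\rangle\le\Lambda$ with $[A:\langle w_1^{\overline L(w_1)},w_2^{\overline L(w_2)}\rangle]=\overline L(w_1)\overline L(w_2)$, this common ratio equals the positive integer $q:=[\Lambda:\langle w_1^{\overline L(w_1)},w_2^{\overline L(w_2)}\rangle]$. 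The identical argument for $f'$, with $A'=\langle w_1',w_2'\rangle$ and $\Lambda'=A'\cap H_2$, produces $q':=[\Lambda':\langle (w_1')^{\overline L(w_1')},(w_2')^{\overline L(w_2')}\rangle]=\overline L(w_i')/\overline l_{w_i'}(f')$ for $i=1,2$.

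It remains to prove $q=q'$, and this is where $\varphi$ itself (not just the induced maps) is used. Because $\overline\varphi$ is $\varphi$-equivariant for the conjugation actions (Lemma 3.6 in \cite{CKZ}), $\varphi$ maps the $H_1$-stabilizer of each cell of $\ti{\Gamma}_1^e$ onto the $H_2$-stabilizer of its image: for the edge this is $\varphi(\Lambda)=\Lambda'$, and for a vertex it is $\varphi(C_{G_1}(w_i)\cap H_1)=C_{G_2}(w_i')\cap H_2$. Since $w_i$ has degree $>1$, $C_{G_1}(w_i)\cong\mathbb{Z}\times F_s$ with $s\ge 2$ and its $\mathbb{Z}$-factor is $\langle w_i\rangle$; the center of the finite-index subgroup $C_{G_1}(w_i)\cap H_1$ is therefore exactly its intersection with $\langle w_i\rangle$, namely $\langle w_i^{\overline L(w_i)}\rangle$, because the free component of this subgroup projects onto a finite-index, hence nonabelian, subgroup of $F_s$, whose centralizer in $F_s$ is trivial — and likewise downstairs for $w_i'$. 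As $\varphi$ carries centers to centers, $\varphi(\langle w_i^{\overline L(w_i)}\rangle)=\langle (w_i')^{\overline L(w_i')}\rangle$, hence $\varphi(\langle w_1^{\overline L(w_1)},w_2^{\overline L(w_2)}\rangle)=\langle (w_1')^{\overline L(w_1')},(w_2')^{\overline L(w_2')}\rangle$. Applying $\varphi$ to the inclusion of this subgroup into $\Lambda$ gives $q=q'$, the required common positive integer. The main obstacle is this last step: it rests on the precise compatibility between $\varphi$ and $\overline\varphi$ and on recognising $\langle w_i^{\overline L(w_i)}\rangle$ intrinsically as the center of a vertex stabilizer, so that it is respected by $\varphi$; the rank-$2$ bookkeeping in the previous paragraph is routine.
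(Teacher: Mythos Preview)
Your argument is correct. The paper does not actually prove this lemma here; it simply cites Lemma~3.16 of \cite{CKZ}, so there is no in-paper proof to compare against. What you have written is a clean self-contained reconstruction of that result: the index computation in the rank-$2$ lattice $A=\langle w_1,w_2\rangle$ giving $\overline L(w_i)/\overline l_{w_i}(f)=[\Lambda:\langle w_1^{\overline L(w_1)},w_2^{\overline L(w_2)}\rangle]$, together with the identification of $\langle w_i^{\overline L(w_i)}\rangle$ as the center of the vertex stabilizer $C_{G_1}(w_i)\cap H_1$ (using that $w_i$ has nonabelian centralizer) so that $\varphi$ must carry it to $\langle (w_i')^{\overline L(w_i')}\rangle$, is exactly the mechanism behind the cited lemma.
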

 
 Let $v$ be a vertex of $\Psi(H_1)$, and $u_1= \delta_1(v) \in V(\ti{\Gamma}_1)$, $u_2=\delta_2(\varphi_*(v)) \in V(\ti{\Gamma}_2)$. Let $p_1, \ldots, p_m$ be all vertices of $\ti{\Gamma}_1$ adjacent to $u_1$, and $q_1, \ldots, q_n$ be all vertices of $\ti{\Gamma}_2$ adjacent to $u_2$. Suppose that the edge $e_i$ connects $u_1$ with $p_i$, $i=1, \ldots, m$, and the edge $f_j$ connects $u_2$ with $q_j$, $j=1, \ldots, n$.  Let $e_i^1, \ldots, e_i^{r_i}$ be all the edges of $\Psi(H_1)$ beginning in $v$ which project into $e_i$ under $\delta_1$, for each $i=1, \ldots, m$; note that $e_1^1, \ldots, e_1^{r_1}, e_2^1, \ldots, e_2^{r_2}, \ldots, e_m^1, e_m^2, \ldots, e_m^{r_m}$ are all the edges of $\Psi(H_1)$ beginning in $v$.
 Analogously, let $f_j^1, \ldots, f_j^{s_j}$ be all the edges of $\Psi(H_2)$ beginning in $\varphi_*(v)$ which project into $f_j$  under $\delta_2$, for each $j=1, \ldots, n$; note that $f_1^1, \ldots, f_1^{s_1}, f_2^1, \ldots, f_2^{s_2}, \ldots, f_n^1, f_n^2, \ldots, f_n^{s_n}$ are all the edges of $\Psi(H_2)$ beginning in $\varphi_*(v)$, in particular, $\varphi_*$ induces a bijection between them and the above edges in $\Psi(H_1)$ beginning in $v$.  
 
 Let also $D_1$ be the degree of $u_1$ in $\Gamma_1$ minus 1, and $D_2$ be the degree of $u_2$ in $\Gamma_2$ minus 1. Note that we take degrees in $\Gamma_1$, $\Gamma_2$, not in $\ti{\Gamma}_1$, $\ti{\Gamma}_2$. Below all the edge labels are taken at the end vertices, i.e., at the vertices which are not $u_1$ or $u_2$; we omit the lower index notation.
\begin{lemma}\label{equations}
	For every vertex $v$ of $\Psi(H_1)$, in the above notation the following equations hold:
	\begin{equation}\label{eq}
	\begin{split}
		D_1 \sum_{i=1}^{r_1}l(e_1^i) = D_1 \sum_{i=1}^{r_2}l(e_2^i) = \ldots = D_1 \sum_{i=1}^{r_m}l(e_m^i) = \\
		= D_2 \sum_{j=1}^{s_1}l'(f_1^j) = D_2 \sum_{j=1}^{s_2}l'(f_2^j) = \ldots = D_2 \sum_{j=1}^{s_n}l'(f_n^j). 
	\end{split}	
	\end{equation}
\end{lemma}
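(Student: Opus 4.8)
\medskip
\noindent\textbf{Proof plan.}
The plan is to establish \eqref{eq} in two independent pieces: (i) for a fixed vertex $v$ of $\Psi(H_1)$ the sum $\sum_{a=1}^{r_i}l(e_i^a)$ does not depend on $i$, and symmetrically $\sum_{b=1}^{s_j}l'(f_j^b)$ does not depend on $j$; and (ii) $D_1$ times the first common value equals $D_2$ times the second.

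For (i) I would work inside the Bass--Serre tree $\ti\Gamma_1^e$ of the reduced centralizer splitting of $G_1$, whose $G_1$-action coincides with the conjugation action by Lemma~3.4 of \cite{CKZ}. Fix a lift $w\in\ti\Gamma_1^e$ of $v$; then $w$ is conjugate to the generator $u_1=\delta_1(v)$, so $C_{G_1}(w)\cong\langle w\rangle\times F$ with $\langle w\rangle$ central and $F$ free of rank $d_1=D_1+1$, and $\mathrm{Stab}_{H_1}(w)=K:=H_1\cap C_{G_1}(w)$ has finite index. By the standard description of the link of a vertex in a Bass--Serre tree, the edges at $w$ lying over $e_i$ form a single $C_{G_1}(w)$-orbit, equivariantly identified with $F/\langle x_i\rangle$, where $x_i\in F$ is the corresponding conjugate of $p_i$ and the central factor $\langle w\rangle$ acts trivially. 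Taking $H_1$-orbits, the edges $e_i^1,\dots,e_i^{r_i}$ of $\Psi(H_1)$ at $v$ lying over $e_i$ are indexed by the double cosets $\bar K\backslash F/\langle x_i\rangle$, where $\bar K$ is the image of $K$ under the projection $\langle w\rangle\times F\to F$. Unwinding the definition of the edge label taken at the far end, the label $l(e_i^a)$ of the edge corresponding to a double coset with representative $h$ is the least $k>0$ such that $(hx_ih^{-1})^kw^l\in H_1$ for some $l$; since $hx_ih^{-1}$ and $w$ commute and lie in $C_{G_1}(w)$, this condition is equivalent to $(hx_ih^{-1})^k\in\bar K$, so $l(e_i^a)=[\langle hx_ih^{-1}\rangle:\langle hx_ih^{-1}\rangle\cap\bar K]$. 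This index is exactly the cardinality of the orbit of the double coset under the right action of $\langle x_i\rangle$ on the finite set $\bar K\backslash F$; summing over all orbits gives $\sum_{a=1}^{r_i}l(e_i^a)=|\bar K\backslash F|=[F:\bar K]$, which does not involve $i$. The same computation on the $G_2$-side gives $\sum_{b=1}^{s_j}l'(f_j^b)=[F':\bar K']$, where $F'$ is the rank-$d_2$ free factor of $C_{G_2}(w_2)$ for $w_2=\bar\varphi(w)$ (a conjugate of $u_2=\delta_2(\varphi_*(v))$) and $\bar K'$ is the image of $K':=H_2\cap C_{G_2}(w_2)$.

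For (ii) I would use that the graph isomorphism $\bar\varphi\colon\ti\Gamma_1^e\to\ti\Gamma_2^e$ is $\varphi$-equivariant (Lemma~3.6 of \cite{CKZ}); applied to vertex stabilizers this gives $\varphi(K)=\mathrm{Stab}_{H_2}(w_2)=K'$, hence $K\cong K'$ as abstract groups. Since $K$ has finite index in $\langle w\rangle\times F\cong\mathbb Z\times F_{d_1}$ and $\langle w\rangle$ is central, $K$ is a central extension of its free image $\bar K$ by $\mathbb Z$; as $\bar K\cong F_{1+[F:\bar K](d_1-1)}$ by the Nielsen--Schreier formula and a central extension of a free group splits, $K\cong\mathbb Z\times F_{1+[F:\bar K](d_1-1)}$, and likewise $K'\cong\mathbb Z\times F_{1+[F':\bar K'](d_2-1)}$. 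Comparing the abelianizations of $K$ and $K'$ forces $[F:\bar K](d_1-1)=[F':\bar K'](d_2-1)$, i.e.\ $D_1[F:\bar K]=D_2[F':\bar K']$. Combining with (i) yields $D_1\sum_{a=1}^{r_i}l(e_i^a)=D_1[F:\bar K]=D_2[F':\bar K']=D_2\sum_{b=1}^{s_j}l'(f_j^b)$ for all $i$ and $j$, which is \eqref{eq}.

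The delicate part is the bookkeeping in (i) — checking that $l(e_i^a)$ is well defined on double cosets and that it equals the orbit cardinality — and the whole argument relies on two facts imported from \cite{CKZ}: that the conjugation action on $\ti\Gamma_1^e$ is the Bass--Serre action, and that $\bar\varphi$ intertwines $\varphi$ with the two actions. The real obstacle is conceptual and lies in (ii): one must recognize that the only commensurability invariant of the vertex group $\mathbb Z\times F_d$ that survives the passage to the finite-index subgroup $K$ is the product of the index $[F:\bar K]$ in the free part with $d-1$, and it is precisely this product that manufactures the coefficients $D_1$ and $D_2$ appearing in \eqref{eq}.
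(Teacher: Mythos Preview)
Your argument is correct. The paper's own proof is a one-line citation: ``The statement follows immediately from Lemmas 3.12 and 3.13 in \cite{CKZ}.'' You have, in effect, reconstructed the content of those lemmas from scratch.

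Your part (i) is exactly the standard Schreier-type count: edges of $\Psi(H_1)$ at $v$ over $e_i$ are $\bar K$-orbits on $F/\langle x_i\rangle$, the far-end label is the orbit size, and the sum of orbit sizes is the index $[F:\bar K]$, independent of $i$. Your part (ii) identifies $K\cong K'$ via $\varphi$-equivariance of $\bar\varphi$, realizes each as $\mathbb Z\times(\text{free of computable rank})$ by Nielsen--Schreier and the splitting of a central extension of a free group, and reads off $D_1[F:\bar K]=D_2[F':\bar K']$ from the rank of the abelianization. This is presumably the same mechanism underlying Lemmas 3.12 and 3.13 of \cite{CKZ}; your write-up has the virtue of making the role of the degree correction $D_i=d_i-1$ transparent as the Euler-characteristic factor in Nielsen--Schreier.

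One small point of care, which you flag yourself: in (i) the free factor $F$ has rank $D_1+1$, the full degree of $u_1$ in $\Gamma_1$, while only $m\le D_1+1$ of its generators (those corresponding to non-leaf neighbours $p_1,\dots,p_m$) actually index edge-types in $\widetilde\Gamma_1^e$. Your count $\sum_a l(e_i^a)=[F:\bar K]$ uses only that $x_i$ is primitive in $F$, so this discrepancy is harmless, but it is worth saying explicitly.
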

\begin{proof}
	The statement follows immediately from Lemmas 3.12 and 3.13 in \cite{CKZ}. 
\end{proof}
 
\subsection{Product graph}   \label{sec:prodgraph}

In this section, we introduce the product graph, which only depends on the graphs $\Gamma_1$ and $\Gamma_2$, and describe its relation with the quotient graph.
 
Recall that for two graphs $\Delta_1, \Delta_2$ their {\it direct product} (also called tensor product) is the graph $\Delta_1 \times \Delta_2$ defined as follows. The vertex set of $\Delta_1 \times \Delta_2$ is the (Cartesian) product of the vertex sets of $\Delta_1$ and $\Delta_2$. If $u_1, v_1$ are vertices of $\Delta_1$, and $u_2, v_2$ are vertices of $\Delta_2$, then we define two vertices $(u_1, u_2)$ and $(v_1, v_2)$ to be adjacent in $\Delta_1 \times \Delta_2$ if and only if $u_1$ is adjacent to $v_1$ in $\Delta_1$ and $u_2$ is adjacent to $v_2$ in $\Delta_2$. Note that this is a category-theoretic product, which means that there exist naturally defined projection morphisms 
$$
\pi_1^0:  \Delta_1 \times \Delta_2 \rightarrow \Delta_1, \: \pi_1^0((u,v))=u; \quad  \pi_2^0:  \Delta_1 \times \Delta_2 \rightarrow \Delta_2, \: \pi_2^0((u,v))=v,
$$
such that if $\Delta$ is a graph and $\delta_1^0: \Delta \rightarrow \Delta_1$, $\delta_2^0: \Delta \rightarrow \Delta_2$ are graph morphisms, then there exists a unique graph morphism $\delta^0: 
\Delta \rightarrow \Delta_1 \times \Delta_2$ such that $\delta_1^0=\pi_1^0 \delta^0$ and $\delta_2^0 = \pi_2^0 \delta^0$. Namely, $\delta^0(x)=(\delta_1^0(x), \delta_2^0(x))$ for every vertex  $x$ of $\Delta$, and it is extended to the edges of $\Delta$ in a natural way.

 \begin{defn}[Product graph and type morphism] \
	 Recall $\ti{\Gamma}_1$ is the subgraph obtained from $\Gamma_1$ by deleting all degree 1 vertices and incident edges, and similar for $\Gamma_2$. We define the \emph{product graph} to be the direct product $\mathfrak{D}=\ti{\Gamma}_1 \times \ti{\Gamma}_2$ of the subgraphs $\ti{\Gamma}_1$ and $\ti{\Gamma}_2$.

	To abbreviate the notation, we will denote $\Psi(H_1)$ by $\Psi$. Let $\delta: \Psi \rightarrow \mathfrak{D}$ be the graph morphism defined by $\delta(x)=(\delta_1(x), \delta_2(\varphi_*(x)))$ for every vertex $x$ of $\Psi$, extended to the edges in a natural way.  Let $\pi_1:  \mathfrak{D} \rightarrow \ti{\Gamma}_1 $, 	$\pi_2:  \mathfrak{D} \rightarrow \ti{\Gamma}_2$ be the projection morphisms as above. We call $\delta$ the \emph{type morphism}. Recall that in \cite{CKZ} we defined the pair of vertices $(\delta_1(u), \delta_2(\varphi_*(u)))$ to be the type of a vertex $u$ of $\Psi(H_1)$. Thus, the type of a vertex $u$ is now just $\delta(u)$, which is a vertex of $\mathfrak{D}$, see Figure \ref{fig:0}.	
\end{defn}

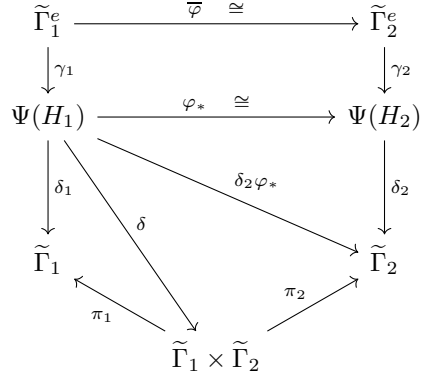
\begin{figure}[!h]
	\begin{tikzcd}
		\ti{\Gamma}_1^e \arrow[rr, "\overline{\varphi} \quad \cong"] \arrow[d, "\gamma_1"]
		&&   \ti{\Gamma}_2^e \arrow[d, "\gamma_2"] \\
		\Psi(H_1) \arrow[rr,"\varphi_* \quad \cong"] \arrow[dd, "\delta_1"] 
		\arrow[ddrr, "\delta_2\varphi_*"] \arrow[dddr, "\delta"]
		&& \Psi(H_2) \arrow[dd,"\delta_2"] \\ \\
		\ti{\Gamma}_1 && \ti{\Gamma}_2 \\
		& \ti{\Gamma}_1 \times \ti{\Gamma}_2 \arrow[ul, "\pi_1"] \arrow[ur, "\pi_2"]
	\end{tikzcd}
	\caption{\small Commutative diagram for the reduced extension graphs and quotient graphs of commensurable tree RAAGs} \label{fig:0}
\end{figure}

We denote the image of $\Psi$ in $\DD$ under $\delta$ by $\CC$. 
 Thus the vertex set of $\CC$ is the set of all possible types of vertices of $\Psi$. 
  \begin{lemma}\label{DD}
  		In the above notation the following statements hold:
  		\begin{enumerate}
  			\item The product graph $\mathfrak{D}$ has two connected components $\DD_1$ and $\DD_2$, and the graph $\mathfrak{C}$ is a connected subgraph of one of these components.
  			\item The restrictions of the projections $\pi_1:  \mathfrak{D} \rightarrow \ti{\Gamma}_1$ and $\pi_2:  \mathfrak{D} \rightarrow \ti{\Gamma}_2$ to $\CC$ are surjective.
  			\item Moreover, the restrictions of $\pi_1$ and $\pi_2$ to $\CC$ are locally surjective, i.e. if $v \in V(\CC)$, and $\pi_1(v) \in V(\ti{\Gamma}_1)$ is adjacent to some vertex $u \in V(\ti{\Gamma}_1)$, then there exists a vertex $w \in V(\CC)$ adjacent to $v$ in $\CC$ such that $\pi_1(w)=u$; the same is true for $\pi_2$.
  		\end{enumerate}
  \end{lemma}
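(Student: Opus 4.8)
The plan is to deduce all three statements from the defining property of the type morphism $\delta$ together with the fact that $\Psi = \Psi(H_1)$ is connected (being the quotient of the connected graph $\ti{\Gamma}_1^e$ by the action of $H_1$) and that the maps $\delta_1 : \Psi \to \ti{\Gamma}_1$ and $\delta_2\varphi_* : \Psi \to \ti{\Gamma}_2$ are surjective, which in turn follows from the surjectivity of $\gamma_1, \gamma_2, \delta_1, \delta_2$ recorded before the statement. For item (1), the key combinatorial observation is that a direct product $\Delta_1 \times \Delta_2$ of two connected bipartite graphs has exactly two connected components, corresponding to the two ways of matching up the bipartition classes; since $\ti{\Gamma}_1$ and $\ti{\Gamma}_2$ are trees, hence connected and bipartite, $\DD = \ti{\Gamma}_1 \times \ti{\Gamma}_2$ splits as $\DD_1 \sqcup \DD_2$. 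Then $\CC$, being the image under the graph morphism $\delta$ of the connected graph $\Psi$, is connected, so it lies entirely inside one of $\DD_1$, $\DD_2$.

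For item (2), surjectivity of $\pi_1|_\CC$ onto $\ti{\Gamma}_1$: by definition $\pi_1 \circ \delta = \delta_1$ (this is exactly the commutativity $\pi_1^0 \delta^0 = \delta_1^0$ in Figure \ref{fig:0}), and $\delta_1 : \Psi \to \ti{\Gamma}_1$ is surjective on vertices and edges, hence $\pi_1(\CC) = \pi_1(\delta(\Psi)) = \delta_1(\Psi) = \ti{\Gamma}_1$. The argument for $\pi_2$ is identical using $\pi_2 \circ \delta = \delta_2 \varphi_*$.

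For item (3), local surjectivity, I would argue as follows. Let $v \in V(\CC)$, pick $x \in V(\Psi)$ with $\delta(x) = v$, so $\delta_1(x) = \pi_1(v)$. Suppose $\pi_1(v)$ is adjacent in $\ti{\Gamma}_1$ to a vertex $u$. The crucial input here is that the morphism $\delta_1 : \Psi \to \ti{\Gamma}_1$ is itself locally surjective; this is essentially the statement that $\gamma_1 : \ti{\Gamma}_1^e \to \Psi$ is surjective and the composition $\delta_1 \gamma_1$ (which is the quotient map $\ti{\Gamma}_1^e \to \ti{\Gamma}_1$ for the action of $G_1$, equivariant and surjective) is locally surjective because the $G_1$-action on $\ti{\Gamma}_1^e$ is vertex-transitive on fibers and preserves the tree structure — lifting the edge $\{\pi_1(v), u\}$ along $\delta_1\gamma_1$ and projecting down gives an edge of $\Psi$ at $x$ mapping to $\{\pi_1(v), u\}$. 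So there is an edge $f$ of $\Psi$ incident to $x$ with $\delta_1(f) = \{\pi_1(v), u\}$; let $y$ be its other endpoint, so $\delta_1(y) = u$. Then $w := \delta(y) \in V(\CC)$ is adjacent to $v = \delta(x)$ in $\CC$ (the image of the edge $f$), and $\pi_1(w) = \delta_1(y) = u$, as required. The statement for $\pi_2$ is symmetric.

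The main obstacle is item (3): one must be careful that local surjectivity is genuinely available for $\delta_1$ and not merely global surjectivity — that is, that every edge at $\pi_1(v)$ in $\ti{\Gamma}_1$ is actually hit by an edge of $\Psi$ at the chosen preimage $x$, not just by some edge of $\Psi$ somewhere. This is where the description of $\delta_1$ as the quotient of the Bass–Serre-type action of $G_1$ on $\ti{\Gamma}_1^e$ (recalled in the paragraph preceding the labels subsection, via Lemma 3.4 of \cite{CKZ}) is essential: the action is edge-transitive on the star of each vertex of $\ti{\Gamma}_1^e$ lying over a fixed vertex of $\ti{\Gamma}_1$, so the quotient map, and hence $\delta_1$, inherits local surjectivity. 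Everything else is a formal consequence of the universal property of the direct product and the commutativity of the diagram in Figure \ref{fig:0}.
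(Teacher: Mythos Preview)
Your proposal is correct and follows essentially the same route as the paper: Weichsel's theorem for item (1), the factorization $\pi_1\delta=\delta_1$ (and its analogue for $\pi_2$) for item (2), and local surjectivity of the Bass--Serre quotient map $\delta_1\gamma_1$ pushed down to $\delta_1$ and then to $\pi_1|_\CC$ for item (3). The only minor point is that your phrase ``edge-transitive on the star'' is not quite the right formulation---what is actually used is that the quotient map from a Bass--Serre tree to its quotient graph is a covering in the graph-theoretic sense, hence locally surjective---but you correctly locate this as the key input from Lemma~3.4 of \cite{CKZ}.
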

\begin{proof}
	The first claim follows from the general graph-theoretic fact that the direct product of two connected graphs without cycles of odd length has two connected components \cite{Weichsel}, in particular this is true for trees. Obviously, $\CC$ is connected, so it should lie inside one of the two connected components. 
	
	The second claim follows immediately from the fact that $\delta_1=\pi_1\delta$ and $\delta_2=\pi_2\delta \varphi_*^{-1}$ are surjective. 
	
	Note that $\delta_1\gamma_1$ is locally surjective in the sense above, so $\delta_1$ is locally surjective as well, and then $\pi_1$ restricted to $\CC$ is also locally surjective; the same holds for $\pi_2$ restricted to $\CC$. 
\end{proof}

\subsection{Edge labels for the product graph}
  
   As we did with the quotient graph, in this section we define certain labels and assign them to edges of the product graph.

   We assume that all our graphs are directed, so that each edge $e$ has initial vertex $\alpha(e)$, terminal vertex $\omega(e)$ and inverse edge $e^{-1}$. Note that  the edge labels on $\Psi$ defined above are defined for unoriented edges, i.e. they are the same for $e$ and $e^{-1}$ (but depend on the choice of the endpoint). We now define edge labels for the product graph $\DD$. Each oriented edge $e$ will have $4$ labels, denoted by $M_{11}(e), M_{12}(e), M_{21}(e)$ and $M_{22}(e)$.

   Let $e$ be an edge of $\DD$. If $e \notin E(\CC)$, define $M_{11}(e)=M_{12}(e)=M_{21}(e)=M_{22}(e)=0$. 
 Otherwise, if $e \in E(\CC)$, let $\delta^{-1}(e)=\{ f_1, \ldots, f_k \}$ be the edges of $\Psi$ which project into $e$ under $\delta$, $k \geq 1$. Let $\alpha(e)=u$, $\omega(e)=v$, and $\alpha(e_i)=u_i$, $\omega(e_i)=v_i$, for $i=1, \ldots, k$, so that $\delta(u_i)=u$, $\delta(v_i)=v$, $i=1, \ldots, k$. Now define the edge labels of $e$ as follows:
 	\begin{equation}\label{M}
 	\begin{split} 
		M_{11}(e)=\sum_{i=1}^k L(u_i)l_{v_i}(e_i);  \quad M_{12}(e)=\sum_{i=1}^k L(u_i)l'_{v_i}(e_i); \\ \quad
	    M_{21}(e)=\sum_{i=1}^k L'(u_i)l_{v_i}(e_i);  \quad M_{22}(e)=\sum_{i=1}^k L'(u_i)l'_{v_i}(e_i).
	    \end{split}
	\end{equation}

	\begin{lemma}\label{CC}
	For every $e \in E(\DD)$, all the labels of $e$ are non-negative integers, and 
	\begin{equation}\label{M0}
		e \in E(\CC) \Leftrightarrow M_{11}(e)>0 \Leftrightarrow M_{12}(e)>0 \Leftrightarrow M_{21}(e)>0 \Leftrightarrow M_{22}(e)>0.		 
	\end{equation} 
	\end{lemma}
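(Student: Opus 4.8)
The plan is to prove this directly by unwinding the definition of the four labels $M_{ij}$ in \eqref{M}, since no real computation is involved; the only thing to organize is the case split according to whether or not $e\in E(\CC)$. The statement has two parts: that the labels are non-negative integers, and the chain of equivalences \eqref{M0}. If $e\notin E(\CC)$, then by the very definition of the labels $M_{11}(e)=M_{12}(e)=M_{21}(e)=M_{22}(e)=0$, which are non-negative integers, and moreover every statement occurring in \eqref{M0} is false, so the chain of equivalences holds vacuously. Thus all the content is in the case $e\in E(\CC)$.

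So suppose $e\in E(\CC)$. First I would recall that $\CC$ was defined to be the image $\delta(\Psi)$ of the quotient graph under the type morphism; hence an edge $e\in E(\CC)$ has the form $e=\delta(f)$ for some edge $f$ of $\Psi$, so the set $\delta^{-1}(e)=\{e_1,\dots,e_k\}$ of edges of $\Psi$ projecting onto $e$ is non-empty, i.e. $k\ge 1$. Next I would invoke the fact, established when the labels on $\Psi(H_1)$ and $\Psi(H_2)$ were introduced, that all the vertex labels $L(u_i), L'(u_i)$ and all the edge labels $l_{v_i}(e_i), l'_{v_i}(e_i)$ are positive integers, each being by definition the minimal positive integer with a prescribed property. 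Consequently every summand appearing in \eqref{M} is a product of positive integers, hence a positive integer, and since there are $k\ge 1$ of them, each of $M_{11}(e), M_{12}(e), M_{21}(e), M_{22}(e)$ is a positive integer. This simultaneously settles integrality and non-negativity in all cases and establishes the implication $e\in E(\CC)\Rightarrow M_{ij}(e)>0$ for every pair of indices $i,j$.

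It remains only to close the chain \eqref{M0} by the converse implications: if $M_{ij}(e)>0$ for some $i,j$, then $e$ cannot fall under the clause ``$e\notin E(\CC)$'' of the definition, which would force $M_{ij}(e)=0$; hence $e\in E(\CC)$. Stringing these together yields $e\in E(\CC)\Leftrightarrow M_{11}(e)>0\Leftrightarrow M_{12}(e)>0\Leftrightarrow M_{21}(e)>0\Leftrightarrow M_{22}(e)>0$. I do not expect a genuine obstacle here; the only point needing a moment's care is that $k\ge1$ whenever $e\in E(\CC)$, which is immediate from $\CC$ being by definition the image of $\delta$. In effect this lemma is a normalization step ensuring that the labels $M_{ij}$ on $\DD$ behave like weighted adjacency data that detects the subgraph $\CC$, and its proof is pure bookkeeping on the formula \eqref{M} together with the positivity of the labels on the quotient graphs.
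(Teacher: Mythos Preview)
Your proof is correct and follows exactly the same approach as the paper: the paper's proof is the one-line remark that the result ``follows immediately from the definition of labels in $\DD$ and the fact that all edge and vertex labels of $\Psi$ are positive integers,'' and your argument is simply a careful unpacking of that sentence via the case split $e\in E(\CC)$ versus $e\notin E(\CC)$.
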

	\begin{proof}
		Follows immediately from the definition of labels in $\DD$ and the fact that  all edge and vertex labels of $\Psi$ are positive integers. 
	\end{proof}	
	
	\begin{rem}\label{rem:localsurject}
	Note that the conditions on {\rm(}local{\rm)} surjectivity from {\rm Lemma \ref{DD}} can be expressed as a union of linear equations and inequalities. Indeed, if $e' \in E(\ti\Gamma_j)$ and $e_1, \dots, e_k \in E(\DD)$ is the preimage $\pi_j^{-1}(e')$ in $\DD$, then, by {\rm Lemma \ref{CC}}, $\pi_j$ is surjective on the edge $e'$ if and only if 
	$$
	\sum\limits_{i=1}^{k}M_{lm}(e_i) >0,\ j,l,m\in \{1,2\}.
	$$ 
	Similarly, if $v \in V(\CC)$, $\pi_j(v) \in V(\ti{\Gamma}_j)$ is adjacent to some vertex $u \in V(\ti{\Gamma}_j)$, $e'=(\pi_j(v),u) \in E(\ti{\Gamma}_j)$ and $e_1, \dots, e_k \in E(\DD)$ are all the edges in the preimage $\pi_j^{-1}(e')$ in $\DD$ which begin in $v$, then $\sum\limits_{i=1}^{k}M_{lm}(e_i) >0$, $j,l,m\in \{1,2\}$.
	\end{rem}
	
\subsection{Linear system for the product graph}
	In this section, we define a linear system of equations and inequalities associated to the product graph and show that the labels of the edges are positive integer solutions of the system.
	
	We first show that the following equations are satisfied for each edge. 
	
	\begin{lemma}\label{EE}
		For every edge $e \in E(\DD)$ the following holds:
		\begin{equation}\label{ee}
			M_{11}(e)=M_{11}(e^{-1}), \: M_{12}(e)=M_{21}(e^{-1}), \: M_{21}(e)=M_{12}(e^{-1}), \: M_{22}(e)=M_{22}(e^{-1}). 
		\end{equation}
	\end{lemma}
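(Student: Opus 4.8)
The plan is to prove the four identities directly from the definition \eqref{M}, by analysing how the edge set $\delta^{-1}(e)$ of $\Psi$ behaves under reversal of orientation. First I would dispose of the trivial case: if $e \notin E(\CC)$, then $e^{-1} \notin E(\CC)$ as well (membership in $\CC$ does not depend on orientation), so all eight labels vanish and \eqref{ee} holds. So assume $e \in E(\CC)$, write $\alpha(e)=u$, $\omega(e)=v$, and let $\delta^{-1}(e)=\{f_1,\dots,f_k\}$ with $\alpha(f_i)=u_i$, $\omega(f_i)=v_i$, so that $\delta(u_i)=u$ and $\delta(v_i)=v$. The key observation is that $\delta$ is a graph morphism, hence commutes with the inversion of edges: $\delta(f_i^{-1})=e^{-1}$, and conversely any edge of $\Psi$ projecting to $e^{-1}$ is the inverse of an edge projecting to $e$. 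Therefore $\delta^{-1}(e^{-1})=\{f_1^{-1},\dots,f_k^{-1}\}$, with $\alpha(f_i^{-1})=v_i$ and $\omega(f_i^{-1})=u_i$.

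Next I would simply evaluate \eqref{M} for $e^{-1}$ using this description. Recall that the edge labels $l_w(f)$ on $\Psi$ are attached to an unoriented edge together with a choice of endpoint, so $l_{v_i}(f_i^{-1}) = l_{v_i}(f_i)$ and $l_{u_i}(f_i^{-1}) = l_{u_i}(f_i)$; the orientation of the $\Psi$-edge is irrelevant, only the endpoint matters. Plugging the vertices $v_i$ (as initial) and $u_i$ (as terminal) of $f_i^{-1}$ into the formula for $M_{11}$ gives
\begin{equation*}
M_{11}(e^{-1}) = \sum_{i=1}^k L(v_i)\, l_{u_i}(f_i^{-1}) = \sum_{i=1}^k L(v_i)\, l_{u_i}(f_i),
\end{equation*}
and by Lemma \ref{proportions} applied to each edge $f_i$ (the common ratio $q_i$ satisfies $L(u_i)/l_{u_i}(f_i) = L(v_i)/l_{v_i}(f_i) = q_i$, hence $L(u_i) l_{v_i}(f_i) = q_i\, l_{u_i}(f_i) l_{v_i}(f_i) = L(v_i) l_{u_i}(f_i)$), each summand equals $L(u_i) l_{v_i}(f_i)$, so $M_{11}(e^{-1}) = \sum_i L(u_i) l_{v_i}(f_i) = M_{11}(e)$. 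The same computation with $L'$ in place of $L$ (using that Lemma \ref{proportions} gives the identical proportionality for the primed labels, with the same ratio $q_i$) yields $M_{22}(e^{-1}) = M_{22}(e)$. For the mixed labels, $M_{12}(e^{-1}) = \sum_i L(v_i) l'_{u_i}(f_i)$; Lemma \ref{proportions} gives $L(v_i) l'_{u_i}(f_i)$ — rewrite $L(v_i) = q_i\, l_{v_i}(f_i)$ and $l'_{u_i}(f_i) = L'(u_i)/q_i$, so the summand equals $L'(u_i)\, l_{v_i}(f_i)$, whence $M_{12}(e^{-1}) = \sum_i L'(u_i) l_{v_i}(f_i) = M_{21}(e)$; symmetrically $M_{21}(e^{-1}) = M_{12}(e)$.

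The only real point requiring care — and the step I would flag as the crux — is the use of Lemma \ref{proportions} to reconcile "label taken at the tail" with "label taken at the head" of each fibre edge $f_i$: the definition \eqref{M} as written uses the label at $\omega(f_i)=v_i$, so after reversal one is forced to re-express the label at $u_i$, and it is exactly the common-ratio equation \eqref{eq3} that makes the two versions agree up to the vertex-label factors. Once that bookkeeping is set up cleanly for one of the four identities, the other three are obtained by the symmetric substitutions $L \leftrightarrow L'$ and $\text{(unprimed edge label)} \leftrightarrow \text{(primed edge label)}$, so the write-up can handle $M_{11}$ in full and then indicate the parallel arguments. I do not anticipate any further obstacle; everything reduces to the fibrewise proportionality already established.
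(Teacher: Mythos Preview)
Your proposal is correct and follows essentially the same approach as the paper: dispose of the case $e\notin\CC$ trivially, identify $\delta^{-1}(e^{-1})$ with the inverses of the fibres over $e$, and then use the cross-ratio identities from Lemma~\ref{proportions} to convert each summand $L(v_i)l_{u_i}(f_i)$ into $L(u_i)l_{v_i}(f_i)$ (and similarly for the primed/mixed versions). The paper's write-up is terser---it simply cites the identity $L(v_i)l_{u_i}(e_i)=L(u_i)l_{v_i}(e_i)$ from Lemma~\ref{proportions} directly rather than passing through the ratio $q_i$---but the substance is identical.
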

	\begin{proof}
		If $e \notin E(\CC)$, then all the labels of $e$ and $e^{-1}$ are 0, and so the claim holds automatically. Suppose now $e \in E(\CC)$. Note that, in the above notation, by Lemma \ref{proportions}, we have that $L(v_i)l_{u_i}(e_i)=L(u_i)l_{v_i}(e_i)$, for all $i=1, \ldots, k$, hence 
		$$
		M_{11}(e^{-1})= \sum_{i=1}^k L(v_i)l_{u_i}(e_i)= \sum_{i=1}^k L(u_i)l_{v_i}(e_i) = M_{11}(e).
		$$
Analogously, by Lemma \ref{proportions}, we have that $L(v_i)l'_{u_i}(e_i)=L'(u_i)l_{v_i}(e_i)$, for all $i=1, \ldots, k$ and so
		$$
		M_{12}(e^{-1})= \sum_{i=1}^k L(v_i)l'_{u_i}(e_i)= \sum_{i=1}^k L'(u_i)l_{v_i}(e_i) = M_{21}(e).
		$$
The proof of other two equalities in (\ref{ee}) is analogous.		
	\end{proof}
	
	We now describe the equations that we associate to each vertex of $\DD$. Let $w=(w_1,w_2) \in V(\DD)$, where $w_1=\pi_1(w) \in V(\ti{\Gamma}_1)$, $w_2=\pi_2(w) \in V(\ti{\Gamma}_2)$. Let $p_1, \ldots, p_m$ be all vertices of $\ti{\Gamma}_1$ adjacent to $w_1$, and $q_1, \ldots, q_n$ be all vertices of $\ti{\Gamma}_2$ adjacent to $w_2$. Suppose that the edge $e_i$ connects $u_1$ with $p_i$, $i=1, \ldots, m$, and the edge $f_j$ connects $u_2$ with $q_j$, $j=1, \ldots, n$. Then $w_{ij}=(p_i, q_j)$, $i=1, \ldots, m$, $j=1, \ldots, n$, are all the vertices of $\DD$ adjacent to $w$. Let $e_{ij}$ be the (oriented) edge of $\DD$ beginning in $w$ and ending in $w_{ij}$, $i=1, \ldots, m$, $j=1, \ldots, n$. Thus, the edges $e_{i1}, e_{i2}, \ldots, e_{in}$ are all the edges of $\DD$ beginning in $w$ which project into $e_i$ under $\pi_1$, for $1\le i\le m$ and the edges $e_{1j}, e_{2j}, \ldots, e_{mj}$ are all the edges of $\DD$ beginning in $w$ which project into $f_j$ under $\pi_2$, for $1\le j\le n$.

		Recall that $D_1$ is the degree of the vertex $w_1$ considered as a vertex of $\Gamma_1$ minus $1$, and $D_2$ is the degree of the vertex $w_2$ considered as a vertex of $\Gamma_2$ minus $1$. Note that we take degrees in $\Gamma_1$, $\Gamma_2$, not in $\ti{\Gamma}_1$, $\ti{\Gamma}_2$.
	\begin{lemma}\label{VE}
		For every vertex $w \in V(\DD)$, in the above notation the following equations hold:
		\begin{equation}\label{ve1}
		\begin{split}
		 		D_1 \sum_{j=1}^n M_{11}(e_{1j}) =  D_1 \sum_{j=1}^n M_{11}(e_{2j}) = \ldots = 	D_1 \sum_{j=1}^n M_{11}(e_{mj})	= \\
		 		= D_2 \sum_{i=1}^m M_{12}(e_{i1}) = D_2 \sum_{i=1}^m M_{12}(e_{i2}) = \ldots = D_2 \sum_{i=1}^m M_{12}(e_{in}),
		 \end{split}		
		\end{equation}
		and
		\begin{equation}\label{ve2}
		\begin{split}
		 		D_1 \sum_{j=1}^n M_{21}(e_{1j}) =  D_1 \sum_{j=1}^n M_{21}(e_{2j}) = \ldots = 	D_1 \sum_{j=1}^n M_{21}(e_{mj})	= \\
		 		= D_2 \sum_{i=1}^m M_{22}(e_{i1}) = D_2 \sum_{i=1}^m M_{22}(e_{i2}) = \ldots = D_2 \sum_{i=1}^m M_{22}(e_{in}).
		 \end{split}
		\end{equation}
	\end{lemma}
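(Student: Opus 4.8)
Here is how I would prove Lemma~\ref{VE}. The plan is to transport the edge labels of $\DD$ back to the quotient graph $\Psi$ along the type morphism $\delta$ and then apply Lemma~\ref{equations} fibre by fibre over the vertex $w$. Fix $w=(w_1,w_2)\in V(\DD)$ and keep the notation of the statement. If $w\notin V(\CC)$, then none of the edges $e_{ij}$ lies in $E(\CC)$, so by the definition (\ref{M}) of the labels (equivalently, by Lemma~\ref{CC}) every $M_{lm}(e_{ij})$ equals $0$ and both (\ref{ve1}) and (\ref{ve2}) read $0=\cdots=0$. So from now on I assume $w\in V(\CC)$ and write $\delta^{-1}(w)\subseteq V(\Psi)$ for the non-empty fibre over $w$.

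The first, and main, step is a bookkeeping observation about which edges of $\Psi$ project onto the edges $e_{ij}$. An oriented edge $f$ of $\Psi$ satisfies $\delta(f)=e_{ij}$ for some $j$ if and only if $\alpha(f)\in\delta^{-1}(w)$ and $\delta_1(f)=e_i$, and then $j$ is determined by $\delta_2(\varphi_*(f))=f_j$. Consequently, for a fixed vertex $v\in\delta^{-1}(w)$, the edges of $\Psi$ starting at $v$ that project onto $e_i$ under $\delta_1$ are exactly the edges $e_i^1,\dots,e_i^{r_i}$ of Lemma~\ref{equations} applied at $v$ — here one uses $\delta_1(v)=w_1$ and $\delta_2(\varphi_*(v))=w_2$, so that the combinatorial data that Lemma~\ref{equations} attaches to $v$ is precisely the data of the present statement attached to $w$ — and likewise $\varphi_*$ carries the edges of $\Psi$ starting at $v$ that project onto $f_j$ under $\delta_2\varphi_*$ bijectively onto the edges $f_j^1,\dots,f_j^{s_j}$ of Lemma~\ref{equations} starting at $\varphi_*(v)$. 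Since each label in (\ref{M}) is a sum of products (vertex label at the initial endpoint)$\times$(edge label at the terminal endpoint), and since $\varphi_*$ commutes with $\alpha$ and $\omega$ — so that the quantity written $l'_{v_i}(e_i)$ in (\ref{M}) is exactly the end-vertex label of the corresponding $f_j^t$ in $\Psi(H_2)$ — unfolding the definitions gives, with $r_i,s_j$ and the edges $e_i^t,f_j^t$ depending on $v$ and all edge labels read at the end vertices,
$$\sum_{j=1}^n M_{11}(e_{ij})=\sum_{v\in\delta^{-1}(w)}L(v)\sum_{t=1}^{r_i}l(e_i^t),\qquad \sum_{i=1}^m M_{12}(e_{ij})=\sum_{v\in\delta^{-1}(w)}L(v)\sum_{t=1}^{s_j}l'(f_j^t),$$
together with the two analogous identities obtained by replacing $L$ with $L'$, $M_{11}$ with $M_{21}$, and $M_{12}$ with $M_{22}$.

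To finish, I would apply Lemma~\ref{equations} at each $v\in\delta^{-1}(w)$: it yields a single integer $c(v)$ with $D_1\sum_{t=1}^{r_i}l(e_i^t)=c(v)$ for every $i$ and $D_2\sum_{t=1}^{s_j}l'(f_j^t)=c(v)$ for every $j$. Multiplying the four identities above by $D_1$ or $D_2$ as appropriate and substituting, every entry of (\ref{ve1}) becomes $\sum_{v\in\delta^{-1}(w)}L(v)c(v)$ and every entry of (\ref{ve2}) becomes $\sum_{v\in\delta^{-1}(w)}L'(v)c(v)$; in particular the entries within each chain coincide, which is the assertion. (Integrality and non-negativity of the $M_{lm}(e_{ij})$ play no role in this argument and are in any case Lemma~\ref{CC}.)

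The genuinely delicate point is thus the first step: aligning the indexing of Lemma~\ref{equations}, which is phrased at a vertex of $\Psi$, with the indexing of the present statement, which is phrased at a vertex of $\DD$, and verifying that pushing forward along the isomorphism $\varphi_*$ converts ``edge label at the far endpoint in $\Psi(H_1)$'' into ``edge label at the far endpoint in $\Psi(H_2)$''. Once the fibre $\delta^{-1}(w)$ is fixed and the labels are unfolded, the lemma is simply Lemma~\ref{equations} summed over the fibre with weights $L(v)$, respectively $L'(v)$, so I do not anticipate any real obstacle beyond careful notation.
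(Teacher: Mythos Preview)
Your proposal is correct and follows essentially the same approach as the paper: handle $w\notin V(\CC)$ trivially, then for $w\in V(\CC)$ write Lemma~\ref{equations} at each $v\in\delta^{-1}(w)$, multiply by $L(v)$ (respectively $L'(v)$), and sum over the fibre. The paper's proof is terser and leaves the bookkeeping implicit, whereas you spell out the identification between the edges $e_i^t,f_j^t$ at $v\in\Psi$ and the preimages of the $e_{ij}$ at $w\in\DD$, but the argument is the same.
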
	
	\begin{proof}
		If $w \notin V(\CC)$, then the claim follows since in this case $M_{ij}$'s are all equal to $0$. Hence, we can assume that $w \in V(\CC)$. Let $\delta^{-1}(w)= \{w_1, \ldots, w_N\}$ be all the vertices of $\Psi(H_1)$ which project to $w$ under $\delta$. Now for each vertex $w_k$, $k=1, \ldots, N$, write the equations (\ref{eq}) from Lemma \ref{equations}, multiply each side by $L(w_k)$ and sum over $k=1, \ldots, N$. Equation (\ref{ve1}) follows now from the definitions of the labels on $\DD$.  Analogously, for each vertex $w_k$, $k=1, \ldots, N$, write the equations (\ref{eq}) from Lemma \ref{equations}, multiply each side by $L'(w_k)$ and sum over $k=1, \ldots, N$. Hence Equation (\ref{ve2}).  
		\end{proof}
	
	In the above notation, for every vertex $w \in V(\DD)$, define two labels: 
\begin{equation}\label{R}	
	R_1(w)=D_1 \sum_{j=1}^n M_{11}(e_{1j}), \quad R_2(w)=D_2 \sum_{i=1}^m M_{22}(e_{in}).
\end{equation}	
	By Lemma \ref{VE}, we can rewrite these labels in several different ways.

\begin{defn}[Linear system $S(\Gamma_1, \Gamma_2)$]
Let $\GG(\Gamma_1)$ and $\GG(\Gamma_2)$ be two RAAGs defined by trees $\Gamma_1$ and $\Gamma_2$ and let $\ti{\Gamma}_i$ be the induced subgraph of $\Gamma_i$ defined by all non-leaf vertices, $i=1,2$. We denote by $S_i(\Gamma_1,\Gamma_2)$ the system of linear equations defined by the $i$-th connected component $\DD_i$ of the product graph $\DD$ of $\ti{\Gamma}_1$ and $\ti{\Gamma}_2$ in variables $M_{kl}(e)$, $k,l=1,2$, $e \in E(\DD_i)$, that is $S_i(\Gamma_1,\Gamma_2)$ is the union of Equations (\ref{ee}), (\ref{ve1}) and (\ref{ve2}) from Lemmas \ref{EE} and \ref{VE}, for all edges and vertices of the $i$-th connected component $\DD_i$. 

Let $P$ be the set of inequalities $M_{kl}(e) \ge 0$, for all $e\in E(\DD)$. Let $E$ be the following disjunction of equations and inequalities encoding surjectivity:
$$
\sum\limits_{i=1}^{k}M_{lm}(e_i) >0,
$$ 
for all $e' \in E(\ti\Gamma_j)$, where $e_1, \dots, e_k \in E(\DD)$ is the preimage $\pi_j^{-1}(e')$ in $\DD$, and $j,l,m\in \{1,2\}$; and local surjectivity:
$$
M_{kl}(e) >0 \implies \sum\limits_{i=1}^{k}M_{lm}(e_i) >0,
$$
or, equivalently,
$$
M_{kl}(e) = 0 \  \vee \  \sum\limits_{i=1}^{k}M_{lm}(e_i) > 0,
$$
for all $v \in V(\DD)$, for all $e=(v,v') \in E(\DD)$, for all vertices $u \in V(\ti{\Gamma}_j)$ adjacent to $\pi_j(v) \in V(\ti{\Gamma}_j)$, $e'=(\pi_j(v),u) \in E(\ti{\Gamma_j})$, where $e_1, \dots, e_k \in E(\DD)$ are all the edges in the preimage $\pi_j^{-1}(e')$ in $\DD$ which begin in $v$, and $j,l,m\in \{1,2\}$.

Note that
$$
\left(S_1(\Gamma_1, \Gamma_2)\  \vee\ S_2(\Gamma_1,\Gamma_2)\right)\ \wedge \  P \ \wedge \  E
$$
is a disjunction of linear system of equations and inequalities in variables $M_{kl}(e)$, which we denote by $S(\Gamma_1,\Gamma_2)$.
\end{defn}

\begin{corollary}\label{cor:reductionLSE}
In the above notation, if $\GG(\Gamma_1)$ and $\GG(\Gamma_2)$ are commensurable, then the {\rm(}disjunction of{\rm)} linear system of equations and inequalities $S(\Gamma_1, \Gamma_2)$ has an integer solution. 
\end{corollary}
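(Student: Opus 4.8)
The proof is a synthesis of the lemmas of this section, so the plan is simply to harvest the labels already constructed and verify that they solve each conjunct of $S(\Gamma_1,\Gamma_2)$. Assume $\GG(\Gamma_1)$ and $\GG(\Gamma_2)$ are commensurable; the reduced-extension-graph construction presupposes that both trees have diameter at least $3$, which is the only situation used below (the remaining cases are degenerate and handled separately). Fix finite index subgroups $H_1\le G_1=\GG(\Gamma_1)$ and $H_2\le G_2=\GG(\Gamma_2)$ together with an isomorphism $\varphi\colon H_1\to H_2$. As recalled above, $\varphi$ induces the isomorphisms $\overline\varphi$ and $\varphi_*$, hence the quotient graph $\Psi=\Psi(H_1)$, the type morphism $\delta\colon\Psi\to\DD$ with image $\CC$, and, via formula (\ref{M}), the edge labels $M_{kl}(e)$ for $k,l\in\{1,2\}$ and $e\in E(\DD)$. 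I claim that the tuple $\big(M_{kl}(e)\big)$ is the desired integer solution.

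First, by Lemma \ref{CC} every $M_{kl}(e)$ is a non-negative integer, so the inequalities $P$ hold; moreover $M_{kl}(e)>0$ if and only if $e\in E(\CC)$, which is the equivalence (\ref{M0}) that will be used repeatedly. By Lemma \ref{DD}(1) the connected graph $\CC$ is contained in one connected component $\DD_i$ of $\DD$, and all labels of edges outside $E(\CC)$ vanish. For this index $i$, Lemmas \ref{EE} and \ref{VE} assert exactly that the labels satisfy Equations (\ref{ee}), (\ref{ve1}) and (\ref{ve2}) at every edge and vertex of $\DD_i$ (the instances outside $\CC$ holding trivially, both sides being $0$); that is, $\big(M_{kl}(e)\big)$ solves $S_i(\Gamma_1,\Gamma_2)$, and a fortiori the disjunction $S_1(\Gamma_1,\Gamma_2)\vee S_2(\Gamma_1,\Gamma_2)$.

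It remains to check the surjectivity clauses $E$. For the global part, fix $e'\in E(\ti\Gamma_j)$ with preimage $\pi_j^{-1}(e')=\{e_1,\dots,e_k\}$ in $\DD$; by Lemma \ref{DD}(2) the morphism $\pi_j|_\CC$ is surjective, so some $e_t$ lies in $E(\CC)$, whence $M_{lm}(e_t)>0$ by (\ref{M0}) and therefore $\sum_t M_{lm}(e_t)>0$. For the local part, suppose $M_{kl}(e)>0$ for an edge $e=(v,v')\in E(\DD)$; by (\ref{M0}) we get $e\in E(\CC)$, so $v\in V(\CC)$, and Lemma \ref{DD}(3) furnishes an edge of $\CC$ beginning at $v$ and projecting under $\pi_j$ to the prescribed edge $e'=(\pi_j(v),u)$ of $\ti\Gamma_j$; that edge is one of the $e_1,\dots,e_k$ beginning at $v$ and, by (\ref{M0}) once more, contributes a positive term, so $\sum_t M_{lm}(e_t)>0$. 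Hence the implication ``$M_{kl}(e)=0$ or $\sum_t M_{lm}(e_t)>0$'' holds in every instance, $E$ is satisfied, and $\big(M_{kl}(e)\big)$ is an integer solution of $(S_1\vee S_2)\wedge P\wedge E=S(\Gamma_1,\Gamma_2)$.

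There is no genuinely hard step here: the substantive work has already been front-loaded into Lemmas \ref{proportions}--\ref{VE}, and the corollary only needs the observation that all four families of labels are read off a single quotient graph $\Psi$ (so that the proportionality identities of Lemma \ref{proportions}, underlying Lemma \ref{EE}, and the degree balance of Lemma \ref{equations}, underlying Lemma \ref{VE}, refer to a single consistent object), together with the passage to the component $\DD_i$ containing $\CC$. If anything requires separate attention it is the low-diameter trees, which fall outside the reduced-extension-graph framework, but these do not arise in the applications of Sections \ref{sec:3}--\ref{sec:6}.
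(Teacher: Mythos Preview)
Your proof is correct and follows essentially the same approach as the paper's own proof, which is a terse three-sentence appeal to Lemmas \ref{CC}, \ref{EE}, \ref{VE} and \ref{DD}. You have simply spelled out in greater detail how each conjunct of $S(\Gamma_1,\Gamma_2)$ is verified (in particular separating the global and local surjectivity clauses of $E$ and invoking (\ref{M0}) explicitly), which is a reasonable elaboration of what the paper leaves implicit.
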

\begin{proof}
It follows from Lemmas \ref{EE} and \ref{VE} that the labels assigned to vertices and edges of $\Psi(H)$ give rise to a solution of the linear system of equations $S_i$ for some $i=1,2$. By Lemma \ref{CC} the solutions are non-negative and so satisfy the inequalities of the system $P$. Furthermore, by Lemma \ref{DD}, the labels satisfy the disjunctions of inequalities encoding local surjectivity and so are a solution of the system $E$.
\end{proof}

\section{The RAAGs $\GG(P_3)$ and $\GG(P_m)$ are not commensurable for $m \geq 5$}\label{sec:3}
	
	In this section, we prove a special case of Theorem \ref{thm1}, namely we show that $\GG(P_3)$ is not commensurable to $\GG(P_n)$, for all $n>4$. As we already mentioned, $\GG(P_3)$ and $\GG(P_4)$ are commensurable, see \cite{CKZ} for more details on that.
	
	The proof in this special case is easier and introduces the reader to the techniques and ideas behind the proof of Theorem \ref{thm1}. The proof of Theorem \ref{thm1} is the most technically demanding in this paper and we will address it in the last section, Section \ref{sec:6}.
	
	\begin{theorem}\label{P3}
		$\GG(P_3)$ is not commensurable to $\GG(P_m)$ for $m \geq 5$.
	\end{theorem}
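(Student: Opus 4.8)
The plan is to suppose $\GG(P_3)$ and $\GG(P_m)$ are commensurable for some $m \ge 5$ and to contradict Corollary \ref{cor:reductionLSE}: that corollary produces an integer solution of $S(P_3,P_m)$, i.e.\ non-negative integer labels $M_{11},M_{12},M_{21},M_{22}$ on the edges of the product graph $\DD=\ti{P}_3\times\ti{P}_m$ satisfying the relations of Lemmas \ref{EE} and \ref{VE} together with the (local) surjectivity conditions of Lemma \ref{DD}. First I would pin down the combinatorics of $\DD$. Deleting the two leaves of $P_3$ leaves a single edge, so $\ti{P}_3=K_2$, while $\ti{P}_m$ is the path on $m-1$ vertices. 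Since a path is connected and bipartite, $\DD=K_2\times\ti{P}_m$ has exactly two connected components $\DD_1,\DD_2$, and on each of them the projection $\pi_2$ restricts to a graph \emph{isomorphism} onto $\ti{P}_m$. The subgraph $\CC$ (the $\delta$-image of $\Psi$) is connected and, by Lemma \ref{DD}, lies in one component, say $\DD_1$, with $\pi_2|_{\CC}$ surjective; as $\pi_2|_{\DD_1}$ is an isomorphism this forces $\CC=\DD_1$, and then Lemma \ref{CC} shows that all four labels of every edge of $\DD_1$ are strictly positive integers.

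Second, I would exploit the simplification special to paths: every non-leaf vertex of $P_3$ and of $P_m$ has degree $2$, so the coefficients $D_1$ and $D_2$ occurring in every instance of Lemmas \ref{EE} and \ref{VE} are equal to $1$. Write $\DD_1$ as a path $w_1-w_2-\dots-w_{m-1}$, let $e_t$ be the edge oriented from $w_t$ to $w_{t+1}$ for $1\le t\le m-2$, and abbreviate $(P_t,Q_t,R_t,S_t):=\big(M_{11}(e_t),M_{12}(e_t),M_{21}(e_t),M_{22}(e_t)\big)$; Lemma \ref{EE} then gives $\big(M_{11}(e_t^{-1}),M_{12}(e_t^{-1}),M_{21}(e_t^{-1}),M_{22}(e_t^{-1})\big)=(P_t,R_t,Q_t,S_t)$. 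Applying Lemma \ref{VE} at the two degree-one vertices $w_1,w_{m-1}$ gives $P_1=Q_1$, $R_1=S_1$ and $P_{m-2}=R_{m-2}$, $Q_{m-2}=S_{m-2}$; applying it at each internal vertex $w_t$, $2\le t\le m-2$, gives
$$P_{t-1}+P_t=R_{t-1}=Q_t \qquad\text{and}\qquad Q_{t-1}+R_t=S_{t-1}=S_t.$$

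Third, I would deduce a contradiction. The relations $S_{t-1}=S_t$ over all internal $t$ make all the $S_t$ equal; combining $S_1=R_1$ (from $w_1$) with $R_1=Q_2=P_1+P_2$ (from $w_2$) gives $S_1=P_1+P_2$. On the other hand, the second relation at $w_2$ reads $S_1=Q_1+R_2$, and here $Q_1=P_1$ (from $w_1$) while $R_2=Q_3=P_2+P_3$ by the relation at $w_3$ --- a vertex that exists precisely because $m\ge 5$. Hence $P_1+P_2=S_1=P_1+P_2+P_3$, so $P_3=M_{11}(e_3)=0$, contradicting the strict positivity established in the first step; therefore $\GG(P_3)$ and $\GG(P_m)$ are not commensurable. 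The only genuinely delicate points are the orientation bookkeeping for the four labels carried by each edge and the verification that $\CC$ must be an entire component of $\DD$ (so that the labels are strictly positive, not merely non-negative); once these are settled the computation is short, which is why this case serves as a warm-up for the heavier analyses of Sections \ref{sec:4} and \ref{sec:6}.
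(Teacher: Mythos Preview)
Your proof is correct and follows essentially the same approach as the paper's: identify a connected component of $\DD$ as a path, argue via surjectivity that $\CC$ equals the whole component (so all edge labels are strictly positive), then combine the vertex equations at $w_1,w_2,w_3$ to force a label of $e_3$ to vanish. The only cosmetic difference is that you take $\Gamma_1=P_3$ and $\Gamma_2=P_m$, whereas the paper takes them in the opposite order; this swap interchanges $M_{11}\leftrightarrow M_{22}$ and $M_{12}\leftrightarrow M_{21}$, so your conclusion $M_{11}(e_3)=0$ is exactly the paper's $M_{22}(e_3)=0$.
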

	\begin{proof}
			 Let $a_0, a_1, \ldots, a_m$ be the vertices of $P_m$, considered as canonical generators of $\GG(P_m)$, and $b_0, b_1, b_2, b_3$ be the vertices of $P_3$, considered as canonical generators of $\GG(P_3)$. Then, in the above notation, $\Gamma_1=P_m$, $\Gamma_2=P_3$, and $\ti{\Gamma}_1=P_{m-2}$, with vertices $a_1, \ldots, a_{m-1}$, $\ti{\Gamma}_2=P_{1}$, with vertices $b_1, b_{2}$. 
			  
			Suppose that $\GG(P_m)$ and $\GG(P_3)$ are commensurable. 
			Note that in our case $\DD = \ti{\Gamma}_1 \times \ti{\Gamma}_2=P_{m-2} \times P_{1}$ is the following graph: its set of vertices is $\{ (a_i, b_j), \: i=1, \ldots, m-1; \: j=1, \ldots, 2 \}$, and two vertices $(a_{i_1}, b_{j_1})$ and $(a_{i_2}, b_{j_2})$ are connected by an edge in $\DD$ if and only if $|i_1-i_2| = 1$ and $|j_1-j_2|=1$, for $i_1, i_2=1, \ldots, m-1; \: j_1, j_2=1, 2$.	To abbreviate the notation, we will denote the vertex $(a_i,b_j)$ of $\DD$ by $(i,j)$, where $i=1, \ldots, m-1; \: j=1, 2$, see Figure \ref{fig:1}. 	
			 
			 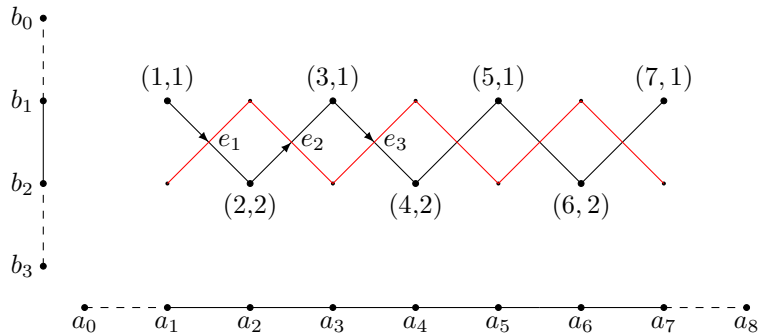
\begin{figure}[h!]
			 	\begin{tikzpicture}[scale=1.1]
			 	\tikzset{near start abs/.style={xshift=1cm}}
			 	\draw [dashed] (0.5,0)--(1.5,0);
			 	\draw (1.5,0) -- (6,0);
			 	\draw (6, 0) -- (7.5, 0);
			 	\draw[dashed] (7.5,0)--(8.5,0);
			 	\filldraw (0.5,0) circle (1pt)  node[align=left, below] {$a_0$};
			 	\filldraw (1.5,0) circle (1pt)  node[align=left, below] {$a_1$};
			 	\filldraw (2.5,0) circle (1pt)  node[align=left, below] {$a_2$};
			 	\filldraw (3.5,0) circle (1pt)  node[align=left, below] {$a_3$};
			 	\filldraw (4.5,0) circle (1pt)  node[align=left, below] {$a_4$};
			 	\filldraw (5.5,0) circle (1pt)  node[align=left, below] {$a_5$};
			 	\filldraw (6.5,0) circle (1pt)  node[align=left, below] {$a_{6}$};
			 	\filldraw (7.5,0) circle (1pt)  node[align=left, below] {$a_{7}$};
			 	\filldraw (8.5,0) circle (1pt)  node[align=left, below] {$a_{8}$};
			 	
			 	\draw  (0,1.5) -- (0,2.5);
			 	\draw[dashed] (0,0.5)--(0,1.5);
			 	\draw[dashed] (0,2.5)--(0,3.5);
			 	\filldraw (0,0.5) circle (1pt)  node[align=left, left] {$b_3$};
			 	\filldraw (0,1.5) circle (1pt)  node[align=left, left]  {$b_2$};
			 	\filldraw (0,2.5) circle (1pt)  node[align=left, left] {$b_1$};
			 	\filldraw (0,3.5) circle (1pt)  node[align=left, left] {$b_0$};

			 	\filldraw (1.5,2.5) circle (1pt) node[align=left, above] {(1,1)};
			 	\filldraw (2.5,1.5) circle (1pt) node[align=left, below] {(2,2)};
			 	\filldraw (3.5,2.5) circle (1pt) node[align=left, above] {(3,1)};
			 	\filldraw (4.5,1.5) circle (1pt) node[align=left, below] {(4,2)};
			 	\filldraw (5.5,2.5) circle (1pt) node[align=left, above] {(5,1)};
			 	\filldraw (6.5,1.5) circle (1pt) node[align=left, below] {($6,2$)};
			 	\filldraw (7.5,2.5) circle (1pt) node[align=left, above] {($7,1$)};

			 	\draw[middlearrow={latex}] (1.5,2.5) -- (2.5,1.5)  node [midway, right] {$e_1$};
			 	\draw[middlearrow={latex}] (2.5,1.5) -- (3.5,2.5)  node [midway, right] {$e_2$};
			 	\draw[middlearrow={latex}] (3.5,2.5) -- (4.5,1.5)  node [midway, right] {$e_3$};
			 	\draw (4.5,1.5) -- (5.5,2.5);
			 	\draw (5.5,2.5) -- (6,2);
			 	\draw (6,2) -- (6.5,1.5);
			 	\draw (6.5,1.5) -- (7.5,2.5);
			 	
			 	\filldraw (1.5,1.5) circle (0.5pt);
			 	\filldraw (2.5,2.5) circle (0.5pt);
			 	\filldraw (3.5,1.5) circle (0.5pt);
			 	\filldraw (4.5,2.5) circle (0.5pt);
			 	\filldraw (5.5,1.5) circle (0.5pt);
			 	\filldraw (6.5,2.5) circle (0.5pt);
			 	\filldraw (7.5,1.5) circle (0.5pt);
			 	
			 	\draw[color=red] (1.5,1.5) -- (2.5,2.5) (2.5,2.5) -- (3.5,1.5) (3.5,1.5) -- (4.5,2.5) (4.5,2.5) -- (5.5,1.5) (5.5,1.5) -- (6,2) (6,2) -- (6.5,2.5) (6.5,2.5) -- (7.5,1.5);
			 	
			 	\end{tikzpicture}
			 	\caption{\small Graph $\mathfrak{D}$ in the proof of Theorem \ref{P3} for $m=8$. Edges of $\mathfrak{D}_1$ are black and edges of $\mathfrak{D}_2$ are red.} \label{fig:1}
			 \end{figure}

 Note that, as in Lemma \ref{DD}, $\DD$ has two connected components, one of them, denoted by $\DD_1$, consisting of vertices $(i,j)$, where $i+j$ is even, and the other one, denoted by $\DD_2$, where $i+j$ is odd, and $\CC$ lies in one of them. The automorphism of $P_3$ which reverses the order of its vertices (this also induces an automorphism of $\GG(P_3)$) switches these components, which are isomorphic graphs, so, after applying this automorphism of $\GG(P_3)$ if necessary, without loss of generality, we assume that $\CC$ lies in a particular component of $\DD$. 
	
    So we assume that $\CC$ lies in the component $\DD_1$ containing the vertex $(1,1)$. Then $\DD_1$ contains vertices $(1,1), (2,2), (3,1), (4,2), \ldots, (m-1, n_0)$, where $n_0 = 1$ if $m$ is even, and $n_0=2$ if $m$ is odd, and all the connecting edges (as a graph, $\DD_1$ is isomorphic to the path of length $m-2$). Note also that, by Lemma \ref{DD}, $\CC$ should project surjectively on $\ti{\Gamma}_1$, so in fact $\CC= \DD_1$ in this case. 
     
    Denote by $e_1$ the (oriented) edge of $\DD$ beginning in $(1,1)$ and ending in $(2,2)$, by $e_2$ the edge beginning in $(2,2)$ and ending in $(3,1)$, and by $e_3$ the edge beginning in $(3,1)$ and ending in $(4,2)$ (note that such edges always exist, since $m \geq 5$, so $m-1 \geq 4$), see Figure \ref{fig:1}.

    Note that we have the following equations on the labels of $e_1, e_2, e_3$: edge equations as in Lemma \ref{EE}, and vertex equations which follow from Lemma \ref{VE} applied to the case under consideration. 
     
    From the $(1,1)$ vertex we get 
  	$$
  	M_{11}(e_1)=M_{12}(e_1), \: M_{21}(e_1)=M_{22}(e_1);
  	$$
  	from the $(2,2)$ vertex we get
  	$$
  	M_{11}(e_1^{-1})=M_{11}(e_2)=M_{12}(e_1^{-1})+M_{12}(e_2), \: M_{21}(e_1^{-1})=M_{21}(e_2)=M_{22}(e_1^{-1})+M_{22}(e_2);
  	$$
  	and from the $(3,1)$ vertex we get
  		$$
  		M_{11}(e_2^{-1})=M_{11}(e_3)=M_{12}(e_2^{-1})+M_{12}(e_3), \: M_{21}(e_2^{-1})=M_{21}(e_3)=M_{22}(e_2^{-1})+M_{22}(e_3). 
  		$$
  		Using these equations, on the one hand we can write
  		\begin{equation}\label{r1} 
  		M_{11}(e_1)= M_{12}(e_1)=M_{21}(e_1)^{-1}=M_{22}(e_1^{-1})+M_{22}(e_2)=M_{22}(e_1)+M_{22}(e_2^{-1}), \end{equation}
  		 and on the other hand
		\begin{equation}\label{r2} 
		\begin{split} M_{11}(e_1)=M_{11}(e_1^{-1})= M_{12}(e_1^{-1})+M_{12}(e_2) = M_{21}(e_1)+M_{21}(e_2^{-1})= \\ =M_{22}(e_1)+M_{21}(e_2^{-1})= M_{22}(e_1)+ M_{22}(e_2^{-1})+M_{22}(e_3).
		\end{split} 
		\end{equation}

		Comparing (\ref{r1}) and (\ref{r2}), we see that $M_{22}(e_3)=0$, so by $(\ref{M0})$ we have $e_3 \notin \CC$, which is a contradiction, since $\CC=\DD_1$ as mentioned above. This shows that $\GG(P_3)$  is not commensurable to $\GG(P_m)$ for $m \geq 5$.  
	\end{proof}

	\begin{corollary}\label{P4}
		$\GG(P_4)$ is not commensurable to $\GG(P_m)$ for $m \geq 5$.
	\end{corollary}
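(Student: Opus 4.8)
The plan is to obtain Corollary~\ref{P4} as an immediate consequence of Theorem~\ref{P3}, using the transitivity of commensurability, rather than redoing the product-graph analysis. Recall that abstract commensurability is an equivalence relation on groups; in particular it is transitive. (Concretely, if $A, B$ are isomorphic finite-index subgroups of $G_1, G_2$ and $C, D$ are isomorphic finite-index subgroups of $G_2, G_3$, then $B \cap C$ has finite index in $G_2$; pulling it back through the first isomorphism and pushing it forward through the second yields isomorphic finite-index subgroups of $G_1$ and $G_3$.) We also use the fact, established in~\cite{CKZ}, that $\GG(P_3)$ and $\GG(P_4)$ are commensurable.

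First I would assume, for contradiction, that $\GG(P_4)$ is commensurable to $\GG(P_m)$ for some $m \ge 5$. Combining this with the commensurability $\GG(P_3) \sim \GG(P_4)$ and applying transitivity, we conclude that $\GG(P_3)$ is commensurable to $\GG(P_m)$. This contradicts Theorem~\ref{P3}, which asserts precisely that $\GG(P_3)$ is not commensurable to $\GG(P_m)$ for any $m \ge 5$. Hence no such $m$ exists, as claimed.

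I do not expect a genuine obstacle here, since the proof only invokes Theorem~\ref{P3}, the commensurability $\GG(P_3) \sim \GG(P_4)$ from~\cite{CKZ}, and the formal transitivity of commensurability. If one instead wanted a self-contained argument not relying on that input, one could imitate the proof of Theorem~\ref{P3} directly: here $\ti{\Gamma}_1 = P_{m-2}$ and $\ti{\Gamma}_2 = P_2$, so $\DD = P_{m-2} \times P_2$; after normalising so that $\CC$ lies in a fixed one of the two components of $\DD$, one writes out the edge equations (\ref{ee}) and vertex equations (\ref{ve1}), (\ref{ve2}) along a subpath of length three inside $\CC$ (which exists because $m \ge 5$) and derives $M_{22}(e_3) = 0$, contradicting (\ref{M0}). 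The only extra bookkeeping in that route is the presence of the degree-$2$ vertex in $\ti{\Gamma}_2 = P_2$; the transitivity argument avoids it and is the one I would use.
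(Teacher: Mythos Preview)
Your proof is correct and takes essentially the same approach as the paper: the paper's proof reads ``Follows immediately from Theorem \ref{P3} and the fact that $\GG(P_3)$ and $\GG(P_4)$ are commensurable, see \cite[Proposition 4.4]{CKZ},'' which is exactly your transitivity argument. Your additional sketch of a direct product-graph argument is unnecessary here but not incorrect in spirit.
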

	\begin{proof}
		Follows immediately from Theorem \ref{P3} and the fact that $\GG(P_3)$ and $\GG(P_4)$ are commensurable, see \cite[Proposition 4.4]{CKZ}.
	\end{proof}

	\section{Non-commensurability of RAAGs defined by paths and trees of diameter $4$}\label{sec:4}	
		
		In this section, we address the commensurability relations between the classes of RAAGs defined by paths and trees of diameter $4$. More precisely, we show that if $n>5$ is not congruent to $2$ modulo $4$, then $\GG(P_n)$ is not commensurable to any RAAG defined by a tree of diameter $4$.

		The general strategy to prove that two RAAGs defined by trees are not commensurable is common. First, we specialise the linear system of equations and inequalities given in Corollary \ref{cor:reductionLSE} to the case under consideration. We then find a (local) pattern in the product graph and prove that the absence of certain set of edges $S$ at a vertex implies the absence of other edges, see Lemma \ref{l2} and Figure \ref{fig:6}. We then determine a vertex $v$ for which the set of edges $S$ is missing, see Lemma \ref{tr}. This allows us to recursively remove edges using the identified local pattern and starting at $v$ until we remove enough edges to contradict the local surjectivity at a vertex (assured by Lemma \ref{DD}). This allows us to conclude that the system $S$ does not have integer solutions and hence the RAAGs are not commensurable.

		For our purposes, it will be convenient to encode finite trees of diameter four as follows. Let $T$ be any finite tree of diameter four.  Let $f$ be a path (without backtracking) of length four from one leaf of $T$ to another. By definition $f$ contains $5$ vertices and let $c_f\in V(T)$ be the middle vertex in $f$. It is immediate to see that the choice of the vertex $c=c_f$ does not depend on the choice of the path $f$ of length four. We call $c$ the \emph{center} of $T$. 
		
		Any leaf of $T$ connected to $c$ by an edge is called a \emph{hair vertex}. Vertices connected to $c$ by an edge which are not hair are called \emph{pivots}.  Any finite tree $T$ of diameter $4$ is uniquely defined by the number $q$ of hair vertices and by the number $k_i$ of pivots of a given degree $d_i+1$.  Hence we encode any finite tree of diameter $4$ as $T((d_1,k_1), \dots, (d_l,k_l);q)$. Here all $d_i$ and $k_i$ and $l$ are positive integers, $d_1 < d_2 < \ldots < d_l$, and $q$ is a non-negative integer; moreover, either $l \geq 2$ or $l=1$ and $k_1 \geq 2$, so that $T$ indeed has diameter $4$. See \cite{CKZ} for more details.

		\begin{theorem}\label{non-com}
			Let $m \geq 5$ and suppose that $m$ is not $2$ modulo $4$. Then $\GG(P_m)$ is not commensurable to any RAAG defined by a tree of diameter $4$. 
		\end{theorem}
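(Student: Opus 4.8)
Here is the plan. The whole argument runs through Corollary \ref{cor:reductionLSE}: if $\GG(P_m)$ were commensurable to $\GG(T)$ for a tree $T$ of diameter $4$, then the (disjunction of) linear system(s) $S(P_m,T)$ attached to the product graph $\DD$ of $P_{m-2}$ and $\ti T$ would have a non-negative integer solution, and our job is to show that it does not. I would begin by making $\DD$ explicit. Since $T$ has diameter $4$, its reduced graph $\ti T$ is the star $S_N$ on the centre $c$ and the $N\ge 2$ pivots $v_1,\dots,v_N$, so $\DD=P_{m-2}\times S_N$ splits into two components $\DD_1,\DD_2$, each a ``subdivided caterpillar'': walking along it one alternates between \emph{hub} vertices $(a_i,c)$ and \emph{leaf-group blocks} $\{(a_i,v_j):1\le j\le N\}$, a hub being joined to the $N$ vertices of each neighbouring block. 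Two features of the labels are decisive: $D_1=1$ at every vertex of $\DD$ (degrees are read in $P_m$, where every non-leaf has degree $2$), whereas $D_2=N+q-1$ at a hub and $D_2=d$ at a leaf-group vertex $(a_i,v_j)$ when $v_j$ has degree $d+1$ in $T$, with $q$ the number of hair vertices of $T$. One component, say $\DD_1$, has the hub $(a_1,c)$ as a boundary vertex; the other, $\DD_2$, has the leaf-group block over $a_1$ as a boundary. The reversal automorphism $a_i\mapsto a_{m-i}$ of $P_m$ interchanges $\DD_1$ and $\DD_2$ when $m$ is odd and preserves each of them when $m$ is even; so for odd $m$ one may assume $\CC\subseteq\DD_1$, while for even $m$ both inclusions must be excluded.

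The case $\CC\subseteq\DD_1$ (the only case when $m$ is odd) admits a short argument. Here $(a_1,c)\in\CC$, and local surjectivity of $\pi_2$ there forces every incident edge to lie in $\CC$; Equation (\ref{ve1}) at $(a_1,c)$, together with $D_1=1$, reads $\sum_j M_{11}(e_j)=(N+q-1)M_{12}(e_j)$ for every $j$, where $e_j=((a_1,c),(a_2,v_j))$, so $M_{12}(e_j)=K$ is the same for all $j$ with $K:=\bigl(\sum_j M_{11}(e_j)\bigr)/(N+q-1)$. Applying (\ref{ee}) and (\ref{ve2}) at the adjacent leaf-group block gives $M_{12}(f_j)=M_{12}(e_j)=K$ for the edges $f_j$ joining that block to the second hub $(a_3,c)$, which is interior as soon as $m\ge 5$; Equation (\ref{ve1}) at that hub then yields $M_{12}(f_j)+M_{12}(g_j)=K$, hence $M_{12}(g_j)=0$, for the edges $g_j$ leaving $(a_3,c)$ towards the next block. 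By (\ref{M0}) none of the $g_j$ lies in $E(\CC)$; but the $g_j$ are exactly the $\DD_1$-preimage of the edge $(a_3,a_4)$ of $P_{m-2}$, contradicting the surjectivity clause of $S(P_m,T)$. In the language of Lemma \ref{l2} this is the local pattern: absence of the inward edges at a hub propagates, across one leaf-group block, to absence of the outward edges at the next hub, and one quickly runs out of room.

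The heart of the matter is the remaining case $\CC\subseteq\DD_2$ (so $m$ even), and here the residue of $m$ modulo $4$ enters through the number $r=\tfrac m2-1$ of hubs of $\DD_2$. Writing the path as $L_0,H_1,L_1,\dots,H_r,L_r$ and, for a pivot $v_j$ of degree $d_j+1$, letting $A_{t,j}$ (resp.\ $B_{t,j}$) be the edge oriented from $H_t$ into $L_t$ (resp.\ from $H_{t+1}$ into $L_t$), the plan is as follows. First solve the $M_{12}$-labels: Equation (\ref{ve1}) at $H_t$ gives $M_{12}(B_{t-1,j})+M_{12}(A_{t,j})=K$ (a constant, by the computation above), the interior leaf-group equations give $M_{12}(A_{t,j})=M_{12}(B_{t,j})$, so all these labels are determined by $\beta_j:=M_{12}(B_{0,j})$ and alternate between $\beta_j$ and $K-\beta_j$; the boundary blocks $L_0,L_r$ contribute, via (\ref{ee}), (\ref{ve1}), (\ref{ve2}), the relations $d_j\mid\beta_j$, $d_j\mid M_{12}(A_{r,j})$, $M_{21}(B_{0,j})=M_{11}(B_{0,j})/d_j$, $M_{22}(B_{0,j})=\beta_j/d_j$, and symmetrically at $L_r$. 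Next, telescope the $M_{22}$-labels down the same path, using Equation (\ref{ve2}) at each hub (which gives $M_{22}(B_{t-1,j})+M_{22}(A_{t,j})=Y_t/(N+q-1)$ with $Y_t:=\sum_j M_{21}(A_{t,j})$), the leaf-group equations, and the boundary relations at $L_0$; this expresses each $M_{22}(A_{t,j})$, $M_{22}(B_{t,j})$ as an explicit affine combination of $\beta_j$, the partial sums $Y_1+\cdots+Y_t$, $K$ and $d_j$, and matching the value obtained at $L_r$ with the boundary relation there produces one scalar equation in $j$. When $r$ is odd, i.e.\ $m\equiv 0\,(\modu 4)$, the $\beta_j$-terms cancel and one is left, for \emph{every} pivot $v_j$, with $Y_1+\cdots+Y_r=c\,(N+q-1)K/d_j$ for a single constant $c=c(r)$; the left-hand side being independent of $j$, all pivots must have the same degree $d+1$, and then $Y_1=\sum_jM_{21}(B_{0,j})=\sum_jM_{11}(B_{0,j})/d=(N+q-1)K/d$, which forces $M_{22}(B_{1,j})=0$ for all $j$, whence the $A_{1,j}$ are all absent and surjectivity onto the edge $(a_2,a_3)$ of $P_{m-2}$ fails. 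When $r$ is even ($m\equiv 2\,(\modu 4)$) the analogous scalar equation still contains $\beta_j$, which can absorb the pivot-degree dependence, so no contradiction arises --- consistently with the fact, established in Section \ref{sec:5} and recorded in Theorem \ref{th2}, that $\GG(P_{4k+2})$ \emph{is} commensurable to $\GG(T_{k,k+1})$.

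The step I expect to be genuinely delicate is this last case: controlling the alternation of the $M_{12}$-labels and the telescoping of the $M_{22}$-labels through an arbitrary number of hubs, carrying out the cancellation of the $\beta_j$-terms uniformly in the pivot data and in $q$, and pinning down the vertex and the set of missing edges (this is Lemma \ref{tr}) from which the edge-deletion argument is launched. The structural set-up, the reduction, and the $\DD_1$ case are, by contrast, routine once the apparatus of Section \ref{sec:2} is in place.
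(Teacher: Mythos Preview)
Your plan is sound and, once the bookkeeping is carried out, yields a correct proof; the telescoping identities you sketch do hold (the key point being that $M_{21}(A_{t,j}^{-1})=M_{21}(B_{t,j}^{-1})$ at an interior leaf-group vertex gives, via~(\ref{ee}), exactly $M_{12}(A_{t,j})=M_{12}(B_{t,j})$, so the alternation goes through). One small slip: in the $\DD_2$ endgame it is $B_{1,j}$ (not $A_{1,j}$) whose $M_{22}$-label vanishes, and the edge of $\ti\Gamma_1$ that loses surjectivity is $(a_3,a_4)$, not $(a_2,a_3)$. Also, your closing remark about Lemma~\ref{tr} and an ``edge-deletion argument'' is unnecessary for your own approach: once all $d_j$ are equal you get the contradiction directly, with no propagation step.

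Your route is genuinely different from the paper's. The paper first invokes the commensurability classification of \cite{CKZ} to replace an arbitrary diameter-$4$ tree by one with $q=0$ and \emph{pairwise distinct} pivot degrees $m_1>\dots>m_k$; this strict ordering is then used in Lemmas~\ref{l2} and~\ref{tr} through a positivity term $D=\sum (m_j-m_k)(\cdots)\ge 0$ to force specific edges out of $\CC$, and the contradiction is obtained by iterating Lemma~\ref{l2} along the caterpillar. You instead keep $T$ arbitrary, solve the $M_{12}$-labels in closed form, telescope the $M_{22}$-labels, and let the boundary matching at $L_r$ do the work: for $r$ odd the $\beta_j$-terms cancel and the resulting identity $S_r=(s{+}1)(N{+}q{-}1)K/d_j$ forces all pivot degrees equal, from which a single label computation finishes. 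The paper's argument is more modular (Lemma~\ref{l2} is reusable and visually transparent), while yours is more direct, avoids the reduction to canonical $T$, and makes the role of the parity of $r$ completely explicit.
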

		\begin{proof}
			Let $T$ be a tree of diameter $4$, and $m \geq 5$, $m$ is not $2$ modulo $4$. We need to prove that $\GG(P_m)$ is not commensurable to $\GG(T)$. If $T=P_4$, then the claim follows from Theorem \ref{P3}. Hence, without loss of generality, we can assume that $\GG(T)$ is not commensurable to $\GG(P_4)$. By \cite{CKZ}, this implies that $\GG(T)$ is commensurable to $\GG(T((m_k,1),(m_{k-1},1),\ldots,(m_1,1);0))$ for some $m_1 > m_2 > \ldots > m_k \geq 1$. Here  $T((m_k,1),(m_{k-1},1),\ldots,(m_1,1);0)$ is the tree with central vertex $c$ of degree $k$, connected to pivot vertices $b_1, \ldots,b_k$, and each $b_i$ has degree $m_i+1$ and is connected to $m_i$ degree one vertices, see Figure \ref{fig:30}.
			
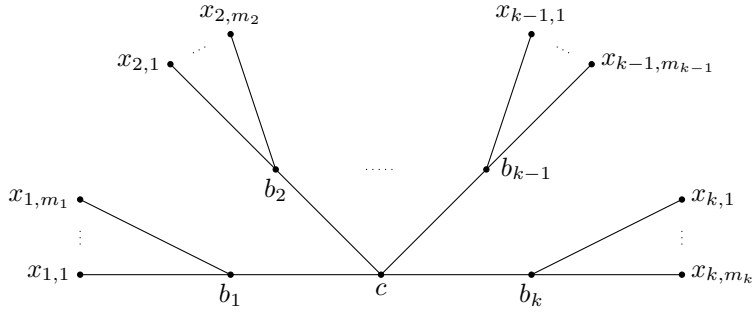
\begin{figure}[!h]	
\begin{tikzpicture}

\draw (1,0) -- (9,0);

\filldraw (1,0) circle (1pt)  node[align=left, left] {$x_{1,1}$};
\filldraw (3,0) circle (1pt)  node[align=left, below] {$b_1$};
\filldraw (5,0) circle (1pt)  node[align=left, below] {$c$};
\filldraw (7,0) circle (1pt)  node[align=left, below] {$b_k$};
\filldraw (9,0) circle (1pt)  node[align=right,right] {$x_{k,m_k}$};
\filldraw (1,1) circle (1pt)  node[align=left, left] {$x_{1,m_1}$};
\filldraw (9,1) circle (1pt)  node[align=right, right] {$x_{k,1}$};

\draw[dotted] (1,0.4)--(1,0.6) (9,0.4)--(9,0.6);

\draw (3,0) -- (1,1) (7,0) -- (9,1);

\draw (5,0) -- (2.2,2.8);
\filldraw (3.6,1.4) circle (1pt)  node[align=left, below] {$b_2$};
\filldraw (2.2,2.8) circle (1pt)  node[align=left, left] {$x_{2,1}$};
\filldraw (3,3.2) circle (1pt)  node[align=left, above] {$x_{2,m_2}$};

\draw (3.6,1.4)--(3,3.2);
\draw[dotted] (2.5,2.95)--(2.7,3.05);

\draw (5,0) -- (7.8,2.8);
\filldraw (6.4,1.4) circle (1pt)  node[align=right, below, right=2pt] {$b_{k-1}$};
\filldraw (7.8,2.8) circle (1pt)  node[align=right,right] {$x_{k-1,m_{k-1}}$};
\filldraw (7,3.2) circle (1pt)  node[align=right, above] {$x_{k-1,1}$};

\draw (6.4,1.4)--(7,3.2);
\draw[dotted] (7.5,2.95)--(7.3,3.05);

\draw[dotted] (4.8,1.4)--(5.2,1.4);

\end{tikzpicture}
	  \caption{\small Tree $T((m_k,1),(m_{k-1},1),\ldots,(m_1,1); 0)$ for $m_1>m_2>\ldots >m_{k-1}>m_k>0$.} \label{fig:30}
	 \end{figure}

Thus we can assume that $T=T((m_k,1),(m_{k-1},1),\ldots,(m_1,1);0)$ for some $m_1 > m_2 > \ldots > m_k \geq 1$, $k \geq 2$. So, in the above notation, we have that $\Gamma_1=P_m$, $m \geq 5$, with the vertices $a_0, a_1, \ldots, a_{m-1}, a_m$, and $\Gamma_2=T$. It follows that $\ti{\Gamma}_1=P_{m-2}$, with the vertices $a_1, \ldots, a_{m-1}$, and $\ti{\Gamma}_2=\Delta$, where $\Delta$ is a tree of diameter $2$ with central vertex $c$ of degree $k$, connected to the degree one (in $\Delta$) vertices $b_1, \ldots, b_k$. So $a_1, \ldots, a_{m-1}$ are those canonical generators of $\GG(P_m)$ which have non-abelian centralizers, and $c, b_1, \ldots, b_k$ are those canonical generators of $\GG(T)$ which have non-abelian centralizers.

We have that $\DD=P_{m-2} \times \Delta$ is a graph with vertices of the form $(a_i, c)$ and $(a_i, b_j)$, where $i=1, \ldots, m-1$, $j=1,\ldots, k$, and the following edges: for $i=1,\ldots,m-2$ the vertex $(a_i, c)$ is connected to the vertices $(a_{i+1},b_j)$, for all $j=1, \ldots, k$, and for $i=2,\ldots,m-1$ the vertex $(a_i,c)$  is connected to the vertices $(a_{i-1},b_j)$, for all $j=1, \ldots, k$. We will denote the vertex $(a_i, c)$ by $(i,c)$, and the vertex $(a_i,b_j)$ by $(i,j)$ for shortness, for $i=1,\ldots,m-1$, $j=1,\ldots,k$, see Figure \ref{fig:3}.

\begin{figure}[!h]
	\small 
	\begin{tikzpicture}[scale=0.8]
	
	\draw (1,0) -- (7,0);
	
	\filldraw (1,0) circle (1pt)  node[align=left, below] {$a_1$};
	\filldraw (2,0) circle (1pt)  node[align=left, below] {$a_2$};
	\filldraw (3,0) circle (1pt)  node[align=left, below] {$a_3$};
	\filldraw (4,0) circle (1pt)  node[align=left, below] {$a_4$};
	\filldraw (5,0) circle (1pt)  node[align=left, below] {$a_5$};
	\filldraw (6,0) circle (1pt)  node[align=left, below] {$a_{6}$};
	\filldraw (7,0) circle (1pt)  node[align=left, below] {$a_{7}$};

	\filldraw (0,1) circle (1pt)  node[align=right, right] {$b_k$};
	\filldraw (0,2) circle (1pt)  node[align=right, right] {$b_3$};
	\filldraw (0,3) circle (1pt)  node[align=right, right] {$b_2$};
	\filldraw (0,4) circle (1pt)  node[align=right, right] {$b_1$};
	\draw[dotted] (0,1.4) -- (0, 1.6);
	\filldraw (-1,5) circle (1pt)  node[align=left, left] {$c$};
	\draw (-1,5) -- (0,1) (-1,5) -- (0,2) (-1,5) -- (0,3) (-1,5) -- (0,4);

	\filldraw (1,1) circle (0.5pt);
	\filldraw (1,2) circle (0.5pt); 
	\filldraw (1,3) circle (0.5pt);
	\filldraw (1,4) circle (0.5pt);
	\filldraw (1,5) circle (1pt);
	\draw[dotted] (1,1.4) -- (1, 1.6);
	
	\filldraw (2,1) circle (1pt);
	\filldraw (2,2) circle (1pt);
	\filldraw (2,3) circle (1pt);
	\filldraw (2,4) circle (1pt);
	\filldraw (2,5) circle (0.5pt);
	\draw[dotted] (2,1.4) -- (2, 1.6);
	
	\filldraw (3,1) circle (0.5pt);
	\filldraw (3,2) circle (0.5pt);
	\filldraw (3,3) circle (0.5pt);
	\filldraw (3,4) circle (0.5pt);
	\filldraw (3,5) circle (1pt);
	\draw[dotted] (3,1.4) -- (3, 1.6);
	
	\filldraw (4,1) circle (1pt);
	\filldraw (4,2) circle (1pt);
	\filldraw (4,3) circle (1pt);
	\filldraw (4,4) circle (1pt);
	\filldraw (4,5) circle (0.5pt);
	\draw[dotted] (4,1.4) -- (4, 1.6);
	
	\filldraw (5,1) circle (0.5pt);
	\filldraw (5,2) circle (0.5pt);
	\filldraw (5,3) circle (0.5pt);
	\filldraw (5,4) circle (0.5pt);
	\filldraw (5,5) circle (1pt);
	\draw[dotted] (5,1.4) -- (5, 1.6);
	
	\draw (1,5)--(2,4) (1,5)--(2,3) (1,5)--(2,2) (1,5)--(2,1);
	\draw (2,4)--(3,5) (2,3)--(3,5) (2,2)--(3,5) (2,1)--(3,5);
	\draw (3,5)--(4,4) (3,5)--(4,3) (3,5)--(4,2) (3,5)--(4,1);
	\draw (4,4)--(5,5) (4,3)--(5,5) (4,2)--(5,5) (4,1)--(5,5);
	
	\draw[dotted](5.5, 3)--(5.5,3.2);
	
	\draw[dotted] (6,1.4) -- (6, 1.6);

	\filldraw (6,1) circle (1pt); 
	\filldraw (6,2) circle (1pt); 
	\filldraw (6,3) circle (1pt); 
	\filldraw (6,4) circle (1pt);
	\filldraw (6,5) circle (0.5pt);
	\draw[dotted] (7,1.4) -- (7, 1.6);

	\filldraw (7,1) circle (0.5pt); 
	\filldraw (7,2) circle (0.5pt);
	\filldraw (7,3) circle (0.5pt);
	\filldraw (7,4) circle (0.5pt);
	\filldraw (7,5) circle (1pt);
	\draw (5,5)--(6,4) (5,5)--(6,3) (5,5)--(6,2) (5,5)--(6,1);
	\draw (6,4)--(7,5) (6,3)--(7,5) (6,2)--(7,5) (6,1)--(7,5);

	\draw[color=blue] (2,5)--(3,4) (2,5)--(3,3) (2,5)--(3,2) (2,5)--(3,1);
	\draw[color=blue] (1,4)--(2,5) (1,3)--(2,5) (1,2)--(2,5) (1,1)--(2,5);
	\draw[color=blue] (4,5)--(5,4) (4,5)--(5,3) (4,5)--(5,2) (4,5)--(5,1);
	\draw[color=blue] (3,4)--(4,5) (3,3)--(4,5) (3,2)--(4,5) (3,1)--(4,5);
	\draw[color=blue] (6,5)--(7,4) (6,5)--(7,3) (6,5)--(7,2) (6,5)--(7,1);
	\draw[color=blue] (5,4)--(6,5) (5,3)--(6,5) (5,2)--(6,5) (5,1)--(6,5);

	\end{tikzpicture}
	\hspace{0.2cm}
	\begin{tikzpicture}[scale=0.8]
	
	\draw (1,0) -- (6,0);
	
	\filldraw (1,0) circle (1pt)  node[align=left, below] {$a_1$};
	\filldraw (2,0) circle (1pt)  node[align=left, below] {$a_2$};
	\filldraw (3,0) circle (1pt)  node[align=left, below] {$a_3$};
	\filldraw (4,0) circle (1pt)  node[align=left, below] {$a_4$};
	\filldraw (5,0) circle (1pt)  node[align=left, below] {$a_{5}$};
	\filldraw (6,0) circle (1pt)  node[align=left, below] {$a_{6}$};

	\filldraw (0,1) circle (1pt)  node[align=right, right] {$b_k$};
	\filldraw (0,2) circle (1pt)  node[align=right, right] {$b_3$};
	\filldraw (0,3) circle (1pt)  node[align=right, right] {$b_2$};
	\filldraw (0,4) circle (1pt)  node[align=right, right] {$b_1$};
	\draw[dotted] (0,1.4) -- (0, 1.6);
	\filldraw (-1,5) circle (1pt)  node[align=left, left] {$c$};
	\draw (-1,5) -- (0,1) (-1,5) -- (0,2) (-1,5) -- (0,3) (-1,5) -- (0,4);

	\filldraw (1,1) circle (0.5pt); 
	\filldraw (1,2) circle (0.5pt); 
	\filldraw (1,3) circle (0.5pt); 
	\filldraw (1,4) circle (0.5pt); 
	\filldraw (1,5) circle (1pt);
	\draw[dotted] (1,1.4) -- (1, 1.6);
	
	\filldraw (2,1) circle (1pt);
	\filldraw (2,2) circle (1pt);
	\filldraw (2,3) circle (1pt);
	\filldraw (2,4) circle (1pt);
	\filldraw (2,5) circle (0.5pt);
	\draw[dotted] (2,1.4) -- (2, 1.6);
	
	\filldraw (3,1) circle (0.5pt);
	\filldraw (3,2) circle (0.5pt);
	\filldraw (3,3) circle (0.5pt);
	\filldraw (3,4) circle (0.5pt);
	\filldraw (3,5) circle (1pt);
	\draw[dotted] (3,1.4) -- (3, 1.6);
	
	\filldraw (4,1) circle (1pt); 
	\filldraw (4,2) circle (1pt);
	\filldraw (4,3) circle (1pt);
	\filldraw (4,4) circle (1pt); 
	\filldraw (4,5) circle (0.5pt);
	\draw[dotted] (4,1.4) -- (4, 1.6);

	\draw (1,5)--(2,4) (1,5)--(2,3) (1,5)--(2,2) (1,5)--(2,1);
	\draw (2,4)--(3,5) (2,3)--(3,5) (2,2)--(3,5) (2,1)--(3,5);
	\draw (3,5)--(4,4) (3,5)--(4,3) (3,5)--(4,2) (3,5)--(4,1);
	\draw (4,4)--(5,5) (4,3)--(5,5) (4,2)--(5,5) (4,1)--(5,5);
	
	\draw[dotted](4.5, 3)--(4.5,3.2);
	
	\filldraw (5,1) circle (0.5pt); 
	\filldraw (5,2) circle (0.5pt); 
	\filldraw (5,3) circle (0.5pt); 
	\filldraw (5,4) circle (0.5pt); 
	\filldraw (5,5) circle (1pt);
	\draw[dotted] (5,1.4) -- (5, 1.6);

	\filldraw (6,1) circle (1pt); 
	\filldraw (6,2) circle (1pt); 
	\filldraw (6,3) circle (1pt); 
	\filldraw (6,4) circle (1pt); 
	\filldraw (6,5) circle (0.5pt);
	\draw[dotted] (6,1.4) -- (6, 1.6);

	\draw (5,5)--(6,4) (5,5)--(6,3) (5,5)--(6,2) (5,5)--(6,1);

	\draw[color=blue] (2,5)--(3,4) (2,5)--(3,3) (2,5)--(3,2) (2,5)--(3,1);
	\draw[color=blue] (1,4)--(2,5) (1,3)--(2,5) (1,2)--(2,5) (1,1)--(2,5);
	\draw[color=blue] (3,4)--(4,5) (3,3)--(4,5) (3,2)--(4,5) (3,1)--(4,5);
	\draw[color=blue] (5,4)--(6,5) (5,3)--(6,5) (5,2)--(6,5) (5,1)--(6,5);
	
	\draw[color=blue] (4,5)--(5,4) (4,5)--(5,3) (4,5)--(5,2) (4,5)--(5,1);
	\end{tikzpicture}
	
	\caption{\small Graph $\mathfrak{D}$ in the proof of Theorem \ref{non-com} for $m=8$ (on the left) and $m=7$ (on the right).  Edges of $\mathfrak{D}_1$ are black and edges of $\mathfrak{D}_2$ are blue.} \label{fig:3}
\end{figure}
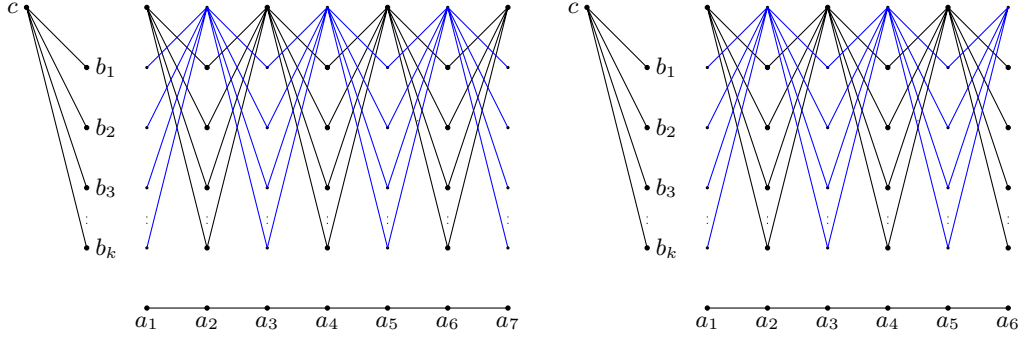

By Lemma \ref{DD}, the graph $\DD$ has two connected components, and $\CC$ is a connected subgraph of one of them. Denote the one which contains the vertex $(1,c)$ by $\DD_1$ and the other one by $\DD_2$. Then $\DD_1$ contains all the vertices of the form $(i,c)$, where $i$ is odd, $1 \leq i \leq m-1$, and $(i',j)$, where $i'$ is even, $1 \leq i' \leq m-1$, $j=1, \ldots, k$, as well as all the incident edges, and $\DD_2$ contains all the vertices of the form $(i,c)$, where $i$ is even, $1 \leq i \leq m-1$, and $(i',j)$, where $i'$ is odd, $1 \leq i' \leq m-1$, $j=1, \ldots, k$, as well as all the incident edges. 

\begin{rem} \label{rem:f3}
Note that if $m$ is odd, then the automorphism of $P_m$ which reverses the order of its vertices {\rm(}it also induces an automorphism of $\GG(P_m)${\rm)} switches the components $\DD_1$ and $\DD_2$, which are in this case isomorphic graphs, so, after applying this automorphism of $\GG(P_m)$ if necessary, without loss of generality, we can assume that $\CC$ lies in a particular component of $\DD$.  However, if $m$ is even {\rm(}i.e., $m$ is $0$ modulo $4$ in our case{\rm)}, then $\DD_1$ and $\DD_2$ are not isomorphic graphs, and we should consider two cases, depending on whether $\CC$ lies in $\DD_1$ or $\DD_2$, see {\rm Figure \ref{fig:3}}.
\end{rem}

	Denote by $e_{l,c}^{i,j}$ the (oriented) edge of $\DD$ beginning in the vertex $(l, c)$ and ending in the vertex $(i,j)$, for all  $i,l=1,\ldots,m$, such that $|i-l|=1$, and $j=1,\ldots,k$. This is an edge of $\DD_1$ if $l$ is odd and $i$ is even, and of $\DD_2$ if $l$ is even and $i$ is odd. Denote also by $e_{i,j}^{l,c}=(e_{l,c}^{i,j})^{-1}$ the inverse edge, beginning in $(i,j)$ and ending in $(l,c)$.
	
	Note that, in the notation of Lemma \ref{VE}, we have $D_1=1$ for all vertices of $\DD$, and $D_2=k-1$ for the vertices $(i,c)$ of $\DD$, $i=1,\ldots,m-1$, and $D_2=m_j-1$ for the vertices $(i,j)$ of $\DD$, $i=1,\ldots,m-1$, $j=1, \ldots,k$.

	In our case the equations of Lemma \ref{VE} and Equation (\ref{R}) have the following form. For a vertex $w=(1,j)$,  where $j=1,\ldots,k$, which has degree $1$, we have 
	$$
	R_1(w)=M_{11}(e_{1,j}^{2,c})=m_jM_{12}(e_{1,j}^{2,c}), \quad  R_2(w)=M_{21}(e_{1,j}^{2,c})=m_jM_{22}(e_{1,j}^{2,c}). 
	$$
	For a vertex $w=(i,j)$ of $\DD$, where $1<i<m-1$, $j=1, \ldots, k$, which has degree 2, we have
	$$
	\begin{array}{l}
	R_1(w)=M_{11}(e_{i,j}^{i-1,c})=M_{11}(e_{i,j}^{i+1,c})=m_j(M_{12}(e_{i,j}^{i-1,c})+M_{12}(e_{i,j}^{i+1,c})),\\
	R_2(w)=M_{21}(e_{i,j}^{i-1,c})=M_{21}(e_{i,j}^{i+1,c})=m_j(M_{22}(e_{i,j}^{i-1,c})+M_{22}(e_{i,j}^{i+1,c})).
	\end{array}
	$$
	For a vertex $w=(m-1,j)$ of $\DD$, where $j=1, \ldots, k$, which has degree 1, we have
	$$
	R_1(w)=M_{11}(e_{m-1,j}^{m-2,c})=m_jM_{12}(e_{m-1,j}^{m-2,c}), \quad  R_2(w)=M_{21}(e_{m-1,j}^{m-2,c})=m_jM_{22}(e_{m-1,j}^{m-2,c}). 
	$$
	We refer the reader to Figure \ref{fig:4} for notation.
	
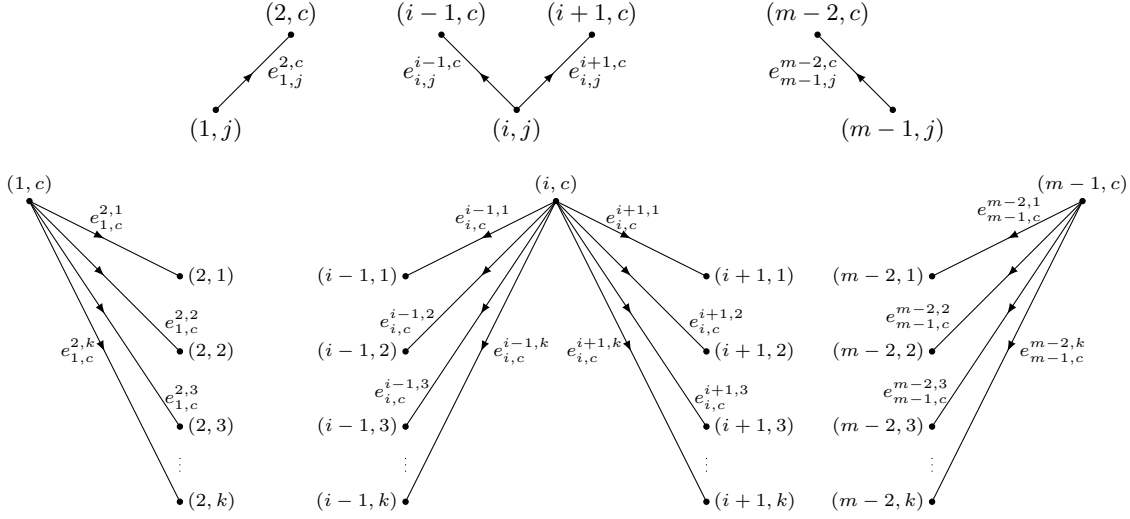
\begin{figure}[!h]
\begin{tikzpicture}

\small

\filldraw (1,4) circle (1pt) node[align=left, below] {$(1,j)$};
\filldraw (2,5) circle (1pt) node[align=left, above] {$(2,c)$};
\draw[middlearrow={latex}] (1,4) -- (2,5)  node [midway, right] {$\: e_{1,j}^{2,c}$};

\filldraw (5,4) circle (1pt) node[align=left, below] {$(i,j)$};
\filldraw (6,5) circle (1pt) node[align=left, above] {$(i+1,c)$};
\filldraw (4,5) circle (1pt) node[align=left, above] {$(i-1,c)$};
\draw[middlearrow={latex}] (5,4) -- (4,5)  node [midway,left=3pt] {$\: e_{i,j}^{i-1,c}$};
\draw[middlearrow={latex}] (5,4) -- (6,5)  node [midway, right] {$\: e_{i,j}^{i+1,c}$};

\filldraw (10,4) circle (1pt) node[align=left, below] {$(m-1,j)$};
\filldraw (9,5) circle (1pt) node[align=left, above] {$(m-2,c)$};
\draw[middlearrow={latex}] (10,4) -- (9,5)  node [midway,left=3pt] {$\: e_{m-1,j}^{m-2,c}$};

\end{tikzpicture}

\vspace{0.2 cm}

\begin{tikzpicture}

\scriptsize
\filldraw (0,5) circle (1pt) node[align=right, above] {$(1,c)$};
\filldraw (2,1) circle (1pt) node[align=right, right] {$(2,k)$};
\filldraw (2,2) circle (1pt) node[align=right, right] {$(2,3)$};
\filldraw (2,3) circle (1pt) node[align=right, right] {$(2,2)$};
\filldraw (2,4) circle (1pt) node[align=right, right] {$(2,1)$};
\draw[dotted] (2,1.4) -- (2, 1.6);
\draw[middlearrow={latex}] (0,5) -- (2,1)  node [midway, left] {$\: e_{1,c}^{2,k}$};
\draw[middlearrow={latex}] (0,5) -- (2,2)  node [above=3pt] {$\: e_{1,c}^{2,3}$};
\draw[middlearrow={latex}] (0,5) -- (2,3)  node [above=3pt] {$\: e_{1,c}^{2,2}$};
\draw[middlearrow={latex}] (0,5) -- (2,4)  node [midway, right, above] {$\: e_{1,c}^{2,1}$};

\filldraw (7,5) circle (1pt) node[align=right, above] {$(i,c)$};

\filldraw (9,1) circle (1pt) node[align=right, right] {$(i+1,k)$};
\filldraw (9,2) circle (1pt) node[align=right, right] {$(i+1,3)$};
\filldraw (9,3) circle (1pt) node[align=right, right] {$(i+1,2)$};
\filldraw (9,4) circle (1pt) node[align=right, right] {$(i+1,1)$};
\draw[dotted] (9,1.4) -- (9, 1.6);

\filldraw (5,1) circle (1pt) node[align=left, left] {$(i-1,k)$};
\filldraw (5,2) circle (1pt) node[align=left, left] {$(i-1,3)$};
\filldraw (5,3) circle (1pt) node[align=left, left] {$(i-1,2)$};
\filldraw (5,4) circle (1pt) node[align=left, left] {$(i-1,1)$};
\draw[dotted] (5,1.4) -- (5, 1.6);

\draw[middlearrow={latex}] (7,5) -- (9,1)  node [midway, left] {$\: e_{i,c}^{i+1,k}$};
\draw[middlearrow={latex}] (7,5) -- (9,2)  node [right=5pt, above=3pt] {$\: e_{i,c}^{i+1,3}$};
\draw[middlearrow={latex}] (7,5) -- (9,3)  node [right=3pt, above=3pt] {$\: e_{i,c}^{i+1,2}$};
\draw[middlearrow={latex}] (7,5) -- (9,4)  node [midway, right, above] {$\: e_{i,c}^{i+1,1}$};

\draw[middlearrow={latex}] (7,5) -- (5,1)  node [midway, right] {$\: e_{i,c}^{i-1,k}$};
\draw[middlearrow={latex}] (7,5) -- (5,2)  node [left=2pt, above=5pt] {$\: e_{i,c}^{i-1,3}$};
\draw[middlearrow={latex}] (7,5) -- (5,3)  node [above=3pt] {$\: e_{i,c}^{i-1,2}$};
\draw[middlearrow={latex}] (7,5) -- (5,4)  node [midway, right, above] {$\: e_{i,c}^{i-1,1}$};

\filldraw (14,5) circle (1pt) node[align=right, above] {$(m-1,c)$};

\filldraw (12,1) circle (1pt) node[align=left, left] {$(m-2,k)$};
\filldraw (12,2) circle (1pt) node[align=left, left] {$(m-2,3)$};
\filldraw (12,3) circle (1pt) node[align=left, left] {$(m-2,2)$};
\filldraw (12,4) circle (1pt) node[align=left, left] {$(m-2,1)$};
\draw[dotted] (12,1.4) -- (12, 1.6);

\draw[middlearrow={latex}] (14,5) -- (12,1)  node [midway, right] {$\: e_{m-1,c}^{m-2,k}$};
\draw[middlearrow={latex}] (14,5) -- (12,2)  node [left=7pt, above=5pt] {$\: e_{m-1,c}^{m-2,3}$};
\draw[middlearrow={latex}] (14,5) -- (12,3)  node [left=6pt, above=5pt] {$\: e_{m-1,c}^{m-2,2}$};
\draw[middlearrow={latex}] (14,5) -- (12,4)  node [midway, right, above=3pt] {$\: e_{m-1,c}^{m-2,1}$};
\end{tikzpicture}
	  \caption{\small Vertex stars of the graph $\mathfrak{D}$ in the proof of Theorem \ref{non-com}.} \label{fig:4}
	\end{figure}

	For a vertex $w=(1,c)$ of $\DD$, which has degree $k$, we have
$$
	\begin{array}{c}
	R_1(w)=\sum_{j=1}^k M_{11}(e_{1,c}^{2,j})=(k-1)M_{12}(e_{1,c}^{2,j'}), \quad j'=1,\ldots,k, 
	\\
R_2(w)=\sum_{j=1}^k M_{21}(e_{1,c}^{2,j})=(k-1)M_{22}(e_{1,c}^{2,j'}), \quad j'=1,\ldots,k. 
\end{array}
$$
	For a vertex $w=(i,c)$ of $\DD$, where $1 < i < m-1$, which has degree $2k$, we have, for all $j'=1, \ldots, k$, 
	\begin{equation*}
		R_1(w)=\sum_{j=1}^k M_{11}(e_{i,c}^{i-1,j})=\sum_{j=1}^k M_{11}(e_{i,c}^{i+1,j})=(k-1)(M_{12}(e_{i,c}^{i-1,j'}) + M_{12}(e_{i,c}^{i+1,j'})), 
	\end{equation*}
	$$
	R_2(w)= \sum_{j=1}^k M_{21}(e_{i,c}^{i-1,j})=\sum_{j=1}^k M_{21}(e_{i,c}^{i+1,j})=(k-1)(M_{22}(e_{i,c}^{i-1,j'}) + M_{22}(e_{i,c}^{i+1,j'})).
	$$
	Finally, for a vertex $w=(m-1,c)$ of $\DD$, which has degree $k$, we have 
	$$
	\begin{array}{c}
	R_1(w)=\sum_{j=1}^k M_{11}(e_{m-1,c}^{m-2,j})=(k-1)M_{12}(e_{m-1,c}^{m-2,j'}), \quad j'=1,\ldots,k, \\
	R_2(w)=\sum_{j=1}^k M_{21}(e_{m-1,c}^{m-2,j})=(k-1)M_{22}(e_{m-1,c}^{m-2,j'}), \quad j'=1,\ldots,k. 
	\end{array}
	$$

	Recall that $m_1>m_2> \ldots >m_k \geq 1$, $k \geq 2$ and $m \geq 5$. We will suppose that $s=1,2$ is such that $\CC$ lies in $\DD_s$.
	
	\begin{lemma}\label{l2}
		Let $v=(i,c)$ be a vertex of $\DD_s$, where $1 \leq i \leq m-2$. Suppose that the edge $e_{i,c}^{i-1,k}$ is not in $\CC$, where $i > 1$. Then the edges $e_{i,c}^{i+1,j'}$, $j'=1, \ldots, k-1$, are not in $\CC$. 
		
		If, in addition, $i \leq m-3$, then the edges $e_{i+1,j'}^{i+2,c}$ are not in $\CC$, where $j'=1, \ldots, k-1$. Moreover, if $i \leq m-4$, then also the edge $e_{i+2,c}^{i+3,k}$ is not in $\CC$. 
	\end{lemma}
	\begin{proof}
	We will prove the lemma in the case $1 \leq i \leq m-4$; the cases $i=m-3$ and $i=m-2$ are analogous. To abbreviate the notation, denote the edges $e_{i,c}^{i-1,j}$ by $e_j$ (if $i \geq 2$), the edges $e_{i,c}^{i+1,j}$ by $f_j$, the edges $e_{i+2,c}^{i+1,j}$ by $h_j$, and the edges $e_{i+2,c}^{i+3,j}$ by $p_j$, for all $j=1, \ldots, k$. So we know that $e_k \notin \CC$, see Figure \ref{fig:6}.
	
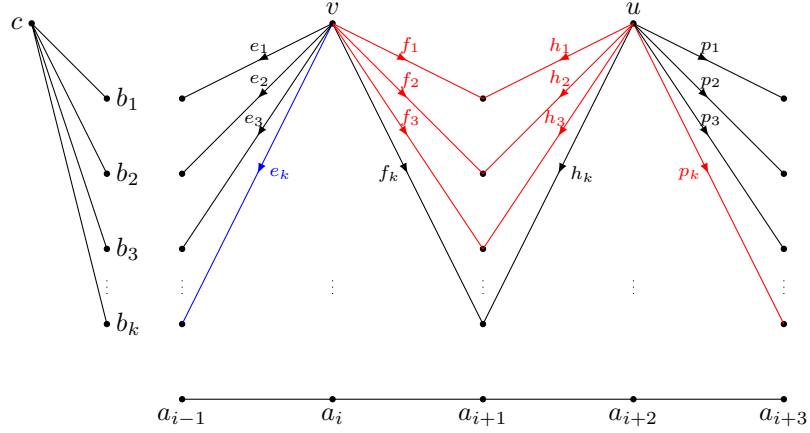
\begin{figure}[!h]
\begin{tikzpicture}

\draw (1,0) -- (9,0);

\filldraw (1,0) circle (1pt)  node[align=left, below] {$a_{i-1}$};
\filldraw (3,0) circle (1pt)  node[align=left, below] {$a_i$};
\filldraw (5,0) circle (1pt)  node[align=left, below] {$a_{i+1}$};
\filldraw (7,0) circle (1pt)  node[align=left, below] {$a_{i+2}$};
\filldraw (9,0) circle (1pt)  node[align=left, below] {$a_{i+3}$};

\filldraw (0,1) circle (1pt)  node[align=right, right] {$b_k$};
\filldraw (0,2) circle (1pt)  node[align=right, right] {$b_3$};
\filldraw (0,3) circle (1pt)  node[align=right, right] {$b_2$};
\filldraw (0,4) circle (1pt)  node[align=right, right] {$b_1$};
\draw[dotted] (0,1.4) -- (0, 1.6);
\filldraw (-1,5) circle (1pt)  node[align=left, left] {$c$};
\draw (-1,5) -- (0,1) (-1,5) -- (0,2) (-1,5) -- (0,3) (-1,5) -- (0,4);

\filldraw (1,1) circle (1pt); 
\filldraw (1,2) circle (1pt); 
\filldraw (1,3) circle (1pt); 
\filldraw (1,4) circle (1pt); 

\draw[dotted] (1,1.4) -- (1, 1.6);

\filldraw (3,5) circle (1pt)  node[align=right, above] {$v$};
\draw[dotted] (3,1.4) -- (3, 1.6);

\filldraw (5,1) circle (1pt); 
\filldraw (5,2) circle (1pt); 
\filldraw (5,3) circle (1pt); 
\filldraw (5,4) circle (1pt); 

\draw[dotted] (5,1.4) -- (5, 1.6);

\filldraw (7,5) circle (1pt)  node[align=right, above] {$u$};
\draw[dotted] (7,1.4) -- (7, 1.6);

\filldraw (9,1) circle (1pt); 
\filldraw (9,2) circle (1pt); 
\filldraw (9,3) circle (1pt); 
\filldraw (9,4) circle (1pt); 

\draw[dotted] (9,1.4) -- (9, 1.6);

\scriptsize

\draw[middlearrow={latex}] (3,5) -- (5,1)  node [midway, left] {$\: f_k$};
\draw[color=red,middlearrow={latex}] (3,5) -- (5,2)  node [midway,above=1pt] {$\: f_3$};
\draw[color=red,middlearrow={latex}] (3,5) -- (5,3)  node [midway,above=1pt] {$\: f_2$};
\draw[color=red,middlearrow={latex}] (3,5) -- (5,4)  node [midway, right, above] {$\: f_1$};

\draw[color=blue,middlearrow={latex}] (3,5) -- (1,1)  node [midway, right] {$\: e_k$};
\draw[middlearrow={latex}] (3,5) -- (1,2)  node [midway,left=2pt, above=1pt] {$\: e_3$};
\draw[middlearrow={latex}] (3,5) -- (1,3)  node [midway,above=2pt] {$\: e_2$};
\draw[middlearrow={latex}] (3,5) -- (1,4)  node [midway, right, above] {$\: e_1$};

\draw[color=red,middlearrow={latex}] (7,5) -- (9,1)  node [midway, left] {$\: p_k$};
\draw[middlearrow={latex}] (7,5) -- (9,2)  node [midway,above=1pt] {$\: p_3$};
\draw[middlearrow={latex}] (7,5) -- (9,3)  node [midway,above=1pt] {$\: p_2$};
\draw[middlearrow={latex}] (7,5) -- (9,4)  node [midway, right, above] {$\: p_1$};

\draw[middlearrow={latex}] (7,5) -- (5,1)  node [midway, right] {$\: h_k$};
\draw[color=red,middlearrow={latex}] (7,5) -- (5,2)  node [midway,left=2pt, above=1pt] {$\: h_3$};
\draw[color=red,middlearrow={latex}] (7,5) -- (5,3)  node [midway,above=2pt] {$\: h_2$};
\draw[color=red,middlearrow={latex}] (7,5) -- (5,4)  node [midway, right, above] {$\: h_1$};

\end{tikzpicture}

	  \caption{\small Part of the graph $\mathfrak{D}$ in the proof of Lemma \ref{l2}, in the case $2 \leq i \leq m-4$. The blue edge is not in $\mathfrak{C}$ by assumptions of the lemma, and the red edges are claimed not to be in $\mathfrak{C}$ by the lemma.} \label{fig:6}
	 \end{figure}

	By Lemmas \ref{VE} and \ref{EE} (applied to this case), we have 
	\begin{equation}\label{D1}
	\begin{split}
	R_1(v)=  \sum_{j=1}^k M_{11}(f_j)=\sum_{j=1}^k M_{11}(f_j^{-1})=\sum_{j=1}^k m_j(M_{12}(f_j^{-1})+M_{12}(h_j^{-1}))= \\ = 
	m_k \sum_{j=1}^k (M_{12}(f_j^{-1})+M_{12}(h_j^{-1})) + D,
	\end{split}
	\end{equation}
	where 
	$$
	D= \sum_{j=1}^{k-1} (m_j - m_k)(M_{12}(f_j^{-1})+M_{12}(h_j^{-1})).
	$$
	\begin{rem} \label{rem:2}
	Since $m_1 > m_2 > \ldots > m_{k-1}>m_k$, and $M$-labels are non-negative, we have that $D \geq 0$. Furthermore, $D=0$ if and only if $M_{12}(f_j^{-1})=M_{12}(h_j^{-1})=0$, for all $j=1, \ldots, k-1$, if and only if {\rm(by Equation (\ref{M0}))} $f_j, h_j \notin \CC$, for all $j=1, \ldots, k-1$.
	\end{rem}
	
	Denote the vertex $(i+2,c)$ by $u$. We continue Equation (\ref{D1}),
	\begin{equation}\label{D2}
		\begin{split}
		R_1(v)= m_k \sum_{j=1}^k (M_{12}(f_j^{-1})+M_{12}(h_j^{-1})) + D =m_k \sum_{j=1}^k M_{21}(f_j)+ m_k \sum_{j=1}^k M_{21}(h_j) + D = \\ = m_k (R_2(v)+R_2(u))+D = m_k (k-1) (M_{22}(f_k)+M_{22}(h_k)+M_{22}(p_k))+D,
		\end{split}
	\end{equation}
	where the last equality holds since $R_2(u)=(k-1)(M_{22}(h_k)+M_{22}(p_k))$; if $i=1$, then $R_2(v)=(k-1)M_{22}(f_k)$, and if $i \geq 2$, then still $R_2(v)=(k-1)(M_{22}(e_k)+M_{22}(f_k))=(k-1)M_{22}(f_k)$, because $e_k \notin \CC$.
	 
In the same way, we have $R_1(v)=(k-1)M_{12}(f_k)$. So 
\begin{equation}\label{D3}
\begin{split}
	R_1(v)=(k-1)M_{12}(f_k)=(k-1)M_{21}(f_k^{-1})=m_k(k-1)(M_{22}(f_k^{-1})+M_{22}(h_k^{-1}))= \\ 
	= m_k(k-1)(M_{22}(f_k)+M_{22}(h_k)).
	\end{split}
\end{equation}	
	Comparing Equations (\ref{D2}) and (\ref{D3}), we obtain that $m_k(k-1)M_{22}(p_k)+D=0$, but both $M_{22}(p_k)$ and $D$ are non-negative, so $M_{22}(p_k)=D=0$. By Equation (\ref{M0}), this implies that $p_k \notin \CC$. By Remark \ref{rem:2}, it follows that $f_j, h_j \notin \CC$, for all $j=1, \ldots, k-1$. This is exactly the claim of the lemma. 
	\end{proof}

	Suppose first that $m$ is odd. By Remark \ref{rem:f3}, we can suppose that $\CC$ lies inside $\DD_1$, which is the connected component of $\DD$ containing $(1,c)$. Then the vertex $v=(1,c)$ satisfies the conditions of Lemma \ref{l2}, so by this lemma the edge $e_{1,c}^{2,1}$ is not in $\CC$. This is a contradiction with Lemma \ref{DD}, since local surjectivity at $v$ of the projection $\pi_2$ does not hold. Thus, for odd $m$, the group $\GG(P_m)$ is not commensurable with a RAAG defined by a tree of diameter $4$.
	
	So we can suppose that $m$ is $0$ modulo $4$, in particular, $m \geq 8$. There are two cases --- either $\CC$ lies inside $\DD_1$, or inside $\DD_2$. If $\CC$ lies inside $\DD_1$, then again applying Lemma \ref{l2} at the vertex $v=(1,c)$ results in a contradiction, as in the case of odd $m$. Thus, we can assume that $\CC$ lies inside $\DD_2$. Note that the vertices $(2,c)$ and $(m-2,c)$ belong to $\DD_2$.
	
	\begin{lemma}\label{tr}
		If $m\equiv 0 \,(\modu 4)$ and $\CC$ lies inside $\DD_2$, then the edges $e_{2,c}^{1,j}$ and $e_{m-2,c}^{m-1,j}$, are not in $\CC$, where $j=2,\ldots,k$.
	\end{lemma}
	\begin{proof}
		The proof is similar to that of Lemma \ref{l2}.  
		By symmetry, it suffices to prove that the edges $e_{2,c}^{1,j}$, for $j=2,\ldots,k$, are not in $\CC$. To abbreviate the notation, denote the edges $e_{2,c}^{1,j}$ by $e_j$, the edges $e_{2,c}^{3,j}$ by $f_j$, and the edges $e_{4,c}^{3,j}$ by $h_j$, $j=1, \ldots, k$. Denote the vertex $(2,c)$ by $v$, see Figure \ref{fig:7}. 
		
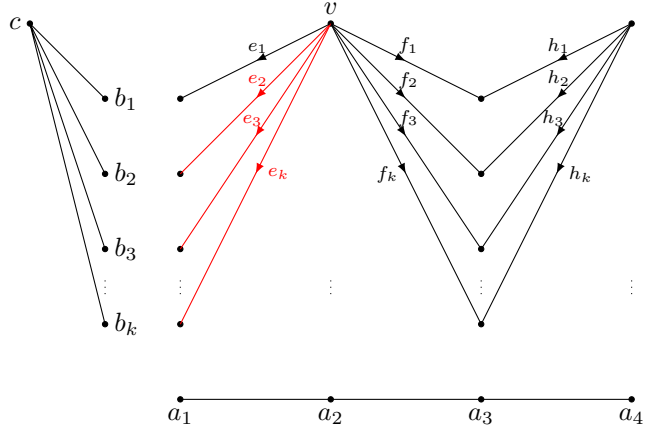
\begin{figure}[!h]
\begin{tikzpicture}

\draw (1,0) -- (7,0);

\filldraw (1,0) circle (1pt)  node[align=left, below] {$a_{1}$};
\filldraw (3,0) circle (1pt)  node[align=left, below] {$a_2$};
\filldraw (5,0) circle (1pt)  node[align=left, below] {$a_{3}$};
\filldraw (7,0) circle (1pt)  node[align=left, below] {$a_{4}$};

\filldraw (0,1) circle (1pt)  node[align=right, right] {$b_k$};
\filldraw (0,2) circle (1pt)  node[align=right, right] {$b_3$};
\filldraw (0,3) circle (1pt)  node[align=right, right] {$b_2$};
\filldraw (0,4) circle (1pt)  node[align=right, right] {$b_1$};
\draw[dotted] (0,1.4) -- (0, 1.6);
\filldraw (-1,5) circle (1pt)  node[align=left, left] {$c$};
\draw (-1,5) -- (0,1) (-1,5) -- (0,2) (-1,5) -- (0,3) (-1,5) -- (0,4);

\filldraw (1,1) circle (1pt); 
\filldraw (1,2) circle (1pt); 
\filldraw (1,3) circle (1pt); 
\filldraw (1,4) circle (1pt); 

\draw[dotted] (1,1.4) -- (1, 1.6);

\filldraw (3,5) circle (1pt)  node[align=right, above] {$v$};
\draw[dotted] (3,1.4) -- (3, 1.6);

\filldraw (5,1) circle (1pt); 
\filldraw (5,2) circle (1pt); 
\filldraw (5,3) circle (1pt); 
\filldraw (5,4) circle (1pt); 

\draw[dotted] (5,1.4) -- (5, 1.6);

\filldraw (7,5) circle (1pt); 
\draw[dotted] (7,1.4) -- (7, 1.6);

\scriptsize

\draw[middlearrow={latex}] (3,5) -- (5,1)  node [midway, left] {$\: f_k$};
\draw[middlearrow={latex}] (3,5) -- (5,2)  node [midway,above=1pt] {$\: f_3$};
\draw[middlearrow={latex}] (3,5) -- (5,3)  node [midway,above=1pt] {$\: f_2$};
\draw[middlearrow={latex}] (3,5) -- (5,4)  node [midway, right, above] {$\: f_1$};

\draw[color=red,middlearrow={latex}] (3,5) -- (1,1)  node [midway, right] {$\: e_k$};
\draw[color=red,middlearrow={latex}] (3,5) -- (1,2)  node [midway,left=2pt, above=1pt] {$\: e_3$};
\draw[color=red,middlearrow={latex}] (3,5) -- (1,3)  node [midway,above=2pt] {$\: e_2$};
\draw[middlearrow={latex}] (3,5) -- (1,4)  node [midway, right, above] {$\: e_1$};

\draw[middlearrow={latex}] (7,5) -- (5,1)  node [midway, right] {$\: h_k$};
\draw[middlearrow={latex}] (7,5) -- (5,2)  node [midway,left=2pt, above=1pt] {$\: h_3$};
\draw[middlearrow={latex}] (7,5) -- (5,3)  node [midway,above=2pt] {$\: h_2$};
\draw[middlearrow={latex}] (7,5) -- (5,4)  node [midway, right, above] {$\: h_1$};

\end{tikzpicture}
	  \caption{\small Part of the graph $\mathfrak{D}_2$ in the proof of Lemma \ref{tr}. The red edges are claimed not to be in $\mathfrak{C}$ by the lemma.} \label{fig:7}
	 \end{figure} 	
		
		Note that, by Lemma \ref{VE}, we have that $M_{11}(e_j^{-1})=m_jM_{12}(e_j^{-1})$ and $M_{21}(e_j^{-1})=m_jM_{22}(e_j^{-1})$ for all $j=1,\ldots,k$. Then, we have
		\begin{equation}\label{C1}
			R_1(v)= \sum_{j=1}^k M_{11}(e_j) = \sum_{j=1}^k M_{11}(e_j^{-1})= \sum_{j=1}^k m_j M_{12}(e_j^{-1})=m_1 \sum_{j=1}^k M_{12}(e_j^{-1}) - C,
		\end{equation}
		where 
		$$
		C= \sum_{j=2}^k (m_1-m_j)M_{12}(e_j^{-1}).
		$$
	Since $m_1 > m_2 > \ldots > m_{k-1}>m_k$, and $M$-labels are non-negative, we have $C \geq 0$, and $C=0$ if and only if $M_{12}(e_j^{-1})=0$, for all $j=2, \ldots, k$, if and only if (by Equation (\ref{M0})) $e_j \notin \CC$, for all $j=2, \ldots, k$.
		
		We continue Equation (\ref{C1}),
		\begin{equation}\label{C2}
		\begin{split}
		R_1(v)=m_1 \sum_{j=1}^k M_{12}(e_j^{-1}) - C= m_1 \sum_{j=1}^k M_{21}(e_j)-C =m_1R_2(v)-C= \\ =m_1(k-1)(M_{22}(e_1)+M_{22}(f_1))-C.		
		\end{split}
		\end{equation}
		On the other hand, we have
		\begin{equation}\label{C3}
		\begin{split}
		R_1(v)=(k-1)(M_{12}(e_1)+M_{12}(f_1))=(k-1)(M_{21}(e_1^{-1})+M_{21}(f_1^{-1}))= \\ =
		m_1(k-1)(M_{22}(e_1^{-1})+M_{22}(f_1^{-1})+M_{22}(h_1^{-1}))= \\ =m_1(k-1)(M_{22}(e_1)+M_{22}(f_1)+M_{22}(h_1)),
		\end{split}
		\end{equation}
		since $M_{21}(f_1^{-1})=M_{22}(f_1^{-1})+M_{22}(h_1^{-1})$ by Lemma \ref{VE}.
		
		Comparing (\ref{C2}) and (\ref{C3}), we obtain that $C + m_1(k-1)M_{22}(h_1)=0$, but both $C$ and $M_{22}(h_1)$ are non-negative, so $C=M_{22}(h_1)=0$. This means that $h_1 \notin \CC$, but also, by Remark \ref{rem:2}, that $e_j \notin \CC$ for $j=2, \ldots,k$. This is exactly the claim of the lemma.
	\end{proof}
	
	We now turn to the proof of Theorem \ref{non-com}. As we already mentioned, we use Lemma \ref{tr} to deduce the absence of certain edges and Lemma \ref{l2} to recursively remove other edges. We combine these two lemmas until we assure that there is no local surjectivity contradicting Lemma \ref{DD}, see Figure \ref{fig:8}.
	
		 \begin{figure}[!h]
		\begin{tikzpicture}[scale=1.5]
		\tikzset{near start abs/.style={xshift=1cm}}
		\small
		\draw (1.5,1) -- (6.5,1);

		\filldraw (1.5,1) circle (1pt)  node[align=left, below] {$a_1$};
		\filldraw (2,1) circle (1pt)  node[align=left, below] {$a_2$};
		\filldraw (2.5,1) circle (1pt)  node[align=left, below] {$a_3$};
		\filldraw (3,1) circle (1pt)  node[align=left, below] {$a_4$};
		\filldraw (3.5,1) circle (1pt)  node[align=left, below] {$a_5$};
		\filldraw (4,1) circle (1pt)  node[align=left, below] {$a_6$};
		\filldraw (4.5,1) circle (1pt)  node[align=left, below] {$a_7$};
		\filldraw (5,1) circle (1pt)  node[align=left, below] {$a_8$};
		\filldraw (5.5,1) circle (1pt)  node[align=left, below] {$a_9$};
		\filldraw (6,1) circle (1pt)  node[align=left, below] {$a_{10}$};
		\filldraw (6.5,1) circle (1pt)  node[align=left, below] {$a_{11}$};

		\draw  (1,1.5) -- (1,2.5);

		\filldraw (1,1.5) circle (1pt)  node[align=left, left]  {$b_2$};
		\filldraw (1,2) circle (1pt)  node[align=left, left]  {$c$};
		\filldraw (1,2.5) circle (1pt)  node[align=left, left] {$b_1$};

		\filldraw (1.5,2.5) circle (1pt);
		\filldraw (2.5,1.5) circle (1pt);
		\filldraw (3.5,2.5) circle (1pt);
		\filldraw (4.5,1.5) circle (1pt);
		\filldraw (5.5,2.5) circle (1pt);

		\draw(1.5,2.5) -- (2.5,1.5);
		\draw(2.5,1.5) -- (3.5,2.5);
		\draw(3.5,2.5) -- (4.5,1.5);
		\draw (4.5,1.5) -- (5.5,2.5);
		\draw (5.5,2.5) -- (6,2);

		\filldraw (1.5,1.5) circle (1pt);
		\filldraw (2.5,2.5) circle (1pt);
		\filldraw (3.5,1.5) circle (1pt);
		\filldraw (4.5,2.5) circle (1pt);
		\filldraw (5.5,1.5) circle (1pt);
		\filldraw (6.5,2.5) circle (1pt);
		\filldraw (6.5,1.5) circle (1pt);
		
		\filldraw (2,2) circle (1pt);
		\filldraw (3,2) circle (1pt);
		\filldraw (4,2) circle (1pt);
		\filldraw (5,2) circle (1pt);
		\filldraw (6,2) circle (1pt);

		\tikzset{middlearrow/.style={
				decoration={markings,
					mark= at position 0.6 with {\arrow{#1}} ,
				},
				postaction={decorate}
			}
		}
		
		\draw[color=red, middlearrow={latex}] (1.5,1.5) -- (2,2)  node [midway, right, below] {$\: e_{1,2}^{2,c}$};
		\draw[color=red, middlearrow={latex}] (2,2) -- (2.5,2.5)  node [midway, right=2pt, below] {$\: e_{2,c}^{3,1}$};
		\draw[color=red, middlearrow={latex}] (2.5,2.5) -- (3,2)  node [midway, right, above] {$\: e_{3,1}^{4,c}$};
		\draw[color=red, middlearrow={latex}] (3,2) -- (3.5,1.5)  node [midway, right=2pt, above] {$\: e_{4,c}^{5,2}$};
		
		\draw[color=red, middlearrow={latex}] (3.5,1.5) -- (4,2)  node [midway, right, below] {$\: e_{5,2}^{6,c}$};
		\draw[color=red, middlearrow={latex}] (4,2) -- (4.5,2.5)  node [midway, right=2pt, below] {$\: e_{6,c}^{7,1}$};
		\draw[color=red, middlearrow={latex}] (4.5,2.5) -- (5,2)  node [midway, right, above] {$\: e_{7,1}^{8,c}$};
		\draw[color=red, middlearrow={latex}] (5,2) -- (5.5,1.5)  node [midway, right=2pt, above] {$\: e_{8,c}^{9,2}$};
		
		\draw[color=red, middlearrow={latex}] (5.5,1.5) -- (6,2)  node [midway, right=2pt, below] {$\: e_{9,2}^{10,c}$};
		\draw[color=red, middlearrow={latex}] (6,2) -- (6.5,2.5)  node [midway, right, right] {$\: e_{10,c}^{11,1}$};
		\draw[color=red, middlearrow={latex}] (6,2) -- (6.5,1.5)  node [midway, right, right] {$\: e_{10,c}^{11,2}$};

		\end{tikzpicture}
		\caption{\small The graph $\mathfrak{D}_2$ for $m=12$ in the end of the proof of Theorem \ref{non-com}. The red edges are proved not to be in $\mathfrak{C}$, which leads to a contradiction.} \label{fig:8}
	\end{figure}

	By Lemma \ref{tr}, we see that the edges $e_{2,c}^{1,j}$ are not in $\CC$ for $j=2, \ldots, k$. This means that we can apply Lemma \ref{l2} to the vertex $v=(2,c)$ of $\DD_2$, and we conclude that the edges $e_{2,c}^{3,j'}$, for $j'=1,\ldots,k-1$, are not in $\CC$. If $k \geq 3$, then we get a contradiction with Lemma \ref{DD}: local surjectivity at $v$ of the projection $\pi_2$ does not hold, since neither $e_{2,c}^{1,2}$ nor $e_{2,c}^{3,2}$ are in $\CC$. Thus we can suppose that $k=2$. 
	
	We know that $e_{1,2}^{2,c} \notin \CC$, and by Lemma \ref{l2} the edges $e_{2,c}^{3,1}$, $e_{3,1}^{4,c}$ and $e_{4,c}^{5,2}$ are not in $\CC$. By Lemma \ref{DD} applied at the vertex $(5,2)$, we also see that $e_{5,2}^{6,c} \notin \CC$.  
	 We claim that the edges $e_{4a-2,c}^{4a-1,1}, \: e_{4a-1,1}^{4a,c}, \: e_{4a,c}^{4a+1,2}, \: e_{4a+1, 2}^{4a+2,c}$ are not in $\CC$, for $a=1, \ldots, m/4-1$. We prove it by induction on $a$.  Note that the claim holds for $a=1$, as proved above. Suppose it holds for $a \leq a'$, $1 \leq a' \leq m/4-2$, and we prove it for $a=a'+1$. Since $e_{4a'+2,c}^{4a'+1,2} \notin \CC$ by induction hypothesis for $a=a'$, by Lemma \ref {l2}, applied at the vertex $(4a'+2, c)$, we see that $e_{4a'+2,c}^{4a'+3,1}, \: e_{4a'+3,1}^{4a'+4,c}, \: e_{4a'+4,c}^{4a'+5,2}, \:  \notin \CC$. Finally, by Lemma \ref{DD}, we get $e_{4a'+5,2}^{4a'+6,c} \notin \CC$, and this proves the claim. 
	
	It follows from the claim with $a=m/4-1$ that $e_{m-3,2}^{m-2,c} \notin \CC$. By Lemma \ref{l2}, applied at the vertex $(m-2, c)$, we have that $e_{m-2,c}^{m-1,1} \notin \CC$. But by Lemma \ref{tr}, $e_{m-2,c}^{m-1,2} \notin \CC$, and this is a contradiction with Lemma \ref{DD}, see Figure \ref{fig:8}.

Thus, if $m$ is not $2$ modulo $4$, then $\GG(P_m)$ is not commensurable to a tree of diameter $4$. This proves the theorem.
\end{proof}

Note that in this way we were able to show that for $P_{4k+2}$ and $T_{k,k+1}$ the system has positive integer solutions and calculate these solutions, and this gave us a hint on how to construct the corresponding isomorphic finite index subgroups given in Section \ref{sec:5}.

\section{Commensurability of RAAGs defined by paths and trees of diameter $4$} \label{sec:5}
	
In this section, we characterise when a RAAG defined by a path $P_n$ is commensurable to a RAAG defined by a tree of diameter $4$. In Section \ref{sec:4}, we have seen that a necessary condition for commensurability is that $n \equiv 2\, (\modu  4)$. In this section, we show that this is a sufficient condition.  
	
Recall that by $T_{k,k+1}$ we  denote a tree of diameter $4$, with the central vertex of degree $2$ and so that the two vertices adjacent to the central vertex have  degrees $k+1$ and $k+2$ correspondingly, $T_{k,k+1}$ has $2k+1$ leaves, see Figure \ref{fig:T4}.

\begin{figure}[!h]		
\begin{tikzpicture}

\draw (1,0) -- (9,0);

\filldraw (1,0) circle (1pt)  node[align=left, left] {$a_1$};
\filldraw (3,0) circle (1pt)  node[align=left, below] {$b$};
\filldraw (5,0) circle (1pt)  node[align=left, below=2pt] {$c$};
\filldraw (7,0) circle (1pt)  node[align=left, below] {$d$};
\filldraw (9,0) circle (1pt)  node[align=right,right] {$e_1$};
\filldraw (1,1) circle (1pt)  node[align=left, left] {$a_k$};
\filldraw (9,1) circle (1pt)  node[align=right, right] {$e_k$};
\filldraw (9,1.5) circle (1pt)  node[align=right, right] {$e_{k+1}$};

\draw[dotted] (1,0.4)--(1,0.6) (9,0.4)--(9,0.6);

\draw (3,0) -- (1,1) (7,0) -- (9,1) (7,0) -- (9,1.5);

\end{tikzpicture}
	  \caption{\small The tree $T_{k,k+1}$.} \label{fig:T4}
	 \end{figure}
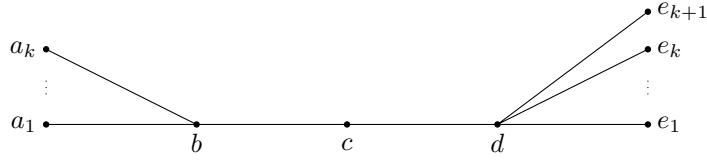

\begin{theorem}\label{com}
	 Let $k \geq 1$. Then $\GG(P_{4k+2})$ is commensurable to $\GG(T_{k, k+1})$. 
\end{theorem}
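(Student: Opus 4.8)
Since we must show that the two RAAGs \emph{are} commensurable, I would follow the third step of the programme of \cite{CKZ}: produce explicit finite-index subgroups $H_1\le \GG(P_{4k+2})$ and $H_2\le \GG(T_{k,k+1})$ and an isomorphism between them. Recall first that both groups are fundamental groups of finite graphs of groups via the reduced centraliser splitting. For $\GG(P_{4k+2})$ the underlying graph is $\ti P_{4k+2}=P_{4k}$, every vertex group is $\cong\mathbb{Z}\times F_2$ (each non-leaf vertex of $P_{4k+2}$ has degree $2$) and every edge group is $\cong\mathbb{Z}^2$. For $\GG(T_{k,k+1})$ the underlying graph is the length-$2$ path $b-c-d$, with vertex groups $C(c)\cong\mathbb{Z}\times F_2$, $C(b)\cong\mathbb{Z}\times F_{k+1}$, $C(d)\cong\mathbb{Z}\times F_{k+2}$, and edge groups $\cong\mathbb{Z}^2$. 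By Bass--Serre theory a finite-index subgroup of either group is again the fundamental group of a finite graph of groups (the one carried by the quotient of the Bass--Serre tree), and its isomorphism type depends only on that graph of groups. Hence it suffices to produce \emph{one} finite graph of groups $\mathcal{G}$, with vertex groups of the form $\mathbb{Z}\times F_r$ and edge groups $\mathbb{Z}^2$, that arises as a finite ``cover'' (in the Scott--Wall sense) of \emph{both} splittings above.

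\textbf{The shape of $\mathcal{G}$.} The combinatorics is dictated by the positive integer solution of the linear system $S(P_{4k+2},T_{k,k+1})$ computed at the end of Section~\ref{sec:4}: the nonzero variables $M_{lm}(e)$ single out the edges of $\CC$, and together with Lemma~\ref{proportions} they give the vertex labels $L$, the edge labels $l$ and the common ratio $q$, which determine the indices of the vertex and edge subgroups inside the $\mathbb{Z}\times F_s$'s. Concretely, the underlying graph of $\mathcal{G}$ is the path $P_{4k}=\ti P_{4k+2}$ itself, mapped to $\ti P_{4k+2}$ by the identity, and mapped to $\ti T_{k,k+1}=(b-c-d)$ by the ``reflecting'' folding with period $4$, namely $b,c,d,c,b,c,d,c,\dots,b$, whose $b$-points sit at the positions $\equiv 0\pmod 4$ and whose $d$-points sit at the positions $\equiv 2\pmod 4$. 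This is exactly where the hypotheses of the theorem come from: a path of length $4k$ is what fits such a period-$4$ reflecting walk that starts and ends at a reflection point, so $n-2$ must be divisible by $4$; and for the rank arithmetic below the two reflection points must be the vertices of degrees $k+1$ and $k+2$, i.e. the tree must be $T_{k,k+1}$. One checks local surjectivity (Lemma~\ref{DD}) for this folding: each $c$-point of $P_{4k}$ has one neighbour folding to $b$ and one to $d$.

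\textbf{Construction of $H_1$ and $H_2$.} Along $\mathcal{G}$ I would assign, at each vertex, a finite-index subgroup of the corresponding centraliser — a ``rectangular'' subgroup $m\mathbb{Z}\times K$ of $\mathbb{Z}\times F_s$ with $K\le F_s$ of index $j$ — and at each edge the corresponding finite-index $\mathbb{Z}^2$, the indices $m,j$ being read off from the labels $L,l,q$ of the solution. On the $T_{k,k+1}$ side one may take the full centralisers $C(b)$ and $C(d)$ at the reflection points (so their free parts have ranks $k+1$ and $k+2$), while on the $P_{4k+2}$ side, by Nielsen--Schreier, an index-$k$ (resp. index-$(k+1)$) subgroup of the free part $F_2$ of the centraliser at a position $\equiv 0$ (resp. $\equiv 2$) modulo $4$ is free of the matching rank $k+1$ (resp. $k+2$); at the $c$-positions both sides carry an $F_{j+1}$-type free part, and the $\mathbb{Z}$-directions are cut down by the amounts $q,L$ prescribed by Lemma~\ref{proportions} so that everything glues into honest subgroups. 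With these choices one verifies that $H_1$ and $H_2$ are finite-index (the index being the product of the local indices, which one computes), and then that the two resulting graphs of groups coincide: same underlying graph $P_{4k}$, vertex-by-vertex isomorphic $\mathbb{Z}\times F_r$ vertex groups, $\mathbb{Z}^2$ edge groups, and edge-inclusion monomorphisms that correspond under these isomorphisms. By the uniqueness of the fundamental group of a graph of groups this gives $H_1\cong H_2$, hence $\GG(P_{4k+2})$ and $\GG(T_{k,k+1})$ are commensurable.

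\textbf{Main obstacle.} The hard part is this last compatibility: turning the abstract rank-and-index matching into a genuine isomorphism of graphs of groups rather than a mere coincidence of vertex and edge groups. One must keep track, around every vertex of $P_{4k}$, of exactly which power of the central generator and which free generators each incident $\mathbb{Z}^2$ edge group meets — on the $P_{4k+2}$ side \emph{both} free generators of $C(a_i)$ appear in edge groups, whereas on the $T_{k,k+1}$ side the extra free generators at $b$ and $d$ are internal to the vertex group — and choose the index-$k$ and index-$(k+1)$ subgroups of $F_2$ so that their two distinguished primitive elements play the role of the two edge directions on the tree side. Moreover the $\mathbb{Z}$-direction indices (the ratio $q$ of Lemma~\ref{proportions}) are forced and propagate along the whole path, so the matching cannot be arranged one vertex at a time; making all of these constraints consistent simultaneously is where the work of the section concentrates.
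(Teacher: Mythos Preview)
Your high-level strategy---build a common graph of groups $\mathcal{G}$ realised as the induced splitting of a finite-index subgroup on each side---is exactly the paper's. But your concrete proposal contains a genuine error. You take the underlying graph of $\mathcal{G}$ to be the path $P_{4k}$, folded onto $\ti T_{k,k+1}=(b\text{--}c\text{--}d)$ by the period-$4$ reflecting walk, and you assert that at each reflection point one may take the \emph{full} centraliser $C(b)$ or $C(d)$. At an \emph{internal} $b$-position the path has two incident edges, both folding to the single edge $b\text{--}c$; in Bass--Serre terms this requires two $\mathrm{Stab}_{H_2}(\tilde v)$-orbits on the edges at the lift $\tilde v$, whereas the full $C(b)$ acts transitively on those edges and gives only one. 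So full centralisers at internal reflection points are impossible, and your rank-matching ($F_{k+1}$ on the $T$-side versus an index-$k$ subgroup of $F_2$ on the $P$-side) rests on an unrealisable configuration. There is also a conflation of objects: the subgraph $\CC\subset\DD$ picked out by the nonzero labels is the \emph{image} of $\Psi(H)$ under the type morphism $\delta$, not $\Psi(H)$ itself; even when $\CC$ is a path, $\Psi(H)$ need not be.

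The paper's construction is accordingly different. The common graph of groups $X$ is not a tree: it is assembled from ``diamond'' pieces $D_1,\dots,D_k$ (each $D_i$ with $1\le i\le k-1$ having first Betti number $i-1$) glued in the palindromic sequence $(D_k,D_1,D_{k-1},D_2,\dots,D_{k-1},D_1,D_k)$, and its hub vertex groups are $\mathbb{Z}\times F_{k^2+k+1}$, not $\mathbb{Z}\times F_{k+1}$ or $\mathbb{Z}\times F_{k+2}$. Both subgroups have index $k(k+1)$ and are produced as preimages, under the natural retractions $\GG(T_{k,k+1})\to F(a_1,e_1)$ and $\GG(P_{4k+2})\to F(C_1,\dots,C_k,C_1',\dots,C_k')$, of subgroups described by explicit finite covers of the respective bouquets; the bulk of the section then identifies each induced splitting with $X$ by exhibiting fundamental domains and computing the relevant centralisers. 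That is precisely the compatibility step you flag as the main obstacle, but carried out for a different---and workable---$\mathcal{G}$.
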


Note that Theorem \ref{com} together with Theorem \ref{non-com} immediately imply Theorem \ref{th2}.

The remaining part of this section will be devoted to the proof of Theorem \ref{com}.
In order to do so, we first define an abstract group as a fundamental group of a certain graph of groups $X$; we then exhibit finite index subgroups $H$ and $K$ of $\GG(T_{k,k+1})$ and $\GG(P_{4k+2})$ respectively and show that they are isomorphic to that abstract group. We define the subgroups by describing them as fundamental groups of finite covers of the Salvetti complexes of the corresponding RAAGs. We divide the proof into five subsections. In Section \ref{sec:51} we construct $X$, in Section \ref{sec:52} we construct $H$, in Section \ref{sec:53} we prove that $H$ is isomorphic to $\pi_1(X)$, in Section \ref{sec:54} we construct $K$, and in in Section \ref{sec:55} we prove that $K$ is isomorphic to $\pi_1(X)$.

Throughout this section we always denote the conjugation as follows: $g^h=hgh^{-1}$. Also, in a group $G$, we denote the centralizer of an element $g$ in $G$ by $C(g)$, and the centralizer of $g$ in a subgroup $H$ of $G$ by $C_H(g)$. We will occasionally use this notation even in the case when $g$ is not in $H$ (for some finite index subgroup $H$), and in this case, since centralizers in RAAGs are isolated (see \cite{CKZ}), we have $C_H(g)=H \cap C(g)=C_H(g^n)$, where $n$ is the minimal positive integer such that $g^n \in H$.

We will use basic facts from Bass-Serre theory, the reader is referred to \cite{Serre} for details.

\subsection{Construction of the graph of groups X} \label{sec:51}

Recall that by $F(A)$ we mean the free group on $A$. We begin by defining a graph of groups $X$. The graph of groups $X$ is built from some simpler pieces $D_i$, which are also graphs of groups. We begin by describing these pieces.  

Let $D_i$, $i=1,\dots, k-1$ be the ``diamond'' graph with $i+2$ vertices, namely $v,w, u_1, \dots, u_i$, where the vertices $u_1,\dots, u_i$ have degree $2$ and each of the vertices $u_i$ is adjacent to the two vertices $v$ and $w$ of degree $i$, see Figure \ref{fig:Di}. 

\begin{figure}[!h]		
\begin{tikzpicture}

\draw (0,0) -- (1,1) -- (2,0) -- (1,0.5) -- (0,0) -- (1,-1)--(2,0);

\filldraw (0,0) circle (1pt)  node[align=left, left] {$v$};
\filldraw (2,0) circle (1pt)  node[align=right,right] {$w$};
\filldraw (1,1) circle (1pt)  node[align=right,above] {$u_1$};
\filldraw (1,0.5) circle (1pt)  node[align=right,below=2pt] {$u_2$};
\filldraw (1,-1) circle (1pt)  node[align=right,below] {$u_i$};

\draw[dotted] (1,-0.1)--(1,-0.3);

\draw (4,0)--(5,1)--(6,1) (4,0)--(5,0.5)--(6,0.5) (4,0)--(5,-1)--(6,-1);
\filldraw (4,0) circle (1pt)  node[align=left, left] {$v$};
\filldraw (5,1) circle (1pt)  node[align=right,above] {$u_1$};
\filldraw (5,0.5) circle (1pt)  node[align=right,below=2pt] {$u_2$};
\filldraw (5,-1) circle (1pt)  node[align=right,below] {$u_k$};
\filldraw (6,1) circle (1pt)  node[align=right,right] {$u_1'$};
\filldraw (6,0.5) circle (1pt)  node[align=right,right] {$u_2'$};
\filldraw (6,-1) circle (1pt)  node[align=right,right] {$u_k'$};

\draw[dotted] (5,-0.1)--(5,-0.3);
\draw[dotted] (6,-0.1)--(6,-0.3);

\end{tikzpicture}
	  \caption{\small The ``diamond'' graph $D_i$, $1 \leq i \leq k-1$ (on the left) and the graph $D_k$ (on the right).} \label{fig:Di}
	 \end{figure}
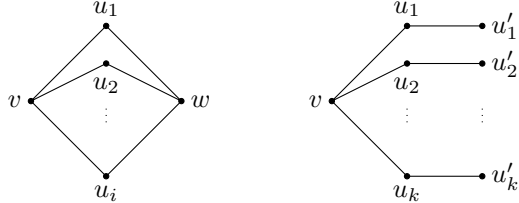

Let the vertex group at $u_j$ be 
$$
V(u_j)=\langle r_j\rangle \times F(f_j, g_j).
$$
Let the vertex groups at $v$ and $w$ be as follows,
$$
\begin{array}{ll}
V(v)=&\langle s\rangle \times F(x_1, \dots, x_{k^2}, y_1,\dots, y_{k+1-i}, z_1,\dots, z_i);\\
V(w)=&\langle s'\rangle \times F(x_1', \dots, x_{k^2+1}', y_1',\dots, y_{k-i}', z_1',\dots, z_i').
\end{array}
$$

All edge groups are isomorphic to $\BZ^2=\langle p,q\rangle$. 

The embedding of $\BZ^2$ of the edge $(v,u_j)$ into $V(v)$, is defined by the map 
$$
p \mapsto s, \
q\mapsto z_j,
$$
where $j=1, \dots, i$. The embedding of $\BZ^2$ of the edge $(v,u_j)$ into $V(u_j)$, is defined by the map 
$$
p \mapsto f_j, \
q\mapsto r_j,
$$
where $j=1, \dots, i$. The embedding of $\BZ^2$ of the edge $(w,u_j)$ into $V(u_j)$, is defined by the map 
$$
p \mapsto g_j, \ 
q\mapsto r_j,
$$
where $j=1, \dots, i$. The embedding of $\BZ^2$ of the edge $(w,u_j)$ into $V(w_j)$, is defined by the map 
$$
p \mapsto s', \
q\mapsto z_j',
$$
where $j=1, \dots, i$.

Let $D_k$ be the graph with $2k+1$ vertices defined as follows: it has $1$ vertex $v$ of degree $k$ adjacent to $k$ vertices $u_1,\dots, u_k$ of degree $2$; each vertex $u_i$ is adjacent to a vertex $u_i'$ of degree $1$, $i=1, \dots, k$, see Figure \ref{fig:Di}.

The vertex group $V(v)$ at $v$ is defined to be
$$
V(v)=\langle s\rangle \times F(x_1,\dots, x_{k^2}, z_1,\dots, z_k, y_1)
$$
Let the vertex groups at $u_i$ and $u_i'$ be as follows,
$$
V(u_j)  =  \langle r_j  \rangle \times F(f_j,g_j); \ \ \quad  V(u_j') =  \langle r_j' \rangle \times F(x_{j1}', \dots, x_{j,k+1}', z_j').
$$

All edge groups are isomorphic to $\BZ^2=\langle p,q\rangle$. The embedding of $\BZ^2$ of the edge  $(v,u_j)$ into $V(v)$, is defined by the map 
$$
p \mapsto s, \ q\mapsto z_j, \hbox{ where } j=1, \dots, k. 
$$
The embedding of $\BZ^2$ of the edge $(v,u_j)$ into $V(u_j)$, is defined by the map 
$$
p \mapsto f_j, \ q\mapsto r_j, \hbox{ where } j=1, \dots, k.
$$
The embedding of $\BZ^2$ of the edge $(u_j,u_j')$ into $V(u_j)$, is defined by the map 
$$
p \mapsto r_j, \ q\mapsto g_j, \hbox{ where } j=1, \dots, k. 
$$
The embedding of $\BZ^2$ of the edge $(u_j,u_j')$ into $V(u_j')$, is defined by the map 
$$
p \mapsto z_j', \ q\mapsto r_j', \hbox{ where } j=1, \dots, k.
$$

We now consider the graph of groups $X$ obtained by identifying the vertices $v$ and $w$ in the graphs of groups $D_i$ in the following sequence 
\begin{equation}\label{eq:diseq}
(D_k, D_1, D_{k-1}, D_2, \dots, D_{k-1}, D_1, D_k), 
\end{equation}
where $D_k$ and $D_1$  are identified along $v$, $D_1$ and $D_{k-1}$ are identified along $w$, $D_{k-1}$ and $D_2$ are identified along $v$ etc, see Figure \ref{fig:X}. This defines the graph of groups $X$.

\tikzset{node distance=2.5cm, 
every state/.style={minimum size=1pt, scale=0.15, 
semithick,
fill=black},
initial text={}, 
double distance=2pt, 
every edge/.style={ 
draw,  
auto}}

\begin{figure}[!h]	

\begin{tikzpicture}
	
\node[state] (1) at (0,0){};
\node[state] (2) at (-1,1) {};
\node[state] (3) at (-2,1) {};
\node[state] (4) at (-1,0) {};
\node[state] (5) at (-2,0) {};
\node[state] (6) at (-1,-1) {};
\node[state] (7) at (-2,-1) {};
\node[state] (8) at (1,0) {};
\node[state] (9) at (2,0) {};
\node[state] (10) at (3,1) {};
\node[state] (11) at (4,0) {};
\node[state] (12) at (3,-1) {};
\node[state] (13) at (5,1) {};
\node[state] (14) at (6,0) {};
\node[state] (15) at (5,-1) {};
\node[state] (16) at (7,0) {};
\node[state] (17) at (8,0) {};
\node[state] (18) at (9,0) {};
\node[state] (19) at (10,0) {};
\node[state] (20) at (9,1) {};
\node[state] (21) at (10,1) {};
\node[state] (22) at (9,-1) {};
\node[state] (23) at (10,-1) {};

\draw
(1) edge (2)
(2) edge (3)
(1) edge (4)
(4) edge (5)
(1) edge (6)
(6) edge (7)
(1) edge (8)
(8) edge (9)
(9) edge (10)
(10) edge (11)
(11) edge (12)
(12) edge (9)
(11) edge (13)
(13) edge (14)
(14) edge (15)
(15) edge (11)
(14) edge (16)
(16) edge (17)
(17) edge (18)
(18) edge (19)
(17) edge (20)
(20) edge (21)
(17) edge (22)
(22) edge (23);

\end{tikzpicture}

\vspace{0.7 cm}

\begin{tikzpicture}[scale=0.9]

\draw (0,0) -- (-1,1)--(-2,1) (0,0)--(-1,0.5)--(-2,0.5) (0,0)--(-1,-0.5)--(-2,-0.5) (0,0)--(-1,-1)--(-2,-1);
\draw (0,0) -- (1,0)-- (2,0) --(3,1)--(4,0)--(3,0)--(2,0)--(3,-1)--(4,0);
\draw (4,0)--(5,1)--(6,0)--(5,-1)--(4,0);
\draw (6,0)--(7,1)--(8,0)--(7,-1)--(6,0);
\draw (8,0)--(9,1)--(10,0)--(9,0)--(8,0)--(9,-1)--(10,0);
\draw (10,0)--(11,0)--(12,0);
\draw (12,0)--(13,1)--(14,1) (12,0)--(13,0.5)--(14,0.5) (12,0)--(13,-0.5)--(14,-0.5) (12,0)--(13,-1)--(14,-1);

\filldraw  
(0,0) circle (1pt)  (-1,1)circle (1pt) (-2,1) circle (1pt)  (0,0)circle (1pt) (-1,0.5)circle (1pt) (-2,0.5) circle (1pt)  (0,0)circle (1pt) (-1,-0.5)circle (1pt) (-2,-0.5) circle (1pt)  (0,0)circle (1pt) (-1,-1)circle (1pt) (-2,-1) circle (1pt) 
(0,0) circle (1pt)  (1,0) circle (1pt)  (2,0) circle (1pt) (3,1)circle (1pt) (4,0)circle (1pt) (3,0)circle (1pt) (2,0)circle (1pt) (3,-1)circle (1pt) (4,0)
(4,0)circle (1pt) (5,1)circle (1pt) (6,0)circle (1pt) (5,-1)circle (1pt) (4,0)
(6,0)circle (1pt) (7,1)circle (1pt) (8,0)circle (1pt) (7,-1)circle (1pt) (6,0)
(8,0)circle (1pt) (9,1)circle (1pt) (10,0)circle (1pt) (9,0)circle (1pt) (8,0)circle (1pt) (9,-1)circle (1pt) (10,0)
(10,0)circle (1pt) (11,0)circle (1pt) (12,0) circle (1pt) 
(12,0)circle (1pt) (13,1)circle (1pt) (14,1) circle (1pt)  (12,0)circle (1pt) (13,0.5)circle (1pt) (14,0.5) circle (1pt)  (12,0)circle (1pt) (13,-0.5)circle (1pt) (14,-0.5) circle (1pt)  (12,0)circle (1pt) (13,-1)circle (1pt) (14,-1) circle (1pt) ;

\end{tikzpicture}
	  \caption{\small The underlying graph for the graph of groups $X$ in the cases $k=3$ (above) and $k=4$ (below).} \label{fig:X}
	 \end{figure}
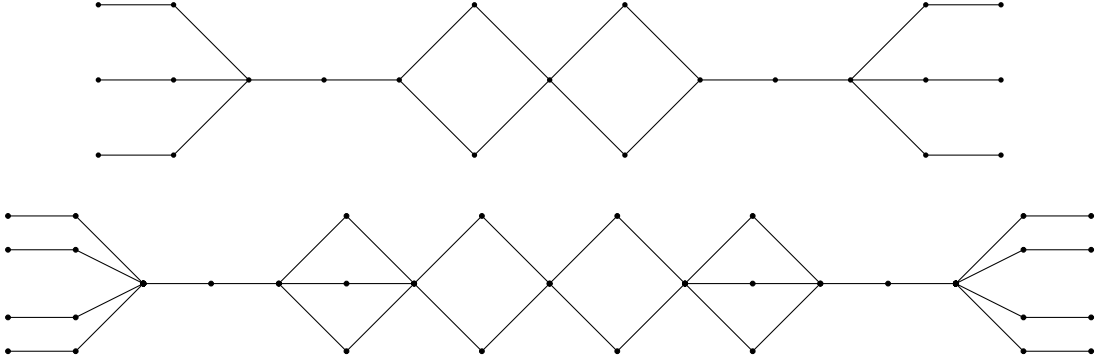

\subsection{Construction of the finite index subgroup $H$ of $\GG(T_{k, k+1})$} \label{sec:52}

Recall that the tree $T_{k,k+1}$ has a central vertex $c$ of degree 2, two adjacent vertices $b$ and $d$ of degree $k+1$ and $k+2$ respectively and leaves $a_i$ adjacent to $b$, $i=1, \dots, k$ and $e_j$ adjacent to $d$, $j=1, \dots, k+1$, see Figure \ref{fig:T4}.

The subgroup $H$ is defined as the full preimage under the natural epimorphism $\pi:\GG(T_{k,k+1}) \to F(a_1, e_1)$ of a finite index subgroup $H'$ of $F(a_1,e_1)$, that is $H:= \pi^{-1}(H')$, where $H'<_{fi}F(a_1, e_1)$.

The subgroup $H'$ is defined as the fundamental group of a finite cover $S$ of degree $k(k+1)$ of the bouquet of two circles. The aforementioned cover is defined as follows. 

Let $P_i$ be $k+1$-cycles labelled by $a_1$, $i=1,\dots, k$, and $Q_i$ be $k$-cycles labelled by $e_1$, $i=1,\ldots,k-1$. We now glue these cycles according to the following pattern:
\begin{itemize}
\item Identify $P_1$ and $Q_1$ by $1$ vertex, the basepoint;
\item Attach $P_2$ onto $Q_1$ by identifying $k-1$ vertices;
\item Attach $Q_2$ onto $P_2$ (attached in the previous step) by identifying $2$ (consecutive) vertices;
\item Attach $P_3$ onto $Q_2$ (attached in the previous step) by identifying $k-2$ (consecutive) vertices;
\item \dots
\item Attach $Q_{k-1}$ onto $P_{k-1}$ (attached in the previous step) by identifying $k-1$ vertices;
\item Attach $P_{k}$ onto $Q_{k-1}$ (attached in the previous step) by identifying $1$ vertex.
\item Add loops labelled by $e_1$ at all the $k$ vertices of $P_1$ which are not the basepoint (and so are not on $Q_1$), and similar for all the $k$ vertices of $P_k$ which are not on $Q_{k-1}$.
\end{itemize}
The attachments are always performed in such a way that the vertices in the intersection of $Q_i$ and $P_i$ appear in those cycles in the opposite order, and similar for the intersection of $P_{i}$ and $Q_{i-1}$, for all $i=2, \ldots,k-1$. This defines $S$, and so also $H$. 

By construction, the cycles $P_l$ and $P_m$, $l\neq m$, $Q_l$ and $Q_m$, $l\neq m$, $P_l$ and $Q_m$, $l\neq m+1, l \neq m,$ do not share any vertices. We refer the reader to Figure \ref{fig:ex4} for the construction of the cover $S$ in the cases $k=3$ and $k=4$. 

Note that the vertices of $S$ are in one-to-one correspondence with the right cosets of $H'$ as follows: if $g$ is the label of any path from the basepoint to a vertex $v$ in $S$, then $v$ corresponds to $H'g$. In other terms, $S$ is the Schreier graph of $H'$ in $F(a_1,e_1)$.

\tikzset{node distance=2.5cm, 
every state/.style={minimum size=1pt, scale=0.3, 
semithick,
fill=black},
initial text={}, 
double distance=2pt, 
every edge/.style={color=red, 
draw, 
->,>=latex', 
auto,
semithick}}

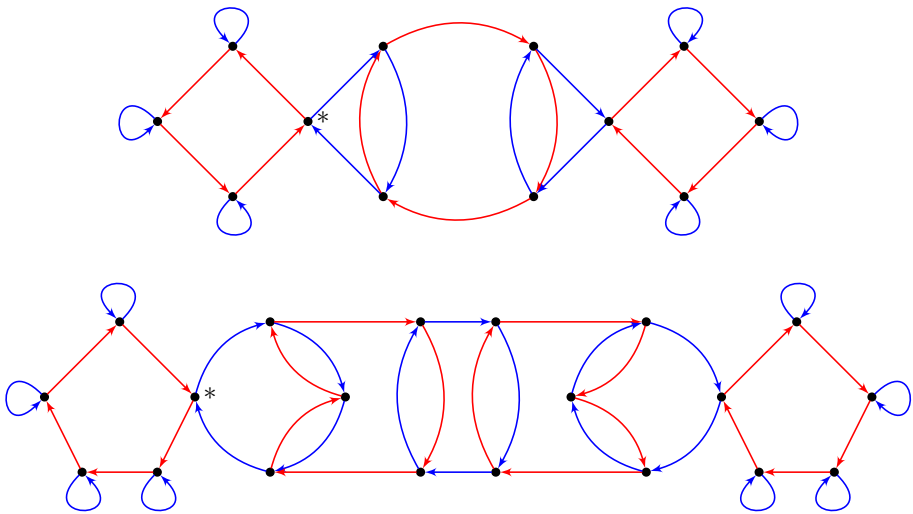
\begin{figure}[!h]	

\begin{tikzpicture}
	
\node[state] (1) at (0,0){};
\node[state] (2) at (-1,1) {};
\node[state] (3) at (-2,0) {};
\node[state] (4) at (-1,-1) {};
\node[state] (5) at (1,1) {};
\node[state] (6) at (3,1) {};
\node[state] (7) at (3,-1) {};
\node[state] (8) at (1,-1) {};
\node[state] (9) at (4,0) {};
\node[state] (10) at (5,1) {};
\node[state] (11) at (6,0) {};
\node[state] (12) at (5,-1) {};

\node at (0.2,0) {*};

\draw
(1) edge (2) 
(2) edge (3)
(3) edge (4)
(4) edge (1)
(5) edge[bend left] (6)
(6) edge[bend left] (7)
(7) edge[bend left] (8)
(8) edge[bend left] (5)
(9) edge (10)
(10) edge (11)
(11) edge (12)
(12) edge (9)

(1) edge[color=blue] (5)
(5) edge[color=blue, bend left] (8)
(8) edge[color=blue] (1)
(6) edge[color=blue] (9)
(9) edge[color=blue] (7)
(7) edge[color=blue, bend left] (6)

(2) edge[color=blue, in=135,out=45,looseness=30] (2)
(3) edge[color=blue, in=225,out=135,looseness=30] (3)
(4) edge[color=blue, in=315,out=225,looseness=30] (4)

(10) edge[color=blue, in=45,out=135,looseness=30] (10)
(11) edge[color=blue, in=-45,out=45,looseness=30] (11)
(12) edge[color=blue, in=225,out=315,looseness=30] (12)

;
\end{tikzpicture}

\vspace{0.3cm}

\begin{tikzpicture}
\node[state] (1) at (0,0){};
\node[state] (2) at (-1,1) {};
\node[state] (3) at (-2,0) {};
\node[state] (4) at (-1.5,-1) {};
\node[state] (5) at (-0.5,-1) {};
\node[state] (6) at (1,1) {};
\node[state] (7) at (2,0) {};
\node[state] (8) at (1,-1) {};
\node[state] (9) at (3,1) {};
\node[state] (10) at (3,-1) {};
\node[state] (11) at (4,1) {};
\node[state] (12) at (4,-1) {};
\node[state] (13) at (6,1) {};
\node[state] (14) at (5,0) {};
\node[state] (15) at (6,-1) {};
\node[state] (16) at (7,0) {};
\node[state] (17) at (8,1) {};
\node[state] (18) at (9,0) {};
\node[state] (19) at (8.5,-1) {};
\node[state] (20) at (7.5,-1) {};

\node at (0.2,0) {*};

\draw(1) edge (5) 
(5) edge (4)
(4) edge (3)
(3) edge (2)
(2) edge (1)
(2) edge[color=blue, in=135,out=45,looseness=30] (2)
(3) edge[color=blue, in=225,out=135,looseness=30] (3)
(4) edge[color=blue, in=315,out=225,looseness=30] (4)
(5) edge[color=blue, in=315,out=225,looseness=30] (5)
(1) edge[color=blue, bend left] (6)
(6) edge[color=blue, bend left] (7)
(7) edge[color=blue, bend left] (8)
(8) edge[color=blue, bend left] (1)
(8) edge[bend left] (7)
(7) edge[bend left] (6)
(6) edge (9)
(9) edge[bend left] (10)
(10) edge (8)
(10) edge[color=blue, bend left] (9)
(9) edge[color=blue] (11)
(11) edge[color=blue, bend left] (12)
(12) edge[color=blue] (10)
(12) edge[bend left] (11)
(11) edge (13)
(13) edge[bend left] (14)
(14) edge[bend left] (15)
(15) edge (12)
(15) edge[color=blue, bend left] (14)
(14) edge[color=blue, bend left] (13)
(13) edge[color=blue, bend left] (16)
(16) edge[color=blue, bend left] (15)
(16) edge (17)
(17) edge (18)
(18) edge (19)
(19) edge (20)
(20) edge (16)
(17) edge[color=blue, in=45,out=135,looseness=30] (17)
(18) edge[color=blue, in=-45,out=45,looseness=30] (18)
(19) edge[color=blue, in=225,out=315,looseness=30] (19)
(20) edge[color=blue, in=225,out=315,looseness=30] (20)
;
\end{tikzpicture}

	  \caption{\small The cover $S$ defining the subgroup $H'$ of $F(a_1,e_1)$ in the cases $k=3$ (above) and $k=4$ (below). Red edges are labelled by {\color{red} $a_1$}, and blue edges are labelled by {\color{blue} $e_1$}. The basepoint is marked by a star.} \label{fig:ex4}
	 \end{figure}

The group $H'$ is a subgroup of $F(a_1,e_1)$ of index $k(k+1)$, since it is defined by a finite cover of degree $k(k+1)$. The group $\GG(T_{k, k+1})$ retracts onto $F(a_1,e_1)$ and so the full preimage $H$ of $H'$ in $\GG(T_{k, k+1})$ is a subgroup of index $k(k+1)$.

\subsection{Isomorphism between $H$ and $\pi_1(X)$} \label{sec:53}

The finite index subgroup $H$ of $\GG(T_{k, k+1})$ acts on the Bass-Serre tree $T$ of the reduced centraliser splitting of $\GG(T_{k, k+1})$ and so $H$ has an induced graph of groups structure determined by the quotient of $T$ by the action of $H$. In this subsection we prove the following.
 
\begin{prop}\label{H}
In the above notation, the subgroup $H$ is isomorphic to the fundamental group of the graph of groups $X$. Namely, the induced splitting of $H$ given by its action on $T$ is $X$.
\end{prop}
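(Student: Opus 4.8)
The plan is to compute the graph-of-groups decomposition $H\backslash\!\backslash T$ induced by the action of $H$ on $T$ and identify it with $X$. First I record the reduced centraliser splitting of $G=\GG(T_{k,k+1})$: its underlying graph is $\ti T_{k,k+1}$, the path $b - c - d$ on the two pivots and the centre; the vertex groups are $C(b)=\langle b\rangle\times F(a_1,\dots,a_k,c)\cong\BZ\times F_{k+1}$, $C(c)=\langle c\rangle\times F(b,d)\cong\BZ\times F_2$, $C(d)=\langle d\rangle\times F(e_1,\dots,e_{k+1},c)\cong\BZ\times F_{k+2}$, and the two edge groups are $\langle b,c\rangle$ and $\langle c,d\rangle$, each $\cong\BZ^2$; its Bass--Serre tree is $T=\ti T_{k,k+1}^e$. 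Since $H=\pi^{-1}(H')$ with $\ker\pi\le H$, and $\pi$ kills every canonical generator except $a_1,e_1$, each conjugate of $b,c,d$ lies in $\ker\pi\le H$; hence every vertex label of $\Psi(H)$ equals $1$, and for $w\in\{b,c,d\}$ the stabiliser in $H$ of the vertex $gwg^{-1}$ of $T$ equals $gC(w)g^{-1}\cap H=g\,(\pi|_{C(w)})^{-1}\bigl(\pi(g)^{-1}H'\pi(g)\cap\pi(C(w))\bigr)g^{-1}$.

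Next I will translate the combinatorics of $H$ acting on $T$ into the combinatorics of the graph $S$. The $w$-type vertices of $\Psi(H)$ are the double cosets $H\backslash G/C(w)$, which (via $\pi$, using $\ker\pi\le H$) map bijectively to $H'\backslash F(a_1,e_1)/\pi(C(w))$, and similarly for edges. Since $\pi(C(c))=\pi(\langle b,c\rangle)=\pi(\langle c,d\rangle)=1$, the $c$-type vertices and all edges correspond to the vertices of $S$; since $\pi(C(b))=\langle a_1\rangle$ and $\pi(C(d))=\langle e_1\rangle$, the $b$-type vertices correspond to the $a_1$-cycles $P_1,\dots,P_k$ of $S$, and the $d$-type vertices to the $e_1$-cycles, that is $Q_1,\dots,Q_{k-1}$ together with the $2k$ added $e_1$-loops. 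Using $H'\cap\langle a_1\rangle=\langle a_1^{k+1}\rangle$ and $H'\cap\langle e_1\rangle=\langle e_1^{k}\rangle$ or $\langle e_1\rangle$ (according as the $e_1$-cycle is a $Q_j$ or a loop), a Reidemeister--Schreier computation identifies the vertex groups: $\BZ\times F_{k^2+k+1}$ for every $b$-type vertex and for every $d$-type vertex over a $Q_j$; $C(c)\cong\BZ\times F_2$ for every $c$-type vertex; and $C(d)\cong\BZ\times F_{k+2}$ for the $2k$ $d$-type vertices over $e_1$-loops; all edge groups are $\BZ^2$. Counting incidences shows that each $b$-type vertex has $k+1$ incident edges, each $d$-type vertex over a $Q_j$ has $k$, each $c$-type vertex has degree $2$, and each $d$-type vertex over a loop has degree $1$; and two $b$/$d$-type vertices are joined through a $c$-type vertex exactly when the corresponding cycles of $S$ share that vertex of $S$. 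Feeding in the explicit identifications built into $S$ (namely $|P_1\cap Q_1|=|Q_{k-1}\cap P_k|=1$, $|P_i\cap Q_i|=i$, $|Q_i\cap P_{i+1}|=k-i$, and the single-vertex overlaps of $P_1$ and of $P_k$ with their $e_1$-loops) recovers precisely the chain of diamonds \eqref{eq:diseq}, the $c$-vertices playing the role of the $u_j$, the loop $d$-vertices that of the $u_j'$ in the two copies of $D_k$, and the $b$-vertices and $Q_j$-type $d$-vertices that of the shared $v,w$.

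It remains to match vertex groups, edge groups and edge monomorphisms with those of $X$. Identifying $C(c)=\langle c\rangle\times F(b,d)$ with $\langle r_j\rangle\times F(f_j,g_j)$ via $c\mapsto r_j,\ b\mapsto f_j,\ d\mapsto g_j$ sends the $(b,c)$- and $(c,d)$-edge groups of $\Psi(H)$, which are $\langle c\rangle\times\langle b\rangle$ and $\langle c\rangle\times\langle d\rangle$, to $\langle r_j,f_j\rangle$ and $\langle r_j,g_j\rangle$, exactly the embeddings $p\mapsto f_j,q\mapsto r_j$ and $p\mapsto g_j,q\mapsto r_j$ of $X$; the same identification handles the $u_j'$ and their full-centraliser $d$-vertices. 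For a $b$-type vertex the stabiliser is $\langle b\rangle\times N$ with $N$ of index $k+1$ in $F(a_1,\dots,a_k,c)$; the Schreier transversal $\{1,a_1,\dots,a_1^{k}\}$ gives $N$ the basis consisting of $a_1^{k+1}$, the $k^2-1$ elements $a_1^{j}a_ia_1^{-j}$ ($2\le i\le k$, $0\le j\le k$), and the $k+1$ elements $a_1^{j}ca_1^{-j}$ ($0\le j\le k$); the incident edge groups are the $\BZ^2$'s $\langle b\rangle\times\langle a_1^{j}ca_1^{-j}\rangle$, sharing the $\BZ$-factor $\langle b\rangle$ and hitting pairwise distinct basis elements — precisely the pattern $p\mapsto s$, $q\mapsto z_j$ of $X$, with the remaining $k^2$ basis elements accounting for the $x_\bullet$ and the extra $y_\bullet$; the $Q_j$-type $d$-vertices and the generators internal to $D_k$ are handled identically. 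Having matched underlying graph, vertex groups, edge groups and edge inclusions, the fundamental theorem of Bass--Serre theory yields $H\cong\pi_1(X)$ with the induced splitting equal to $X$.

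The routine algebra is the Reidemeister--Schreier bookkeeping of the third paragraph; the genuine obstacle is the second: one must verify, paying full attention to orientations of the cycles and to exactly which vertex of which $P_i$ is identified with which vertex of which $Q_j$ (and which carries an $e_1$-loop), that the overlap pattern prescribed in the construction of $S$ reproduces the diamond sequence \eqref{eq:diseq} and its incidence structure on the nose, rather than some other graph with the same vertex and edge counts.
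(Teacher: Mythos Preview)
Your proposal is correct and follows essentially the same approach as the paper. The paper constructs an explicit fundamental domain $Y\subset T$ and verifies it (Lemmas~\ref{rem1}, \ref{rem2}, \ref{f1}), whereas you phrase the same computation directly in terms of the double cosets $H\backslash G/C(w)\cong H'\backslash F(a_1,e_1)/\pi(C(w))$; in both cases the $c$-type vertices and all edges of $\Psi(H)$ are parametrised by the vertices of $S$, the $b$-type vertices by the $a_1$-cycles $P_i$, and the $d$-type vertices by the $e_1$-cycles, and the vertex stabilisers are computed by the same Reidemeister--Schreier calculation (the paper's Lemma~\ref{centH}).
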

\begin{proof}
By Bass-Serre theory the vertices of $T$ correspond to the left cosets of the vertex groups of the reduced centralizer splitting, i.e. left cosets of centralizers of $b$, $c$ and $d$ in $\GG(P_{k, k+1})$, and the action of $\GG(P_{k, k+1})$ on $T$ is by left multiplication.

We proceed by describing the fundamental domain of the action of $H$ on $T$. 

Let $1 \leq i \leq k-1$. Let $\ti{D}_i$ be the graph with $2i+1$ vertices described as follows: it has $1$ vertex $v$ of degree $i$ adjacent to vertices $v_1,\dots, v_i$ of degree $2$; each $v_j$ is adjacent to a vertex $v_j'$ of degree $1$, $j=1, \dots, i$. 

We now consider the subtree of the Bass-Serre tree $T$ which is isomorphic to $\ti{D}_i$, with vertex $v$ labelled by $e_1(a_1e_1)^{i-2}C(b)$, $v_j$ labelled by $e_1(a_1e_1)^{i-2}a_1^jC(c)$, $v_j'$ labelled by $e_1(a_1e_1)^{i-2}a_1^jC(d)=(e_1a_1)^{i-1}a_1^{j-1}C(d)$, $j=1,\dots, i$. We now remove the vertices $v_j'$, $j=2, \dots, i$. The obtained subtree $D_i'$ (without the removed vertices; no edges are removed) corresponds to a lift of $D_i$, where $D_i$ is the $2i$-th member of Sequence (\ref{eq:diseq}), for $1 \leq i \leq k-1$. Note that, strictly speaking, $D_i'$ is not a subtree, since for some of its edges one of the ends is not in $D_i'$, but, abusing the terminology, we will call it a subtree; the same observation applies for similar constructions below.

Similarly, consider a subtree of the Bass-Serre tree which is isomorphic to $\ti{D}_{k-i}$, with vertex $v$ labelled by $(e_1a_1)^{i-1}C(d)$, $v_j$ labelled by $(e_1a_1)^{i-1}e_1^jC(c)$, $v_j'$ labelled by $(e_1a_1)^{i-1}e_1^jC(b)$, $j=1,\dots, k-i$. We now remove the vertices $v_j'$, $j=2, \dots, k-i$. The obtained subtree $D_{k-i}''$ (without the removed vertices; no edges are removed) corresponds to a lift of $D_{k-i}$, where $D_{k-i}$ is the $2i+1$-th member of Sequence (\ref{eq:diseq}).

Let also $D_k'$ be the subtree of the Bass-Serre tree $T$  which is isomorphic to $\ti{D}_k$, with vertex $v$ labelled by $C(b)$, $v_j$ labelled by $a_1^jC(c)$, $v_j'$ labelled by $a_1^jC(d)$, $j=1,\dots, k$. Note that no vertices are removed in this case.
The subtree $D_k'$ corresponds to a lift of $D_k$, which is the first member of Sequence (\ref{eq:diseq}).

Similarly, let $D_k''$ be the subtree of the Bass-Serre tree $T$  which is isomorphic to $\ti{D}_k$, with vertex $v$ labelled by $e_1(a_1e_1)^{k-2}C(b)$, $v_j$ labelled by $e_1(a_1e_1)^{k-2}a_1^jC(c)$, $v_j'$ labelled by $e_1(a_1e_1)^{k-2}a_1^jC(d)=(e_1a_1)^{k-1}a_1^{j-1}C(d)$, $j=1,\dots, k$. Note that no vertices are removed in this case.
The subtree $D_k''$ corresponds to a lift of $D_k$, which is the last member Sequence (\ref{eq:diseq}).

One can readily check that the union $Y$ of all the subtrees $D_i'$ and $D_{k-i}''$ of $T$, for $1 \leq i \leq k-1$, together with $D_k'$ and $D_k''$, is connected. See Figure \ref{fig:Y} for $Y$ in the cases $k=3$ and $k=4$.

\tikzset{node distance=2.5cm, 
	every state/.style={minimum size=1pt, scale=0.3, 
		semithick,
		fill=black},
	initial text={}, 
	double distance=2pt, 
	every edge/.style={ 
		draw,  
		auto}}

\begin{figure}[!h]		
	\begin{tikzpicture}
	
	\footnotesize
	\node[state, color=red] (1) at (0,0) {};  \filldraw (0,0) node[align=right,above] {$1$};	
	\node[state] (2) at (-1,1) {}; 	\filldraw (-1,1) node[align=right,above] {$a_1$};	
	\node[state, color=blue] (3) at (-2,1) {}; \filldraw (-2,1) node[align=right,above] {$a_1$};	
	\node[state] (4) at (-1,0) {};  	\filldraw (-1,0) node[align=right,above] {$a_1^2$};	
	\node[state, color=blue] (5) at (-2,0) {};  	\filldraw (-2,0) node[align=right,above] {$a_1^2$};	
	\node[state] (6) at (-1,-1) {};  	\filldraw (-1,-1) node[align=right,above] {$a_1^3$};	
	\node[state, color=blue] (7) at (-2,-1) {}; 	\filldraw (-2,-1) node[align=right,above] {$a_1^3$};	
	\node[state](8) at (1,0) {};  \filldraw (1,0) node[align=right,above] {$1$};
	\node[state, color=blue] (9) at (2,0) {};   \filldraw (2,0) node[align=right,above] {$1$};
	\node[state] (10) at (3,1) {};   \filldraw (3,1) node[align=right,above] {$e_1^2$};
	\node[state, color=red, fill=white] (24) at (4,1) {};   \filldraw (4,1) node[align=right,above] {$e_1^2$};
	\node[state, color=red] (11) at (4,0) {};   \filldraw (4,0) node[align=right,above] {$e_1$};
	\node[state] (12) at (3,0) {};   \filldraw (3,0) node[align=right,above] {$e_1$};
	\node[state] (13) at (5,1) {}; \filldraw (5,1) node[align=right,above] {$e_1a_1^2$};
	\node[state, color=blue, fill=white] (25) at (6,1) {};  \filldraw (6,1) node[align=right,above] {$e_1a_1^2$};
	\node[state, color=blue] (14) at (6,0) {}; \filldraw (6,0) node[align=right,above] {$e_1a_1$};
	\node[state] (15) at (5,0) {};  \filldraw (5,0) node[align=right,above] {$e_1a_1$};
	\node[state] (16) at (7,0) {};   \filldraw (7,0) node[align=right,above] {$e_1a_1e_1$};
	\node[state, color=red] (17) at (8,0) {};  \filldraw (8,0) node[align=left, left=8pt, below=1pt] {$e_1a_1e_1$};
	\node[state] (18) at (9,0) {}; \filldraw (9,0) node[align=right,above] {$(e_1a_1)^2a_1$};
	\node[state, color=blue] (19) at (10,0) {}; \filldraw (10,0) node[align=right,right] {$(e_1a_1)^2a_1$};
	\node[state] (20) at (9,1) {}; \filldraw (9,1) node[align=right,above] {$(e_1a_1)^2$};
	\node[state, color=blue] (21) at (10,1) {}; \filldraw (10,1) node[align=right,right] {$(e_1a_1)^2$};
	\node[state] (22) at (9,-1) {}; \filldraw (9,-1) node[align=right,below] {$(e_1a_1)^2a_1^2$};
	\node[state, color=blue] (23) at (10,-1) {};  \filldraw (10,-1) node[align=right,right] {$(e_1a_1)^2a_1^2$};
	
	\draw
	(1) edge (2)
	(2) edge (3)
	(1) edge (4)
	(4) edge (5)
	(1) edge (6)
	(6) edge (7)
	(1) edge (8)
	(8) edge (9)
	(9) edge (10)
	(10) edge (24)
	(11) edge (12)
	(12) edge (9)
	(11) edge (13)
	(13) edge (25)
	(14) edge (15)
	(15) edge (11)
	(14) edge (16)
	(16) edge (17)
	(17) edge (18)
	(18) edge (19)
	(17) edge (20)
	(20) edge (21)
	(17) edge (22)
	(22) edge (23);
	
	\end{tikzpicture}
	
	\vspace{0.3 cm}
	\resizebox{430pt}{!}{
		\begin{tikzpicture}
		\footnotesize
		\node[state, color=red] (1) at (0,0) {};  \filldraw (0,0) node[align=right,above] {$1$};
		\node[state] (2) at (-1,1.5) {}; 	\filldraw (-1,1.5) node[align=right,above] {$a_1$};	
		\node[state] (3) at (-1,0.5) {}; 	\filldraw (-1,0.5) node[align=right,above] {$a_1^2$};	
		\node[state] (4) at (-1,-0.5) {}; 	\filldraw (-1,-0.5) node[align=right,above] {$a_1^3$};	
		\node[state] (5) at (-1,-1.5) {}; 	\filldraw (-1,-1.5) node[align=left, left=2pt,above] {$a_1^4$};	
		\node[state, color=blue] (6) at (-2,1.5) {}; \filldraw (-2,1.5) node[align=right,above] {$a_1$};	
		\node[state, color=blue] (7) at (-2,0.5) {}; 	\filldraw (-2,0.5) node[align=right,above] {$a_1^2$};	
		\node[state, color=blue] (8) at (-2,-0.5) {}; 	\filldraw (-2,-0.5) node[align=right,above] {$a_1^3$};	
		\node[state, color=blue] (9) at (-2,-1.5) {}; 	\filldraw (-2,-1.5) node[align=right,above] {$a_1^4$};
		\node[state] (10) at (1,0) {}; 	\filldraw (1,0) node[align=right,above] {$1$};	
		\node[state, color=blue] (11) at (2,0) {}; 	\filldraw (2,0) node[align=right,above] {$1$};	
		\node[state] (12) at (3,0) {}; 	\filldraw (3,0) node[align=right,above] {$e_1$};	
		\node[state,color=red] (13) at (4,0) {}; 	\filldraw (4,0) node[align=right,above] {$e_1$};	
		\node[state] (14) at (3,1) {}; 	\filldraw (3,1) node[align=right,above] {$e_1^2$};	
		\node[state,color=red, fill=white] (15) at (4,1) {}; 	\filldraw (4,1) node[align=right,above] {$e_1^2$};	
		\node[state] (16) at (3,2) {}; 	\filldraw (3,2) node[align=right,above] {$e_1^3$};	
		\node[state,color=red, fill=white] (17) at (4,2) {}; 	\filldraw (4,2) node[align=right,above] {$e_1^3$};	
		\node[state] (18) at (5,0) {}; 	\filldraw (5,0) node[align=right,above] {$e_1a_1$};	
		\node[state,color=blue] (19) at (6,0) {}; 	\filldraw (6,0) node[align=right,below] {$e_1a_1$};	
		\node[state] (20) at (5,1) {}; 	\filldraw (5,1) node[align=right,above] {$e_1a_1^2$};	
		\node[state,color=blue, fill=white] (21) at (6,1) {}; 	\filldraw (6,1) node[align=right,above] {$e_1a_1^2$};	
		\node[state] (22) at (7,0) {}; 	\filldraw (7,0) node[align=right,above] {$e_1a_1e_1$};	
		\node[state,color=red] (23) at (8,0) {}; 	\filldraw (8,0) node[align=right,below] {$e_1a_1e_1$};	
		\node[state] (24) at (7,1) {}; 	\filldraw (7,1) node[align=right,above] {$e_1a_1e_1^2$};	
		\node[state,color=red, fill=white] (25) at (8,1) {}; 	\filldraw (8,1) node[align=right,above] {$e_1a_1e_1^2$};	
		\node[state] (26) at (9,0) {}; 	\filldraw (9,0) node[align=right,above] {$(e_1a_1)^2$};	
		\node[state,color=blue] (27) at (10,0) {}; 	\filldraw (10,0) node[align=right,below] {$(e_1a_1)^2$};	
		\node[state] (28) at (9,1) {}; 	\filldraw (9,1) node[align=right,above] {$\qquad (e_1a_1)^2a_1$};	
		\node[state,color=blue, fill=white] (29) at (10,1) {}; 	\filldraw (10,1) node[align=right,right] {$(e_1a_1)^2a_1$};	
		\node[state] (30) at (9,2) {}; 	\filldraw (9,2) node[align=right,above] {$(e_1a_1)^2a_1^2$};	
		\node[state,color=blue, fill=white] (31) at (10,2) {}; 	\filldraw (10,2) node[align=right,right] {$(e_1a_1)^2a_1^2$};
		\node[state] (32) at (11,0) {}; 	\filldraw (11,0) node[align=right,above] {$(e_1a_1)^2e_1$};	
		\node[state,color=red] (33) at (12,0) {}; 	\filldraw (12,0) node[align=left, left=12pt, below] {$(e_1a_1)^2e_1$};	
		\node[state] (34) at (13,1.5) {}; 	\filldraw (13,1.5) node[align=right,above] {$(e_1a_1)^3$};	
		\node[state,color=blue] (35) at (14,1.5) {}; 	\filldraw (14,1.5) node[align=right,below] {$(e_1a_1)^3$};	
		\node[state] (36) at (13,0.5) {}; 	\filldraw (13,0.5) node[align=right,above] {$\qquad (e_1a_1)^3a_1$};	
		\node[state,color=blue] (37) at (14,0.5) {}; 	\filldraw (14,0.5) node[align=right,below] {$(e_1a_1)^3a_1$};
		\node[state] (38) at (13,-0.5) {}; 	\filldraw (13,-0.5) node[align=right,above] {$\qquad (e_1a_1)^3a_1^2$};	
		\node[state,color=blue] (39) at (14,-0.5) {}; 	\filldraw (14,-0.5) node[align=right,below] {$(e_1a_1)^3a_1^2$};
		\node[state] (40) at (13,-1.5) {}; 	\filldraw (13,-1.5) node[align=left, left] {$(e_1a_1)^3a_1^3$};	
		\node[state,color=blue] (41) at (14,-1.5) {}; 	\filldraw (14,-1.5) node[align=right,above] {$(e_1a_1)^3a_1^3$};	
		
		\draw
		(1) edge (2) edge (3) edge (4) edge (5)
		(2) edge (6)
		(3) edge (7) (4) edge (8) (5) edge (9)
		(1) edge (10) (10) edge (11) 
		(11) edge (12) edge (14) edge (16) (12) edge (13) (14) edge (15) (16) edge (17)
		(13) edge (18) edge (20) (20) edge (21) (18) edge (19)
		(19) edge (24) edge (22) (24) edge (25) (22) edge (23)
		(23) edge (26) edge (28) edge (30) (30) edge (31) (28) edge (29) (26) edge (27)
		(27) edge (32) (32) edge (33)
		(33) edge (34) edge (36) edge (38) edge (40)
		(34) edge (35) (36) edge (37) (38) edge (39) (40) edge (41)
		
		;
		\end{tikzpicture}
	}
	\caption{\small The fundamental domain $Y$ for the action of $H$ on $T$ in the case $k=3$ (above) and $k=4$ (below). Only the vertices that are denoted by disks belong to $Y$, while those denoted by circles don not. Black vertices correspond to the cosets of $C(c)$, red vertices -- to the cosets of $C(b)$, and blue vertices -- to the cosets of $C(d)$. Representatives of the corresponding cosets are written next to the vertices.} \label{fig:Y}
\end{figure}
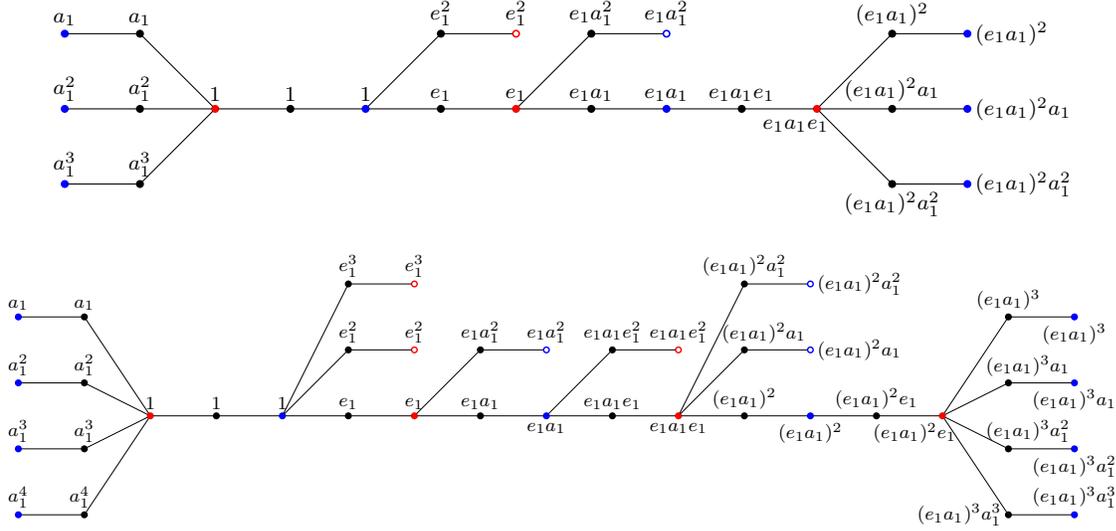

We need a few additional lemmas.

\begin{lemma}\label{rem1}
There is a one-to-one correspondence $\theta_c$ between the vertices of $Y$ which are the left cosets of $C(c)$ and the vertices in the cover $S$ defining $H'$, i.e. all the right cosets of $H'$. 

Under $\theta_c$ a coset $gC(c)$ representing a vertex of $Y$ is mapped to the vertex of $S$, where one gets after reading the word $\pi(g)$ starting at the basepoint, i.e. to the vertex representing the right coset $H'\pi(g)$. 
\end{lemma}
\begin{proof}
	Note that $\theta_c$  is a well-defined map, since $C(c)=\langle b,c,d \rangle$ is in the kernel of $\pi$.
	
	Now, by the definition of $Y$, the set of left cosets of $C(c)$ which are the vertices of $Y$ has the following set of representatives (one for each coset), for $ 1\leq i \leq k-1$:
	\begin{equation}\label{porto}
	e_1(a_1e_1)^{i-2}a_1^j,   \: 1 \leq j \leq i; 
	\quad (e_1a_1)^{i-1}e_1^j,   \: 1 \leq j \leq k-i;
	\quad a_1^j, \: e_1(a_1e_1)^{k-2}a_1^j, \: 1 \leq j \leq k.
	\end{equation}
Then it follows from the definition of $H'$ that the elements from (\ref{porto}) form the set of right coset representatives for $H'$, or, in other words, for each vertex $v$ in $S$ there is exactly one of the elements in (\ref{porto}) which labels a path from the basepoint to $v$ in $S$.
	Indeed,  the elements $a_1^j, 1 \leq j \leq i,$ correspond to the vertices in $P_1$ but not $Q_1$, the elements $(e_1a_1)^{i-1}e_1^j,   \: 1 \leq j \leq k-i,$ correspond to the vertices in $Q_i \cap P_{i+1}$ for all $ 1\leq i \leq k-1$, the elements $e_1(a_1e_1)^{i-2}a_1^j,   \: 1 \leq j \leq i,$ correspond to the vertices in $P_i \cap Q_i$ for all $ 1\leq i \leq k-1$, and the elements  $e_1(a_1e_1)^{k-2}a_1^j, \: 1 \leq j \leq k,$ correspond to the vertices in $P_k$ but not $Q_{k-1}$.
	
	This proves that $\theta_c$ is indeed a bijection.
\end{proof}

\begin{lemma}\label{rem2}
	There is a one-to-one correspondence $\theta_b$ between the vertices of $Y$ which are the left cosets of $C(b)$ and the cycles labelled by $a_1$ in the cover $S$ defining $H'$. 
Under this correspondence, a coset $gC(b)$ representing a vertex of $Y$ is mapped to the $a_1$-cycle passing through the vertex of $S$ where one gets after reading the word $\pi(g)$ starting at the basepoint {\rm(}i.e. through the vertex representing the coset $H'\pi(g)${\rm)}. 

Similarly, there is a one-to-one correspondence $\theta_d$ between the vertices of $Y$ which are the left cosets of $C(d)$ and all the cycles labelled by $e_1$ in the cover $S$ defining $H'$. Under this correspondence, a coset $gC(d)$ representing a vertex of $Y$ is mapped to the $e_1$-cycle passing through the vertex of $S$ where one gets after reading the word $\pi(g)$ starting at the basepoint {\rm(}i.e., through the vertex representing the coset $H'\pi(g)${\rm)}. 
\end{lemma}
\begin{proof}
Note that $\theta_b$ is a well-defined map, since $C(b)$ does not contain $e_1$ and so taking another representative $g'$ from the coset $gC(b)$ would result in a vertex on the same $a_1$-cycle.
 
Now, by the definition of $Y$, the set of left cosets of $C(b)$ which are the vertices of $Y$ has the following set of representatives (one for each coset):
$$
 	e_1(a_1e_1)^{i-2}, \: 1\leq i \leq k.
$$
 It follows from the definition of $H'$ that each word $e_1(a_1e_1)^{i-2}$ labels a path in $S$ from the basepoint to a vertex on the cycle $P_i$, for all $1\leq i \leq k$. Since $P_1, \ldots, P_k$ are all the cycles in $S$ labelled by $a_1$, this proves the first claim.
 
 Similarly, $\theta_d$ is a well-defined map, and  the set of left cosets of $C(d)$ which are vertices of $Y$ has the following set of representatives (one for each coset):
$$
 	(e_1a_1)^{i-1}, \:  1\leq i \leq k-1, \:  a_1^j, \: (e_1a_1)^{k-1}a_1^{j-1}, \: 1 \leq j \leq k.
$$
 It follows from the definition of $H'$ that each word $(e_1a_1)^{i-1}$ labels a path in $S$ from the basepoint to a vertex on the cycle $Q_i$, for all $1\leq i \leq k-1$, and together with  $a_1^j, \: (e_1a_1)^{k-1}a_1^{j-1}$, which label paths from the basepoint to $e_1$-loops in $S$, this gives all the $e_1$-cycles, and hence the desired result.
\end{proof}

The correspondences from Lemmas \ref{rem1} and \ref{rem2} can be easily traced on Figures \ref{fig:ex4} and \ref{fig:Y}  for $k=3$ and $k=4$ (notice that they respect the colours).

\begin{lemma}\label{f1}
$Y$ is a fundamental domain of the action of $H$ on $T$, i.e.,
\begin{enumerate}
	\item no $2$ vertices (edges) of $Y$ belong to the same $H$-orbit;
	\item any vertex (edge) of the tree $T$ can be brought to one of the vertices (edges) of $Y$ by the action of $H$.
\end{enumerate}
\end{lemma}
\begin{proof}
Claim (1) follows from Lemmas \ref{rem1} and \ref{rem2}.
Indeed, suppose that two vertices $v=gC(c)$ and $v'=g'C(c)$ of $Y$ are equivalent under the action of $H$, i.e., $g'C(c)=hgC(c)$ for some $h \in H$. Then $\pi(h)\in H'$ and so $\theta_c(g'C(c))=\theta_c(gC(c)),$ since both correspond to the coset $H'\pi(g)=H'\pi(h)\pi(g)$. Then $v=v'$, since $\theta_c$ is injective.

Similarly, if  $v=gC(b)$ and $v'=g'C(b)$, and $g'C(b)=hgC(b)$ for some $h \in H$, then $\theta_b(g'C(b))=\theta_b(gC(b)),$ since both correspond to the $a_1$-cycle passing through the vertex $H'\pi(g)=H'\pi(h)\pi(g)$, so $v=v'$, since $\theta_b$ is injective. In the same way, if $v=gC(d)$ and $v'=g'C(d)$, and $g'C(d)=hgC(d)$, then $v=v'$. Since no two vertices of $Y$ are in the same $H$-orbit, the same is true for edges of $Y$. This proves the first claim.

We now prove claim (2). Let $v$ be a vertex of $Y$, labelled by $gC(\alpha)$, where $\alpha$ is $b,c$ or $d$, and let $St(v)$ be the star at $v$ in the Bass-Serre tree $T$. Note that the stabilizer of $v$ under the action of $H$ is $C_{H}(\alpha^g)$. We show that, modulo $C_{H}(\alpha^g)$, any vertex $w\in St(v)\smallsetminus \{v\}$ belongs to the orbit of a vertex labelled by:
\begin{itemize}
	\item $gC(b), gC(d)$ if $\alpha=c$;
	\item $ge_1^j C(c)$, if $\alpha=d$, where $j=0,\dots, k-1$;
	\item $ga_1^j C(c)$, if $\alpha=b$, where $j=0,\dots, k$.
\end{itemize}

Indeed, suppose first $v=gC(c)\in Y$ and let $e=(gC(c),gxC(b))\notin Y$. Then $x\in C(c)=\langle b,c,d \rangle$ and without loss of generality we can assume that $x\in \langle b,d\rangle$. It suffices to find $h\in H\cap C(c^g)$ such that $h\cdot gxC(b)=gC(b)$. Hence, $h=gx^{-1}g^{-1}$ satisfies the requirements. The case when $e=(gC(c),gxC(d))\notin Y$ is similar.

Let now $v=gC(d)\in Y$, and let $e=(gC(d),gxC(c))\notin Y$. Then $x\in C(d)=\langle c, d, e_i, i=1,\dots, k+1\rangle$, and without loss of generality we can assume that $x\in \langle c,e_i, i=1,\dots, k+1\rangle$. We now find $h\in H\cap C(d^g)$ such that $h\cdot gxC(c)=ge_1^jC(c)$, for some $j=0,\dots, k-1$. Set $h=ge_1^j x^{-1}g^{-1} \in H$, where $j$ is the sum of exponents of $e_1$ in $x$ modulo $k$. Note that the choice of $j$ guarantees that $e_1^jx^{-1}\in C(d)$ is a loop in the graph.

The argument for the case $\alpha=b$ is identical.

We now show that all the edges incident to a vertex in $Y$ can be taken to $Y$ by elements of $H$.

For the edges that are incident to a vertex of type $gC(c)$ in $Y$ it follows directly from the above claim: any edge connecting $gC(c)$ to $w=g'C(b)$ can be brought to an edge connecting $gC(c)$ to $gC(b)$, which is in $Y$, and similar for $d$.

We now consider the case of edges incident to a vertex $v=gC(b)$ of $Y$. According to the above claim, it suffices to prove that any edge connecting $v$ to $ga_1^j C(c)$ for $j=0, \ldots, k,$ can be brought to an edge of $Y$. If $v=C(b)$ or $v=e_1(a_1e_1)^{k-2}C(b)$, then all these edges are already in $Y$, and there is nothing to prove. 
Otherwise, we have $v=e_1(a_1e_1)^{i-2}C(b)$ for some $1 \leq i \leq k-1$, and then the edges $(gC(b),ga_1^j C(c))$ are already in $Y$ for $j=1, \dots, i$, but not for $j=i+1, \ldots, k$. Thus it suffices to show that the edges $(gC(b),ga_1^j C(c))$ can be brought to $Y$ for $j=i+1, \ldots, k$.
Indeed, let $h_j=g e_1^{k+1-j}a_1^{-j} g^{-1}$. It follows from the construction of $S$ that $h_j \in H$. We then have 
\begin{gather}\notag
\begin{split}
h_j&\cdot g a_1^j C(c)=ge_1^{k+1-j} a_1^{-j} g^{-1}ga_1^jC(c)=ge_1^{k+1-j}C(c) \quad \hbox{ and } \\
h_j&\cdot gC(b)=ge_1^{k+1-j} a_1^{-j}C(b)=ge_1^{k+1-j}C(b).
\end{split}
\end{gather}

It follows that the edge $(gC(b),ga_1^jC(c))$ is mapped by $h_j$ to the edge $(ge_1^{k+1-j}C(b), ge_1^{k+1-j}C(c))$, which belongs to $Y$ since $1 \leq k+1-j \leq k-i$.

Finally, consider the case of edges incident to a vertex $v=gC(d)$ of $Y$. According to the above claim, it suffices to prove that any edge connecting $v$ to $ge_1^j C(c)$ for $j=0, \ldots, k-1$ can be brought to an edge of $Y$. 
Suppose first $v=a_1^lC(d)$ for $1 \leq l \leq k$. Then the edge $(a_1^lC(d),a_1^le_1^jC(c))$ can be brought to the edge $(a_1^lC(d),a_1^lC(c)) \in Y$, by the element $a_1^le_1^{-j}a_1^{-l}$, which belongs to $H$ by the construction of $S$.
  The case when $v=(e_1a_1)^{k-1}a_1^{l-1}C(d)$ is similar. Otherwise, $v=(e_1a_1)^{i-1}C(d)$ for some $1 \leq i \leq k-1$, and the proof in this case is similar to the one above for the cosets of $C(b)$.

This shows that indeed all the edges incident to a vertex in $Y$ can be taken to $Y$ by elements of $H$.

To finish the proof of claim (2) we are left to consider the case when $v\in Y$, $(v,w)\in Y$  but $w$ is not in $Y$. By definition of $Y$, this is possible only in the following setting. Let $(u_j, v_j,w,u_1, v_1)$ be a path of length $4$ in the Bass-Serre tree $T$, where $g=(e_1a_1)^{i-1}$ for some $1 \leq i \leq k-1$ and
$$
u_j = ge_1^jC(b),\  v_j= g e_1^j C(c), \ w= gC(d), \hbox{ where } j=1,\dots, k-i.
$$
We show that $u_1$ and $u_j$ are in the same $H$-orbit, that is there exist $h_j\in H$ so that $h_j\cdot ge_1^jC(b)=ge_1C(b)$, for all $j=1, \ldots, k-i$. Indeed, one can take $h_j=ge_1a_1^{1-j}e_1^{-j}g^{-1}$, and it follows from the construction of $S$ that $h_j \in H$. 

The case when $w=gC(b)$ is identical and is left to the reader. 

Now it is standard that $Y$ is the fundamental domain. Indeed, one can see by induction on the distance between an edge $e$ of $T$ and the closest to $e$ vertex of $Y$ that any edge of $T$ can be taken to $Y$ by an element of $H$, and similar for vertices.

This proves Lemma \ref{f1}.
\end{proof}

 It follows that the quotient of the action of $T$ by $H$ is a graph isomorphic to the one associated to $X$. We need one more lemma about the structure of centralizers in $H$.

\begin{lemma}\label{centH}
The following formulas hold for centralizers in $H$:
\begin{gather}\notag
\begin{split}
C_{H}(c)=\langle c \rangle \times F(b,d) , \quad C_H(d)&=\langle d \rangle \times F({e_i}^{e_1^{j}}, e_1^k, {c}^{e_1^{j}} \mid  j=0,\dots, k-1, \: i=2,\dots,k+1 ),\\ 
C_H(b)&=\langle b \rangle \times F( {a_i}^{a_1^{j}},a_1^{k+1},{c}^{a_1^{j}} \mid  j=0,\dots, k, \: i=2,\dots,k).
\end{split}
\end{gather}

More generally, the following holds.
\begin{itemize}
\item
$ C_{H}(c^g)=\langle c^g \rangle \times  F(b^g, d^g) \cong \BZ \times F_2$ for every $g \in G$;
\item $C_H(b^g)=\langle b^g \rangle \times F({a_i}^{ga_1^{j}},(a_1^{k+1})^g,{c}^{ga_1^{j}} \mid  j=0,\dots, k, \: i=2,\dots,k) \cong \BZ \times F_{k^2+k+1}$ for every $g \in G$;
\item If $g \in G$ is such that $\pi(g)$ labels a path in $S$ from the basepoint to a vertex on one of the cycles $Q_1, \ldots, Q_{k-1}$, then
  $$
  C_H(d^g)=\langle d^g \rangle \times F( {e_i}^{ge_1^{j}}, (e_1^k)^g, {c}^{ge_1^{j}} \mid  j=0,\dots, k-1, \: i=2,\dots,k+1) \cong \BZ \times F_{k^2+k+1};
  $$
\item If $g \in G$ is such that $\pi(g)$ labels a path in $S$ from the basepoint to a vertex on the cycle $P_1$ but not $Q_1$, or $P_k$ but not $Q_{k-1}$, then    
$$
C_H(d^g)=\langle d^g \rangle \times F(e_i^g, c^g \mid i=1, \ldots, k+1 ) \cong \BZ \times F_{k+2}.
$$ 
\end{itemize}
\end{lemma}
\begin{proof}
	We just prove the first statement, the others follow from the construction of the cover $S$ in a similar way, since $C(\alpha^g)=(C(\alpha))^g$, where $C(\alpha)$ is a subgroup of $H^{g^{-1}}=\pi^{-1}((H')^{\pi(g)^{-1}})$ for $\alpha=b,c,d$, and the cover defining $(H')^{\pi(g)^{-1}}$ can be obtained from $S$ just by moving the basepoint along a path with the label $\pi(g)$. 
	 
	Note that 
	$$
	C(b)=\langle a_1,\ldots,a_k, b, c \rangle \cong \BZ \times F_{k+1}, \: C(c)=\langle b,c, d \rangle \cong \BZ \times F_2, \: C(d)=\langle c,d,e_1,\ldots,e_{k+1} \rangle \cong \BZ \times F_{k+2}.
	$$
	It is immediate that $C_{H}(c)=\langle b,c,d\rangle$. Now, we have
	$$
	C_H(b)=H \cap C(b)=H \cap \langle a_1,\ldots,a_k, b, c \rangle=\langle b \rangle \times (H \cap \langle a_1, \ldots, a_k, c \rangle),
	$$
	and $H_0=H \cap \langle a_1, \ldots, a_k, c \rangle$ is a subgroup of the free group  $F(a_1, \ldots, a_k, c)$ which is defined by the following cover: take the cycle $P_1$ (labelled by $a_1$, of length $k+1$) and add loops labelled by $a_2, \ldots, a_k, c$ at every vertex. It follows that $H_0$ has a basis $\{{a_i}^{a_1^{j}},a_1^{k+1},{c}^{a_1^{j}}, \: j=0,\dots, k, \: i=2,\dots,k \}$, and so $C_H(b)$ has the desired form. The proof for $C_H(d)$ is similar.
\end{proof}

We now show that $H$ is isomorphic to the fundamental group of the graph of groups $X$. 
To see this we need to check that vertex groups, edge groups and the embeddings are the same. The vertex groups are simply centralisers (in $H$) of conjugates of generators, which we computed in Lemma \ref{centH}. Edge groups are clearly free abelian groups of rank two and the embeddings are mapping generators of the edge groups to the corresponding generators of the vertex groups. Now it follows directly from the definition of $X$, Lemma \ref{f1} and Lemma \ref{centH} that $H$ is isomorphic to the fundamental group of the graph of groups $X$.
This proves Proposition \ref{H}.
\end{proof}

\subsection{Construction of the finite index subgroup $K$ in $\GG(P_{4k+2})$}\label{sec:54}

Let now $P_{4k+2}$ be the path of length $4k+2$. Let 
$$
\{A, D_1, C_1,B_1,C_1', D_2, C_2, B_2, C_2', \dots, D_k, C_k, B_k, C_k', D_{k+1}, E\}
$$ 
be the ordered list of vertices of $P_{4k+2}$. From now on denote by $G=\GG(P_{4k+2})$ the corresponding RAAG. Let $F=F(C_1,\dots, C_k, C_1', \dots, C_k')$ be the free group on the indicated set of generators. We construct a finite index subgroup $K'$ of $F$. The group $K'$ corresponds to the degree $k(k+1)$ cover $Z$ of the bouquet of $2k$ circles defined as follows. 
\begin{itemize}
	\item There are exactly $i$ cycles spanned by edges labelled by $C_i'$, $i=1, \ldots, k$. In particular, there is only one cycle spanned by $C_1'$, its length is $k(k+1)$.
	\item Edges labelled by $C_k'$ span $k$ cycles of length $k+1$ each.
	\item Edges labelled by $C_i'$ span $i$ cycles, $i=1,\dots, k-1$: one cycle of length $k(k+1)-(i-1)$ and $i-1$ loops.
	\item There are $k+1-i$ cycles spanned by $C_i$, $i=1,\dots, k$. In particular, there is one cycle spanned by $C_k$, its length is $k(k+1)$.
	\item Edges labelled by $C_1$ span $k$ cycles of length $k+1$ each.
	\item Edges labelled by $C_i$ span $k+1-i$ cycles, $i=2,\dots, k$: one cycle of length $k(k+1)-(k-i)$ and $k-i$ loops.
	\item There are no loops at the basepoint.
	\item The graph spanned by the edges labelled by $C_i$ and $C_i'$ is connected, for every $i=1, \ldots,k$.
    \item The graph spanned by the edges labelled by $C_{i-1}'$ and $C_i$ is connected, for every $i=2, \ldots,k$.
\end{itemize}

It is easy to see that such a cover always exists. For instance, one could have at most one loop at each vertex, it is possible since there are $(k-1)(k-2)$ loops altogether and $k(k+1)$ vertices, and in this case the last two conditions follow from the previous ones. 
Such a cover is not unique, but we can choose any cover satisfying the above assumptions, and this will give isomorphic subgroups, as we will see below. See Figures \ref{fig:cover} and \ref{fig:cover2} for covers $Z$ in the cases $k=3$ and $k=4$ respectively.

Without loss of generality, we assume that the vertices of $Z$ which can be reached from the basepoint by reading the words $C_1'^j$, $j=0,\ldots,k-1$, all belong to different cycles labelled by $C_1$, and similarly the vertices of $Z$ which can be reached from the basepoint  by reading the words $C_k^j$, $j=0,\ldots,k-1$, all belong to different cycles labelled by $C_k'$. 

\tikzset{->-/.style={decoration={
  markings,
  mark=at position #1 with {\arrow{latex}}},postaction={decorate}}, node distance=2.5cm, 
every state/.style={minimum size=1pt, scale=0.3, 
semithick,
fill=black},
initial text={}, 
double distance=2pt, 
every edge/.style={color=red, 
draw, 
->,>=latex',
auto,
semithick}}

\begin{figure}[h]

\begin{tikzpicture}[scale=1]
	
\node[state] (1) at (-3,0){};
\node[state] (2) at (-2,1) {};
\node[state] (3) at (-1,2) {};
\node[state] (4) at (0,3) {};
\node[state] (5) at (1,2) {};
\node[state] (6) at (2,1) {};
\node[state] (7) at (3,0) {};
\node[state] (8) at (2,-1) {};
\node[state] (9) at (1,-2) {};
\node[state] (10) at (0,-3) {};
\node[state] (11) at (-1,-2) {};
\node[state] (12) at (-2,-1) {};

\node at (-3.2,0) {*};

\draw
(1) edge (2) 
(2) edge (3)
(3) edge (4)
(4) edge (5)
(5) edge (6)
(6) edge (7)
(7) edge (8)
(8) edge (9)
(9) edge (10)
(10) edge (11)
(11) edge (12)
(12) edge (1)

(1) edge[bend right, color=blue] (4)
(4) edge[bend right, color=blue] (7)
(7) edge[bend right, color=blue] (10)
(10) edge[bend right, color=blue] (1)
(2) edge[color=blue] (5)
(5) edge[color=blue] (8)
(8) edge[color=blue] (11)
(11) edge[color=blue] (2)
(3) edge[color=blue] (6)
(6) edge[color=blue] (9)
(9) edge[color=blue] (12)
(12) edge[color=blue] (3)

(1) edge[color=green, bend left=50] (3)
(3) edge[color=green, bend left] (4)
(4) edge[color=green, bend left] (5)
(5) edge[color=green, bend left] (6)
(6) edge[color=green, bend left] (7)
(7) edge[color=green, bend left] (8)
(8) edge[color=green, bend left] (9)
(9) edge[color=green, bend left] (10)
(10) edge[color=green, bend left] (11)
(11) edge[color=green, bend left] (12)
(12) edge[color=green, bend left] (1)
(2) edge[color=green, in=160,out=90,looseness=30] (2)

(1) edge[color=brown, bend left] (2)
(2) edge[color=brown, bend left=50] (4)
(3) edge[color=brown, in=90,out=160,looseness=30] (3)
(4) edge[color=brown,bend left=70] (5)
(5) edge[color=brown,bend left=70] (6)
(6) edge[color=brown,bend left=70] (7)
(7) edge[color=brown,bend left=70] (8)
(8) edge[color=brown,bend left=70] (9)
(9) edge[color=brown,bend left=70] (10)
(10) edge[color=brown,bend left=70] (11)
(11) edge[color=brown,bend left=70] (12)
(12) edge[color=brown,bend left=70] (1)

;
\end{tikzpicture}
 \caption{\small A cover $Z$ defining the subgroup $K'$ in $F$ in the case $k=3$. The basepoint is marked by a star. Each red edge corresponds to two edges, one labelled by {\color{red} $C_1'$} and the other by {\color{red} $C_3$}; each blue edge corresponds to two edges, one labelled by {\color{blue} $C_1$} and the other by {\color{blue} $C_3'$}; green edges are labelled by {\color{green} $C_2$} and brown edges are labelled by {\color{brown} $C_2'$}. Here we have   
 $\alpha_{2,1}=1$, $\beta_{2,1}=1$.} \label{fig:cover}
	 \end{figure}
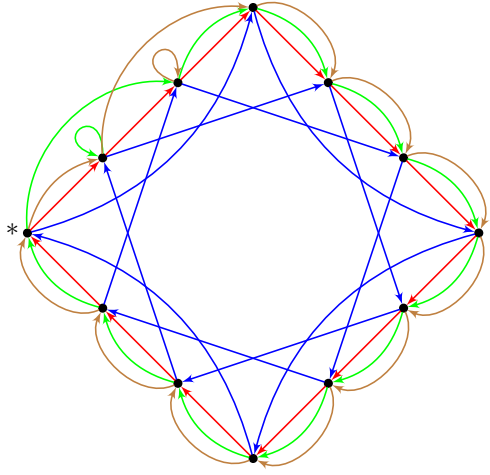 
	 
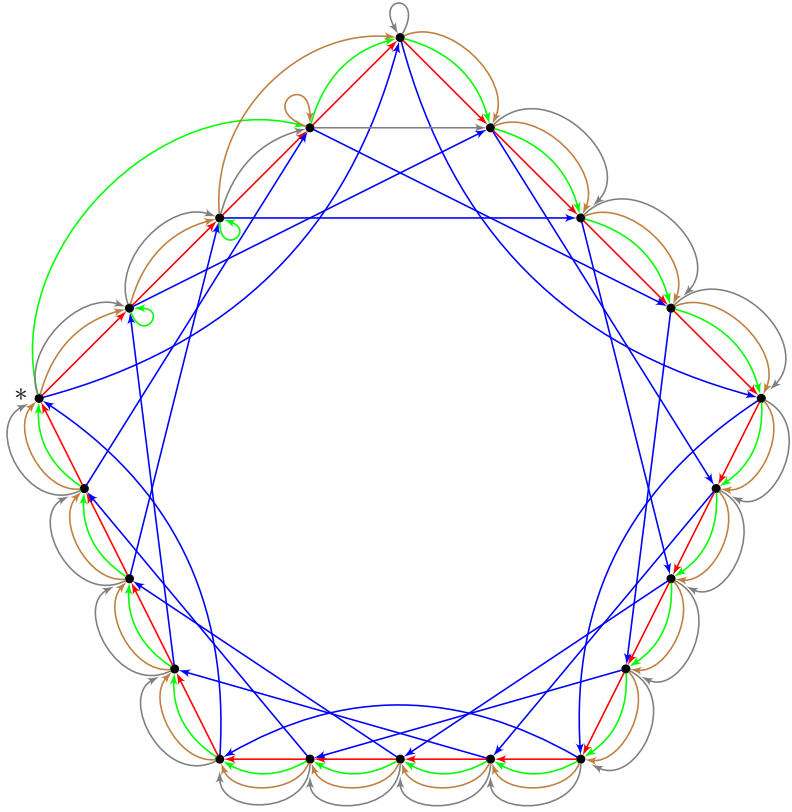
\begin{figure}[h]
\begin{tikzpicture}[scale=0.6]
	
\node[state] (1) at (0,0){};
\node[state] (2) at (2,2) {};
\node[state] (3) at (4,4) {};
\node[state] (4) at (6,6) {};
\node[state] (5) at (8,8) {};
\node[state] (6) at (10,6) {};
\node[state] (7) at (12,4) {};
\node[state] (8) at (14,2) {};
\node[state] (9) at (16,0) {};
\node[state] (10) at (15,-2) {};
\node[state] (11) at (14,-4) {};
\node[state] (12) at (13,-6) {};
\node[state] (13) at (12,-8) {};
\node[state] (14) at (10,-8) {};
\node[state] (15) at (8,-8) {};
\node[state] (16) at (6,-8) {};
\node[state] (17) at (4,-8) {};
\node[state] (18) at (3,-6) {};
\node[state] (19) at (2,-4) {};
\node[state] (20) at (1,-2) {};

\node at (-0.4,0) {*};

\draw
(1) edge (2) 
(2) edge (3)
(3) edge (4)
(4) edge (5)
(5) edge (6)
(6) edge (7)
(7) edge (8)
(8) edge (9)
(9) edge (10)
(10) edge (11)
(11) edge (12)
(12) edge (13)
(13) edge (14)
(14) edge (15)
(15) edge (16)
(16) edge (17)
(17) edge (18)
(18) edge (19)
(19) edge (20)
(20) edge (1)

(1) edge[bend right, color=blue] (5)
(5) edge[bend right, color=blue] (9)
(9) edge[bend right, color=blue] (13)
(13) edge[bend right, color=blue] (17)
(17) edge[bend right, color=blue] (1)
(2) edge[color=blue] (6)
(6) edge[color=blue] (10)
(10) edge[color=blue] (14)
(14) edge[color=blue] (18)
(18) edge[color=blue] (2)
(3) edge[color=blue] (7)
(7) edge[color=blue] (11)
(11) edge[color=blue] (15)
(15) edge[color=blue] (19)
(19) edge[color=blue] (3)
(4) edge[color=blue] (8)
(8) edge[color=blue] (12)
(12) edge[color=blue] (16)
(16) edge[color=blue] (20)
(20) edge[color=blue] (4)

(1) edge[color=green, bend left=60] (4)
(4) edge[color=green, bend left] (5)
(2) edge[color=green, in=0,out=-70,looseness=20] (2)
(3) edge[color=green, in=-20,out=-90,looseness=20] (3)
(5) edge[color=green, bend left] (6)
(6) edge[color=green, bend left] (7)
(7) edge[color=green, bend left] (8)
(8) edge[color=green, bend left] (9)
(9) edge[color=green, bend left] (10)
(10) edge[color=green, bend left] (11)
(11) edge[color=green, bend left] (12)
(12) edge[color=green, bend left] (13)
(13) edge[color=green, bend left] (14)
(14) edge[color=green, bend left] (15)
(15) edge[color=green, bend left] (16)
(16) edge[color=green, bend left] (17)
(17) edge[color=green, bend left] (18)
(18) edge[color=green, bend left] (19)
(19) edge[color=green, bend left] (20)
(20) edge[color=green, bend left] (1)

(1) edge[color=brown, bend left] (2)
(2) edge[color=brown, bend left] (3)
(3) edge[color=brown, bend left=50] (5)
(4) edge[color=brown, in=90,out=160,looseness=30] (4)
(5) edge[color=brown, bend left=70] (6)
(6) edge[color=brown, bend left=70] (7)
(7) edge[color=brown, bend left=70] (8)
(8) edge[color=brown, bend left=70] (9)
(9) edge[color=brown, bend left=70] (10)
(10) edge[color=brown, bend left=70] (11)
(11) edge[color=brown, bend left=70] (12)
(12) edge[color=brown, bend left=70] (13)
(13) edge[color=brown, bend left=70] (14)
(14) edge[color=brown, bend left=70] (15)
(15) edge[color=brown, bend left=70] (16)
(16) edge[color=brown, bend left=70] (17)
(17) edge[color=brown, bend left=70] (18)
(18) edge[color=brown, bend left=70] (19)
(19) edge[color=brown, bend left=70] (20)
(20) edge[color=brown, bend left=70] (1)

(1) edge[color=gray, bend left=65] (2)
(2) edge[color=gray, bend left=65] (3)
(3) edge[color=gray, bend left] (4)
(4) edge[color=gray] (6)
(5) edge[color=gray, in=120,out=60,looseness=30] (5)
(6) edge[color=gray, bend left=90, looseness=1.5, shorten >= 5pt] (7)
(7) edge[color=gray, bend left=90, looseness=1.5, shorten >= 5pt] (8)
(8) edge[color=gray, bend left=90, looseness=1.5, shorten >= 3pt] (9)
(9) edge[color=gray, bend left=90, looseness=1.5, shorten >= 5pt] (10)
(10) edge[color=gray, bend left=90, looseness=1.5, shorten >= 5pt] (11)
(11) edge[color=gray, bend left=90, looseness=1.5, shorten >= 5pt] (12)
(12) edge[color=gray, bend left=90, looseness=1.5, shorten >= 3pt] (13)
(13) edge[color=gray, bend left=90, looseness=1.5, shorten >= 5pt] (14)
(14) edge[color=gray, bend left=90, looseness=1.5, shorten >= 5pt] (15)
(15) edge[color=gray, bend left=90, looseness=1.5, shorten >= 5pt] (16)
(16) edge[color=gray, bend left=90, looseness=1.5, shorten >= 3pt] (17)
(17) edge[color=gray, bend left=90, looseness=1.5, shorten >= 5pt] (18)
(18) edge[color=gray, bend left=90, looseness=1.5, shorten >= 5pt] (19)
(19) edge[color=gray, bend left=90, looseness=1.5, shorten >= 5pt] (20)
(20) edge[color=gray, bend left=90, looseness=1.5, shorten >= 3pt] (1)

;
\end{tikzpicture}

	  \caption{\small A cover $Z$ defining the subgroup $K'$ in $F$ in the case $k=4$. The basepoint is marked by a star.  Each red edge corresponds to two edges, one labelled by {\color{red} $C_1'$} and the other by {\color{red} $C_4$}; each blue edge corresponds to two edges, one labelled by {\color{blue} $C_1$} and the other by {\color{blue} $C_4'$}; each green edge corresponds to two edges, one labelled by {\color{green} $C_2$} and the other by {\color{green} $C_3'$}; brown edges are labelled by {\color{brown} $C_2'$} and grey edges are labelled by {\color{gray} $C_3$}. Here we have $\alpha_{2,1}=1 , \alpha_{2,2}=2 , \alpha_{3,1}=3 , \beta_{2,1}=1 , \beta_{3,1}=1 ,\beta_{3,2}=2 $.} \label{fig:cover2}
	 \end{figure} 

We let $K$ to be the full preimage of $K'$ in $G$ under the natural epimorphism $\pi:G \rightarrow F$. By definition, $K$ has index $k(k+1)$ in $G$. 

Let $2 \leq i \leq k-1$. By construction, the graph spanned by the edges labelled by $C_{i-1}'$ and $C_i$ is connected, and the graph spanned by the edges labelled only by $C_i$ has $k+1-i$ connected components, all but one being loops, and so these loops are incident to the vertices $v_1, \ldots, v_{k-i}$, which belong to the cycle labelled by $C_{i-1}'$ that goes through the basepoint (of length $k(k+1)-(i-1)$). Let $\alpha_{i,1}<\alpha_{i,2}< \ldots <\alpha_{i,k-i}$ be the lengths of the shortest oriented paths labelled by $C_{i-1}'$ starting at the basepoint and ending at the vertices $v_1,\ldots,v_{k-i}$ (i.e, those which have loops labelled by $C_i$), see  Figures \ref{fig:cover}, \ref{fig:cover2}. In the same way, exchanging the roles of $C_{i-1}'$ and $C_i$, one can define $\alpha'_{i,1}<\alpha'_{i,2}< \ldots <\alpha'_{i,i-2}$ to be the lengths of the shortest oriented paths labelled by $C_i$ starting at the basepoint and ending at the vertices with loops labelled by $C_{i-1}'$, one for each of $i-2$ such loops.

Similarly, for $2 \leq i \leq k-1$, the graph spanned by the edges labelled by $C_i$ and $C_i'$ is connected, and the graph spanned by the edges labelled only by $C_i'$ has $i$ connected components, all but one being loops, and so these loops are incident to the vertices $w_1, \ldots, w_{i-1}$, which belong to the cycle labelled by $C_i$ that goes through the basepoint (of length $k(k+1)-(k-i)$). Let $\beta_{i,1}<\beta_{i,2}< \ldots <\beta_{i,i-1}$ be the lengths of the shortest oriented paths labelled by $C_i$ starting at the basepoint and ending in the vertices $w_1,\ldots,w_{i-1}$ (i.e, those which have loops labelled by $C_i'$), see Figures \ref{fig:cover}, \ref{fig:cover2}. 
In the same way, exchanging the roles of $C_i$ and $C_i'$, one can define $\beta'_{i,1}<\beta'_{i,2}< \ldots <\beta'_{i,k-i}$ to be the lengths of the shortest oriented paths labelled by $C_i'$ starting at the basepoint and ending at the vertices with loops labelled by $C_i$, one for each of $k-i$ such loops.

We additionally define $\alpha_{i,0}=\beta_{i,0}=0$ for $2 \leq i \leq k-1$. 

\subsection{Isomorphism between $K$ and $\pi_1(X)$}\label{sec:55}

From now on we will denote by $T$ the Bass-Serre tree of the reduced centraliser splitting of $\GG(P_{4k+2})$. The finite index subgroup $K$ of $\GG(P_{4k+2})$ acts on $T$, and so $K$ has an induced graph of groups structure determined by the quotient of $T$ by the action of $K$. In this subsection we prove the following.
 
\begin{prop}\label{K}
In the above notation, the subgroup $K$ is isomorphic to the fundamental group of the graph of groups $X$. Namely, the induced splitting of $K$ given by its action on $T$ gives precisely $X$.
\end{prop}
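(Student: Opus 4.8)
The plan is to mimic the proof of Proposition \ref{H} as closely as possible, substituting the cover $Z$ of the bouquet of $2k$ circles (and the subgroup $K' < F$) for the cover $S$ (and $H' < F(a_1,e_1)$). Concretely, I would first identify the vertices of the Bass--Serre tree $T$ of the reduced centraliser splitting of $\GG(P_{4k+2})$ with left cosets of the centralisers $C(D_j)$, $C(C_i)$, $C(B_i)$, $C(C_i')$, $C(D_{j+1})$ (the non-leaf generators of $P_{4k+2}$), with $G$ acting by left multiplication. Since $K = \pi^{-1}(K')$ and $\ker \pi = \langle A, D_1, \dots, D_{k+1}, B_1, \dots, B_k, E\rangle$ contains all the vertex-generator centralisers except those of the $C_i$ and $C_i'$, the combinatorics of the $K$-action on $T$ is governed entirely by the cover $Z$, exactly as the $H$-action in Proposition \ref{H} was governed by $S$.

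Next I would exhibit an explicit fundamental domain $Y \subseteq T$ built out of lifts of the pieces $D_i$ of Sequence (\ref{eq:diseq}): lifts of $D_k$ at the two ends corresponding to the ``fan'' centralisers $C(D_1)$ and $C(D_{k+1})$ (each $D_j$ has degree $k+2$ in $P_{4k+2}$, giving $D_2 = k+1$... wait, $D_1$ has degree $1$ in $P_{4k+2}$? No: in $P_{4k+2}$ the vertex $D_1$ is adjacent to $A$ and to $C_1$, so it has degree $2$; but $A$ is a leaf, so in $\ti\Gamma$... in fact the relevant ``diamond'' and ``fan'' structure is dictated by the sequence of centralisers along the path), identified along the $C_i$- and $C_i'$-coset vertices in the zig-zag pattern of $X$. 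The key auxiliary lemmas, analogous to Lemmas \ref{rem1}, \ref{rem2}, \ref{f1}, would be: (i) a bijection $\theta_{C_i}$ (resp.\ $\theta_{C_i'}$) between the left $C(C_i)$-cosets (resp.\ $C(C_i')$-cosets) appearing as vertices of $Y$ and the $C_i$-cycles (resp.\ $C_i'$-cycles) of $Z$, and bijections between the intermediate-vertex cosets and the vertices of $Z$; (ii) using these to show no two vertices/edges of $Y$ lie in the same $K$-orbit; and (iii) a vertex-by-vertex check that every edge of $T$ incident to $Y$ can be brought back to $Y$ by a suitable element of $K$, where the needed elements are read off from the paths in $Z$ (these are exactly the elements $C_i^{\,\pm}(\cdots)$ and the $\alpha_{i,j}$, $\beta_{i,j}$ data introduced at the end of Section \ref{sec:54}). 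Then a routine induction on distance from $Y$ shows $Y$ is a genuine fundamental domain, so the quotient graph of the $K$-action on $T$ is the underlying graph of $X$.

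Finally I would compute the vertex groups of the induced splitting, i.e.\ the centralisers $C_K(C_i^{\,g})$, $C_K(C_i'^{\,g})$, and $C_K(\alpha^g)$ for $\alpha \in \{D_j, B_i\}$, reproducing the analogue of Lemma \ref{centH}: each is $\BZ$ times a free group whose rank is dictated by how the corresponding cycle of $Z$ meets the rest of the cover (so the ``fan'' vertices give $\BZ \times F_{k^2 + \text{something}}$ and the pure-loop vertices give $\BZ \times F_{\text{small}}$), matching exactly the vertex groups $V(v), V(w), V(u_j), V(u_j')$ prescribed in Section \ref{sec:51}. Because the edge groups are manifestly $\BZ^2 = \langle p, q\rangle$ generated by a conjugate of $C_i$ (or $C_i'$) together with a conjugate of an adjacent generator, and the edge-to-vertex embeddings send $p, q$ to the matching generators, comparing with the definition of $X$ gives the isomorphism $K \cong \pi_1(X)$. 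I expect the main obstacle to be step (iii): bookkeeping the precise edges of $Y$ whose far endpoint lies outside $Y$ and producing, for each, the correct element of $K$ from a path in $Z$ — in particular handling the $C_i$-loops versus the long $C_{i-1}'$-cycle (and symmetrically the $C_i'$-loops versus the long $C_i$-cycle), which is where the auxiliary quantities $\alpha_{i,j}, \alpha_{i,j}', \beta_{i,j}, \beta_{i,j}'$ are consumed; everything else is a direct transcription of the $\GG(T_{k,k+1})$ argument.
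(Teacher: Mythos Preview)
Your plan is essentially the paper's own proof: it constructs an explicit fundamental domain $Y\subset T$ from cosets of the centralisers (using the $\alpha_{i,j},\beta_{i,j}$ data), proves $Y$ is a fundamental domain by the same two-step argument (Lemma \ref{fY}), then computes the vertex centralisers (Lemma \ref{centK}) and matches them with the pieces of $X$. One small correction to your bookkeeping: in the end $D_k$-fans the centre vertex $v$ corresponds to $C(B_1)$ (resp.\ $C(B_k)$), while the leaves $u_j'$ are the conjugates $C_K(D_1^{C_1'^j})$ (resp.\ $C_K(D_{k+1}^{C_k^j})$), not the other way round; and the paper does the ``no two vertices in the same orbit'' step directly via the cycle structure of $Z$ rather than through separate bijection lemmas.
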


\begin{proof}

Consider the full subgraph (subtree) $Y_0$ of the Bass-Serre tree $T$ spanned by the following vertices
\begin{itemize}
	\item $C(D_i), C(C_i), C(B_i), C(C_i'), \: i=1, \ldots, k, \: C(D_{k+1})$;
	\item $C_1'^jC(C_1), \: C_1'^jC(D_1), \: j=1, \ldots, k-1$;
	\item $C_k^jC(C_k'), \: C_k^jC(D_{k+1}), \: j=1, \ldots, k-1$;
	\item $C_{i-1}'^{\alpha_{i,j}}C(C_i), \: C_{i-1}'^{\alpha_{i,j}} C(B_i), \: i=2, \ldots, k-1, \: j=1,\ldots,k-i$;
	\item $C_i^{\beta_{i,j}}C(C_i'), \: C_i^{\beta_{i,j}}C(D_{i+1}), \: i=2, \ldots, k-1, \: j=1, \ldots, i-1$.
\end{itemize}
Let $Y$ be obtained from $Y_0$ by deleting the following vertices (without deleting any edges): 
\begin{itemize}
	\item $C_{i-1}'^{\alpha_{i,j}} C(B_i), \: i=2, \ldots, k-1, \: j=1,\ldots,k-i$;
	\item $C_i^{\beta_{i,j}}C(D_{i+1}), \: i=2, \ldots, k-1, \: j=1, \ldots, i-1$. 
\end{itemize}

The subtree $Y$ is shown on Figure \ref{fig:fd2} for $k=3$ and $k=4$.

\tikzset{node distance=2.5cm, 
every state/.style={minimum size=1pt, scale=0.3, 
semithick,
fill=black},
initial text={}, 
double distance=2pt, 
every edge/.style={ 
draw,  
auto}}

\begin{figure}[!h]	
\begin{tikzpicture}

\scriptsize
\node[state, color=red] (1) at (0,0) {};  \filldraw (0.25,0) node[align=right,above] {$C(B_1)$};	
\node[state] (2) at (-1,1) {}; 	\filldraw (-1,1) node[align=right,above] {$C(C_1)$};	
\node[state, color=blue] (3) at (-2,1) {}; \filldraw (-2,1) node[align=right,above] {$C(D_1)$};	
\node[state] (4) at (-1,0) {};  	\filldraw (-1,0) node[align=right,below] {$C_1'C(C_1)$};	
\node[state, color=blue] (5) at (-2,0) {};  	\filldraw (-2,0) node[align=right,above] {$C_1'C(D_1)$};	
\node[state] (6) at (-1,-1) {};  	\filldraw (-1,-1) node[align=right,below] {$C_1'^2C(C_1)$};	
\node[state, color=blue] (7) at (-2,-1) {}; 	\filldraw (-2,-1) node[align=right,above] {$C_1'^2C(D_1)$};	
\node[state](8) at (1,0) {};  \filldraw (1,0) node[align=right,below] {$C(C_1')$};
\node[state, color=blue] (9) at (2,0) {};   \filldraw (2,0) node[align=right,below] {$C(D_2)$};
\node[state] (10) at (3,1) {};   \filldraw (3,1) node[align=right,above] {$C_1'C(C_2)$};
\node[state, color=red, fill=white] (24) at (4,1) {};   \filldraw (4,1) node[align=right,below] {$C_1'C(B_2)$};
\node[state, color=red] (11) at (4,0) {};   \filldraw (4,0) node[align=right,below] {$C(B_2)$};
\node[state] (12) at (3,0) {};   \filldraw (3,0) node[align=right,above] {$C(C_2)$};
\node[state] (13) at (5,1) {}; \filldraw (5,1) node[align=right,above] {$C_2C(C_2')$};
\node[state, color=blue, fill=white] (25) at (6,1) {};  \filldraw (6,1) node[align=right,right] {$C_2C(D_3)$};
\node[state, color=blue] (14) at (6,0) {}; \filldraw (6,0) node[align=right,above] {$C(D_3)$};
\node[state] (15) at (5,0) {};  \filldraw (5,0) node[align=right,above] {$C(C_2')$};
\node[state] (16) at (7,0) {};   \filldraw (7,0) node[align=right,above] {$C(C_3)$};
\node[state, color=red] (17) at (8,0) {};  \filldraw (8,0) node[align=left, left=8pt, below=1pt] {$C(B_3)$};
\node[state] (18) at (9,0) {}; \filldraw (9,0) node[align=right,above] {$C_3C(C_3')$};
\node[state, color=blue] (19) at (10,0) {}; \filldraw (10,0) node[align=right,right] {$C_3C(D_4)$};
\node[state] (20) at (9,1) {}; \filldraw (9,1) node[align=right,above] {$C(C_3')$};
\node[state, color=blue] (21) at (10,1) {}; \filldraw (10,1) node[align=right,right] {$C(D_4)$};
\node[state] (22) at (9,-1) {}; \filldraw (9,-1) node[align=right,below] {$C_3^2C(C_3')$};
\node[state, color=blue] (23) at (10,-1) {};  \filldraw (10,-1) node[align=right,right] {$C_3^2C(D_4)$};

\draw
(1) edge (2)
(2) edge (3)
(1) edge (4)
(4) edge (5)
(1) edge (6)
(6) edge (7)
(1) edge (8)
(8) edge (9)
(9) edge (10)
(10) edge (24)
(11) edge (12)
(12) edge (9)
(11) edge (13)
(13) edge (25)
(14) edge (15)
(15) edge (11)
(14) edge (16)
(16) edge (17)
(17) edge (18)
(18) edge (19)
(17) edge (20)
(20) edge (21)
(17) edge (22)
(22) edge (23);

\end{tikzpicture}

\vspace{0.3 cm}
	\resizebox{430pt}{!}{
\begin{tikzpicture}
\footnotesize
	\node[state, color=red] (1) at (0,0) {};  \filldraw (0.3,0) node[align=right,above] {$C(B_1)$};
	\node[state] (2) at (-1,1.5) {}; 	\filldraw (-1,1.5) node[align=right,above] {$C(C_1)$};	
	\node[state] (3) at (-1,0.5) {}; 	\filldraw (-1.3,0.5) node[align=right,below] {$C_1'C(C_1)$};	
	\node[state] (4) at (-1,-0.5) {}; 	\filldraw (-1.2,-0.5) node[align=right,below] {$C_1'^2C(C_1)$};	
	\node[state] (5) at (-1,-1.5) {}; 	\filldraw (-1,-1.5) node[align=left, left=2pt,below] {$C_1'^3C(C_1)$};	
	\node[state, color=blue] (6) at (-2,1.5) {}; \filldraw (-2,1.5) node[align=right,above] {$C(D_1)$};	
	\node[state, color=blue] (7) at (-2,0.5) {}; 	\filldraw (-2,0.5) node[align=right,above] {$C_1'C(D_1)$};	
	\node[state, color=blue] (8) at (-2,-0.5) {}; 	\filldraw (-2,-0.5) node[align=right,above] {$C_1'^2C(D_1)$};	
	\node[state, color=blue] (9) at (-2,-1.5) {}; 	\filldraw (-2,-1.5) node[align=right,above] {$C_1'^3C(D_1)$};
	\node[state] (10) at (1,0) {}; 	\filldraw (1,0) node[align=right,below] {$C(C_1')$};	
	\node[state, color=blue] (11) at (2,0) {}; 	\filldraw (2,0) node[align=right,below] {$C(D_2)$};	
	\node[state] (12) at (3,0) {}; 	\filldraw (3,0) node[align=right,above] {$C(C_2)$};	
	\node[state,color=red] (13) at (4,0) {}; 	\filldraw (4,0) node[align=right,below] {$C(B_2)$};	
	\node[state] (14) at (3,1) {}; 	\filldraw (3.3,1) node[align=right,above] {$C_1'C(C_2)$};	
	\node[state,color=red, fill=white] (15) at (4,1) {}; 	\filldraw (4,1) node[align=right,below] {$C_1'C(B_2)$};	
	\node[state] (16) at (3,2) {}; 	\filldraw (3,2) node[align=right,above] {$C_1'^2C(C_2)$};	
	\node[state,color=red, fill=white] (17) at (4,2) {}; 	\filldraw (4,2) node[align=right,right] {$C_1'^2C(B_2)$};	
	\node[state] (18) at (5,0) {}; 	\filldraw (5,0) node[align=right,above] {$C(C_2')$};	
	\node[state,color=blue] (19) at (6,0) {}; 	\filldraw (6,0) node[align=right,below] {$C(D_3)$};	
	\node[state] (20) at (5,1) {}; 	\filldraw (5,1) node[align=right,above] {$C_2C(C_2')$};	
	\node[state,color=blue, fill=white] (21) at (6,1) {}; 	\filldraw (6,1) node[align=right,below] {$C_2C(D_3)$};	
	\node[state] (22) at (7,0) {}; 	\filldraw (7,0) node[align=right,above] {$C(C_3)$};	
	\node[state,color=red] (23) at (8,0) {}; 	\filldraw (8,0) node[align=right,below] {$C(B_3)$};	
	\node[state] (24) at (7,1) {}; 	\filldraw (7,1) node[align=right,above] {$C_2'^3C(C_3)$};	
	\node[state,color=red, fill=white] (25) at (8,1) {}; 	\filldraw (7.7,1) node[align=right,below] {$C_2'^3C(B_3)$};	
	\node[state] (26) at (9,0) {}; 	\filldraw (9,0) node[align=right,above] {$C(C_3')$};	
	\node[state,color=blue] (27) at (10,0) {}; 	\filldraw (10,0) node[align=right,below] {$C(D_4)$};	
	\node[state] (28) at (9,1) {}; 	\filldraw (9,1) node[align=right,above] {$\qquad C_3C(C_3')$};	
	\node[state,color=blue, fill=white] (29) at (10,1) {}; 	\filldraw (10,1) node[align=right,right] {$C_3C(D_4)$};	
	\node[state] (30) at (9,2) {}; 	\filldraw (9,2) node[align=right,above] {$C_3^2C(C_3')$};	
	\node[state,color=blue, fill=white] (31) at (10,2) {}; 	\filldraw (10,2) node[align=right,right] {$C_3^2C(D_4)$};
	\node[state] (32) at (11,0) {}; 	\filldraw (11,0) node[align=right,above] {$C(C_4)$};	
	\node[state,color=red] (33) at (12,0) {}; 	\filldraw (12,0) node[align=left, left=12pt, below] {$C(B_4)$};	
	\node[state] (34) at (13,1.5) {}; 	\filldraw (13,1.5) node[align=right,above] {$C(C_4')$};	
	\node[state,color=blue] (35) at (14,1.5) {}; 	\filldraw (14,1.5) node[align=right,below] {$C(D_5)$};	
	\node[state] (36) at (13,0.5) {}; 	\filldraw (13.2,0.5) node[align=right,above] {$C_4C(C_4')$};	
	\node[state,color=blue] (37) at (14,0.5) {}; 	\filldraw (14,0.5) node[align=right,below] {$C_4C(D_5)$};
	\node[state] (38) at (13,-0.5) {}; 	\filldraw (13,-0.5) node[align=right,above] {$\qquad C_4^2C(C_4')$};	
	\node[state,color=blue] (39) at (14,-0.5) {}; 	\filldraw (14,-0.5) node[align=right,below] {$C_4^2C(D_5)$};
	\node[state] (40) at (13,-1.5) {}; 	\filldraw (13,-1.5) node[align=left, left] {$C_4^3C(C_4')$};	
	\node[state,color=blue] (41) at (14,-1.5) {}; 	\filldraw (14,-1.5) node[align=right,above] {$C_4^3C(D_5)$};

	\draw
	(1) edge (2) edge (3) edge (4) edge (5)
	(2) edge (6)
	(3) edge (7) (4) edge (8) (5) edge (9)
	(1) edge (10) (10) edge (11) 
	(11) edge (12) edge (14) edge (16) (12) edge (13) (14) edge (15) (16) edge (17)
	(13) edge (18) edge (20) (20) edge (21) (18) edge (19)
	(19) edge (24) edge (22) (24) edge (25) (22) edge (23)
	(23) edge (26) edge (28) edge (30) (30) edge (31) (28) edge (29) (26) edge (27)
	(27) edge (32) (32) edge (33)
	(33) edge (34) edge (36) edge (38) edge (40)
	(34) edge (35) (36) edge (37) (38) edge (39) (40) edge (41)
	
	;
\end{tikzpicture}
}

	  \caption{\small The fundamental domain $Y$ for the action of $K$ on $T$ in the case $k=3$ (above) and $k=4$ (below), where $K$ is defined via $K'$ as on Figures \ref{fig:cover}, \ref{fig:cover2}. Only the vertices that are filled belong to $Y$, while those with blank interior do not. Black vertices correspond to the cosets of $C(C_i), C(C_i')$, red vertices -- to the cosets of $C(B_i)$, and blue vertices -- to the cosets of $C(D_i)$.} \label{fig:fd2}
	 \end{figure}
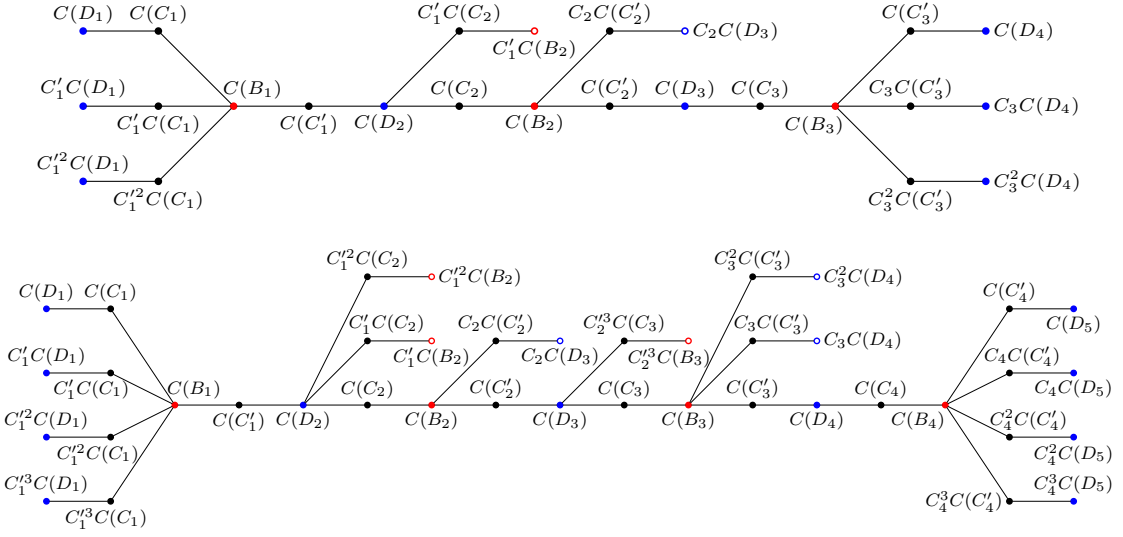

\begin{lemma}\label{fY} 
$Y$ is a fundamental domain of the action of $K$ on $T$. 
\end{lemma}

\begin{proof}

We first show that no $2$ vertices in $Y$ belong to the same $K$-orbit. This is immediate for the cosets of the centralizers of $C_1', C_k, B_1, B_2, \ldots, B_k$ and $D_2,\ldots, D_k$, since for them there is only one vertex in $Y$ even in each $G$-orbit.

Suppose that there exists $h\in K$ such that for some $2 \leq i \leq k-1$ and $0 \leq j<l \leq k-i$ we have
$h\cdot C_{i-1}'^{\alpha_{ij}}C(C_i)=C_{i-1}'^{\alpha_{il}}C(C_i)$. By definition, $h=C_{i-1}'^{\alpha_{il}}xC_{i-1}'^{-\alpha_{ij}}$, where $x\in C(C_i)=\langle B_i,C_i,D_i \rangle$. Then $\pi(h)=C_{i-1}'^{\alpha_{il}}wC_{i-1}'^{-\alpha_{ij}} \in K'$, where $w=\pi(x)$ is a power of $C_i$. This means that the vertices in $Z$ obtained from the basepoint after reading $C_{i-1}'^{\alpha_{il}}$ and $C_{i-1}'^{\alpha_{ij}}$ are on the same cycle labelled by $C_i$, which contradicts the definition of $\alpha$'s. Similar argument shows that there is no $h\in K$ such that for some $2 \leq i \leq k-1$ and $0 \leq j<l \leq k-i$ we have $hC_i^{\beta_{i,j}}C(C_i')=C_i^{\beta_{i,l}}C(C_i')$.

In the same way, if there is $h \in K$ such that $hC_1'^jC(C_1)=C_1'^lC(C_1)$ for some $0 \leq j < l \leq k-1$, then 
$C_1'^jxC_1'^{-l}\in K$ for some $x \in C(C_1)=\langle D_1,C_1,B_1 \rangle$, and so $C_1'^jwC_1'^{-l}\in K'$, where $w=\pi(x)$ is a power of $C_1$, so the vertices in $Z$ obtained from the basepoint after reading $C_1'^j$ and $C_1'^l$ are on the same cycle labelled by $C_1$, which contradicts our choice of $Z$. Also, if there is $h \in K$ such that $hC_1'^jC(D_1)=C_1'^lC(D_1)$ for some $0 \leq j < l \leq k-1$, then $C_1'^jxC_1'^{-l}\in K$ for some $x \in C(D_1)=\langle A,D_1,C_1 \rangle$, so $C_1'^jwC_1'^{-l}\in K'$, where $w=\pi(x)$ is a power of $C_1$, which is again a contradiction as above. Similar arguments apply to the cosets of $C_k'$ and $D_{k+1}$. Thus no $2$ vertices of $Y$ are in the same $K$-orbit.

We now show that all the vertices $C_{i-1}'^{\alpha_{ij}}C(B_i)$, $i=2, \ldots, k-1, \: j=1,\ldots,k-i$, can be mapped to $C(B_i)$ and  all the vertices $C_{i}^{\beta_{ij}}C(D_{i+1})$, $\: i=2, \ldots, k-1, \: j=1,\ldots,i-1$, can be mapped to $C(D_{i+1})$ by some elements of $K$.

Indeed, we show that there exists $h\in K$ such that $h\cdot C(B_i)=C_{i-1}'^{\alpha_{ij}}C(B_i)$. It suffices to take $h=C_{i-1}'^{\alpha_{ij}}w_j \in K$, where $w_j\in C(B_i)= \langle C_i,B_i,C_i' \rangle$. Let $v$ be the vertex of $Z$ where one gets after reading the label $C_{i-1}'^{\alpha_{ij}}$ from the basevertex. It now suffices to choose $w_j=w_j(C_i, C_i')$ to be the label of a path joining $v$ to the basepoint inside the graph spanned by the edges labelled by $C_i$ and $C_i'$, and such a path indeed exists since this graph is connected.

The proof for cosets of $C(D_i)$ is analogous and is left to the reader.

We are left to show that all the edges $e$ in the Bass-Serre tree $T$ which share a vertex with $Y$ can be mapped into $Y$ by an element of $K$.

Suppose first that $e=(xC(C_i), xgC(B_i))$, where $x=1$ and $1 \leq i \leq k$, or $x=C_{i-1}'^{\alpha_{i,j}}$, $i=2, \ldots, k-1, \: j=1,\ldots,k-i$, or $x=C_1'^j, \: j=1,\ldots,k-1$ with $i=1$. In particular,  $xC(C_i)$ is in $Y$. Then $g\in C(C_i)=\langle B_i, C_i, D_i \rangle$, so, without loss of generality, we can assume that $g=w(D_i,B_i)$, and so $g'=xg^{-1}x^{-1} \in K$ and $g'$ takes $e$ to $(xC(C_i), xC(B_i))$ which is in $Y$ by construction.

Suppose next that $e=(C(B_i), gC(C_i'))$, $i=1,\ldots,k$. Then $g\in C(B_i)$, and so, without loss of generality, we can assume that $g=w(C_i,C_i')$.
 We show that for all such $g$ there exists $h\in C_K(B_i)$ and $j=0, \ldots, i-1$ such that $hgC(C_i')=C_i^{\beta_{ij}}C(C_i')$. Indeed, let $v$ be the vertex of $Z$ where one gets after reading the word $w$ from the basepoint of $Z$. By definition, one can choose $j$ such that the path labelled by $C_i^{\beta_{ij}}$ and starting at the basepoint of $Z$ ends in the $C_i'$-cycle that passes  through $v$, and then $h=C_i^{\beta_{ij}}C_i'^lg^{-1}\in K$ for some $l$, and $h \in C(B_i)$, as desired. Then $h$ takes the edge $e$ into $Y$.
 
 Let now $e=( C_1'^jC(D_1), C_1'^jgC(C_1)), \: j=0, \ldots, k-1$. Then $g \in C(D_1)$, and without loss of generality we can assume that $g=w(A,C_1)$. Hence, $\pi(g)=C_1^l$ for some $l$, and so $\pi(C_1^{l}g^{-1})=1$. Therefore, $h=C_1'^jC_1^{l}g^{-1}C_1'^{-j} \in K$. Now $h$ takes $e$ into $(C_1'^jC(D_1), C_1'^jC(C_1))$, which is in $Y$.
 
All the other cases are similar to the above and left to the reader.

Now it is routine to see that $Z$ is a fundamental domain for the action of $K$ on $T$. This proves Lemma \ref{fY}.
\end{proof}

We established that the quotient of the action of $T$ by $K$ is a graph isomorphic to the one associated to $X$. We need one more lemma about centralizers in $K$.  

\begin{lemma}\label{centK}
	The following formulas hold for centralizers in $K$:
	\begin{enumerate}
		\item $C(C_i^g)=(K \cap \langle C_i^g \rangle) \times F(D_i^g, B_i^g) \cong \BZ \times F_2$ for all $g \in G$, $i=1, \ldots, k$;
		\item $C(C_i'^g)=(K \cap \langle C_i'^g \rangle) \times F(B_i^g, D_{i+1}^g) \cong \BZ \times F_2$ for all $g \in G$, $i=1, \ldots, k$;
		\item $C(B_i)= \langle B_i \rangle \times L_i \cong \BZ \times F_{k^2+k+1}$ for $i=2, \ldots, k-1$, where $L_i$ is a subgroup of index $k(k+1)$ in $F(C_i,C_i')$ with basis 
$$
		\left.\left\{
	\begin{array}{l}
		C_i^{k(k+1)-(k-i)}, C_i^{(C_i'^{\beta'_{i,j}})},  C_i'^{k(k+1)-(i-1)},\\ C_i'^{(C_i^{\beta_{i,l}})},	 U_{i,1},\ldots,U_{i,k^2}
    \end{array}
		\right| j=1,\ldots,k-i, \: l=1,\ldots, i-1
		\right\},
$$
		where each of $U_{i,1},\ldots,U_{i,k^2}$ is not conjugate to a power of $C_i$ or $C_i'$;
		\item $C(D_i)= \langle D_i \rangle \times L_i' \cong \BZ \times F_{k^2+k+1} $ for $i=2, \ldots, k$, where $L'_i$ is a subgroup of index $k(k+1)$ in $F(C_i,C_{i-1}')$ with the basis 
		$$
		\left.\left\{
	\begin{array}{l}
		C_i^{k(k+1)-(k-i)},C_i^{(C_{i-1}'^{\alpha_{i,j}})},  C_{i-1}'^{k(k+1)-(i-2)}, \\ C_{i-1}'^{(C_i^{\alpha'_{i,l}})},
		U'_{i,1},\ldots, U'_{i,k^2+1} 
	\end{array}
		\right| j=1,\ldots,k-i, \: l=1,\ldots, i-2
			\right\},
		$$
		where each of $U'_{i,1},\ldots,U'_{i,k^2+1}$ is not conjugate to a power of $C_i$ or $C_{i-1}'$;
		\item $C(B_1)= \langle B_1 \rangle \times L_1 \cong \BZ \times F_{k^2+k+1}$, where $L_1$ is a subgroup of index $k(k+1)$ in $F(C_1,C_1')$ with the basis $\{C_1'^{k(k+1)}, (C_1^{k+1})^{(C_1'^j)}, U_{1,1},\ldots,U_{1,k^2}, \: j=0, \ldots, k-1\}$,  where each of $U_{1,1},\ldots,U_{1,k^2}$ is not conjugate to a power of  $C_1$ or $C_1'$;
		\item $C(B_k)= \langle B_k \rangle \times L_k \cong \BZ \times F_{k^2+k+1}$, where $L_k$ is a subgroup of index $k(k+1)$ in $F(C_k,C_k')$ with the basis $\{C_k^{k(k+1)}, (C_k'^{k+1})^{(C_k^j)}, U_{k,1},\ldots,U_{k,k^2}, \: j=0, \ldots, k-1\}$,  where each of $U_{k,1},\ldots,U_{k,k^2}$ is not conjugate to a power of $C_k$ or $C_k'$;
		\item $C(D_1^{C_1'^j})= \langle D_1^{C_1'^j} \rangle \times F((C_1^{k+1})^{C_1'^j}, A^{C_1'^j}, A^{C_1'^jC_1}, \ldots, A^{C_1'^jC_1^{k}}) \cong \BZ \times F_{k+2}$, where $j=0,\ldots,k-1$;
		\item $C(D_{k+1}^{C_k^j})= \langle D_{k+1}^{C_k^j} \rangle \times F((C_k'^{k+1})^{C_k^j}, E^{C_k^j}, E^{C_k^jC'_k}, \ldots, E^{C_k^jC_k^{'k}}) \cong \BZ \times F_{k+2}$, where  $j=0,\ldots,k-1$.
	\end{enumerate}
\end{lemma}

\begin{proof}
Recall that $C(C_i)=\langle C_i \rangle \times F(B_i,D_i)$, $C(C_i')=\langle C_i' \rangle \times F(B_i, D_{i+1})$, 
$C(B_i)= \langle B_i \rangle \times F(C_i,C_i')$ for $i=1,\ldots,k$, and $C(D_i)=\langle D_i \rangle \times F(C_{i-1}',C_i)$ for $i=1,\ldots,k-1$, $C(D_1)=\langle D_1 \rangle \times F(A,C_1)$, $C(D_{k+1})=\langle D_{k+1} \rangle \times F(C_k',E)$.

    The first two claims of the lemma are immediate. We now prove the third one. 
    Let $Z_i$ be the (connected) graph spanned by the edges labelled by $C_i$ and $C_i'$ in $Z$. By definition, $Z_i$ is a cover of the bouquet of two circles, labelled by $C_i$ and $C_i'$. Let $L_i<F(C_i,C_i')$ be the free group corresponding to the cover $Z_i$. Since the index of $L_i$ in $F(C_i,C_i')$ is $k(k+1)$, its rank is $k(k+1)+1$. Now it is easy to see that $L_i$ has the desired basis by first choosing a basis corresponding to a maximal subtree in $Z_i$ with $k(k+1)-(k-i)-1$ edges labelled by $C_i$ and $k-i$ edges labelled by $C_i'$, and then applying appropriate Nielsen transformations.   
    
Similarly, to prove the fifth claim, we let $Z_1$ to be the graph spanned by $C_1$ and $C_1'$ and $L_1$ be the corresponding subgroup of index $k(k+1)$ in $F(C_1,C_1')$. Again, one can see that $L_1$  has the desired basis by first choosing a basis corresponding to a maximal subtree in $Z_1$ which includes $k$ edges from each cycle labelled by $C_1$ in $Z_1$, as well as a path from the basepoint of length $k-1$ with all the edges labelled by $C_1'$, and then applying appropriate Nielsen transformations.

To prove claim (7) for $j=0$, we define $L_0$ to be the index $k+1$ subgroup of $F(A,C_1)$ given by the cover $Z_0$ obtained from a cycle of length $k+1$ labelled by $C_1$ by adding loops labelled by $A$ at every vertex. The desired basis for $L_0$ then corresponds to a maximal subtree in $Z_0$. The case $j>0$ is similar.
    
The proofs of all the other claims are similar and left to the reader.
\end{proof}

Finally, we show that $K$ is isomorphic to the fundamental group of the graph of groups $X$.
To see this we need to check that vertex groups, edge groups and the embeddings are the same. The vertex groups are simply centralisers (in $K$) of conjugates of generators, which we computed in Lemma \ref{centK}. Edge groups are clearly free abelian groups of rank two and the embeddings are mapping generators of the edge groups to the corresponding generators of the vertex groups. Now Proposition \ref{K} follows directly from the definition of $X$, Lemma \ref{fY} and Lemma \ref{centK} that $K$ is isomorphic to the fundamental group of the graph of groups $X$.
\end{proof}

Propositions \ref{H} and \ref{K} together imply that $H$ and $K$ are isomorphic, and so $\GG(T_{k,k+1})$ and $\GG(P_{4k+2})$ are indeed commensurable. This finishes the proof of Theorem \ref{com}.

\section{Path RAAGs are not commensurable}\label{sec:6}
	
	In this section we address the proof of Theorem \ref{thm1} which states that $\GG(P_m)$ and $\GG(P_n)$ are not commensurable, with the only exception of $\GG(P_3)$ and $\GG(P_4)$.
	
	By Corollary \ref{cor:reductionLSE}, it suffices to show that the linear system of equations and inequalities associated to the product graph does not have integer solutions. The key tool is Lemma \ref{fork}, which allows, given a local pattern in the graph, to deduce that some edges do not exist, see Figure \ref{fig:11}. Applying this lemma recursively, the structure of the graph is significantly simplified and a case-by-case analysis on the parities of $n$ and $m$ allows us to conclude that the system does not have integer solutions.

	We can suppose that $m, n \geq 3$, since $\GG(P_0) \cong \BZ$, $\GG(P_1)\cong \BZ^2$, $\GG(P_2) \cong F_2 \times \BZ$ and $\GG(P_n)$ for some fixed $n \geq 3$ are pairwise not commensurable (not even quasi-isometric, see \cite{BN}). Furthermore, we can  suppose that $m > n \geq 5$, since other cases are already covered by Theorem \ref{P3}. 
			 
	\subsection{Product graph for two paths}
			We fix some $m > n \geq 3$. Let $a_0, a_1, \ldots, a_m$ be the vertices of $P_m$, considered as canonical generators of $\GG(P_m)$, and $b_0, b_1, \ldots, b_n$ be the vertices of $P_n$, considered as canonical generators of $\GG(P_n)$. Then, in the above notation, $\Gamma_1=P_m$, $\Gamma_2=P_n$, and $\ti{\Gamma}_1=P_{m-2}$, with vertices $a_1, \ldots, a_{m-1}$, $\ti{\Gamma}_2=P_{n-2}$, with vertices $b_1, \ldots, b_{n-1}$. 
			 
			Suppose that $\GG(P_m)$ and $\GG(P_n)$ are commensurable. 
			Note that in our case $\DD = \ti{\Gamma}_1 \times \ti{\Gamma}_2=P_{m-2} \times P_{n-2}$ is the following graph: its set of vertices is $\{ (a_i, b_j), \: i=1, \ldots, m-1; \: j=1, \ldots, n-1 \}$, and two vertices $(a_{i_1}, b_{j_1})$ and $(a_{i_2}, b_{j_2})$ are connected by an edge in $\DD$ if and only if $|i_1-i_2| = 1$ and $|j_1-j_2|=1$, for $i_1, i_2=1, \ldots, m-1; \: j_1, j_2=1, \ldots, n-1$. To abbreviate the notation, we will denote the vertex $(a_i,b_j)$ of $\DD$ by $(i,j)$, for $i=1, \ldots, m-1; \: j=1, \ldots, n-1$, see Figure \ref{fig:12}. 
			
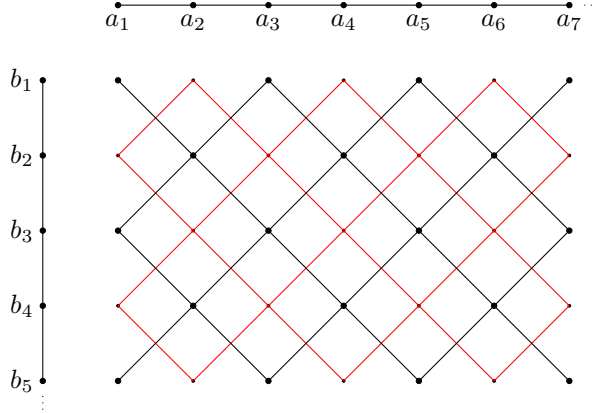
\begin{figure}[!h]
		\begin{tikzpicture}
	
\draw (1,5) -- (7,5);
\draw[dotted] (7.2, 5)--(7.4,5);

\filldraw (1,5) circle (1pt)  node[align=left, below] {$a_1$};
\filldraw (2,5) circle (1pt)  node[align=left, below] {$a_2$};
\filldraw (3,5) circle (1pt)  node[align=left, below] {$a_3$};
\filldraw (4,5) circle (1pt)  node[align=left, below] {$a_4$};
\filldraw (5,5) circle (1pt)  node[align=left, below] {$a_5$};
\filldraw (6,5) circle (1pt)  node[align=left, below] {$a_{6}$};
\filldraw (7,5) circle (1pt)  node[align=left, below] {$a_{7}$};

\draw  (0,4) -- (0,0);
\draw[dotted] (0,-0.2) -- (0,-0.4);

\filldraw (0,4) circle (1pt)  node[align=left, left] {$b_1$};
\filldraw (0,3) circle (1pt)  node[align=left, left]  {$b_2$};
\filldraw (0,2) circle (1pt)  node[align=left, left] {$b_3$};
\filldraw (0,1) circle (1pt)  node[align=left, left] {$b_4$};
\filldraw (0,0) circle (1pt)  node[align=left, left] {$b_5$};

\filldraw (1,4) circle (1pt);
\filldraw (3,4) circle (1pt);
\filldraw (5,4) circle (1pt);
\filldraw (7,4) circle (1pt);

\filldraw (2,3) circle (1pt);
\filldraw (4,3) circle (1pt);
\filldraw (6,3) circle (1pt);

\filldraw (1,2) circle (1pt);
\filldraw (3,2) circle (1pt);
\filldraw (5,2) circle (1pt);
\filldraw (7,2) circle (1pt);

\filldraw (2,1) circle (1pt);
\filldraw (4,1) circle (1pt);
\filldraw (6,1) circle (1pt);

\filldraw (1,0) circle (1pt);
\filldraw (3,0) circle (1pt);
\filldraw (5,0) circle (1pt);
\filldraw (7,0) circle (1pt);

\filldraw (2,4) circle (0.5pt);
\filldraw (4,4) circle (0.5pt);
\filldraw (6,4) circle (0.5pt);

\filldraw (1,3) circle (0.5pt);
\filldraw (3,3) circle (0.5pt);
\filldraw (5,3) circle (0.5pt);
\filldraw (7,3) circle (0.5pt);

\filldraw (2,2) circle (0.5pt);
\filldraw (4,2) circle (0.5pt);
\filldraw (6,2) circle (0.5pt);

\filldraw (1,1) circle (0.5pt);
\filldraw (3,1) circle (0.5pt);
\filldraw (5,1) circle (0.5pt);
\filldraw (7,1) circle (0.5pt);

\filldraw (2,0) circle (0.5pt);
\filldraw (4,0) circle (0.5pt);
\filldraw (6,0) circle (0.5pt);

\draw[color=red] (1,1)--(2,0);
\draw[color=red] (1,3)--(4,0);
\draw[color=red] (2,4)--(6,0);
\draw[color=red] (4,4)--(7,1);
\draw[color=red] (6,4)--(7,3);

\draw[color=red] (1,1)--(4,4);
\draw[color=red] (1,3)--(2,4);
\draw[color=red] (2,0)--(6,4);
\draw[color=red] (4,0)--(7,3);
\draw[color=red] (6,0)--(7,1);

\draw (1,0) -- (5,4);
\draw (1,2) -- (3,4);
\draw (3,0) -- (7,4);
\draw (5,0) -- (7,2);

\draw (1,4) -- (5,0);
\draw (1,2) -- (3,0);
\draw (3,4) -- (7,0);
\draw (5,4) -- (7,2);

		\end{tikzpicture}
		\caption{\small Structure of the product graph $\mathfrak{D}=P_{m-2} \times P_{n-2}$. The black edges are in $\mathfrak{D}_1$, and the red edges are in $\mathfrak{D}_2$. } \label{fig:12}
\end{figure}	
		
 Note that, by Lemma \ref{DD}, $\DD$ has two connected components, one of them, denoted by $\DD_1$, consisting of vertices $(i,j)$ with $i+j$ even, and the other one, denoted by $\DD_2$ with $i+j$ odd, and $\CC$ lies in one of them. 
 
 \begin{rem} \label{rem:f12}
 If $m$ is odd, then the automorphism of $P_m$ which reverses the order of its vertices {\rm(}it also induces an automorphism of $\GG(P_m)${\rm)} switches these components, which are in this case isomorphic graphs, so, after applying this automorphism of $\GG(P_m)$ if necessary, without loss of generality we can assume that $\CC$ lies in a particular component of $\DD$. The same is true if $n$ is odd. However, if both $m$ and $n$ are even, then the two connected components of $\DD$ are not isomorphic, and we should consider two cases, depending on whether $\CC$ lies in one or the other connected component of $\DD$.	
 \end{rem}
 
 Note that Lemmas \ref{EE} and \ref{VE} apply and provide us with a system of equations on the edge labels of $\DD$. The equations of Lemma \ref{VE} get simplified in our case, in particular, in the notation of this lemma we always have $D_1=D_2=1$.
  We will now show that this system of equations has no solutions, provided that restrictions on $\CC$ given by Lemmas \ref{DD} and \ref{CC} hold, and this derives a contradiction. 
 	
	\subsection{Notation}
		 We now introduce some auxiliary notation used in the proof.  
		 
		Note that $\DD$ is a planar graph, so we can think of $\DD$ as a graph on the plane, and use ``compass notation'', with the first coordinate increasing from west to east, and the second coordinate increasing from north to south. So we have the vertex $(1,1)$ in the top left (NW) corner, vertex $(1,n-1)$ in bottom left corner (SW), vertex $(m-1,1)$ in top right corner (NE), and $(m-1, n-1)$ in bottom right (SE) corner.  Every vertex $(i,j)$ in $\DD$ has some of the following incident edges (with a minimum of one): the NW edge, going to $(i-1,j-1)$; the SW edge, going to $(i-1,j+1)$; the NE edge, going to $(i+1,j-1)$; and the SE edge, going to $(i+1,j+1)$. Thus, the vertices of $\DD$ can be subdivided into inner vertices, which have degree 4 --- those which are of the form $(i,j)$ with $1<i<m-1, \: 1<j<n-1$, and boundary vertices --- all the rest. Among boundary vertices there are four corner vertices, 
		which have degree 1, and all the rest, which have degree $2$, see Figure \ref{fig:12}.

		 Fix some $s=1,2$, and recall that $\DD_s$ is one of the connected components of $\DD$. Consider the following auxiliary graph $\DD_s'$: the set of vertices of $\DD_s'$ coincides with the set of vertices of $\DD_s$, and the set of edges of $\DD_s'$ is equal to the union of the set of edges of $\DD_s$ and the set of new edges called boundary, which connect the vertices $(1, k_1)$ with $(1, k_1+2)$ (west boundary); $(m-1, k_2)$ with $(m-1,k_2+2)$ (east boundary); $(k_3,1)$ with $(k_3+2,1)$ (north boundary); $(k_4,n-1)$ with $(k_4+2,n-1)$ (south boundary) for all such natural $k_1, k_2,k_3,k_4$ that the vertices above belong to $\DD_s$. 
Note that $\DD_s'$ contains $\DD_s$ as a subgraph.

Obviously, $\DD_s'$ is also a planar graph, so we can speak about faces of $\DD_s'$ --- the set of all regions bounded by edges, here we do not consider the unbounded region. Let $\mathfrak{F}$ be the set of all such bounded faces of $\DD_s'$ (we omit the index $s$ which is fixed). Abusing the terminology, we will also call them faces of $D_s$. Note that there are two types of faces in $\FF$ --- square faces, which are bounded by four edges, all belonging to $\DD_s$, and triangle (boundary) faces, which are bounded by three edges, one of them boundary and the other two belonging to $\DD_s$. Boundary faces can be further subdivided into west, east, north and south boundary faces, depending on their boundary edge. Each square face has four sides, which are all edges of $\DD_s$ -- the NW side, the SW side, the NE side, and the SE side; for a triangle face only two of the sides are defined. Each square face has four corners, which are all vertices of $\DD_s$ --- the north, south, east and west corner; for a triangle face only three of the corners are defined. Two faces are adjacent if they have a common side, see Figure \ref{fig:13}.

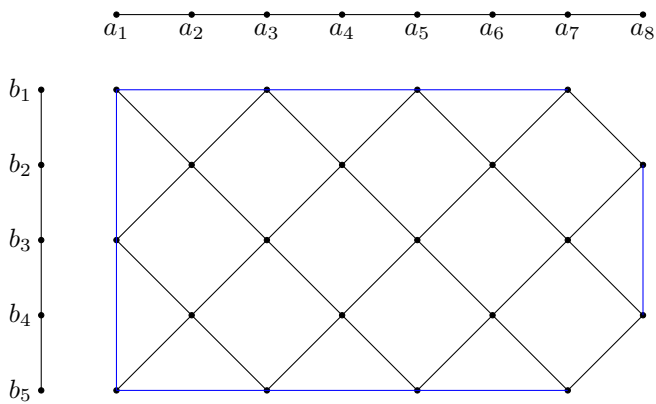
\begin{figure}[!h]
		\begin{tikzpicture}
	
\draw (1,5) -- (8,5);

\filldraw (1,5) circle (1pt)  node[align=left, below] {$a_1$};
\filldraw (2,5) circle (1pt)  node[align=left, below] {$a_2$};
\filldraw (3,5) circle (1pt)  node[align=left, below] {$a_3$};
\filldraw (4,5) circle (1pt)  node[align=left, below] {$a_4$};
\filldraw (5,5) circle (1pt)  node[align=left, below] {$a_5$};
\filldraw (6,5) circle (1pt)  node[align=left, below] {$a_{6}$};
\filldraw (7,5) circle (1pt)  node[align=left, below] {$a_{7}$};
\filldraw (8,5) circle (1pt)  node[align=left, below] {$a_{8}$};

\draw  (0,4) -- (0,0);

\filldraw (0,4) circle (1pt)  node[align=left, left] {$b_1$};
\filldraw (0,3) circle (1pt)  node[align=left, left]  {$b_2$};
\filldraw (0,2) circle (1pt)  node[align=left, left] {$b_3$};
\filldraw (0,1) circle (1pt)  node[align=left, left] {$b_4$};
\filldraw (0,0) circle (1pt)  node[align=left, left] {$b_5$};

\filldraw (1,4) circle (1pt);
\filldraw (3,4) circle (1pt);
\filldraw (5,4) circle (1pt);
\filldraw (7,4) circle (1pt);

\filldraw (2,3) circle (1pt);
\filldraw (4,3) circle (1pt);
\filldraw (6,3) circle (1pt);
\filldraw (8,3) circle (1pt);

\filldraw (1,2) circle (1pt);
\filldraw (3,2) circle (1pt);
\filldraw (5,2) circle (1pt);
\filldraw (7,2) circle (1pt);

\filldraw (2,1) circle (1pt);
\filldraw (4,1) circle (1pt);
\filldraw (6,1) circle (1pt);
\filldraw (8,1) circle (1pt);

\filldraw (1,0) circle (1pt);
\filldraw (3,0) circle (1pt);
\filldraw (5,0) circle (1pt);
\filldraw (7,0) circle (1pt);

\draw (1,0) -- (5,4);
\draw (1,2) -- (3,4);
\draw (3,0) -- (7,4);
\draw (5,0) -- (8,3);

\draw (1,4) -- (5,0);
\draw (1,2) -- (3,0);
\draw (3,4) -- (7,0);
\draw (5,4) -- (8,1);

\draw (7,0) -- (8,1);
\draw (7,4) -- (8,3);

\draw[color=blue] (1,4)--(1,0);
\draw[color=blue] (1,4)--(7,4);
\draw[color=blue] (1,0)--(7,0);
\draw[color=blue] (8,1)--(8,3);

		\end{tikzpicture}
		\caption{\small The graphs $\mathfrak{D}_1$ (black edges) and $\mathfrak{D}_1'$ (black and blue edges) for $m=9$ and $n=6$. } \label{fig:13}
		\end{figure}	
	
Recall that $\DD_1$ and $\DD_2$ are the connected components of $\DD$, such that $\DD_1$ contains vertices $(i,j)$ with $i+j$ even, and $\DD_2$ with $i+j$ odd. Note that for every face of $\DD_2$ there exists exactly one vertex of $\DD_1$ inside this face (when considered on the plane), and this vertex is not a corner vertex; and vice versa, each vertex of $\DD_1$ which is not a corner vertex belongs to exactly one face of $\DD_2$.	This means that there is a bijection between the faces of $\DD_2$ and the vertices of $\DD_1$ which are not corner vertices. Analogous statement holds with the roles of $\DD_1$ and $\DD_2$ interchanged. We denote the face of $\DD_2$ (or $\DD_1$) corresponding to the non-corner vertex $(i,j)$ of $\DD_1$ (or of $\DD_2$, respectively) by $Q_{i,j}$. This means that if $(i,j)$ is an inner vertex of $\DD$ (i.e., $1 <  i < m-1$, $1 < j < n-1$), then $Q_{i,j}$ is a face of $\DD_1$ if $i+j$ is odd, and a face of $\DD_2$ if $i+j$ is even, and in both cases it is a square face with corners $(i-1,j)$ (west), $(i, j-1)$ (north), $(i+1,j)$ (east) and $(i, j+1)$ (south). For a vertex $(1, j)$, $1<j<n-1$, $Q_{1,j}$ is a face of $\DD_1$, if $j$ is even, and a face of $\DD_2$, if $j$ is odd, and in both cases it is a west boundary triangle face, with the corners $(1, j-1)$ (north), $(2,j)$ (east) and $(1, j+1)$ (south); analogously for other boundaries, see Figure \ref{fig:9}.
	
		\begin{figure}[!h]
		\begin{tikzpicture}
		\small
		
		\draw (2,1) -- (4,1);
		
		\filldraw (2,1) circle (1pt)  node[align=left, below] {$a_{i-1}$};
		\filldraw (3,1) circle (1pt)  node[align=left, below] {$a_i$};
		\filldraw (4,1) circle (1pt)  node[align=left, below] {$a_{i+1}$};

		\draw  (0,2) -- (0,4);

		\filldraw (0,2) circle (1pt)  node[align=left, left]  {$b_{j+1}$};
		\filldraw (0,3) circle (1pt)  node[align=left, left]  {$b_j$};
		\filldraw (0,4) circle (1pt)  node[align=left, left] {$b_{j-1}$};
		
		\filldraw (2,3) circle (1pt)  node[align=left, left]  {$(i-1,j)$};
		\filldraw (3,2) circle (1pt)  node[align=left, below]  {$(i,j+1)$};
		\filldraw (3,4) circle (1pt)  node[align=left, above] {$(i,j-1)$};
		\filldraw (4,3) circle (1pt)  node[align=right, right] {$(i+1,j)$};

		\draw (2,3)-- (3,4) -- (4,3) -- (3,2) -- (2,3);
		
		\large
		
		\filldraw (3,3.3) node[align=left, below] {$Q_{i,j}$};

		\small
		
		\draw (8,1) -- (9,1);
		
		\filldraw (8,1) circle (1pt)  node[align=left, below] {$a_{1}$};
		\filldraw (9,1) circle (1pt)  node[align=left, below] {$a_2$};

		\draw  (7,2) -- (7,4);

		\filldraw (7,2) circle (1pt)  node[align=left, left]  {$b_{j+1}$};
		\filldraw (7,3) circle (1pt)  node[align=left, left]  {$b_j$};
		\filldraw (7,4) circle (1pt)  node[align=left, left] {$b_{j-1}$};

		\filldraw (8,2) circle (1pt)  node[align=left, below]  {$(1,j+1)$};
		\filldraw (8,4) circle (1pt)  node[align=left, above] {$(1,j-1)$};
		\filldraw (9,3) circle (1pt)  node[align=right, right] {$(2,j)$};

		\draw (8,4)--(9,3)--(8,2);
		\draw[dotted] (8,2)--(8,4);
		
		\large
		
		\filldraw (8,3) node[align=right=-2pt, right] {$Q_{1,j}$};

		\small
		
		\draw (11, 2)--(13,4);
		\filldraw (11,2) circle (1pt)  node[align=left, below]  {$(i,j)$};
		\filldraw (13,4) circle (1pt)  node[align=left, above]  {$(i+1,j-1)$};
		
		\filldraw (12,3)  node[align=right, right]  {$e_{i,j}^{i+1,j-1}$};
		
		\large
		
		\filldraw (11,3.5)  node[align=right, right]  {$Q_{i,j-1}$};
		\filldraw (12,2.2)  node[align=right, right]  {$Q_{i+1,j}$};
		
		\end{tikzpicture}
		\caption{\small Notation for edges and faces of $\mathfrak{D}_1$ or $\mathfrak{D}_2$.} \label{fig:9}
	\end{figure}
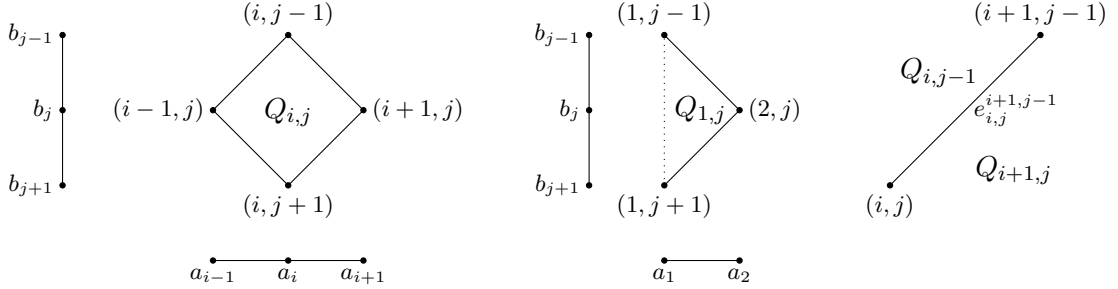
	 
	We also denote by $e_{i,j}^{k,l}$ (or $e_{k,l}^{i,j}$) the (non-oriented) edge of $\DD$ which connects the vertices $(i,j)$ and $(k,l)$, for all possible $i,j,k,l$. This means that $e_{i,j}^{k,l}$ is also a side of faces $Q_{i,l}$ and $Q_{k,j}$ (if these faces exist, which is always true except when $(i,l)$ or $(k,j)$ is a corner vertex), see Figure \ref{fig:9}.
	
\subsection{System of equations for the product graph of two paths}

Note that, in our case and in the above notation, Lemma \ref{VE} and Equation (\ref{R}) have the following form. If $w$ is an inner vertex of $\DD$, and $e_1,e_2,e_3,e_4$ are the NW, NE, SE, SW edges of $\DD$ incident to $w$ respectively, all oriented from $w$, then
$$
\begin{array}{l}
				R_1(w)=M_{11}(e_1)+M_{11}(e_4)=M_{11}(e_2)+M_{11}(e_3)=M_{12}(e_1)+M_{12}(e_2)=M_{12}(e_4)+M_{12}(e_3),	\\
		R_2(w)=M_{21}(e_1)+M_{21}(e_4)=M_{21}(e_2)+M_{21}(e_3)=M_{22}(e_1)+M_{22}(e_2)=M_{22}(e_4)+M_{22}(e_3).
\end{array}
$$		
	Also it follows from Lemma \ref{DD} (claim 3, local surjectivity) that if such $w$ is in $\CC \subseteq \DD_s$, then at least one of each pair of the edges $(e_1,e_2)$, $(e_2,e_3)$, $(e_3, e_4)$, $(e_4, e_1)$ is in $\CC$.
For the boundary vertices, we have similar equations.		
For example, if $w$ is the NW corner with the SE edge $e_3$ beginning in $w$, we have
\begin{equation*}
	R_1(w)=M_{11}(e_3)=M_{12}(e_3); \quad R_2(w)=M_{21}(e_3)=M_{22}(e_3).
\end{equation*}
If $w$ is on the west boundary, but not in a corner, and $e_2, e_3$ are the NE, SE edges beginning in $w$ respectively, then
$$
\begin{array}{c}
	R_1(w)=M_{11}(e_2)+M_{11}(e_3)=M_{12}(e_2)=M_{12}(e_3),
R_2(w)=M_{21}(e_2)+M_{21}(e_3)=M_{22}(e_2)=M_{22}(e_3),
\end{array}
$$
and analogous equations hold for the other boundary vertices.

It follows from Lemma \ref{DD} (local surjectivity) that if $w$ is a boundary vertex (of degree 1 or 2) which is in $\CC \subseteq \DD_s$, then all the edges of $\DD_s$ incident to $w$ are also in $\CC$.

\subsection{Face labels}

		For every face $F$ in $\FF$ define two labels as follows. If $F$ is a square face, and $w_1,w_2,w_3,w_4$ are its west, north, east and south corners respectively, then let
\begin{equation}\label{RR}		
		R_1(F)=R_1(w_2)+R_1(w_4), \quad R_2(F)=R_2(w_1)+R_2(w_3).
\end{equation}
		 If $F$ is a triangle face, then exactly one of $w_1,w_2,w_3,w_4$ above will be missing, say $w_1$ (so $F$ is west boundary face), and then define 
\begin{equation}\label{RR'}		
		R_1(F)=R_1(w_2)+R_1(w_4), \quad R_2(F)=R_2(w_3);
\end{equation}		
		 the other cases are analogous (just think of the missing vertex as having labels 0).

		\begin{lemma}\label{RF}
			In the above notation, for every face $F$ in $\FF$ we have $R_1(F)=R_2(F)$.
		\end{lemma}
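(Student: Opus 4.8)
The plan is to compute both $R_1(F)$ and $R_2(F)$ by expanding the vertex labels via their definition in Equation~(\ref{R}) and the reformulations afforded by Lemma~\ref{VE}, and then to check that both sides reduce to the same sum of $M$-labels over the four (or three) sides of $F$. The key observation is that each of $R_1(F)$ and $R_2(F)$ should, after unwinding, become $\sum M_{11}$ of the four sides of $F$ on the one hand and $\sum M_{22}$ of the four sides on the other — no, more precisely: the natural guess is that both equal $\sum_{e \text{ side of } F} M_{12}(e)$ (with a suitable orientation of the sides), and the content of the lemma is exactly the two different ``ways of reading'' the labels around a face, just as Lemma~\ref{VE} gave several ways of reading the labels around a vertex.

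First I would treat the case when $F$ is a square face $Q_{i,j}$ with corners $w_1=(i-1,j)$ (west), $w_2=(i,j-1)$ (north), $w_3=(i+1,j)$ (east), $w_4=(i,j+1)$ (south), assuming for now that all four are inner vertices of $\DD$. The four sides of $F$ are $e^{NW}=e_{i-1,j}^{i,j-1}$, $e^{NE}=e_{i,j-1}^{i+1,j}$, $e^{SE}=e_{i+1,j}^{i,j+1}$, $e^{SW}=e_{i,j+1}^{i-1,j}$. Now $R_1(F)=R_1(w_2)+R_1(w_4)$. At the north corner $w_2$ the two sides of $F$ meeting there are the SW-oriented edge $e^{NW}$ (as an edge out of $w_2$ this is its SW edge, going to $(i-1,j)$) and the SE-oriented edge $e^{NE}$ (going to $(i+1,j)$); by the inner-vertex equations from Lemma~\ref{VE}, $R_1(w_2)$ can be written as $M_{11}$ of the NW edge of $w_2$ plus $M_{11}$ of the NE edge of $w_2$, but also — and this is the choice I want — reading the ``$M_{12}$-equalities'' gives $R_1(w_2)=M_{12}(\text{NW out of }w_2)+M_{12}(\text{NE out of }w_2)$, which are edges \emph{not} on $F$. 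So the correct move is: use the equation form $R_1(w_2)=M_{11}(e^{NW}_{\text{from }w_2})+M_{11}(e^{NE}_{\text{from }w_2})$ — wait, those are on $F$ only if I pick the right pair. Let me instead organize it as follows: $R_1(w_2)=M_{11}(e')+M_{11}(e'')$ where $e',e''$ are the two downward (south-pointing) edges out of $w_2$, i.e.\ exactly $e^{NW}$ and $e^{NE}$ oriented out of $w_2$; similarly $R_1(w_4)=M_{11}(f')+M_{11}(f'')$ where $f',f''$ are the two upward (north-pointing) edges out of $w_4$, i.e.\ $e^{SW}$ and $e^{SE}$ oriented out of $w_4$. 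Hence $R_1(F)=M_{11}(e^{NW})+M_{11}(e^{NE})+M_{11}(e^{SW})+M_{11}(e^{SE})$ with these orientations. Dually, at the west corner $w_1$, $R_2(w_1)=M_{22}(g')+M_{22}(g'')$ with $g',g''$ the two eastward edges out of $w_1$, namely $e^{NW}$ and $e^{SW}$ oriented out of $w_1$; and $R_2(w_3)=M_{22}(h')+M_{22}(h'')$ with $h',h''$ the two westward edges out of $w_3$, namely $e^{NE}$ and $e^{SE}$ oriented out of $w_3$. So $R_2(F)=M_{22}(e^{NW})+M_{22}(e^{NE})+M_{22}(e^{SW})+M_{22}(e^{SE})$, with the \emph{opposite} orientations on each side compared to the $R_1$ computation. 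Now I invoke Lemma~\ref{EE}: $M_{11}(e)=M_{11}(e^{-1})$ and $M_{22}(e)=M_{22}(e^{-1})$. That immediately gives $R_1(F)=\sum_{e} M_{11}(e) $ and $R_2(F)=\sum_e M_{22}(e)$ for the four sides with a \emph{fixed} orientation — but these are still not obviously equal. The genuine equality must come instead from Lemma~\ref{proportions}/Lemma~\ref{EE}'s mixed identity $M_{12}(e)=M_{21}(e^{-1})$: so I should rather expand $R_1(w_2)$ using the $M_{11}=M_{12}$-type vertex equation to get $M_{12}$'s, expand $R_2(w_1)$ using $M_{22}=M_{21}$, and match $M_{12}(e)$ against $M_{21}(e^{-1})$ side by side. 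Concretely: $R_1(F)=\sum_{\text{sides }e\text{ of }F} M_{12}(e)$ (each side oriented \emph{away from} the north/south corner of $F$) and $R_2(F)=\sum_{\text{sides }e\text{ of }F} M_{21}(e)$ (each side oriented \emph{away from} the west/east corner of $F$, hence the reverse orientation), and then Lemma~\ref{EE} ($M_{12}(e)=M_{21}(e^{-1})$) matches the two term by term.

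The remaining steps are bookkeeping. Second, I would do the triangle-face cases (west/east/north/south boundary faces) by the same computation with one corner and its two incident $M$-terms simply absent, using the boundary-vertex equations from the paragraph after Lemma~\ref{VE} (e.g.\ for a west-boundary vertex $R_1(w)=M_{11}(e_2)+M_{11}(e_3)$, $R_2(w)=M_{21}(e_2)+M_{21}(e_3)$ with $e_2,e_3$ the NE and SE edges), and the convention from Equations~(\ref{RR}),(\ref{RR'}) that a missing corner contributes zero; the term-by-term matching via Lemma~\ref{EE} goes through verbatim with fewer terms. Third, I should double-check the degenerate sub-cases where a corner of a square face $F$ happens to lie on the boundary of $\DD$ (so that corner has degree $2$ rather than $4$): in that situation the relevant vertex equation is a boundary equation, the corner contributes $M_{11}$ (resp.\ $M_{21}$) of only its two on-$F$ edges — which is exactly what is needed, so nothing changes. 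I expect the main obstacle to be purely notational: keeping the orientations of the four sides consistent between the ``$R_1$ expansion'' (sides pointing away from north/south corners) and the ``$R_2$ expansion'' (sides pointing away from west/east corners) so that the identity $M_{12}(e)=M_{21}(e^{-1})$ from Lemma~\ref{EE} lines the two sums up correctly rather than producing a sign/orientation mismatch. Once the orientations are pinned down the lemma follows, since every side of $F$ contributes one $M_{12}$-term to $R_1(F)$ and one $M_{21}$-term of the opposite orientation to $R_2(F)$, and these are equal by Lemma~\ref{EE}.
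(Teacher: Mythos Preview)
Your proposal is correct and, once you settle on the right expansion, follows essentially the same approach as the paper: expand $R_1(F)$ as a sum of $M_{12}$-labels of the sides of $F$ (each oriented away from the north/south corner it touches), expand $R_2(F)$ as a sum of $M_{21}$-labels of the same sides (each oriented away from the west/east corner), and match term by term via the identity $M_{12}(e)=M_{21}(e^{-1})$ from Lemma~\ref{EE}. The paper does exactly this, just with cleaner bookkeeping and without your initial detour through $M_{11}$ and $M_{22}$ (which, as you noticed, does not directly close since Lemma~\ref{EE} gives no relation between $M_{11}$ and $M_{22}$).
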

		\begin{proof}
			Suppose first that $F$ is a square face. Let $w_1,w_2,w_3,w_4$ be the corners of $F$ as above. 
			
			Let $e_1,e_2,e_3,e_4$ be oriented edges which are sides of $F$, such that $e_1$ goes from $w_1$ to $w_2$ (NW side), $e_2$ goes from $w_2$ to $w_3$ (NE side), $e_3$ goes from $w_3$ to $w_4$ (SE side), and $e_4$ goes from $w_4$ to $w_1$ (SW side), see Figure \ref{fig:10}.
			
				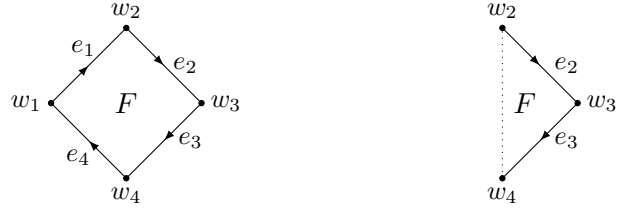
\begin{figure}[!h]
				\begin{tikzpicture}
				\filldraw (2,3) circle (1pt)  node[align=left, left]  {$w_1$};
				\filldraw (3,2) circle (1pt)  node[align=left, below]  {$w_4$};
				\filldraw (3,4) circle (1pt)  node[align=left, above] {$w_2$};
				\filldraw (4,3) circle (1pt)  node[align=right, right] {$w_3$};

				\draw[middlearrow={latex}] (2,3) -- (3,4)  node [midway, left=2pt, above] {$e_1$};
				\draw[middlearrow={latex}] (3,4) -- (4,3)  node [midway, right, right] {$e_2$};
				\draw[middlearrow={latex}] (4,3) -- (3,2)  node [midway, right, right=2pt] {$e_3$};
				\draw[middlearrow={latex}] (3,2) -- (2,3)  node [midway, left=4pt, below] {$e_4$};
				
				\large
				\filldraw (3,3)   node {$F$};
				
				\normalsize
				
				\filldraw (8,2) circle (1pt)  node[align=left, below]  {$w_4$};
				\filldraw (8,4) circle (1pt)  node[align=left, above] {$w_2$};
				\filldraw (9,3) circle (1pt)  node[align=right, right] {$w_3$};
				\draw[middlearrow={latex}] (8,4) -- (9,3)  node [midway, right, right=2pt] {$e_2$};
				\draw[middlearrow={latex}] (9,3) -- (8,2)  node [midway, right=2pt, right] {$e_3$};
				\large
				\filldraw (8.3,3)   node {$F$};
				
				\draw[dotted] (8,2)--(8,4);

				\end{tikzpicture}
				\caption{\small Face labels: Lemma \ref{RF} claims that $R_1(w_2)+R_1(w_4)=R_2(w_1)+R_2(w_3)$ for the square face on the left, and $R_1(w_2)+R_1(w_4)=R_2(w_3)$ for the triangle face on the right, and similar for other triangle faces.} \label{fig:10}
			\end{figure}		
			 
			By Lemma \ref{VE}, we have 
			$$
			R_1(w_2)=M_{12}(e_1^{-1})+M_{12}(e_2), \quad R_1(w_4)=M_{12}(e_3^{-1})+M_{12}(e_4). 
			$$
			Together with Equation (\ref{RR}) and Lemma \ref{EE}, this means that  
			\begin{equation}\label{ttt}
			\begin{split}
			R_1(F)=R_1(w_2)+R_1(w_4)=M_{12}(e_1^{-1})+M_{12}(e_2)+M_{12}(e_3^{-1})+M_{12}(e_4)  = 
			\\
			= M_{21}(e_1)+M_{21}(e_2^{-1})+M_{21}(e_3)+M_{21}(e_4^{-1}). 
			\end{split}
			\end{equation}
			On the other hand, again by Lemma \ref{VE}, we have 
			$$
			R_2(w_1)=M_{21}(e_1)+M_{21}(e_4^{-1}), \quad R_2(w_3)=M_{21}(e_2^{-1})+M_{21}(e_3), 
			$$
			so, by Equation (\ref{RR}), this gives us 
			$$
			R_2(F)=R_2(w_1)+R_2(w_3)=M_{21}(e_1)+M_{21}(e_4^{-1})+M_{21}(e_2^{-1})+M_{21}(e_3),
			$$
			which is the same as the right-hand side of Equation (\ref{ttt}), so $R_1(F)=R_2(F)$.
			
			If $F$ is a triangle face, then the proof is similar, with some summands missing in the argument above. For example, if $F$ is a west boundary face, $w_2, w_3,w_4$ are its corners as above, and $e_2$ goes from $w_2$ to $w_3$ (NE side), $e_3$ goes from $w_3$ to $w_4$ (SE side), see Figure \ref{fig:10}, then, by Lemma \ref{VE}, Lemma \ref{EE} and Equation (\ref{RR'}), we have
\begin{gather}\notag
			\begin{split}
			R_1(F)=R_1(w_2)+R_1(w_4)=&M_{12}(e_2)+M_{12}(e_3^{-1})=\\
			=&M_{21}(e_2^{-1})+M_{21}(e_3)	=R_2(w_3)=R_2(F).		
			\end{split}
\end{gather}
			Other cases are analogous. 
	\end{proof}

		We fix $s$ equal to 1 or 2 such that $\CC$ is a subgraph of $\DD_s$, as above.

\subsection{Key lemma}		

		The following lemma is key in this proof. It provides us with a way of applying consequently the equations from above to prove that some edges of $\DD_s$ do not belong to $\CC$, until we obtain a contradiction with Lemma \ref{DD}.
		
		\begin{lemma}\label{fork}
			In the above notation, suppose that $Q_1, Q_2$ are two adjacent faces in $\FF$, such that $Q_1$ is either square or west boundary face, $Q_2$ is either square or north boundary face, and NE side of $Q_1$ coincides with SW side of $Q_2$. Suppose, in addition, that if $Q_1$ is square, then the west corner of $Q_1$ does not have a NW edge in $\CC$, and, if $Q_2$ is square, then the north corner of $Q_2$ does not have a NW edge in $\CC$. 
			 
			Then the south corner of $Q_1$ does not have a SE edge in $\CC$, and the east corner of $Q_2$ does not have a SE  edge in $\CC$.

			Analogous three statements hold with all the directions above rotated by $\pi/2$, $\pi$ and $3\pi/2$.
		\end{lemma}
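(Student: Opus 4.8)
The plan is to chain the vertex equations of Lemmas \ref{EE} and \ref{VE} around the two faces $Q_1$ and $Q_2$ so that everything collapses, via the face identity $R_1(F)=R_2(F)$ of Lemma \ref{RF}, to the single equation $M_{11}(\eta)+M_{22}(\theta)=0$ for the two edges $\eta,\theta$ that the lemma concerns; since all $M$-labels are non-negative integers, this forces $M_{11}(\eta)=M_{22}(\theta)=0$, hence $\eta,\theta\notin E(\CC)$ by (\ref{M0}). First I would fix coordinates so that $Q_1=Q_{i,j}$, with west corner $u_1=(i-1,j)$, north corner $v=(i,j-1)$, east corner $w=(i+1,j)$ and south corner $u_3=(i,j+1)$; the requirement that the NE side of $Q_1$ be the SW side of $Q_2$ then forces $Q_2=Q_{i+1,j-1}$, with west corner $v$, north corner $u_2=(i+1,j-2)$, east corner $u_4=(i+2,j-1)$ and south corner $w$, the common edge $e^{*}$ running from $v$ to $w$. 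Write $g$ for the edge from $w$ to $u_3$ (the SE side of $Q_1$), $d$ for the edge from $w$ to $u_4$, $\eta$ for the SE edge at $u_3$ and $\theta$ for the SE edge at $u_4$; these last two are exactly the edges the lemma claims are not in $\CC$. When $Q_1$ is square, the hypothesis says the NW edge $\rho$ at $u_1$ is not in $\CC$, so $M_{22}(\rho)=0$ by (\ref{M0}); when $Q_2$ is square, the NW edge $\sigma$ at $u_2$ is not in $\CC$, so $M_{11}(\sigma)=0$.

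Next I would read off three consequences of the vertex equations, each time using the symmetries $M_{11}(f)=M_{11}(f^{-1})$ and $M_{22}(f)=M_{22}(f^{-1})$ of Lemma \ref{EE}. (a) The $M_{22}$-equations at $u_1$ (using $M_{22}(\rho)=0$) and then at $v$ give $M_{22}(e^{*})=R_2(v)-R_2(u_1)$; symmetrically (using $M_{11}(\sigma)=0$ at $u_2$ and then $v$) $M_{11}(e^{*})=R_1(v)-R_1(u_2)$. (b) The equations at the interior vertex $w$ give $M_{11}(g)=R_1(w)-M_{11}(e^{*})$ and $M_{22}(d)=R_2(w)-M_{22}(e^{*})$. (c) The $M_{11}$-equation at $u_3$ gives $M_{11}(\eta)=R_1(u_3)-M_{11}(g)$ and the $M_{22}$-equation at $u_4$ gives $M_{22}(\theta)=R_2(u_4)-M_{22}(d)$. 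Substituting (b) and then (a) into the sum of the two relations in (c), every $M$-term cancels and one is left with
$$
M_{11}(\eta)+M_{22}(\theta)=\bigl(R_1(v)+R_1(u_3)-R_2(u_1)-R_2(w)\bigr)+\bigl(R_2(v)+R_2(u_4)-R_1(u_2)-R_1(w)\bigr),
$$
which by the definitions (\ref{RR}) of the face labels equals $\bigl(R_1(Q_1)-R_2(Q_1)\bigr)+\bigl(R_2(Q_2)-R_1(Q_2)\bigr)=0$ by Lemma \ref{RF}. This settles the generic case, and shows in particular that the assumptions on $\rho$ and $\sigma$ enter precisely to make identity (a) an equality rather than an inequality $M_{22}(e^{*})\ge R_2(v)-R_2(u_1)$.

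Finally I would dispose of the degenerate positions. The hypotheses on $Q_1$ and $Q_2$ force $w$ to be an interior vertex of $\DD$ and forbid $Q_1,Q_2$ from being south or east boundary faces, so the only remaining degeneracies are that $v$, $u_3$ or $u_4$ may lie on a boundary of $\DD$, and that $u_1$ (when $Q_1$ is a west boundary face) or $u_2$ (when $Q_2$ is a north boundary face) may be absent; when $u_3$ lies on the south boundary or $u_4$ on the east boundary the edge $\eta$, resp. $\theta$, does not exist and the corresponding conclusion is vacuous. In each case the three one-line computations above hold verbatim once one uses the boundary versions of the vertex equations, Lemma \ref{RF} for boundary faces (via (\ref{RR'})), and the conventions that an absent corner contributes $0$ to the vertex and face labels and that an absent edge is trivially not in $\CC$; the only point to check is that these substitutions leave the cancellation intact. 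The three rotated statements follow by running the same argument on the image of the configuration under a rotation of the plane by $\pi/2$, $\pi$ or $3\pi/2$: such rotations permute the four compass directions, and the odd ones additionally interchange $\ti\Gamma_1$ with $\ti\Gamma_2$ and the labels $M_{12}$ with $M_{21}$, under all of which the systems of Lemmas \ref{EE}, \ref{VE} and \ref{RF} are invariant. The conceptual content is thus a single cancellation; the only real work — and the main obstacle — is the compass bookkeeping: keeping straight which oriented edge is which side of which face, applying the orientation symmetries of Lemma \ref{EE} correctly, and checking uniformly that none of the boundary degeneracies disturbs the computation.
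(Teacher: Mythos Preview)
Your proof is correct and is essentially the same argument as the paper's: both apply the face identity $R_1(Q_i)=R_2(Q_i)$ of Lemma \ref{RF} to $Q_1$ and $Q_2$, use the hypotheses to kill the NW contributions at $u_1$ and $u_2$, and combine to obtain $M_{11}(\eta)+M_{22}(\theta)=0$. The only difference is organizational: the paper expands $R_1(Q_i)$ and $R_2(Q_i)$ directly as sums of edge labels and subtracts the two resulting equations, whereas you first chain through the vertex labels $R_1(\cdot),R_2(\cdot)$ via your steps (a)--(c) and recognise the face quantities only at the end; the underlying cancellation is identical.
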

		
		In the statement of Lemma \ref{fork}, when we say that the west corner of $Q_1$ does not have a NW edge in $\CC$, we mean that there is either no such edge in $\DD$ (this will be the case when the west corner of $Q_1$ is on the left boundary, i.e. is of the form $(1,k)$), or there is such an edge in $\DD$, but it does not belong to $\CC$, which is equivalent to saying that all (or just one) of its labels are 0, by Equation (\ref{M0}). 
		
		By the expression ``all the directions rotated by $3\pi/2$'', we mean that in the statement north is changed to east, east -- to south, south -- to west, west -- to north, and NE -- to SE, SE -- to SW, SW -- to NW, NW -- to NE; other rotations are defined analogously in a natural way.
	
		\begin{proof}
			Note that it suffices to prove the original statement above, the proofs of all statements with rotated directions are similar, up to corresponding change of directions. 
			 
			Suppose first that $Q_1$ and $Q_2$ are both square faces.
			  
			Let $w_1$ be the west vertex of $Q_1$, $w_2$ be the north vertex of $Q_1$ (which is also the west vertex of $Q_2$), $w_3$ be the north vertex of $Q_2$, $w_4$ be the east vertex of $Q_2$, $w_5$ be the south vertex of $Q_2$ (which is also the east vertex of $Q_1$), and $w_6$ be the south vertex of $Q_1$.  Let $h_1$ be the edge going from $w_2$ to $w_3$ (NW side of $Q_2$), $h_2$ be the edge going from $w_2$ to $w_5$ (NE side of $Q_1$ and SW side of $Q_2$),  $h_3$ be the edge going from $w_5$ to $w_6$ (SE side of $Q_1$),  $h_4$ be the edge going from $w_1$ to $w_2$ (NW side of $Q_1$), and $h_5$ be the edge going from $w_4$ to $w_5$ (SE side of $Q_2$).  Since we will use only $M_{11}$ and $M_{22}$ labels below, the orientation of edges is not important here, see Figure \ref{fig:11}.
			
			\begin{figure}[!h]
				\begin{tikzpicture}
				\filldraw (2,0) circle (1pt)  node[align=right, right] {$w_6$};
				\filldraw (1,1) circle (1pt)  node[align=right, right] {$w_1$}; 
				\filldraw (3,1) circle (1pt)  node[align=right, right] {$w_5$}; 
				\filldraw (2,2) circle (1pt)  node[left, above=2pt] {$w_2$}; 
				\filldraw (3,3) circle (1pt)  node[align=right, right] {$w_3$}; 
				\filldraw (4,2) circle (1pt)  node[align=right, right] {$w_4$}; 
				
				\draw (2,0) -- (3,1)  node [midway, right=2pt, right] {$h_3$};
				\draw (3,1) -- (4,2)  node [midway, right=2pt, right] {$h_5$};	
				\draw (2,0) -- (1,1);
				\draw (4,2) -- (3,3);
				\draw (3,1) -- (2,2)  node [midway, right, right] {$h_2$};
				\draw (1,1) -- (2,2)  node [midway, left=2pt, above] {$h_4$};
				\draw (2,2) -- (3,3)  node [midway, left=2pt, above] {$h_1$};
				\draw[color=blue] (1,1) -- (0.5,1.5);
				\draw[color=blue] (3,3) -- (2.5,3.5);
				\draw[color=red] (2,0) -- (2.5,-0.5)  node [left, below] {$f_1$};
				\draw[color=red] (4,2) -- (4.5,1.5)  node [right, below] {$f_2$};
				\draw (2,0)--(1.5,-0.5) (1,1)--(0.5,0.5) (3,1)--(3.5,0.5) (4,2)--(4.5,2.5) (3,3)--(3.5,3.5) (2,2)--(1.5,2.5);
				
				\large
				\filldraw (2,1)   node[align=center] {$Q_1$}; 
				\filldraw (3,2)   node[align=center] {$Q_2$}; 
				
				\end{tikzpicture}
				\caption{\small Part of the graph $\mathfrak{D}$ in the proof of Lemma \ref{fork}, consisting of two adjacent faces  in the case when both faces are square. Blue edges are not in $\mathfrak{C}$ by the conditions of the lemma, and red edges are claimed not to be in $\mathfrak{C}$ by the lemma.} \label{fig:11}
			\end{figure}
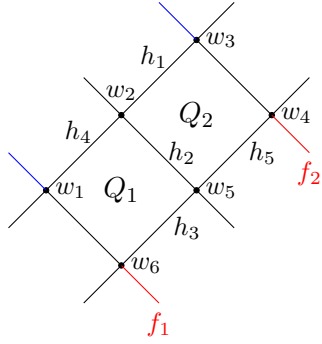	 
			
			We can suppose that there are SE edges in $\DD_s$ both from $w_6$ and $w_4$, otherwise the proof is similar. Let $f_1$ be the SE edge at $w_6$, and $f_2$ be the SE edge at $w_4$.  By Lemma \ref{RF}, we have $R_1(Q_1)=R_2(Q_1)$.  Note that $R_1(Q_1)=R_1(w_2)+R_1(w_6)$ by Equation (\ref{RR}), and $R_1(w_2)=M_{11}(h_1)+M_{11}(h_2)$, $R_1(w_6)=M_{11}(h_3)+M_{11}(f_1)$ by Equation (\ref{R}). Also we have $R_2(Q_1)=R_2(w_1)+R_2(w_5)$ by Equation (\ref{RR}), and $R_2(w_1)=M_{22}(h_4)$, $R_2(w_5)=M_{22}(h_2)+M_{22}(h_5)$ by Equation (\ref{R}) and since, by the conditions of this lemma, $w_1$ has no NW edge in $\CC$ (and so this edge, even if it exists in $\DD_s$, has $M_{22}$ label equal to $0$ by Equation (\ref{M0})). Thus, we have
			\begin{equation}\label{T1}
			R_1(Q_1)=M_{11}(h_1)+M_{11}(h_2)+M_{11}(h_3)+M_{11}(f_1)=R_2(Q_1)=M_{22}(h_4)+M_{22}(h_2)+M_{22}(h_5).
			\end{equation}
In the same way, by Lemma \ref{RF}, we have $R_1(Q_2)=R_2(Q_2)$. Note that $R_1(Q_2)=R_1(w_3)+R_1(w_5)$, by Equation (\ref{RR}), and $R_1(w_	3)=M_{11}(h_1)$, $R_1(w_5)=M_{11}(h_2)+M_{11}(h_3)$ by Equation (\ref{R}) and since, 	by the conditions of this lemma, $w_3$ has no NW edge in $\CC$ (and so this edge, even if it exists in $\DD_s$, has $M_{11}$ label equal to $0$ by Equation (\ref{M0})). Also, we have $R_2(Q_2)=R_2(w_2)+R_2(w_4)$ by Equation (\ref{RR}), and $R_2(w_2)=M_{22}(h_4)+M_{22}(h_2)$, $R_2(w_4)=M_{22}(h_5)+M_{22}(f_2)$ by Equation (\ref{R}).  Thus, we have 
		\begin{equation}\label{T2}
			R_1(Q_2)=M_{11}(h_1)+M_{11}(h_2)+M_{11}(h_3)=R_2(Q_2)=M_{22}(h_4)+M_{22}(h_2)+M_{22}(h_5)+M_{22}(f_2).
		\end{equation}
	
Substracting (\ref{T2}) from (\ref{T1}), we obtain 
	\begin{equation}\label{T}
		R_1(Q_1)-R_1(Q_2)=M_{11}(f_1)=R_2(Q_1)-R_2(Q_2)=-M_{22}(f_2),
	\end{equation}	
	but $M_{11}(f_1) \geq 0$, $-M_{22}(f_2) \leq 0$, so (\ref{T}) implies that $M_{11}(f_1)=M_{22}(f_2)=0$, and so $f_1, f_2 \notin \CC$ by Equation (\ref{M0}), as required. 
	
	The other cases, when $Q_1$ is the west boundary, or $Q_2$ is the north boundary, or both, are analogous, with the only difference in the proof being that some summands do not appear (we can think of them as being equal to $0$). For example, if $Q_1$ is a west boundary, and $Q_2$ is a square, then all the notation and equalities are almost the same as above, except that $w_1$ and $h_4$ do not exist, so $R_2(Q_1)=R_2(w_5)=M_{22}(h_2)+M_{22}(h_5)$, and $R_2(w_2)=M_{22}(h_2)$, so $R_2(Q_2)=M_{22}(h_2)+M_{22}(h_5)+M_{22}(f_2)$. The same argument as above applies, hence the lemma.
		\end{proof}

\subsection{Case of odd $m$ or $n$}	
	
	We now continue with the proof of the theorem.  Suppose first that at least one of $m$ and $n$ is odd. Then, by Remark \ref{rem:f12}, we can always choose one of the connected components of $\DD_1$ and $\DD_2$ of $\DD$, and suppose that $\CC$ is a (connected) subgraph of this component. We will choose now $\DD_2$, and so we can suppose that $\CC \subseteq \DD_2$. 

	Note that all faces $Q_{i,j}$ of $\DD_2$ have even $i+j \geq 4$. For an even $k$ such that $4 \leq k \leq n+1$ denote by $S_k$ the set of all faces $Q_{i,j}$ of $\DD_2$ such that $i+j=k$. Then for a given $k \leq n-1$ all faces in $S_k$ form a ``SW-NE diagonal'': 
	$$
	S_k = \{ Q_{1,k-1}, \: Q_{2,k-2}, \ldots, Q_{k-2,2}, \: Q_{k-1,1} \}, 
	$$ 
	with the NE side of $Q_{i,k-i}$ coinciding with the SW side of $Q_{i+1,k-i-1}$ for $i=1,\ldots, k-2$; for $k = n$ or $k=n+1$ (the one which is even) $S_k$  is as above, but with the first face missing, and if $k=n+1$ and $m=n+1$, also the last face missing. 
	
	Let also $E_k$, for an even $k$ such that $4 \leq k \leq n+1$, be the set of all edges of $\DD_2$ which go from NW to SE and are the sides of some faces in $S_k$. This means that, if $k \leq n$, then 
	$$
	E_k =  \{ e_{1,k-2}^{2,k-1}, \: e_{2, k-3}^{3,k-2}, \ldots, e_{k-3,2}^{k-2,3}, \: e_{k-2,1}^{k-1,2}    \};
	$$   
for $k=n+1$ (in the case it is even) $E_k$ is as above, but with the first edge missing. Note also that each edge set $E_k$, for $4 \leq k \leq n+1$, separates our graph $\DD_2$ in two, which means that, after deleting all the edges of $E_k$ from $\DD_2$, the remaining graph will become disconnected, namely it will have two connected components.	

		The idea is to proceed from NW to SE, showing that the edges in $E_k$ are not in $\CC$, with increasing $k$, until we get a contradiction when $k=n$ or $k=n+1$.

		First apply Lemma \ref{fork} to $Q_{1,3}$ and $Q_{2,2}$; the conditions of the lemma are satisfied, since NE side of $Q_{1,3}$ coincides with the SW side of $Q_{2,2}$, $Q_{1,3}$ is west boundary face, and the north vertex of $Q_{2,2}$ is on the north boundary of $\DD$, in particular it has no NW edge.  By Lemma \ref{fork}, the south corner of $Q_{1,3}$ does not have a SE edge in $\CC$, which, in our terms, means that if $n \geq 6$, then $e_{1,4}^{2,5} \notin \CC$ (if $n=5$, then the south corner of $Q_{1,3}$ is on the south boundary, so this condition is vacuous); and the east corner of $Q_{2,2}$ does not have a SE edge in $\CC$, which in our terms means that $e_{3,2}^{4,3} \notin \CC$, see Figure \ref{fig:20}.
		
\begin{figure}[!h]
\begin{tikzpicture}
	
	\draw[color=blue] (0,9)--(0,4.5) (1,10)--(5.5,10);
	\draw[color=blue,dotted] (0,4.1) -- (0,3.9) (5.9, 10) -- (6.1,10);
	
	\draw (0,9) -- (1, 10) (0,7) -- (3,10) (0,5) -- (5,10);
	\draw (0,7) -- (1,6) (0,9) -- (2,7) (1,10) -- (3, 8) (3,10) -- (4,9);
	
	\draw[color=blue] (0,3.5)--(0,0.5) (6.5,10) -- (9.5,10);
	\draw (0,3) --   (7,10);
	\draw (0,1) --  (9,10);
	\draw[color=blue,dotted] (0,0.2) -- (0,0) (9.8,10)--(10,10);
	\draw (0,3) -- (1,2) (1,4) -- (2,3) (2,5)--(3,4) (5,8) -- (6,7) (6,9) -- (7,8) (7,10)--(8,9);
	\draw[dotted] (3.8,5.8)--(4.2,6.2);

	\draw (0,8)  node[right=1pt] {$Q_{1,3}$};
		\draw (1,9)  node {$Q_{2,2}$};
			\draw (2,10)  node[right, below] {$Q_{3,1}$};

		\draw (-1.5,8)  node[right=1pt] {$E_4, S_4$};	
			\draw (-1.5,6)  node[right=1pt] {$E_6, S_6$};
				\draw (-1.5,2)  node[right=1pt] {$E_k, S_k$};
				
			\draw (1.5,10.7)  node[right=1pt] {$E_4, S_4$};	
			\draw (3.5,10.7)  node[right=1pt] {$E_6, S_6$};
				\draw (7.5,10.7)  node[right=1pt] {$E_k, S_k$};	
					
				\draw (1,6.75)  node[left] {$e_{1,4}^{2,5}$};	
				\draw (2,7.75)  node[left] {$e_{2,3}^{3,4}$};	
				\draw (3,8.75)  node[left] {$e_{3,2}^{4,3}$};
				\draw (4.1,9.7)  node[left] {$e_{4,1}^{5,2}$};		

		\small
	\draw (-0.05,2)  node[right=-2pt] {$Q_{1,k-1}$};
	\normalsize
		\draw (1,3)  node {$Q_{2,k-2}$};
			\draw (2,4)  node {$Q_{3,k-3}$};
				\draw (6,8)  node {$Q_{k-3,3}$};
			\draw (7,9)  node {$Q_{k-2,2}$};
			\small
	\draw (8,9.7)  node {$Q_{k-1,1}$};

	\draw (-1.1,9)  node[right=1pt] {$(1,2)$};	
			\draw (-1.1,7)  node[right=1pt] {$(1,4)$};
				\draw (-1.1,5)  node[right=1pt] {$(1,6)$};
				\draw (-1.5,3)  node[right=1pt] {$(1,k-2)$};
				\draw (-1.1,1)  node[right=1pt] {$(1,k)$};
				
			\draw (0.5,10.2)  node[right=1pt] {$(2,1)$};	
			\draw (2.5,10.2)  node[right=1pt] {$(4,1)$};
				\draw (4.5,10.2)  node[right=1pt] {$(6,1)$};	
					\draw (6.5,10.2)  node[right=1pt] {$(k-2,1)$};
				\draw (8.5,10.2)  node[right=1pt] {$(k,1)$};	

\end{tikzpicture}
\caption{\small Part of the graph $\mathfrak{D}_2$ in the case of odd $m$ or $n$. The black edges are in $\mathfrak{D}_2$, and the blue edges are the boundary edges of $\mathfrak{D}_2'$, which are not in $\mathfrak{D}_2$.} \label{fig:20}
\end{figure}
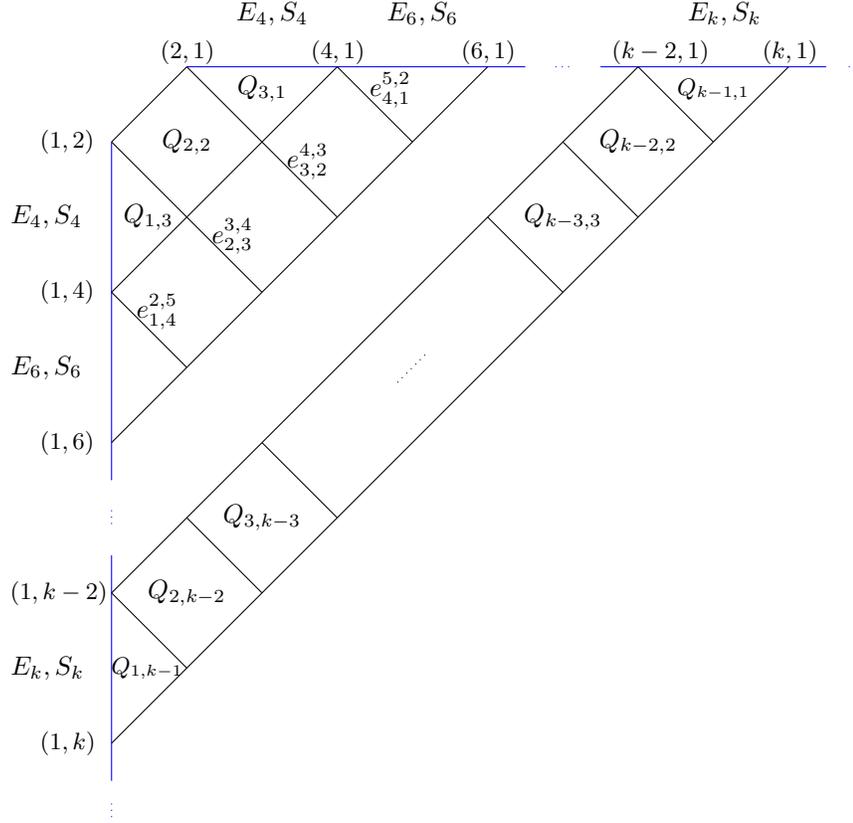
		
		 	Analogously, apply Lemma \ref{fork} to $Q_{2,2}$ and $Q_{3,1}$; the conditions are again satisfied. We obtain that the south corner of $Q_{2,2}$ does not have a SE edge in $\CC$, which means that $e_{2,3}^{3,4} \notin \CC$, and the east corner of $Q_{3,1}$ does not have a SE edge in $\CC$, which means that $e_{4,1}^{5,2} \notin \CC$ (recall that $m \geq 6$). Thus, all the edges of $E_6$ are not in $\CC$, but $\CC$ is connected, so it should be contained in one of the connected components of the graph obtained by deleting the edges of $E_6$ from $\DD_2$. It cannot be the ``NW component'' (i.e., the one containing the vertex $(2,1)$), since in this component there are no vertices of the form $(5, i)$ for some $i$, so it does not project surjectively to $\ti{\Gamma}_1$, but $\CC$ should, by Lemma \ref{DD}. This means that $\CC$ is contained in the other component. In particular, we see that none of the edges of $E_4$, as well as $E_6$, are in $\CC$. 
		 	
		 	We claim that none of the edges of $E_k$, for even $k$ such that $4 \leq k \leq n+1$, are in $\CC$. (For $n=5$ and $n=6$ this is already proved). We prove this by induction, for $E_4$ and $E_6$ it is proved above, so suppose the claim is proved for all even $k$ such that $4 \leq k \leq k_0$, where $6 \leq k_0 \leq n-1$, $k_0$ is even, and let us prove it for $k=k_0+2$.

		 	Consider all the pairs of adjacent faces in $S_{k_0}$: $Q_{1,k_0-1}$ and $Q_{2,k_0-2}$, $Q_{2,k_0-2}$ and $Q_{3,k_0-3}$, $\ldots$, $Q_{k_0-2,2}$ and $Q_{k_0-1,1}$. By induction hypothesis, all these pairs satisfy the conditions of Lemma \ref{fork}. Indeed, all the existing in $\DD_2$ NW edges at the north and west corners of the faces from $S_{k_0}$ belong to $E_{k_0-2}$ by definition, and so they do not belong to $\CC$. Applying Lemma \ref{fork} to all the pairs above, we obtain that all the SE edges at the south and east corners of the faces from $S_{k_0}$, which are exactly all the edges from $E_{k_0+2}$, are not in $\CC$, and the claim is proved.

		 	Thus, in particular, if $n$ is even, then no edges from $E_n$ are in $\CC$, and if $n+1$ is even, then no edges from $E_{n+1}$ are in $\CC$. Suppose first that $n$ is even. Then deleting all the edges of $E_n$ from $\DD_2$ results in a graph with two connected components, none of which projects surjectively to $\ti{\Gamma}_1$ (namely, one of them does not contain vertices with the first coordinate equal to 1, and the other one does not contain vertices with the first coordinate equal to $n-1$). This is impossible by Lemma \ref{DD}. In the same way, if $n$ is odd, then again deleting all the edges of $E_{n+1}$ from $\DD_2$ results in a graph with two connected components, none of which projects surjectively to $\ti{\Gamma}_1$ (namely, one of them does not contain vertices with the first coordinate equal to 1, and the other one does not contain vertices with the first coordinate equal to $n$), and this is impossible by Lemma \ref{DD}. 
		 	
		 	Thus, $\GG(P_m)$ and $\GG(P_n)$ are not commensurable if $m > n \geq 5$ and at least one of $m$ and $n$ is odd.

	\subsection{Case of even $m$ and $n$}	 	
		 	It remains to consider the case when both $m$ and $n$ are even and $m > n \geq 6$; in particular, $m \geq n+2$. In this case $\DD_1$ (the connected component of $\DD$ containing $(1,1)$) contains all four corners of $\DD$, and $\DD_2$ contains no corners. We now have to consider two subcases, depending on whether $\CC$ lies inside $\DD_1$ or $\DD_2$.

		Suppose first that $\CC$ lies inside $\DD_2$. Then similarly to the case when $n$ and $m$ are odd we can derive a contradiction. Indeed, in the above notation and in the same way as above we can prove by induction that none of the edges of $E_k$, for even $k$ such that $4 \leq k \leq n$, are in $\CC$. In particular, no edges from $E_n$ are in $\CC$, and deleting all the edges of $E_n$ from $\DD_2$ results in a graph with two connected components, none of which projects surjectively to $\ti{\Gamma}_1$ (namely, one of them does not contain vertices with the first coordinate equal to 1, and the other one does not contain vertices with the first coordinate equal to $n-1$). This is impossible by Lemma \ref{DD}.

		So it remains to consider the case when $\CC$ lies inside $\DD_1$, which is more subtle. 
		 
		For a vertex $(i,j)$ of $\DD_1$ denote by $A_{i,j}$ the set of all edges on the ``NW-SE diagonal'' of $\DD_1$ passing through $(i,j)$, i.e.,  $A_{i,j}$ consists of all edges connecting vertices $(i',j')$ of $\DD_1$ with $i'-j'=i-j$. These are the edges of the form $e_{i+k,j+k}^{i+k+1,j+k+1}$, where $\max \{1-i, 1-j\} \leq k \leq \min \{m-2-i, n-2-j\}$. Note that every $A_{i,j}$ is equal to $A_{i',j'}$ with $i'=1$ or $j'=1$ (west or north boundary), and to $A_{i'',j''}$ with $i''= m-1$ or $j''=n-1$ (east or south boundary).
		
		Analogously, for a vertex $(i,j)$ of $\DD_2$ denote by $B_{i,j}$ the set of all edges on the ``SW-NE diagonal'' of $\DD_2$ passing through $(i,j)$, i.e., $B_{i,j}$ consists of all edges connecting vertices $(i',j')$ of $\DD_1$ with $i'+j'=i+j$. These are the edges of the form $e_{i+k,j-k}^{i+k+1,j-k-1}$, where $\max \{1-i, j+1-n\} \leq k \leq \min\{m-2-i, j-2\}$. Note that every $B_{i,j}$ is equal to $B_{i',j'}$ with $i'=1$ or $j'=n-1$ (west or south boundary), and to $B_{i'',j''}$ with $i''= m-1$ or $j''=1$ (east or north boundary). 
		
		Recall that $\DD_1$ consists of all vertices $(i,j)$ of $\DD$ such that $i+j$ is even (or, equivalently, $i-j$ is even). Successive application of Lemma \ref{fork} allows us to prove that, informally speaking, every second diagonal of $\DD_1$ is not in $\CC$, which is the content of the following lemma.

\begin{lemma}\label{claim}		
		In the above notation, with even $m$ and $n$, suppose that $(i,j)$ is a vertex of $\DD_1$ {\rm(}i.e. $i+j$ and $i-j$ are even, $1 \leq i \leq m-1$, $1 \leq j \leq n-1${\rm)}. If one of the following conditions holds:
		\begin{enumerate}		
		\item $i-j$ is $2$ modulo $4$;  
		\item $(m-i) - (n-j)$ is $2$ modulo $4$, or, equivalently, $i-j+(n-m)$ is $2$ modulo $4$;
		\end{enumerate}
	then all the edges of $A_{i,j}$ are not in $\CC$. 
	If one of the following conditions holds:
		\begin{enumerate}[resume]		
		\item $i+j-m$ is $2$ modulo $4$;
		\item $i+j-n$ is $2$ modulo $4$.
		\end{enumerate}
	then all the edges of $B_{i,j}$ are not in $\CC$.
\end{lemma}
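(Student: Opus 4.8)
The plan is to prove the lemma by iterated application of Lemma~\ref{fork} and its three rotations, propagating the absence of edges of $\CC$ along the diagonals of $\DD_1$ outward from the four corner vertices of $\DD$. Every edge of $\DD_1$ lies on exactly one $A$-diagonal (a line of constant $i-j$; write $A_d$ for the one along which $i'-j'=d$) or exactly one $B$-diagonal (a line of constant $i+j$); and one step of Lemma~\ref{fork}, applied to a pair $(Q_1,Q_2)$ of adjacent faces of $\DD_1$ with the NE side of $Q_1$ equal to the SW side of $Q_2$, turns the hypothesis ``the NW edges at the west corner of $Q_1$ and the north corner of $Q_2$ are not in $\CC$'' into the conclusion ``the SE edges at the south corner of $Q_1$ and the east corner of $Q_2$ are not in $\CC$''. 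With $Q_1=Q_{i,j}$ one has $Q_2=Q_{i+1,j-1}$, and the four edges involved lie on $A_{i-j-1}$ and $A_{i-j+3}$, two diagonals $4$ apart; so a single application pushes the excluded edge one step (SE) simultaneously along two $A$-diagonals of the same residue modulo~$4$.

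For statement~(1) I would run a double induction rooted at the NW corner $(1,1)$: the outer induction is over the diagonals $A_d$ with $d\equiv 2\pmod 4$ ordered by $|d|$, and the inner induction propagates along a fixed such $A_d$. The base case is $Q_1=Q_{1,2}$ (a west boundary triangle) and $Q_2=Q_{2,1}$ (a north boundary triangle): the hypotheses of Lemma~\ref{fork} are vacuous because the relevant corners of these triangles do not exist, and the conclusion gives $e_{1,3}^{2,4}\notin\CC$ (on $A_{-2}$) and $e_{3,1}^{4,2}\notin\CC$ (on $A_2$). Applying the lemma to $Q_{2,3}$ and $Q_{3,2}$ (where the hypothesised NW edges would run to the non-vertices $(0,2)$, $(2,0)$, so the hypotheses again hold vacuously) gives the next edges of $A_{-2}$ and $A_2$; and in the general step along these two diagonals the NW edges needed at step~$t$ are exactly the SE edges produced at step~$t-2$, so the inner induction closes with the two parity-separated vacuous base cases $t=1,2$ and exhausts $A_{-2}$ and $A_2$ (the propagation reaches the far boundary because the penultimate faces there are still square, and at the boundary itself one uses boundary triangles with the missing terms of Lemma~\ref{fork} deleted). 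For the outer step, once $A_d\cap\CC=\emptyset$ with $d\equiv 2\pmod 4$, $d\ge 2$, one applies Lemma~\ref{fork} to a square face $Q_1$ whose west corner carries an $A_d$-edge (absent by hypothesis) together with a north boundary triangle $Q_2$; this yields the first edge of $A_{d+4}$, whereupon the inner induction finishes $A_{d+4}$. The case $d\le -2$ is symmetric, using west boundary triangles. Hence all $A_d$ with $d\equiv 2\pmod 4$ lie outside $\CC$, which is~(1).

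Statement~(2) I would obtain from~(1) via the symmetry $(i,j)\mapsto(m-i,n-j)$ of $\DD$ induced by the order-reversing automorphisms of $P_m$ and $P_n$: since $m,n$ are even it preserves $\DD_1$ and carries a configuration $\CC$ to another one satisfying the same hypotheses, while sending $A_d$ to $A_{(m-n)-d}$, so that the congruence $i-j\equiv 2$ becomes $(m-i)-(n-j)\equiv 2\pmod 4$. Statements~(3) and~(4) I would prove exactly as~(1), but with the $\pi/2$- and $3\pi/2$-rotations of Lemma~\ref{fork}, starting from the NE corner $(m-1,1)$ (on the $B$-diagonal $i+j=m$, yielding the condition $i+j-m\equiv 2\pmod 4$) and from the SW corner $(1,n-1)$ (on the $B$-diagonal $i+j=n$, yielding $i+j-n\equiv 2\pmod 4$); the propagation then runs along lines of constant $i+j$ and the modular bookkeeping is identical.

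The difficulty lies not in any single computation but in the bookkeeping: at each application of Lemma~\ref{fork} one must verify that the two faces are of a permitted type (square, or a boundary triangle of the correct orientation) and adjacent along the required side, that the ``incoming'' NW (or rotated) edges whose absence is assumed are either absent from $\DD$ altogether or already eliminated at an earlier step, and --- above all --- that the residues modulo~$4$ line up so that precisely the diagonals named in the statement, and no others, are produced. The degenerate near-boundary cases, where several corners and sides of a face are missing and the corresponding terms of Lemma~\ref{fork} vanish, are the most error-prone point, but they are also exactly what allows the induction for each new diagonal to begin with a vacuously satisfied hypothesis.
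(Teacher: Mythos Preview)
Your proposal is correct and follows essentially the same approach as the paper's proof: a double induction driven by Lemma~\ref{fork}, anchored at the NW corner with the pair $(Q_{1,2},Q_{2,1})$, where the inner induction propagates edge-exclusion simultaneously along two $A$-diagonals at distance~$4$ and the outer induction steps to the next such diagonal. The paper organises the outer step slightly differently---first handling $A_{1,3}$ and $A_{3,1}$ together, then treating the families $A_{1,j}$ and $A_{i,1}$ separately rather than ordering by $|d|$---and it obtains claims (2)--(4) by literally rotating the proof of (1) (using the rotated versions of Lemma~\ref{fork}) rather than via the graph symmetry $(i,j)\mapsto(m-i,n-j)$ you invoke for~(2); but these are purely organisational differences, and your symmetry argument for~(2) is valid since the order-reversing automorphisms of $P_m$ and $P_n$ induce an automorphism of the whole setup that preserves $\DD_1$ when $m,n$ are even.
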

\begin{proof}		
		We first prove the first claim, so we suppose that $i-j$ is $2$ modulo $4$, and we need to prove that all the edges of $A_{i,j}$ are not in $\CC$. According to the remarks above, it suffices to prove the claim when $i=1$ or $j=1$. 
		
		We start by proving the claim for $A_{1,3}$ and $A_{3,1}$. The idea is to proceed diagonally from NW to SE  successively applying Lemma \ref{fork}.  Note that 
		$$
		A_{1,3} = \{ e_{1,3}^{2,4}, e_{2,4}^{3,5}, \ldots, e_{n-4,n-2}^{n-3,  n-1} \}, \quad A_{3,1}=\{ e_{3,1}^{4,2}, e_{4,2}^{5,3}, \ldots, e_{n,n-2}^{n+1,n-1} \}. 
		$$ 
		Note that the conditions of Lemma \ref{fork} are satisfied for $Q_{1,2}$ and $Q_{2,1}$ (since $Q_{1,2}$ is west boundary face, and $Q_{2,1}$ is north boundary face), so applying this lemma we deduce that $e_{1,3}^{2,4}, \: e_{3,1}^{4,2} \notin \CC$. Furthermore, $Q_{2,3}$ and $Q_{3,2}$ satisfy conditions of Lemma \ref{fork} (since the west corner of $Q_{2,3}$ is on the west boundary of $\DD$, and the north corner of $Q_{3,2}$ is on the north boundary), so applying this lemma we deduce that $e_{2,4}^{3,5}, \: e_{4,2}^{5,3} \notin \CC$. 
	
	Proceeding by induction, we prove that $e_{k-2,k}^{k-1,k+1}, \: e_{k,k-2}^{k+1,k-1} \notin \CC$ for $3 \leq k \leq n-2$.  Indeed, suppose that this is true for $3 \leq k \leq k_0$, for some $4 \leq k_0 \leq n-3$ (which is the case for $k_0=3, 4$ as shown above), and prove it for $k=k_0+1$. The faces $Q_{k_0-1,k_0}$ and $Q_{k_0,k_0-1}$ satisfy the conditions of Lemma \ref{fork} (since the NW edge at the west corner of $Q_{k_0-1,k_0}$ is $e_{k_0-3,k_0-1}^{k_0-2,k_0} \notin \CC$ by the induction hypothesis for $k=k_0-1$, and the NW edge at the north corner of $Q_{k_0,k_0-1}$ is $e_{k_0-1,k_0-3}^{k_0,k_0-2} \notin \CC$ by the  induction hypothesis for $k=k_0-1$). So, applying Lemma \ref{fork}, we get that $e_{k_0-1,k_0+1}^{k_0, k_0+2}, \: e_{k_0+1,k_0-1}^{k_0+2,k_0} \notin \CC$, which is exactly what we wanted. 
	
		This already shows that all the edges in $A_{1,3}$ are not in $\CC$, and just two more edges from $A_{3,1}$ remain. Applying Lemma \ref{fork} to the faces $Q_{n-3,n-2}$ and $Q_{n-2,n-3}$ (which is possible since we proved above that $e_{n-5,n-3}^{n-4,n-2}, \: e_{n-3,n-5}^{n-2,n-4} \notin \CC$), we obtain that $e_{n-1,n-3}^{n,n-2} \notin \CC$. Applying Lemma \ref{fork} to the faces $Q_{n-2,n-1}$ and $Q_{n-1,n-2}$ (which is possible since we proved above that $e_{n-4,n-2}^{n-3,n-1}, \: e_{n-2,n-4}^{n-1,n-3} \notin \CC$), we obtain that $e_{n,n-2}^{n+1,n-1} \notin \CC$. We conclude that all the edges of $A_{1,3}$ and $A_{3,1}$ are not in $\CC$, see Figure \ref{fig:21}.
		
		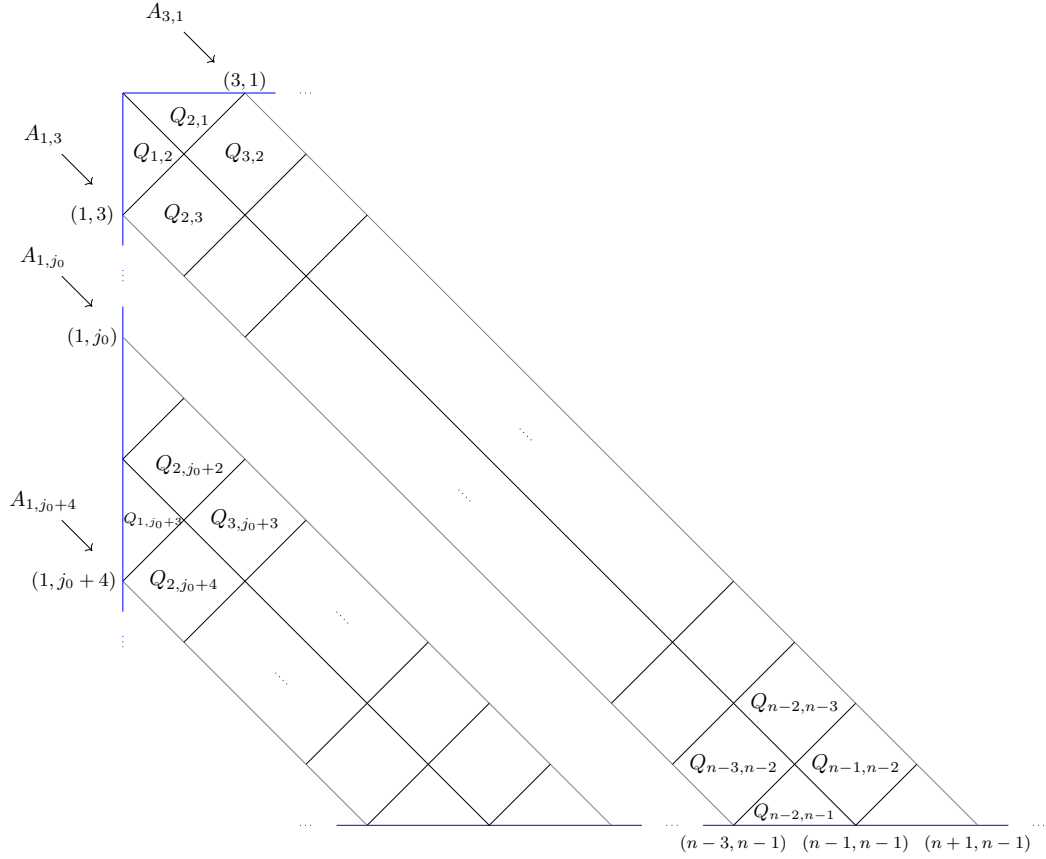
\begin{figure}[!h]
			\resizebox{400pt}{!}{
				\begin{tikzpicture}

				\draw[color=blue] (0,10) -- (0,7.5);
				\draw[color=blue,dotted] (0,7.1) -- (0, 6.9);
				\draw[color=blue] (0,6.5)-- (0,1.5);
				\draw[color=blue,dotted] (0,1.1) -- (0,0.9);
				
				\draw[color=blue] (0,10) -- (2.5,10);
				\draw[color=blue, dotted] (2.9,10)--(3.1,10);
				
				\draw[color=blue] (9.5,-2)--(14.5,-2);
				\draw[color=blue,dotted] (14.9,-2)--(15.1,-2);
				\draw[color=blue,dotted] (8.9,-2)--(9.1,-2);
				
				\draw[color=blue] (3.5,-2) -- (8.5, -2);
				\draw[color=blue, dotted] (2.9,-2)--(3.1,-2);

				\draw (0,10) -- (12,-2);
				
				\draw[color=gray] (0,8) -- (10,-2) (2,10) -- (14,-2);
				
				\draw (0,4) -- (6,-2);
				\draw[color=gray] (0,6) -- (8,-2) (0,2)--(4,-2);

				\draw (0,8) -- (2,10) (1,7)-- (3,9) (2,6) -- (4,8);

				\draw (0,4) -- (1,5) (0,2) -- (2,4);
				\draw (1,1) -- (3,3) (3,-1) -- (5,1) (4,-2) -- (6,0) (6,-2)--(7,-1);
				
				\draw (12,-2) -- (13,-1) (10,-2) -- (12,0) (9,-1) -- (11,1) (8,0) -- (10,2);
				
				\draw[dotted] (2.5,0.5) -- (2.7,0.3) (3.5,1.5) -- (3.7, 1.3);
				\draw[dotted] (5.5,3.5) -- (5.7,3.3) (6.5,4.5) -- (6.7, 4.3);

				\draw (0.5,9)  node {$Q_{1,2}$};
				\draw (1.1,9.6)  node {$Q_{2,1}$};
				\draw (1,8)  node {$Q_{2,3}$};
				\draw (2,9)  node {$Q_{3,2}$};
				
				\draw (10,-1)  node {$Q_{n-3,n-2}$};
				\draw (11,0)  node {$Q_{n-2,n-3}$};
				\draw (12,-1)  node {$Q_{n-1,n-2}$};
				\small
				\draw (11,-1.8)  node {$Q_{n-2,n-1}$};
				\footnotesize
				\draw (10,-2.3)  node {$(n-3,n-1)$};
				\draw (12,-2.3)  node {$(n-1,n-1)$};
				\draw (14,-2.3)  node {$(n+1,n-1)$};
				
				\small
				\draw (-0.5,8)  node {$(1,3)$};
				\draw (2, 10.2)  node {$(3,1)$};
				
				\draw[->] (-1, 9)--(-0.5,8.5);
				\draw[->] (1, 11)--(1.5,10.5);
				
				\normalsize
				
				\draw (-1.3,9.3)  node {$A_{1,3}$};
				\draw (0.7,11.3)  node {$A_{3,1}$};
				
				\draw[->] (-1, 7)--(-0.5,6.5);
				\draw[->] (-1, 3)--(-0.5,2.5);
				
				\small 
				
				\draw (-0.5,6)  node {$(1,j_0)$};
				\draw (-0.8, 2)  node {$(1,j_0+4)$};
				
				\normalsize
				
				\draw (-1.3,7.3)  node {$A_{1,j_0}$};
				\draw (-1.3,3.3)  node {$A_{1,j_0+4}$};
				
				\scriptsize 
				\draw (0.5,3)  node {$Q_{1,j_0+3}$};
				\normalsize
				\draw (1.1,3.9)  node {$Q_{2,j_0+2}$};
				\draw (1,2)  node {$Q_{2,j_0+4}$};
				\draw (2,3)  node {$Q_{3,j_0+3}$};
				
				\end{tikzpicture}
			}
			\caption{\small Part of the graph $\mathfrak{D}_1$ in the case of even $m$ and $n$, as in the proof of Lemma \ref{claim}. The black edges are in $\mathfrak{D}_1$, and the blue edges are the boundary edges of $\mathfrak{D}_1'$, which are not in $\mathfrak{D}_1$.} \label{fig:21}
		\end{figure}
		
		We now prove that all the edges in $A_{1,j}$ are not in $\CC$ for all $3 \leq j \leq n-1$, $j$ is $3$ modulo $4$ (so that $1-j$ is $2$ modulo $4$). For $j=3$ this is proved above. We proceed by induction. Let $3 \leq j_0 \leq n-5$, $j_0$ is $3$ modulo $4$, and suppose this is true for all $j$ which are $3$ modulo $4$, $3 \leq j \leq j_0$, we will prove this is also true for $j=j_0+4$. Note that if $j_0+4=n-1$, then the claim is vacuous, since $A_{1,j_0+4}$ is empty, so we can suppose that $j_0+4 \leq n-3$, or $j_0 \leq n-7$.
		
		Note that 
		$$
		A_{1,j_0}= \{ e_{1,j_0}^{2,j_0+1}, e_{2,j_0+1}^{3,j_0+3}, \ldots, e_{n-j_0-1,n-2}^{n-j_0,n-1} \} , \quad A_{1,j_0+4} = \{e_{1,j_0+4}^{2,j_0+5}, e_{2,j_0+5}^{3,j_0+6}, \ldots,  e_{n-j_0-5,n-2}^{n-j_0-4,n-1} \}
		$$

	   We know by induction hypothesis that all the edges in $A_{1,j_0}$ are not in $\CC$, and need to prove the same for $A_{1,j_0+4}$. Applying Lemma \ref{fork} to $Q_{1,j_0+3}$ and $Q_{2,j_0+2}$ (this is possible, since $Q_{1,j_0+3}$ is a west boundary face, and the NW edge at the north corner of $Q_{2,j_0+2}$ is $e_{1,j_0}^{2,j_0+1} \in A_{1,j_0}$, so it is not in $\CC$), we obtain that $e_{1,j_0+4}^{2,j_0+5} \notin \CC$ (since it is the SE edge at the south vertex of $Q_{1,j_0+3}$).

	  Now apply Lemma \ref{fork} to  $Q_{2,j_0+4}$ and $Q_{3,j_0+3}$ (this is possible, since the west corner of $Q_{2,j_0+4}$ is on the west boundary of $\DD$, and the NW edge at the north corner of $Q_{3,j_0+3}$ is $e_{2,j_0+1}^{3,j_0+2} \in A_{1,j_0}$, so it is not in $\CC$), we obtain that $e_{2,j_0+5}^{3,j_0+6} \notin \CC$ (since it is the SE edge at the south vertex of $Q_{2,j_0+4}$). 
	  
	   If $j_0=n-7$, so $j_0+4=n-3$ and $A_{1,j_0+4}$ contains only two edges, then we are done; otherwise, $j_0 \leq n-9$, and we proceed by (local) induction (inside the main induction) to show that $e_{1+k,j_0+4+k}^{2+k,j_0+5+k } \notin \CC$ for $0 \leq k \leq n-j_0-6$. Suppose this is true for all $0 \leq k \leq k_0$, for some $2 \leq k_0 \leq n-j_0-7$ (which is the case for $k_0=0$ and $k_0=1$, as shown above), and we need to prove it for $k=k_0+1$. Indeed, we can apply Lemma \ref{fork} to $Q_{k_0+2, j_0+k_0+4}$ and $Q_{k_0+3,j_0+k_0+3}$ (this is possible, since the NW edge at the west corner of $Q_{k_0+2,j_0+k_0+4}$ is $e_{k_0,j_0+k_0+3}^{k_0+1,j_0+k_0+4}$, which is not in $\CC$ by the (local) induction hypothesis for $k=k_0-1$, and the NW edge at the north corner of $Q_{k_0+3,j_0+k_0+3}$ is $e_{k_0+2,j_0+k_0+1}^{k_0+3,j_0+k_0+2}$, which is in $A_{1,j_0}$, and so also not in $\CC$). So, we get that the SE edge at the south vertex of $Q_{k_0+2, j_0+k_0+4}$, which is $e_{k_0+2, j_0+k_0+5}^{k_0+3,j_0+k_0+6}$, is not in $\CC$, and this is exactly what we need (in the local induction). 
	   
	   This shows that all the edges of $A_{1, j_0+4}$ are not in $\CC$, and this is what we need in the main induction. Thus all the edges in $A_{1,j}$ for $3 \leq j \leq n-1$, $j$ is $3$ modulo $4$, are not in $\CC$, see Figure \ref{fig:21}.
	   
To prove the lemma, it remains to show that all the edges in $A_{i,1}$ for $3 \leq i \leq m-1$, $i$ is $3$ modulo $4$, are not in $\CC$. The proof is similar to the one for edges $A_{1,j}$, but formally we need to consider two cases, depending on whether $A_{i,1}$ finishes on the south or east boundary of $\DD$: when $3 \leq i \leq m-n+1$, and when $m-n+1 < i \leq m-1$.

 	Suppose first that $3 \leq i \leq m-n+1$. If $i=3$, then the claim is already proved above. We proceed by induction. Let $3 \leq i_0 \leq m-n-3$, and suppose all the edges of $A_{i,1}$ are not in $\CC$ for all $i$ which are $3$ modulo $4$, $3 \leq i \leq i_0$, we need to prove that all the edges of $A_{i_0+4,1}$ are also not in $\CC$. This can be done by Lemma \ref{fork}, applied successively (by induction, as above) to the pairs of faces ($Q_{i_0+2,2}$, $Q_{i_0+4,1}$), ($Q_{i_0+3,3}$, $Q_{i_0+5,2}$), $\ldots$,  ($Q_{i_0+n-1,n-1}, Q_{i_0+n,n-2}$).

	Finally, the proof that all the edges in $A_{i,1}$ for $m-n+1 < i \leq m-1$, $i$ is $3$ modulo $4$, are not in $\CC$, is  analogous to the proofs above; we omit the details. This proves the first claim of the lemma.

	Now note that the second, third and fourth claims of the lemma can be obtained from the first claim by rotating all the directions by $\pi$, $\pi/2$ and $3\pi/2$ respectively. Rotation by $\pi/2$ means replacing $i$ by $m-i$, leaving $j$ unchanged, and changing $A$'s to $B$'s; rotation by $\pi$ means replacing $i$ by $m-i$ and $j$ by $n-j$, without changing the $A$'s; and rotation by $3\pi/2$ means leaving $i$ unchanged, replacing $j$ by $n-j$ and changing $A$'s to $B$'s. So the proof is similar to the one above, with application of the corresponding claims of Lemma \ref{fork}. This proves the lemma.
\end{proof}

Recall that both $m$ and $n$ are even. Suppose first that one of $m$ and $n$ is $0$ modulo $4$, and the other is $2$ modulo $4$. Then every vertex $(i,j)$ of $\DD_1$ satisfies one of the first two conditions and one of the last two conditions of Lemma \ref{claim}. Hence, all the edges of $A_{i,j}$ and $B_{i,j}$ (for all vertices $(i,j)$ of $\DD_1$), which are all the edges of $\DD_1$, are not in $\CC$, and this is a contradiction, so in this case $\GG(P_m)$ and $\GG(P_n)$ are not commensurable.

Suppose now that both $m$ and $n$ are $0$ modulo $4$. In particular, $n \geq 8$, $m \geq n+4 \geq 12$. By Lemma \ref{claim}, for a vertex $(i,j)$ of $\DD_1$ we have that, if $i-j$ is $2$ modulo $4$, then all the edges of $A_{i,j}$ are not in $\CC$, and if $i+j$ is $2$ modulo $4$, then all the edges of $B_{i,j}$ are not in $\CC$.
In other words, we have 
\begin{equation}\label{edge}
 e_{i,j}^{i+1,j+1} \notin \CC, \: \text{if} \: \:  i-j =2 \: \: \modu  \: 4, \quad e_{i',j'}^{i'+1,j'-1} \notin \CC, \: \text{if} \: \: i'+j' =2 \: \: \modu \: 4.
\end{equation}

 Let $m_0 = m/2, \: n_0=n/2$, so that, when considered on the plane, the vertices of the form $(i,n_0)$ of $\DD_1$ are on the horizontal axis of symmetry of $\DD_1$, the vertices of the form $(m_0,j)$ are on the vertical axis of symmetry of $\DD_1$, and $(m_0,n_0)$ is the ``central'' vertex of $\DD_1$.

Let $V$ be the set of vertices $(i,j)$ of $\DD_1$ such that either $j \leq n_0$ and $j < i < m-j$, or $j \geq n_0$ and $n-j < i < m-n+j$.  Let also $E$ be the set of edges of $\DD_1$ which have at least one of the end vertices in $V$.
 Note that all the vertices of $\DD_1$ of the form $(m_0, j)$, for all $j=1,\ldots,n-1$, are in $V$,  so all the edges adjacent to them are in $E$, and this means that the graph obtained from $\DD_1$ by deleting all the edges from $E$ does not project surjectively to $\ti{\Gamma}_1$. 
We claim that all the edges of $E$ are not in $\CC$. This will immediately imply a contradiction by Lemma \ref{DD}.

Let $V=V_1 \cup V_2$, where $V_1$ are all vertices of $V$ of the form $(i,j)$ with $j \leq n_0$, and $V_2$ are the vertices of $V$ of the form $(i,j)$ with $j \geq n_0$.
Let also $E=E_1 \cup E_2$, where $E_1$ are all edges in $E$ with both end vertices in $V_1$, and $E_2$ are all edges in $E$ with both end vertices in $V_2$. It suffices to prove that all the edges in $E_1$ are not in $\CC$. The proof for $E_2$ is similar. 

Let $V_1=U_1 \cup U_2 \cup \ldots \cup U_{n_0}$, where $U_j$, for $1 \leq j \leq n_0$, are all the vertices of $V_1$ of the form $(i,j)$, for all admissible $i$, namely $j < i < m-j$. Let also $E_1= C_1 \cup C_2 \cup \ldots \cup C_{n_0-1}$, where $C_k$, for $1 \leq k \leq n_0-1$, consists of all edges of $E_1$ connecting vertices with second coordinate equal to $k$ to vertices of $E_1$ with second coordinate equal to $k+1$.
This means that 
$$
\begin{array}{l}
C_k = \{ e_{k+1,k+1}^{k+2,k}, \: e_{k+2,k}^{k+3,k+1}, \: e_{k+3,k+1}^{k+4,k}, \ldots, e_{m-k-3, k+1}^{m-k-2,k}, \: e_{m-k-2,k}^{m-k-1,k+1}    \}, \\
 U_k = \{ (k+2,k), (k+4,k), \ldots, (m-k-2,k)  \}.
\end{array}
$$ 
In particular, $C_k$ contains $m-2k-2$ edges, see Figure \ref{fig:22}.

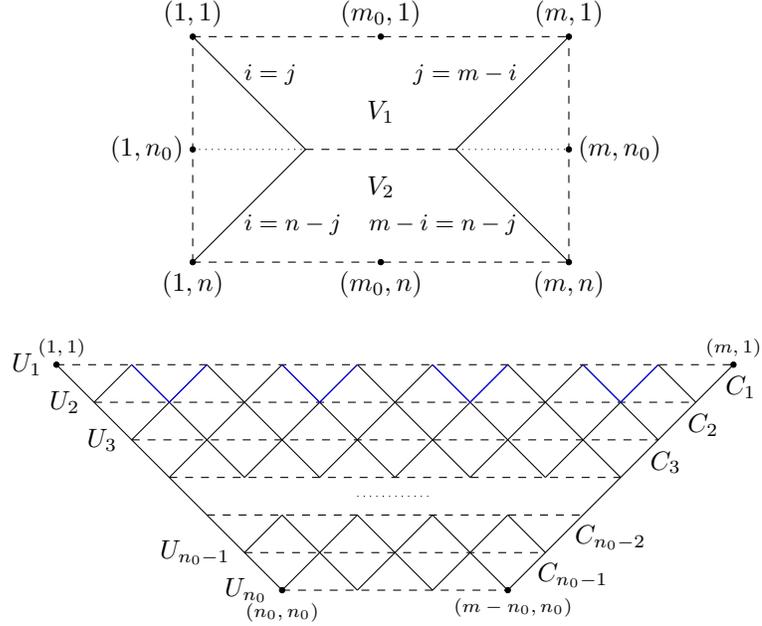
\begin{figure}[!h]
	\begin{tikzpicture}
	
	\draw[dashed] (0,3) -- (5,3) (0,0) -- (0,3) (0,0) -- (5,0) (5,0)--(5,3);
	\draw (0,0) -- (1.5,1.5)--(0,3) (5,0) -- (3.5,1.5)--(5,3);
	\draw[dashed] (1.5,1.5) -- (3.5,1.5);
	\draw[dotted] (0,1.5)--(1.5,1.5) (3.5,1.5)--(5,1.5);
	\draw (2.5,2)  node {$V_1$};
	\draw (2.5,1)  node {$V_2$};

	\filldraw(0,3) circle (1pt) node[left,above]{$(1,1)$};
	\filldraw(5,3) circle (1pt) node[right,above]{$(m,1)$};
	\filldraw(0,0) circle (1pt) node[left,below]{$(1,n)$};
	\filldraw(5,0) circle (1pt) node[right,below]{$(m,n)$};
	
	\filldraw(2.5,3)  circle (1pt) node[above]{$(m_0,1)$};
	\filldraw(2.5,0)  circle (1pt) node[below]{$(m_0,n)$};
	\filldraw(0,1.5)  circle (1pt) node[left]{$(1,n_0)$};
	\filldraw(5,1.5)  circle (1pt) node[right]{$(m,n_0)$};
	
	\small
	\draw(0.5,0.5) node[right=2pt]{$i=n-j$};
	\draw(0.5,2.5) node[right=2pt]{$i=j$};
	\draw(4.5,2.5) node[left=2pt]{$j=m-i$};
	\draw(4.5,0.5) node[left=2pt]{$m-i=n-j$};
	
	\end{tikzpicture} 
	
	\vspace{0.3 cm}
	\begin{tikzpicture}
	\draw[dashed] (3,0)--(6,0);
	\draw (3,0)--(0,3);
	\draw (6,0)--(9,3);
	\draw[dashed] (0,3)--(9,3);
	
	\draw[dashed] (2.5,0.5)--(6.5,0.5) (2,1)--(7,1) (1.5,1.5)--(7.5,1.5) (1,2)--(8,2) (0.5,2.5)--(8.5,2.5);
	\draw (4,0)--(3,1) (2.5,1.5)--(1,3) (5,0)--(4,1) (3.5,1.5)--(2,3) (6,0)--(5,1) (4.5,1.5)--(3,3) 
	(6.5,0.5)--(6,1) (5.5,1.5)--(4,3) (6.5,1.5)--(5,3) (7.5,1.5)--(6,3) (8,2)--(7,3) (8.5,2.5)--(8,3);
	\draw (3,0)--(4,1) (4.5,1.5)--(6,3) (4,0)--(5,1) (5.5,1.5)--(7,3) (5,0)--(6,1) (6.5,1.5)--(8,3) (2.5,0.5)--(3,1) 
	(3.5,1.5)--(5,3) (2.5,1.5)--(4,3) (1.5,1.5)--(3,3) (1,2)--(2,3) (0.5,2.5)--(1,3);
	\draw[dotted] (4,1.25)--(5,1.25);	
	
	\draw(8.7,2.7) node[right=2pt]{$C_1$};
	\draw(8.2,2.2) node[right=2pt]{$C_2$};
	\draw(7.7,1.7) node[right=2pt]{$C_3$};
	\draw(6.7,0.7) node[right=2pt]{$C_{n_0-2}$};
	\draw(6.2,0.2) node[right=2pt]{$C_{n_0-1}$};
	
	\draw(0,3) node[left=2pt]{$U_1$};		
	\draw(0.5,2.5) node[left=2pt]{$U_2$};
	\draw(1,2) node[left=2pt]{$U_3$};
	\draw(2.5,0.5) node[left=2pt]{$U_{n_0-1}$};
	\draw(3,0) node[left=2pt]{$U_{n_0}$};
	
	\draw[color=blue] (1,3)--(1.5,2.5) -- (2,3) (3,3)--(3.5,2.5)--(4,3) (5,3)--(5.5,2.5)--(6,3) (7,3)--(7.5,2.5)--(8,3);	
	\scriptsize
	\filldraw (0,3) circle (1pt) node[right=2pt,above]{$(1,1)$};
	\filldraw (9,3) circle (1pt) node[right,above]{$(m,1)$};
	\filldraw (3,0) circle (1pt) node[right,below=2pt]{$(n_0,n_0)$};
	\filldraw (6,0) circle (1pt) node[right=2pt,below]{$(m-n_0,n_0)$};
	
	\end{tikzpicture}
	\caption{\small Above: schematic figure depicting the graph $\mathfrak{D}_1$ in the case when both $m$ and $n$ are $0$ modulo $4$, with the regions containing the vertices in $V_1$ and $V_2$ marked. Below: the region with vertices in $V_1$, with blue edges not in $\mathfrak{C}$ by Lemma \ref{claim}.} \label{fig:22}
\end{figure} 	

We now prove by induction on $k$ that all the edges in $C_k$ are not in $\CC$, for all $1 \leq k \leq n_0-1$. First consider the case $k=1$. We have 
$$ 
\begin{array}{l}
C_1=\{e_{2,2}^{3,1}, \: e_{3,1}^{4,2}, \: e_{4,2}^{5,1}, \ldots, e_{m-5,1}^{m-4,2}, \: e_{m-4,2}^{m-3,1}, \: e_{m-3,1}^{m-2,2} \}, \\
 U_1= \{ (3,1), (5,1), \ldots, (m-3,1) \} .
\end{array} 
$$
By (\ref{edge}), we know that all the edges from $C_1$ of the from $e_{4i-1,1}^{4i,2}$ and $e_{4i,2}^{4i+1,1}$, where $i=1, \ldots, m/4-1$, are not in $\CC$. But, since $e_{4i-1,1}^{4i,2} \notin \CC$, the vertex $(4i-1, 1)$ of $U_1$ is not  in $\CC$  by Lemma \ref{DD} (local surjectivity), so $e_{4i-2,2}^{4i-1,1} \notin \CC$, for all $i=1, 2, \ldots, m/4-1$. In the same way, since $e_{4i,2}^{4i+1,1} \notin \CC$, also $(4i+1,1) \notin \CC$ by Lemma \ref{DD}, and so $e_{4i+1,1}^{4i+2,2} \notin \CC$, for all $i=1, 2, \ldots, m/4-1$. This shows that indeed all the edges of $C_1$ are not in $\CC$. 

Now suppose that for some $1 \leq k \leq n_0-2$ all the edges in $C_k$ are not in $\CC$, and we need to prove that all the edges in $C_{k+1}$ are also not in $\CC$.  
We have $C_k$ and $U_k$ as above, so
$$
\begin{array}{l}
C_{k+1} = \{ e_{k+2,k+2}^{k+3,k+1}, \: e_{k+3,k+1}^{k+4,k+2}, \: e_{k+4,k+2}^{k+5,k+1}, \ldots, e_{m-k-4, k+2}^{m-k-3,k+1}, \: e_{m-k-3,k+1}^{m-k-2,k+2}    \}, \\
 U_{k+1} = \{ (k+3,k+1), (k+5,k+1), \ldots, (m-k-3,k+1)  \}.
\end{array}
$$ 
By the induction hypothesis, all the NE and NW edges at all the vertices of $U_{k+1}$ are not in $\CC$, since they belong to $C_k$.  By Lemma \ref{DD} (local surjectivity), all the vertices of $U_{k+1}$ are also not in $\CC$, and so all the SE and SW edges at all the vertices of $U_{k+1}$ are not in $\CC$, and these are exactly all the edges in $C_{k+1}$.

Thus, all the edges in $C_k$, for $1 \leq k \leq n_0-1$, which are all the edges of $E_1$, are not in $\CC$. This means that $\GG(P_m)$ and $\GG(P_n)$ are not commensurable if both $m$ and $n$ are $0$ modulo $4$.

It remains to consider the case when both $m$ and $n$ are $2$ modulo $4$. According to Theorem \ref{non-com} in this case, if $m=4k+2$ and $n=4l+2$, for some $k \neq l$, $k,l \geq 1$, $\GG(P_m)$ is commensurable to $\GG(T_{k,k+1})$ and $\GG(P_n)$ is commensurable to $\GG(T_{l,l+1})$, but $\GG(T_{k,k+1})$ and $\GG(T_{l,l+1})$ are not commensurable, according to  \cite[Theorem 4.5]{CKZ}, so $\GG(P_m)$ and $\GG(P_n)$ are also not commensurable. Note that this can also be proved directly using the above equations, but it is not completely straightforward, and we omit the argument.

This finishes the proof of Theorem \ref{thm1}.






\end{document}